  \def\?[#1]{\textbf{[#1]}\marginpar{\Large{\textbf{??}}}}%
\numberwithin{equation}{section}
\newtheorem{theo}{Theorem}
\newtheorem{prop}{Proposition}[section]
\newtheorem{defi}[prop]{Definition}
\newtheorem{lemm}[prop]{Lemma}
\def\Remark{\noindent\textbf{Remark.}\ }
\def\Remarks{\noindent\textbf{Remarks.}\ }
\newtheorem{cor}[prop]{Corollary}
\newtheorem{lem}[prop]{Lemma}
\newtheorem{defn}[prop]{Definition}
\newtheorem{ass}[prop]{Assumption}
\DeclareMathOperator{\diag}{diag}
\newcommand{\eps}{{\varepsilon}}
\newcommand{\RR}{{\mathbb R}}
\newcommand\cD{{\mathcal  D}}
\newcommand\cU{{\mathcal  U}}
\newcommand\cJ{{\mathcal  J}}
\newcommand\cG{{\mathcal  G}}
\newcommand\cO{{\mathcal O}}
\newcommand{\os}{\overline{s}}
\newcommand{\ox}{\overline{x}}
\newcommand{\oy}{\overline{y}}
\newcommand{\otheta}{\overline{\theta}}
\newcommand{\oxi}{\overline{\xi}}
\newcommand{\urho}{\underline{\rho}}
\newcommand{\usigma}{\underline{\sigma}}
\newcommand{\utau}{\underline \tau}
\newcommand{\ueta}{\underline \eta}
\newcommand{\unu}{{\underline \nu}}
\def\eps{\epsilon }
\newcommand\adots{\mathinner{\mkern2mu\raise1pt\hbox{.}
\mkern3mu\raise4pt\hbox{.}\mkern1mu\raise7pt\hbox{.}}}
\let\oldtocsection=\tocsection
\let\oldtocsubsection=\tocsubsection
\renewcommand{\tocsection}[2]{\hspace{0em}\oldtocsection{#1}{#2}}
\renewcommand{\tocsubsection}[2]{\hspace{1em}\oldtocsubsection{#1}{#2}}
\numberwithin{equation}{section}
\begin{document}

\title[Transport of nonlinear oscillations]{Transport of nonlinear oscillations along rays that graze a convex obstacle to any order}

\author{Jian Wang}
\email{wangjian@email.unc.edu}
\address{Department of Mathematics, University of North Carolina, Chapel Hill, NC 27599}
\author{Mark Williams}
\email{williams@math.unc.edu}
\address{Department of Mathematics, University of North Carolina, Chapel Hill, NC 27599}


\begin{abstract}
 We provide a geometric optics description in spaces of low regularity, $L^2$ and $H^1$, of the transport of  oscillations in solutions to 
 linear and some semilinear second-order hyperbolic boundary problems
along rays that graze the boundary of a convex obstacle to arbitrarily high finite or infinite order.    The fundamental motivating example  is the case where the spacetime manifold is $M=(\mathbb{R}^n\setminus \mathcal{O})\times \mathbb{R}_t$, 
where $\mathcal{O}\subset \mathbb{R}^n$ is an open convex  
obstacle with $C^\infty$ boundary, and the governing hyperbolic operator is the wave operator $\Box:=\Delta-\partial_t^2$.

\end{abstract}

\maketitle

{
\hypersetup{linkcolor=NavyBlue}
\tableofcontents
}

\newpage

\section{Introduction}\label{intro}

In this paper we provide a description in spaces of low regularity, $L^2$ and $H^1$, of the transport of  oscillations in solutions to 
 linear and some semilinear second-order hyperbolic boundary problems
along rays that graze the boundary of a convex obstacle to arbitrarily high finite or infinite order.    The fundamental motivating example  is the case where the spacetime manifold is $M=(\mathbb{R}^n\setminus \mathcal{O})\times \mathbb{R}_t$, 
where $\mathcal{O}\subset \mathbb{R}^n$ is an open convex  
obstacle with $C^\infty$ boundary, and the governing hyperbolic operator is the wave operator $\Box:=\Delta-\partial_t^2$.   Our main theorem, Theorem \ref{mt2}, is proved in greater generality than this, but it involves two assumptions that can be difficult to verify.   In \S \ref{convexobstacle} we show that the theorem applies to describe the diffraction of oscillatory plane waves by a variety of convex obstacles for which those assumptions can be verified.  

We approach this problem from the point of view of \emph{geometric optics} in the sense of \cite{jmr1995asens,jmr1996cpam}.\footnote{We use ``geometric optics" roughly to refer to an approach where approximate solutions to problems with highly oscillatory boundary data or initial data are constructed by solving eikonal equations to obtain phases and transport equations to obtain profiles, and where a rigorous error analysis is done to show that high frequency approximate solutions are close to exact solutions in some appropriate norm on a fixed time interval independent of wavelength.}   
The papers most closely related to this paper appear to be those of Cheverry \cite{cheverry1996} and  Dumas \cite{dumas2002}, which applied geometric optics to obtain results similar to the ones studied here, but in problems where only first-order grazing is allowed.    In particular, each of those papers describes the behavior of solutions in spaces of low regularity.  

With regard to linear hyperbolic boundary problems where only first-order grazing is allowed, we recall the papers of Melrose \cite{melrose1975duke} and Taylor \cite{taylor1976cpam}, which construct microlocal parametrices to describe the propagation of $C^\infty$ singularities (wavefront sets) near grazing points, and the book of H\"ormander \cite{hormander3}, which gives such a description based just on energy estimates.  The papers of Melrose and Sj\"ostrand \cite{melrosesjostrand1978cpam, melrosesjostrand1982cpam},  study the propagation of $C^\infty$ singularities along ``generalized bicharacteristics"  which can reflect off the boundary, graze the boundary to any order, or glide along the boundary.   
    
The diffraction of conormal waves in semilinear problems where only first-order grazing is allowed is studied in the paper of Melrose, S\'a Barreto, and Zworski \cite{msz1996aster} in conormal spaces of high regularity.    In both linear and nonlinear problems where higher-order grazing is allowed,  
it seems out of reach at present to describe diffraction using geometric optics in spaces of high regularity.   Roughly speaking, working with spaces of low regularity is more feasible, since much of the complicated (and interesting) behavior that is now too hard to describe is invisible in such spaces.    The papers \cite{jmr1996cpam, jmr2000mams} use spaces of low regularity to describe the behavior of nonlinear oscillations beyond caustics.

In order to describe and state our main result with a minimum of preparation,  we work now in coordinates $(x,y,t)\in\mathbb{R}^{n+1}$ and dual coordinates $(\lambda,\eta,\tau)$ where $t$ is the time variable and $x=0$ defines the (noncharacteristic)  boundary.  
In \S \ref{DandA} we state definitions, assumptions, and the main theorem, Theorem \ref{mt2},  more precisely  and in a coordinate-free way.  

Consider a second-order operator $P(x,y,t,\partial_{x,y,t})$ with $C^\infty$ coefficients, strictly hyperbolic with respect to $t$,
whose principal symbol has the form 
\begin{align}\label{d1}
p(x,y,t,\lambda,\eta,\tau)=\lambda^2+q(x,y,t,\eta,\tau),
\end{align}
where $q(x,y,t,\cdot,\cdot)$ has signature $(n-1,1)$.    On a domain 
\begin{align*}
\Omega_T=\{(x,y,t)\in\mathbb{R}^{n+1} \ | \ x\geq 0, -T\leq t\leq T\}, \;T>0,
\end{align*}
we study the continuation problem 
\begin{subnumcases}{\label{e0a}}
    Pu^\eps=f(x,y,t,u^\eps,\nabla_{x,y,t} u^\eps) & in $\Omega_T$, \label{e0aa}\\
    u^\eps(0,y,t)=0 & on $\Omega_T\cap\{x=0\}$, \label{e0ab}\\
    u^\eps=v_\eps\sim_{H^1} u^1(x,y,t)+\eps U_1(x,y,t,\phi_i/\eps) & on $\Omega_{[-T,-T+\delta]}$ \label{e0ac}
\end{subnumcases}
where $\Omega_{[-T,-T+\delta]}:=\{(x,y,t) \ | \ x\geq 0, -T\leq t\leq -T+\delta\}$ for some small $\delta>0$, and 
the meaning of $\sim_{H^1}$ is explained in Definition  \ref{meaning}.  We assume given
\begin{align*}
\begin{gathered}
v_\eps(x,y,t) \text{ and } u^1(x,y,t)\in H^1(\Omega_{[-T,-T+\delta]}),
\text{ and }U_1(x,y,t,\theta_i) \in L^2(\Omega_{[-T,-T+\delta]}\times\mathbb{T}), 
\end{gathered}
\end{align*}
where each of $v_\eps$, $u$, $U_1$ has compact $(x,y,t)$-support strictly away from $x=0$,  $U_1$ is periodic in $\theta_i$ of mean zero, and
$$\partial_{\theta_i} U_1\in L^2(\Omega_{[-T,-T+\delta]}\times\mathbb{T}).$$
The function $f$ is assumed uniformly Lipschitzean in its last arguments (Definition \ref{A0z}) and satisfies $f(x,y,t,0,0)=0$.

\Remarks 
1. The problem \eqref{e0a}, where $P$ has principal symbol \eqref{d1},  is a local model or \emph{standard form}  to which the problem considered in  Theorem \ref{mt2} can be reduced by a local change of variables near $(0,0,0)$; see Definition \ref{sform} and \S \ref{reduce}.   

\noindent
2. The uniformly Lipschitzean assumption on $f$, Assumption \ref{A0z}, allows one to prove the existence of a unique solution 
$u^\eps\in H^1(\Omega_T)$ by a simple Picard iteration.   The result of Kreiss \cite{kreiss1970cpam} provides the estimate \eqref{kreiss} needed to obtain both existence and convergence of the iterates on $\Omega_T$ for some sufficiently small $T>0$ independent of $\eps$.  The definition of $\sim_{H^1}$ plays no role in this proof.

The function $U_1(x,y,t,\phi_i/\eps)$ in \eqref{e0a}  supplies the incoming oscillations. The surfaces of constant phase are the spacetime surfaces $\phi_i(x,y,t)=c$, where the function $\phi_i$, called the \emph{incoming phase}, is a $C^\infty$ function that satisfies the \emph{eikonal} equation 
\begin{align*}
p(x,t,y,\nabla_{x,y,t}\phi_i)=0 \text{ on }U, 
\end{align*}
where $U$ is some $\mathbb{R}^{n+1}$-neighborhood of $0$ that we take to be an open ball centered at $0$.    The phase $\phi_i$ is constructed to satisfy
\begin{align*}
\nabla_{x,y,t}\phi_i(x,y,t)\neq 0, \text{ for all } (x,y,t)\in U.
\end{align*}
Let 
\begin{align*}
  U_{\det}\subset U\cap \{x\geq 0\} \text{ with }0\in U_{\det}
\end{align*}
be a domain of determinacy for continuation problems in $\mathbb{R}^{n+1}_+=\{x\geq 0\}$ determined by $P$ and the Dirichlet boundary condition \eqref{e0ac}.  
We assume that $U_1$ in \eqref{e0ac} satisfies
\begin{align*}
\mathrm{supp}_{(x,y,t)}U_1\subset \mathring{U}_{\det}.
\end{align*}

\begin{figure}[t]
\begin{center}
    \includegraphics[scale=0.5]{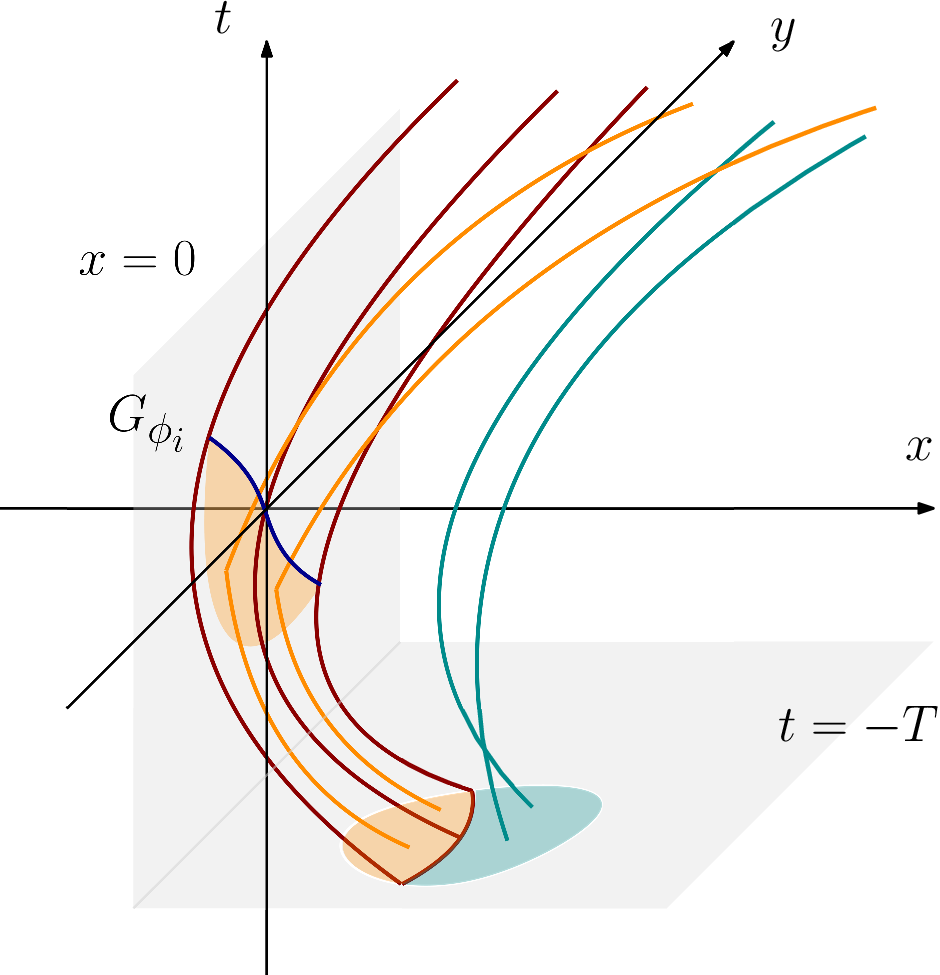}
    \caption{Characteristics associated to $\phi_i$ and $\phi_r$. The yellow curves reflect off the boundary, the red curves graze the boundary, and the green curves do not touch the boundary. The dark curve on $\{x=0\}$ is the grazing set $G_{\phi_i}$.}
    \label{rays}
\end{center}
\end{figure}

We will see that the oscillations are transported along characteristics of $p$ associated not only to $\phi_i$ but also to an associated \emph{reflected phase} $\phi_r$.
The characteristics associated to $\phi_k$, $k=i,r$, are integral curves of the \emph{characteristic vector field of $\phi_k$}:
\begin{align}\label{ezb}
T_{\phi_k}:=\left(2\lambda\partial_x+\partial_{\eta,\tau}q(x,y,t,\eta,\tau)\partial_{y,t}\right)|_{(\lambda,\eta,\tau)=\nabla_{x,y,t}\phi_k}.
\end{align}
These curves are projections onto spacetime of null bicharacteristics of $p$ associated to $\phi_k$; see Definition \ref{q6d} and the Remark after Definition \ref{q6d}.
The operator $P$ and the incoming phase $\phi_i$ are chosen so that {some} of the characteristics of $\phi_i$ emerging from points in the $(x,y,t)$-support of  $U_1$ as in \eqref{e0ac}  graze the boundary $x=0$ to some finite or possibly infinite order.  Each such grazing characteristic is tangent to $x=0$ at a single spacetime point, and nearby points on the characteristic lie in $x>0$.   The order of tangency is what we mean by the order of ``grazing".  We arrange so that the origin $0\in\Omega_T$ is such a point of tangency.   Near each grazing characteristic there are transversal incoming characteristics that reflect off the boundary; see Figure \ref{rays}. These definitions are made precise in \S \ref{decomposition} and \S \ref{reflectedphase}.

The main theorem is stated in terms of incoming and reflected profiles, $U_i(x,y,t,\theta_i)$ and $U_r(x,y,t,\theta_r)$, that describe the transport of oscillations.   Each function $U_k$ for $k=r,i$  is the unique mean zero periodic primitive in $\theta_k$ of a function $W_k(x,y,t,\theta_k)\in L^2(\Omega_T\times \mathbb{T})$ that is constructed to satisfy the transport equations \eqref{e6}--\eqref{e8} of \S \ref{pe}.

We proceed to define particular subsets of $\Omega_T$, $J_r$ and $J_i$, that {contain} the supports of $W_r$ and $W_i$. 
From \eqref{ezb} we know that characteristics of $\phi_i$ are tangent to $x=0$ precisely at points of the \emph{grazing set}
\begin{align*}
G_{\phi_i}:=\{(x,y,t)\in U \ | \ \partial_x\phi_i(0,y,t)=0\}.
\end{align*}

\begin{ass}[Regularity of the grazing set]\label{A02}
The set $G_{\phi_i}$ is a codimension two $C^1$ submanifold of $\mathbb{R}^{n+1}$ near $0\in G_{\phi_i}$.   That is, there exists a $C^1$ function $\zeta(x,y,t)$ defined near $0$ such that $\nabla\zeta(0,0,0)\neq 0$ and
\begin{align*}
G_{\phi_i}=\{(x,y,t)\in U \ | \ x=0 \text{ and }\zeta=0\}.
\end{align*}
Moreover, the vector field $T_{\phi_i}$ is transverse to the $n$-dimensional hypersurface $\{\zeta=0\}$ at $0$.
\end{ass}

\Remark 
When the origin is a point of \emph{first}-order tangency,  it was shown in \cite{cheverry1996} that Assumption \ref{A02} always holds and that $\zeta$ can be taken to be a $C^\infty$ function.   When the origin is a point of higher than first-order tangency, verifying this assumption can be difficult.  It is not clear that Assumption \ref{A02} always holds even when $P$ is the wave operator acting in the exterior of a convex obstacle and the incoming phase $\phi_i$ is linear.  We verify this assumption in \S \ref{va2} for a number of examples in all dimensions involving all orders of tangency.

Let $\mathrm{SB}=\mathrm{SB}_+\cup \mathrm{SB}_-$ be the $C^1$ hypersurface in $\mathbb{R}^{n+1}$  which is the flowout  of $G_{\phi_i}$ along characteristics of $\phi_i$.   More precisely, $\mathrm{SB}$ is the union of the forward and backward flowouts of $G_{\phi_i}$, $\mathrm{SB}_\pm$ respectively,  along integral curves of $T_{\phi_i}$.\footnote{By the ``forward flowout" we mean the flowout along integral curves for which $t$ increases as the curve parameter increases.}  We call $\mathrm{SB}_+$ the \emph{shadow boundary}; see Definition \ref{SB}.

\begin{figure}[t]
\begin{center}
    \includegraphics[scale=0.45]{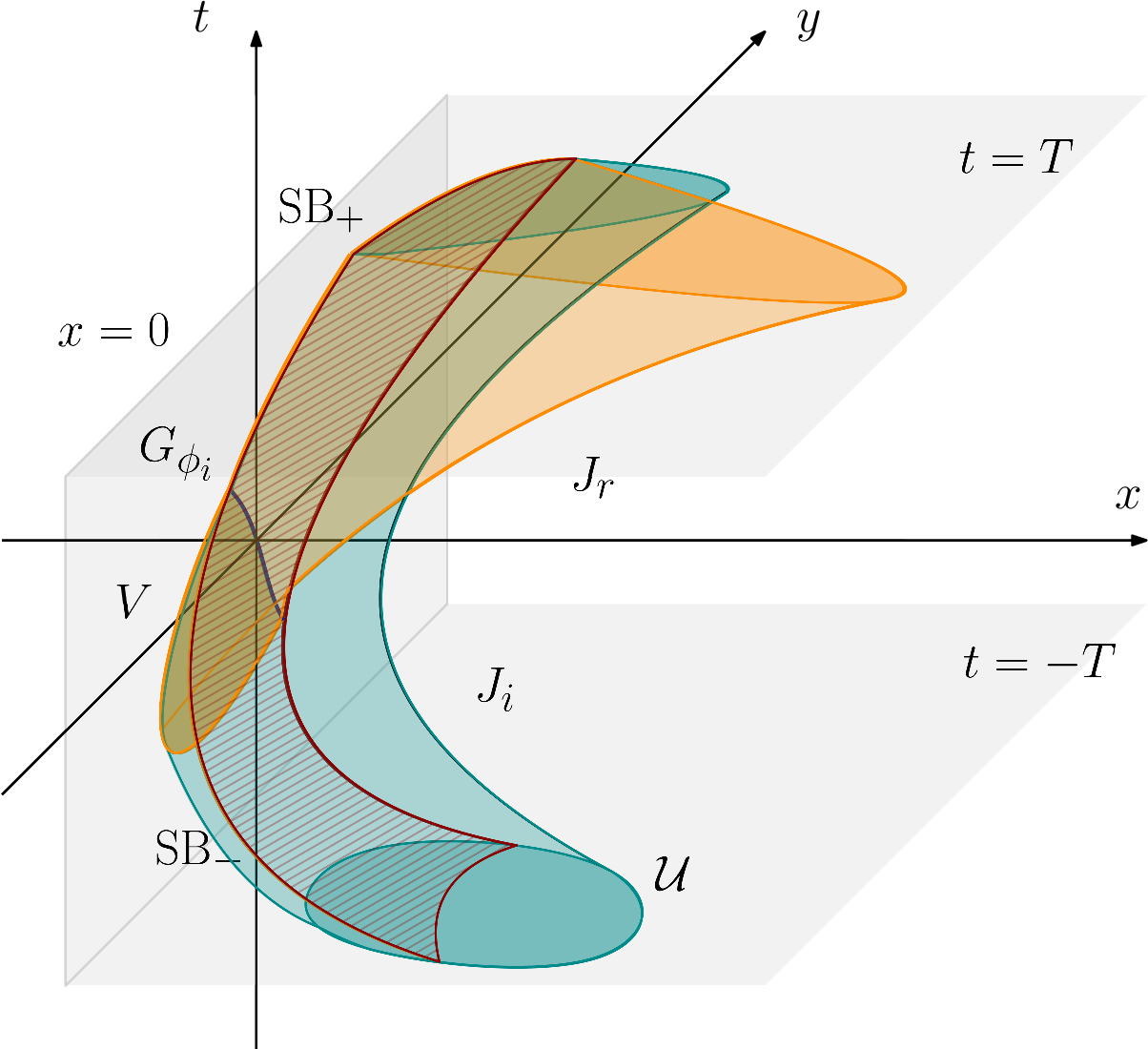}
    \caption{Green domain: forward flowout in $\{x\geq 0\}$ of the characteristic vector field $T_{\phi_i}$ associated to the incoming phase $\phi_i$. Yellow domain: forward flowout of the characteristic vector field $T_{\phi_r}$ associated to the reflected phase $\phi_r$. Dark curve on the boundary $\{x=0\}$: the grazing set $G_{\phi_i}$. Red surfaces: $\mathrm{SB}_{\pm}$, forward and backward flowouts of the grazing sets along characteristics of $T_{\phi_i}$. }
    \label{shadow}
\end{center}
\end{figure}

Set 
\begin{align*}
W_1(x,y,-T,\theta_i):=\partial_{\theta_i}U_1(x,y,-T,\theta_i)\text{ for }U_1\text{ as in }\eqref{e0ac}.
\end{align*}
We are interested in the behavior of oscillations transported by rays that reflect off and graze $\partial M$ near $0$, so it is no restriction to assume that $\mathrm{supp}_{(x,y,-T)}W_1$ is small and located near $\mathrm{SB}_-\cap \{t=-T\}.$ 
For $T>0$ small this allows us to choose an $n$-dimensional closed ball $\cU$ such that
\begin{align}\label{ezg}
\begin{split}
\cU\subset {U}_{\det}\cap \{t=-T\}\text{ and }\mathrm{supp}_{(x,y,-T)}W_1\subset \mathring{\cU};
\end{split}
\end{align}
\noindent see Figure \ref{shadow}.
For points $(x',y',-T)\in \cU$ and for $s\geq 0$ let 
\begin{align*}
(x,y,t)=Z_i(s,x',y'), \text{ where }Z_i(0,x',y')=(x',y',-T)
\end{align*}
denote the forward flow map determined by $T_{\phi_i}$.   We refer to $Z_i$ as the 
\emph{the incoming flow map}; it is a $C^\infty$ diffeomorphism onto its range, since $T_{\phi_i}$ is transverse to surfaces $t=c$
for $|c|$ small.   Moreover the range of $Z_i$ contains an $\mathbb{R}^{n+1}$-neighborhood of $0$.

Now define the flowout of $\cU$ under $T_{\phi_i}$ in $\Omega_T$ to be
\begin{align}\label{e2a}
J_i=\{Z_i(s,x',y') \ | \ 0\leq s\leq s(x',y'), (x',y',-T)\in \cU\}:=Z_i(\cD^i)\subset \Omega_T,
\end{align}
where $s(x',y')$ is the value of $s$ for which the $x$-component of $Z_i(s,x',y')$ is $0$ when the integral curve leaves $\{x\geq 0\}$, and is the value of $s$ for which the $t$-component of $Z_i(s,x',y')$ is $T$ when the integral curve remains inside $\{x\geq 0\}$.

Let $V:=J_i\cap \{x=0\}$.  For points $(0,y',t')\in V$ and for $s\geq 0$ let 
\begin{align}\label{e3}
(x,y,t)=Z_r(s,y',t'), \text{ where }Z_r(0,y',t')=(0,y',t')
\end{align}
denote the forward flow map determined by $T_{\phi_r}$.\footnote{The reflected phase $\phi_r$ and reflected flow map $Z_r$ are defined precisely in \S \ref{reflectedphase}.}   Parallel to $J_i$  we define the flowout of $V$
\begin{align}\label{e4}
\begin{split}
&J_r=\{(x,y,t)=Z_r(s,y',t') \ | \  0\leq s\leq s(y',t'), (0,y',t')\in V \}:=Z_r(\cD^r)\subset \Omega_T,
\end{split}
\end{align}
where $s(y',t')$ is the value of $s$ for which the $t$-component of $Z_r(s,x',y')$ is $T$;  see Figure \ref{shadow}.

The mapping properties of $Z_r$ are much more difficult to assess than those of $Z_i$, because the set $V=J_i\cap\{x=0\}$ contains points of the grazing set $G_{\phi_i}$ and $T_{\phi_r}$ is tangent to the initial surface $\{x=0\}$ for $Z_r$ on $G_{\phi_i}$.   
It was noticed in \cite{cheverry1996} in the case of first-order grazing that the inverse of $Z_r$ becomes singular nearly the grazing set; the Jacobian determinant of $Z_r^{-1}$ blows up roughly like $1/(\text{distance to } G_{\phi_i})$.  In cases of higher-order grazing we observe that the singularity of this determinant worsens and becomes more complicated as the order of grazing increases.\footnote{See the Remark at the end of section \ref{2d}, along with \eqref{ja} and the subsequent analysis of $\det(A)$.} 
This singularity  of $Z_r^{-1}$ has to be taken into account in our study of diffraction, 
since the formula that constructs the reflected phase $\phi_r$ by the method of characteristics involves $Z_r^{-1}$; see \eqref{q13}--\eqref{q15}.   This leads to 

\begin{ass}[Reflected flow map $Z_r$]\label{A03}
Let $V_r:=\{(y',t') \ | \ (0,y',t')\in V\}$ and $\mathring{V}_r=\{(y',t') \ | \ (0,y',t')\in V\setminus G_{\phi_i}\}$ for $V$ as above.    The sets $\cU$ and $V$  as well as $s_0>0$ can be chosen so that the map
\begin{align*}
Z_r:[0,s_0)\times V_r\to \Omega_T
\end{align*}
is a homeomorphism onto its range $J_r$,  and so that 
\begin{align*}
Z_r:[0,s_0)\times \mathring{V}_r\to \Omega_T
\end{align*}
is a  $C^\infty$ diffeomorphism onto its range.
\end{ass}

\Remark 
In Proposition \ref{i3} of Appendix \ref{flowmap} we show that Assumption \ref{A03} is always satisfied, even for nonlinear incoming phases $\phi_i$,  when the origin is a point of first-order tangency.\footnote{A proposition close to Proposition \ref{i3} was formulated in \cite{cheverry1996}, but the proof there applied to a modified map obtained by truncating the Taylor series of $Z_r$ at order two.}   As with Assumption \ref{A02}, when the origin is a point of higher than first-order tangency, verifying this assumption can be difficult.  In \S\S \ref{2d}--\ref{nd} we show that Assumption \ref{A03} always holds when $P$ is the wave operator acting in the exterior of a strictly convex obstacle (Definition \ref{r1}) and the incoming phase $\phi_i$ is linear.  The proof there applies to all orders of tangency and, in fact, does not depend on Assumption \ref{A02}.

Here is our main result stated in standard form coordinates.   See Theorem \ref{mt2} of \S \ref{DandA} for a more precise and coordinate-free statement.
\begin{theo}\label{mt}
Consider the problem \eqref{e0a} under Assumptions \ref{A02} and \ref{A03}, where $\phi_i$ is a given incoming phase and the origin $0$ belongs to the grazing set $G_{\phi_i}$.  
Suppose that $W_1=\partial_{\theta_i} U_1$ satisfies the support condition \eqref{ezg}.
Then if $T>0$ is small enough, the solution $u^\eps\in H^1(\Omega_T)$ to \eqref{e0a} satisfies 
 \begin{align}\label{e4z}
u^\eps(x,y,t)|_{\Omega_T}\sim_{H^1} u(x,y,t)+\eps U_r(x,y,t,\phi_r/\eps)+\eps U_i(x,y,t,\phi_i/\eps).
\end{align}
Here $U_k(x,y,t,\theta_k)$ for $k=r,i$  is the unique mean zero periodic primitive in $\theta_k$ of $W_k(x,y,t,\theta_k)$,
and the functions 
$$u\in H^1(\Omega_T),\; W_r\in L^2(\Omega_T\times \mathbb{T}),\;  W_i\in  L^2(\Omega_T\times \mathbb{T})$$
  are constructed to satisfy the profile equations \eqref{e6}--\eqref{e8}.  In particular, $W_k$ has support in $J_k$ for $k=r,i$.   The meaning of $\sim_{H^1}$ in \eqref{e0ac} and \eqref{e4z} is given in Definition \ref{meaning}.
\end{theo}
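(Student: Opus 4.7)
The plan is to implement the standard BKW / geometric optics scheme in the low-regularity setting: build an approximate solution from the formal expansion \eqref{e4z}, show it satisfies \eqref{e0a} modulo a residual that is negligible in the $\sim_{H^1}$ sense of Definition \ref{meaning}, and then control the gap between exact and approximate solutions by the Kreiss estimate \eqref{kreiss} together with the uniform-Lipschitz hypothesis on $f$ (Assumption \ref{A0z}).

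First I would construct the reflected phase $\phi_r$ on $J_r$ by the method of characteristics: starting from the Cauchy data $\phi_i|_V$ on $V = J_i\cap\{x=0\}$ and using the reflected flow $Z_r$, one sets $\phi_r(Z_r(s,y',t')) = \phi_i(0,y',t')$. By Assumption \ref{A03}, $Z_r$ is a $C^\infty$ diffeomorphism on $[0,s_0)\times\mathring V_r$ but only a homeomorphism up to $V_r$, so $\phi_r$ is $C^\infty$ on $J_r\setminus\mathrm{SB}$ and merely continuous across $\mathrm{SB}$. I would then solve the profile equations \eqref{e6}--\eqref{e8}: propagate $W_1$ along $T_{\phi_i}$ to obtain $W_i\in L^2(\Omega_T\times\TT)$ supported in $J_i$; transfer the boundary trace $W_i|_{x=0}$ into initial data for a transport along $T_{\phi_r}$, via pullback by $Z_r^{-1}$, to obtain $W_r\in L^2$ supported in $J_r$; and solve the linear second-order boundary problem for the mean term $u\in H^1(\Omega_T)$. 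The zero-mean periodic primitives $U_i,U_r$ are then well-defined in $L^2$.

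Second, form $u_a^\eps := u + \eps U_r(x,y,t,\phi_r/\eps) + \eps U_i(x,y,t,\phi_i/\eps)$ and substitute into \eqref{e0aa}. Expanding $Pu_a^\eps$ in powers of $\eps$, the $\eps^{-1}$ contribution vanishes because $p(\nabla\phi_i)=p(\nabla\phi_r)=0$ (the eikonal equations) on the relevant supports; the $\eps^0$ contribution vanishes by the transport equations for $W_i,W_r$ and the equation for $u$; the remaining $O(\eps)$ terms and the contribution of $f$ constitute the residual. The boundary condition \eqref{e0ab} is handled by observing that $\phi_r=\phi_i$ on $V$ by construction and that the transport boundary condition enforces $U_r|_{x=0}+U_i|_{x=0}=0$ on $V$, so the oscillatory traces cancel and only $u|_{x=0}=0$ remains, which is arranged. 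Setting $w^\eps := u^\eps - u_a^\eps$, the function $w^\eps$ satisfies a boundary problem with small source, small boundary data, and small initial trace on $\Omega_{[-T,-T+\delta]}$; the Kreiss estimate \eqref{kreiss} combined with a Picard iteration absorbing the nonlinear contribution then yields the conclusion on $\Omega_T$ for $T$ sufficiently small.

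The main obstacle is controlling the reflected oscillation $\eps U_r(\cdot,\phi_r/\eps)$ in the $\sim_{H^1}$ topology near the shadow boundary $\mathrm{SB}$. Because the Jacobian of $Z_r^{-1}$ blows up like a negative power of the distance to $G_{\phi_i}$ (of increasing order and complexity as the order of tangency grows), the phase $\phi_r$ fails to be $C^1$ across $\mathrm{SB}_+$, and the chain-rule term $\nabla\phi_r\cdot\partial_{\theta_r}U_r$ is only $L^2$ rather than uniformly bounded in $\eps$. This is precisely why the theorem is stated as $\sim_{H^1}$ rather than as an $H^1$-norm estimate: Definition \ref{meaning} must be calibrated to what the geometric optics expansion can deliver in the presence of the singular reflected flow. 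Verifying that the residual is genuinely small in this topology is the technical heart of the argument; it exploits $\partial_{\theta_r}U_r=W_r\in L^2$ supported in $J_r$, the change-of-variables formula on $\mathring V_r$ provided by Assumption \ref{A03}, the $C^1$ regularity of $G_{\phi_i}$ from Assumption \ref{A02}, and a careful treatment of the measure-zero set $\mathrm{SB}$ across which $\phi_r$ loses smoothness.
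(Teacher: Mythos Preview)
Your overall framework is right, but the proposal has a factual error and omits three technical ingredients that the paper identifies as essential.

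First, a correction: $\phi_r$ is $C^1$ on $\cJ_r$, not merely continuous (see \eqref{q15}); what fails at $\mathrm{SB}_+$ is $C^2$.  So $\nabla\phi_r$ is bounded, and the difficulty is not the chain-rule term you name but the coefficient $P_1\phi_r$ in the transport equation \eqref{e7y}, which involves second derivatives of $\phi_r$ and blows up at the grazing set.  Your plan to ``propagate along $T_{\phi_r}$'' by pullback through $Z_r^{-1}$ would require an integrating factor built from $P_1\phi_r$, and the paper explicitly notes that for higher-order grazing this singularity is not computable in closed form.  The paper instead derives $L^2$ energy estimates (\S\ref{ee}) in which the bad term $((p(x,y,t,\partial)\phi_r)W_r,W_r)$ \emph{cancels} identically (see \eqref{d4}--\eqref{d5}); this cancellation is what allows the profile equations to be solved without detailed knowledge of the singularity, and it is the central new observation of the paper.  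You do not mention it.

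Second, your residual analysis is incomplete.  Expanding $f(u_a^\eps,\nabla u_a^\eps)$ produces, beyond the characteristic pieces $\underline f,f^*_r,f^*_i$, a noncharacteristic term $f^*_{\mathrm{nc}}(x,y,t,\theta_r,\theta_i)$ of order $O(1)$ in $L^2$ (see \eqref{e5} and \eqref{f1aa}).  Nothing in the eikonal or transport equations removes it.  The paper introduces an explicit corrector $\eps^2 U^M_{\mathrm{nc}}$ in the approximate solution \eqref{f1}, with coefficients \eqref{f6} inverting $p$ on the noncharacteristic modes, to solve this term away; the introduction states this corrector is necessary even for first-order grazing.  Without it the residual in $Pu_a^\eps-f(u_a^\eps)$ is not small and the Kreiss estimate cannot close.

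Third, since $W_r,W_i$ are only $L^2$, the expression $u_a^\eps$ has no direct meaning, and the formal expansion of $Pu_a^\eps$ cannot be performed.  The paper's error analysis (\S\S\ref{TR}--\ref{ea}) works instead with a truncated-and-regularized approximate solution $m^l_{\mu,\rho,M,\eps}$: the profiles are cut off along the flow away from $\mathrm{SB}$ (so the singular coefficients become smooth on the support), then mollified, and the error is controlled by a careful ordering of limits $\mu\to0$, $\rho\to0$, $M\to\infty$, $\eps\to0$.  Near $\mathrm{SB}_+$ the residual is handled not by regularity of $\phi_r$ but by Lemmas \ref{f9}--\ref{f13}, which use the profile equation for $u$ to show the residual there is supported in a set of vanishing measure.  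Your final paragraph gestures at this region but does not supply the mechanism.
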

  
The reader may have noticed that an expression like $W_i(x,y,t,\phi_i/\eps)$ has no direct meaning since $W_i$ is only in 
$L^2(\Omega_T\times \mathbb{T})$.    As in \cite{cheverry1996} we therefore make the following definition.

\begin{defn}\label{meaning}
The condition $$u^\eps(x,y,t)|_{\Omega_T}\sim_{H^1} u(x,y,t)+\eps U_r(x,y,t,\phi_r/\eps)+\eps U_i(x,y,t,\phi_i/\eps)$$  means that for any sequence of positive reals $\delta_l\to 0$ as $l\to \infty$,  there exist sequences $W^l_k(x,y,t,\theta_k)$, $k=r,i$ of trigonometric polynomials of mean zero in $\theta_k$ with coefficients in $C^\infty_c(\mathring{\Omega}_T)$ and sequences of  positive reals $\eps_l$ such that 
\begin{subequations}\label{e4y}
    \begin{align}
        \|W_k-W^l_k\|_{L^2(\Omega_T\times\mathbb{T})}\leq \delta_l; \label{e4ya}
    \end{align}
    and for all $\eps\in (0,\eps_l]$,
    \begin{align}
        \left\|u^\eps(x,y,t)-\left(u(x,y,t)+\eps U^l_r(x,y,t,\phi_r/\eps)+\eps U^l_i(x,y,t,\phi_i/\eps)\right)\right\|_{H^1(\Omega_T)}\lesssim\delta_l. \label{e4yb}
    \end{align}
\end{subequations}
Here $U^l_k(x,y,t,\theta_k)$ is the unique mean zero primitive in $\theta_k$ of $W^l_k$.    Up to a change in $\eps_l$ the condition \eqref{e4yb}  is equivalent to the pair of conditions
\begin{align*}
\begin{gathered}
\text{for all } \eps\in (0,\eps_l],\ \left\|u^\eps-u\right\|_{L^2(\Omega_T)}\lesssim \delta_l\text{ and }\\
\left\|\nabla u^\eps-\left(\nabla u(x,y,t)+ W^l_r(x,y,t,\phi_r/\eps)\nabla\phi_r+W^l_i(x,y,t,\phi_i/\eps)\nabla\phi_i\right)\right\|_{L^2(\Omega_T)}\lesssim\delta_l.
\end{gathered}
\end{align*}
\end{defn}
\noindent
In fact, the trigonometric polynomials $W^l_k$ will be constructed to have coefficients in $C_c^\infty(\mathring{J_k})$.

\Remark 
Definition \ref{meaning} also gives the meaning of the symbol $\sim_{H^1}$ in \eqref{e0ac}, except that $\Omega_T$ should replaced by $\Omega_{[-T,-T+\delta]}$ and the terms $U_r$, $U^l_r$ are absent.

Since the profiles $W_r$, $W_i$ have support in $J_r\cup J_i$, Theorem \ref{mt} implies
\begin{cor}\label{inshadow}
The solution $u^\eps$ to problem \eqref{e0a} satisfies
\begin{align*}
\|u^\eps - u\|_{H^1(\Omega_{T}\setminus (J_r\cup J_i))}=o_\eps(1),
\end{align*}
for $u(x,y,t)$ as in \eqref{e4z}.
Although $u$ generally has some of its support in the set $\Omega_{T}\setminus (J_r\cup J_i)$, there are no high frequency oscillations in that set that are  detectable in the $H^1$ norm.    In particular the shadow region adjacent to $\mathrm{SB}_+$ contains no such oscillations.
\end{cor}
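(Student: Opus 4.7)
The plan is to derive this as a direct consequence of Theorem \ref{mt} combined with the sentence immediately following Definition \ref{meaning}: the approximating trigonometric polynomials $W^l_k$ can be chosen with coefficients in $C_c^\infty(\mathring{J_k})$. The key observation is that once we have such a support localization for $W^l_r$ and $W^l_i$, their $\theta_k$-primitives $U^l_r$ and $U^l_i$ also vanish outside $J_r$ and $J_i$ respectively, so on the complement $\Omega_T\setminus(J_r\cup J_i)$ the high-frequency correction terms in Definition \ref{meaning} drop out of the approximation and only $u$ remains.

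Concretely, I would proceed as follows. First, fix an arbitrary sequence $\delta_l\to 0$ and, by Theorem \ref{mt} and Definition \ref{meaning}, select trigonometric polynomials $W^l_k$ with coefficients in $C_c^\infty(\mathring{J_k})$ and numbers $\eps_l>0$ such that \eqref{e4ya} and \eqref{e4yb} hold. Since the primitive operator $\partial_{\theta_k}^{-1}$ (with mean zero normalization) preserves the spatial support of a trigonometric polynomial, each $U^l_k(x,y,t,\theta_k)$ is supported in $\mathring{J_k}$ in the $(x,y,t)$ variables. Consequently, on $\Omega_T\setminus(J_r\cup J_i)$,
\begin{align*}
u(x,y,t)+\eps U^l_r(x,y,t,\phi_r/\eps)+\eps U^l_i(x,y,t,\phi_i/\eps)=u(x,y,t).
\end{align*}

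Restricting the $H^1(\Omega_T)$ bound \eqref{e4yb} to the open subset $\Omega_T\setminus(J_r\cup J_i)$ (where the bracketed expression becomes $u$) yields, for every $\eps\in(0,\eps_l]$,
\begin{align*}
\|u^\eps-u\|_{H^1(\Omega_T\setminus(J_r\cup J_i))}\lesssim \delta_l.
\end{align*}
Since $\delta_l\to 0$, for any prescribed $\delta>0$ we choose $l$ large so that the right-hand side is $<\delta$, and then any $\eps\in(0,\eps_l]$ achieves the bound; this is precisely the statement $\|u^\eps-u\|_{H^1(\Omega_T\setminus(J_r\cup J_i))}=o_\eps(1)$.

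There is essentially no obstacle here beyond what is already packaged in Theorem \ref{mt} and Definition \ref{meaning}; the only point to verify is the support claim for the primitives $U^l_k$, which is immediate from the fact that a mean-zero trigonometric polynomial in $\theta_k$ with coefficient functions vanishing outside $\mathring{J_k}$ has its (mean-zero) antiderivative of the same form. The qualitative conclusion about the shadow region adjacent to $\mathrm{SB}_+$ then follows because $\mathrm{SB}_+$ bounds $J_r\cup J_i$ on the shadow side by the definitions of $J_i$ and $J_r$ as forward flowouts of $\cU$ and $V$ under $T_{\phi_i}$ and $T_{\phi_r}$: no oscillatory profile is transported into that region, so the $H^1$ difference $u^\eps-u$ loses its high-frequency content there.
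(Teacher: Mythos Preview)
Your argument is correct and matches the paper's own justification, which is simply the one-line observation that since the profiles $W_r$, $W_i$ (and hence their approximants $W^l_k$, $U^l_k$) are supported in $J_r\cup J_i$, restricting \eqref{e4yb} to $\Omega_T\setminus(J_r\cup J_i)$ collapses the oscillatory terms and yields the $o_\eps(1)$ bound. Your version is slightly more explicit about why the restriction step is legitimate (via the $C_c^\infty(\mathring{J_k})$ coefficients of the $W^l_k$ and the support-preserving $\theta_k$-primitive), which is a useful clarification but not a different approach.
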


\Remark 
The Lipschitzean assumption on $f(x,y,t,\cdot,\cdot)$ includes, of course, the linear case.   We believe that the results of this paper that pertain to higher than first-order grazing are new even for the linear case.   The main new difficulties addressed in this paper are not associated with nonlinearity.

\noindent
{\bf Organization of the paper.}
In \S \ref{intro}, we state assumptions and the main result Theorem \ref{mt} in standard coordinates. In \S \ref{DandA}, we state the assumptions and the main result Theorem \ref{mt2} in a coordinate-free way.
\S\S \ref{sfchoice}-\ref{ea} carry out the proof of the main theorem.   \S \ref{convexobstacle}, which is rather geometric and can be read independently of \S\S \ref{sfchoice}-\ref{ea},  provides examples in all dimensions and involving grazing rays of any order where the main theorem applies.

We close this introduction with some comments on the relation between this paper and \cite{cheverry1996}.   

Recall that the inverse of the reflected flow map, $Z_r^{-1}$ has a singularity at the grazing set that worsens with the order of grazing.   This singularity produces a singularity in $\phi_r$, which is $C^1$ but not $C^2$ near the grazing set.  
The solution of the profile equations for $(u,W_r,W_i)$ in \cite{cheverry1996} for the case of first-order grazing made use of an explicit calculation of this singularity in the second derivatives of $\phi_r$.\footnote{See \cite[(6.1.9)]{cheverry1996} and the top of \cite[p.451]{cheverry1996}, for example.}  Second derivatives of $\phi_r$ occur in the term $(P_1\phi_r)W_r$ of the linearized profile equation \eqref{e7y}, and $P_1\phi_r$  is used in \cite{cheverry1996} to construct an integrating factor when the profile equation is solved by integrating along characteristics.\footnote{Here $P_1=P-B_0$, where $B_0$ is the zeroth order part of $P$; see \eqref{b6zz}}
Our solution of the profile equations 
does not depend on an explicit knowledge of the singularity in $P_1\phi_r$, and this is one reason we were able to solve the equations for any order of grazing.    Indeed, in the energy estimates \eqref{d4}--\eqref{d5} we were surprised to observe a \emph{cancellation} of the term involving $P_1\phi_r$, which blows up near the grazing set.\footnote{It is actually just the bad second-order part $((p(x,y,t,\partial)\phi_r)W_r,W_r)_{L^2}$ of $((P_1\phi_r)W_r,W_r)_{L^2}$ that cancels out.}    In \S \ref{rigorous} we use these estimates in an approximation argument involving approximants $(W^k_r,W^k_i)$ that \emph{vanish} near the grazing set to construct $(W_r,W_i)$.   The cancellation  of the term involving $P_1\phi_r$ allows us to pass to the  limit as $k\to \infty$ to obtain an $L^2$ estimate for $(W_r,W_i)$; see Remark in \S \ref{rigorous}.

The error analysis of \S \ref{ea} uses an essential idea of \cite{cheverry1996}; namely, to estimate the difference between the exact solution $u^\eps$ and an approximate solution obtained by truncating and regularizing $u+\eps U_r+\eps U_i$ in \eqref{e4z} in a careful way.    But there are substantial differences from \cite{cheverry1996} in the way we carry out this idea.   For example, except for Lemmas \ref{f9} and \ref{f13}, we use the profile equations in a quite different way; see \eqref{f16}, \eqref{f17}, and the proofs of Propositions \ref{f19} and \ref{f21}.  Moreover, we found it necessary, even in the case of first-order grazing, to incorporate an extra ``corrector" term of order $\eps^2$ and depending on both $\phi_r$ and $\phi_i$ into the definition of the truncated and regularized approximate solution $m^l_{\mu,\rho,M,\eps}$ in \eqref{f1}.  The corrector is the
 term $\eps^2 U^M_{{\mathrm{nc}}}(x,y,t,\frac{\phi_r}{\eps},\frac{\phi_i}{\eps})$ in \eqref{f1}, and it is needed to ``solve away" a term $f^*_{{\mathrm{nc}}}$ of order $O(1)$ in the expansion of $f(x,y,t,m^l_{\mu,\rho,M,\eps},\nabla m^l_{\mu,\rho,M,\eps})$; see \eqref{f1aa}. The terms $U^M_{{\mathrm{nc}}}$ and $f^*_{{\mathrm{nc}}}$ carry noncharacteristic oscillations that do not propagate.

\section{Definitions,  assumptions, and the main result}\label{DandA}

    In this section we give precise, coordinate-independent statements of our main definitions and assumptions as well as the main theorem, Theorem \ref{mt2}.  

\begin{ass}\label{A0}

For $m\in\mathbb{R}^{n+1}$, let $P(m,\partial_m)$ be a scalar second-order differential operator with real $C^\infty$ coefficients and principal symbol $p(m,\nu)$ a smooth function on $T^*\mathbb{R}^{n+1}$.
We are given a $C^\infty$ hypersurface 
$S=\{m \ | \ \alpha(m)=0\}$ that is spacelike at $m=0$, and a $C^\infty$ hypersurface $\partial{M}=\{m \ | \ \beta=0\}$  that is timelike at $0$.\footnote{Here $m$ denotes a general point and ``$0$'' denotes some distinguished point in the manifold $\mathbb{R}^{n+1}$.  Coordinates have not yet been chosen.}  Replacing $P$ by $-P$ if necessary, we may suppose $p(0,d\alpha(0))<0$, which implies $p(0,d\beta(0))>0$.   
\end{ass}

The surfaces $S$ and $\partial{M}$ are thus both noncharacteristic and intersect transversally at $m=0$.\footnote{The surface $S=\{\alpha=0\}$ is  \emph{spacelike} at $0$ if ${P}(0,\partial_m)$ is strictly hyperbolic in the direction $d\alpha(0)\neq 0$.  If $p(0,d\alpha(0))<0$ then the hypersurface $\beta=0$ is \emph{timelike} at $0$ when $p(0,d\beta(0))>0$.  The hypersurface $\psi=0$ is \emph{noncharacteristic} at $0$ if $p(0,d\psi(0))\neq 0$.   See \cite[pp.416--417]{hormander3} for more discussion of these definitions.}  Define ${M}=\{m \ |\ \beta(m)\geq 0\}$ and $\partial M=\{m \ | \ \beta(m)= 0\}$ for $m$ near $0\in \partial M\cap S$.

The fundamental motivating example to keep in mind is the case  $M=(\mathbb{R}^n\setminus \mathcal{O})\times \mathbb{R}_t$, 
where $\mathcal{O}$ is an open convex  
obstacle with $C^\infty$ boundary, and where 
$P$ is the wave operator $\Box:=\partial_{x_1}^2+\dots+\partial_{x_n}^2-\partial_t^2$.  

In order to work in spaces of low regularity like $L^2$ and $H^1$ we assume that $f$ is \emph{uniformly Lipschitzean} in its last arguments.

\begin{ass}\label{A0z}
For some $R>0$, let $B(0,R)=\{m\in \mathbb{R}^{n+1} \ | \ |m|\leq R\}$.   We assume that 
$f(m,p,q) : B(0,R)\times \mathbb{R}\times \mathbb{R}^{n+1}\to \mathbb{R}$ is $C^\infty$ and there exists $K$ such that 
\begin{align*}
|f(m,p_1,q_1)-f(m,p_2,q_2)|\leq K|(p_1,q_1)-(p_2,q_2)|, \ \text{ for all } (m,p_i,q_i).
\end{align*}
Suppose also that $f(m,0,0)=0$.
\end{ass}

\subsection{Decomposition of \texorpdfstring{$T^*\partial M\setminus 0$}{TEXT} with respect to \texorpdfstring{$p$}{TEXT}.} \label{decomposition}

We recall from \cite{melrosesjostrand1978cpam} the decomposition 
\begin{align*}
T^*\partial M\setminus 0=E\cup H\cup G
\end{align*}
into \emph{elliptic}, \emph{hyperbolic}, and \emph{glancing} sets.   Let $i^*:\partial T^*M\to T^*\partial M$ be the pullback map induced by the inclusion
$i:\partial M\to M$.     Observe that the kernel of $i^*$ is the conormal bundle to $\partial M$, $N^*(\partial M)\subset T^*M$.

If $\sigma\in T^*\partial M\setminus 0$, we say that $\sigma$ belongs to $E$, $H$, or $G$ if the number of elements in  $(i^*)^{-1}(\sigma)\cap p^{-1}(0)$ is zero, two, or one respectively.   The sets $E$ and $H$ are conic open subsets of $T^*\partial M\setminus 0$, and $G$ is a closed conic hypersurface in  $T^*\partial M\setminus 0$.

\begin{defn}\label{q2}
Let $\sigma=(m,\nu)\in G$ and suppose $(i^*)^{-1}(\sigma)\cap p^{-1}(0)=\{\rho\}$, where $\rho\in T^*_mM$.   
We say $\sigma\in G^l$, the glancing set of order at least $l\geq 2$, if \footnote{Here $H_p$ is the Hamilton vector field of $p$, which is defined using the standard symplectic form on $T^*\mathbb{R}^{n+1}$.  A formula  for $H_p$ in coordinates is given by \eqref{hp}.}
\begin{align*}
p(\rho)=0 \text{ and }H_p^j \beta(\rho)=0\text{ for }0\leq j<l.   
\end{align*}
Thus, $G=G^2\supset G^3\supset \dots\supset G^\infty$.  

We say $\sigma\in G^{l}\setminus G^{l+1}$, the set of glancing points of exact order $l$, if $\sigma\in G^l$ and $H^l_p\beta(\rho)\neq 0$.     We will study the transport of oscillations near points $\sigma\in G^{2k}\setminus G^{2k+1}$, $k\geq 1$,  such that $H^{2k}_p\beta(\rho)>0$.    When $k=1$, such a point $\sigma$ is a  classical \emph{diffractive point} as studied in \cite{melrose1975duke} or \cite{cheverry1996}.   When $k\geq 1$ we refer to $\sigma$ as a diffractive point of order $2k$, and we write 
\begin{align}\label{q3}
\sigma\in G^{2k}_d\setminus G^{2k+1}\Leftrightarrow p(\rho)=0, \ H_p^j \beta(\rho)=0\text{ for }0\leq j<2k,\text{ and }H^{2k}_p\beta(\rho)>0.
\end{align}  
\end{defn}

\Remarks
1. If $\sigma\in G^{2k}_d\setminus G^{2k+1}$, let $\gamma(s)$ denote the  bicharacteristic of $p$ such that $\gamma(0)=\rho$.    Then $\gamma$ is tangent to $\partial T^*M$ at $\rho$ and lies $T^*\mathring{M}$ for small $s\neq 0$.

\noindent
2. \emph{Gliding points} of order $2k$, $\sigma\in G^{2k}_g\setminus G^{2k+1}$, are defined as in \eqref{q3} with the single change $H^{2k}_p\beta(\rho)<0$.   If $\sigma\in G^l\setminus G^{l+1}$ for some odd $l$, we call $\sigma$ an \emph{inflection point} of order $l$.

\begin{defn}[Diffractive points of order $\infty$]\label{q2a}
Let $\sigma\in G^\infty$ and suppose $(i^*)^{-1}(\sigma)\cap p^{-1}(0)=\{\rho\}$.
We say that $\sigma$ is a \emph{diffractive point of order $\infty$} and write $\sigma\in G^\infty_d$ if the 
bicharacteristic $\gamma(s)$ of $p$ such that $\gamma(0)=\rho$ lies $T^*\mathring{M}$ for small $s\neq 0$.
\end{defn}

\begin{defn}[Glancing points of diffractive type]\label{q2b}
We denote by 
\begin{align*}
\mathcal{G}_d:=\cup_{k=1}^\infty \left(G^{2k}_d\setminus G^{2k+1}\right)\cup G^\infty_d
\end{align*}
the set of \emph{glancing points of diffractive type}.
\end{defn}

\subsection{The incoming phase \texorpdfstring{$\phi_i$}{TEXT}. }
 For a function $f$ as in Assumption \ref{A0z} and small $\eps>0$,  we study a semilinear problem of the form
\begin{align}\label{q4}
\begin{cases}
P(m,\partial_m) u^\eps=f(m,u^\eps,\partial_m u^\eps) & \text{near $m=0$ in }M, \\
u^\eps=0 &\text{on }\partial M,\\
u^\eps=v^\eps\sim_{H^1} u^1(m)+U_1(m,\phi_i(m)/\eps) &\text{in }\alpha < -T_0 \text{ for some }T_0>0, 
\end{cases}
\end{align}
where $v^\eps$, $u^1$ and $U_1$ are given,  the initial profile $U_1(m,\theta)$ is periodic with mean zero in $\theta$,  and the meaning of $\sim_{H_1}$ is explained in Definition \ref{meaning}.   Here $\phi_i$ is a $C^\infty$  \emph{incoming phase} such that:

\begin{ass}\label{A00}
The function $\phi_i\in C^\infty(U)$ satisfies the 
 the eikonal equation 
\begin{align}\label{q5}
p(m,d\phi_i(m))=0
\end{align}
on some open $\mathbb{R}^{n+1}$-ball $U$ centered at $m=0$.  Here  $U\subset B(0,R)$ for $B(0,R)$ as in Assumption \ref{A0z}.
\end{ass}

We assume that a choice of $\phi_i$ is given satisfying additional properties described below.    We are interested in describing the behavior of oscillations in solutions to \eqref{q4} in an $M$-neighborhood of $m=0$, when a characteristic of $\phi_i$ emerging from the ``past", $\{m\in M \ | \ \alpha(m)<-T_0\}$,  grazes $\partial M$ at $m=0$ to either finite or infinite order.

Let $\phi_0\in C^\infty(\partial M\cap U)$ be defined by 
\begin{align*}
\phi_0=i^*\phi_i=\phi_i|_{\partial M\cap U}.
\end{align*}

The following assumption means that a characteristic of $\phi_i$ grazes $\partial M$  at $0$ to some finite or possibly infinite order:
\begin{ass}\label{A1}
With $\mathcal{G}_d$ as in Definition \ref{q2b}, we have $\underline{\sigma}:=(0,d\phi_0(0))\in \mathcal{G}_d$.
\end{ass}

Let $\urho$ be the point in $\partial T^*M$ such that $(i^*)^{-1}(\usigma)\cap p^{-1}(0)=\{\urho\}$.
We show in \S \ref{coords} that strict hyperbolicity of $P$ with respect to $\alpha$ and the fact that $\{\beta=0\}$ is timelike imply that we can modify $\alpha$ if necessary so that
\begin{align}\label{q6a}
H_p\alpha(\urho)>0.
\end{align}
Thus, $\alpha$ increases along the bicharacteristic through $\urho$ as the bicharacteristic parameter, say $s$, increases, and $\usigma$ is  \emph{nondegenerate} in the sense of \cite{melrose1975duke}.

\begin{defn}\label{nondeg}
The point $\usigma\in \mathcal{G}_d$ is \emph{nondegenerate} if $p$ restricted to the fiber of $T^*M$ over $\pi\usigma$ is nonstationary at 
$\urho$.  
\end{defn}
\noindent In standard form coordinates this is the condition $\partial_{\lambda,\eta,\tau}p(\urho)\neq 0$.  
This condition implies that the $\pi$-projection to $M$ of the bicharacteristic of $p$ through $\urho$ is nonsingular at $\pi\usigma$.\footnote{Here and below we use $\pi$ denote the natural projection from  $T^*M$,  $T^*\partial M$,  or $T^*\mathbb{R}^{n+1}$ to $M$, $\partial M$, or $\mathbb{R}^{n+1}$ respectively.   We denote the derivative of $\pi$ by $\pi_*$.}

\begin{figure}[t]
    \begin{center}
        \includegraphics[scale=0.8]{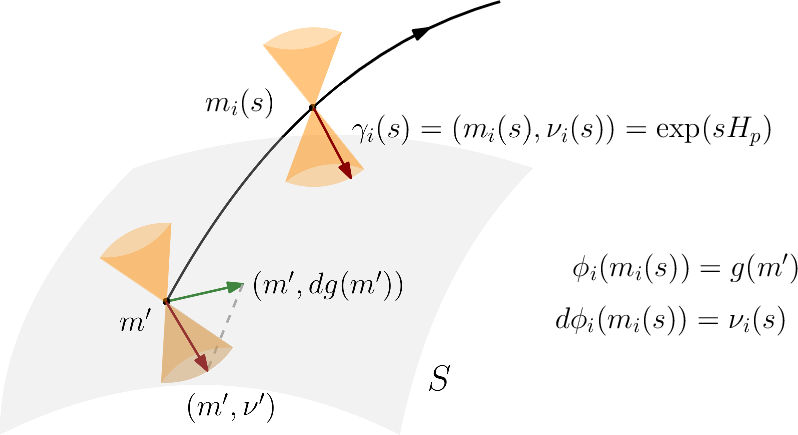}
        \caption{Solving the eikonal equation using the method of characteristics. The yellow cones are the characteristic variety $p^{-1}(0)$, i.e., the light cone. The red arrow on the characteristic cone indicates the choice of $\nu'$ or $\nu_i(s)$ for  which $\alpha$ increases with $s$. The dependence on $(m',\nu')$ is omitted in notations.}
        \label{eikonal}
    \end{center}
\end{figure}

To construct a phase $\phi_i$ as in Assumption \ref{A00}  on an $\mathbb{R}^{n+1}$-neighborhood of $m=0$ by the method of characteristics, one first solves the
bicharacteristic equations for $p$  with a prescribed value for $\phi_i|_S$, say $\phi_i|_S=g\in C^\infty(S)$,  on $S=\{m\in \mathbb{R}^{n+1} \ | \ \alpha(m)=0\}$.\footnote{See  Williams \cite{williams2022} or Evans \cite[Chapter 3]{evans2010ams} for a discussion of this method.} 
Let $i_S:S\to M$ be the inclusion map and $i_S^*:T^*\mathbb{R}^{n+1}|_S\to T^*S$ the natural pullback map.  Denote by
 $\gamma_i(s;(m',\nu'))$ the null bicharacteristic of $p$ 
such that 
\begin{align}\label{q6aa}
\dot\gamma_i(s;(m',\nu'))=H_p\left(\gamma_i(s;(m',\nu'))\right),\;\;     \gamma_i(0;(m',\nu'))=(m',\nu'), 
\end{align}
where $m'\in S$ and $\nu'\in T^*_m\mathbb{R}^{n+1}$ is
chosen so that
\begin{align}\label{q6b}
(m',\nu')\in (i_S^*)^{-1}\left(m',dg(m')\right)\cap p^{-1}(0).
\end{align}
Since $P$ is strictly hyperbolic there are two possible choices of $\nu'$ satisfying \eqref{q6b}, and we make the choice $\nu'=\nu'(m')$ so that 
$\alpha$ increases along $\gamma_i(s;(m',\nu'))$ as $s$ increases.  In particular, if $\unu'=\nu'(0)$ denotes the choice for $m'=0$, we have $\gamma_i(0;(0,\unu'))=\urho.$\footnote{By \eqref{q6b} $g$ must have been chosen so that $\urho\in (i_S^*)^{-1}\left(0,dg(0)\right)\cap p^{-1}(0)$ in order to be compatible with the condition $\usigma=(0,d\phi_0(0))$.}

Let us write $\gamma_i(s;(m',\nu'))=(m_i(s;(m',\nu')),\nu_i(s;(m',\nu'))).$  Then the method of characteristics yields a solution of the eikonal equation such that 
\begin{subequations}\label{q6c}
    \begin{align}
        \phi_i(m_i(s;(m',\nu'))) & =g(m'), \label{q6c1}\\
        d\phi_i(m_i(s;(m',\nu'))) & =\nu_i(s;(m',\nu')). \label{q6c2}
    \end{align}
\end{subequations}

\Remark 
For some open interval $(a,b)\ni 0$ and an open subset $O_S\subset S$, this construction determines an \emph{incoming flow map}
\begin{align}\label{q6cc}
Z_i:(a,b)\times O_S\to \mathbb{R}^{n+1}, \text{ where }Z_i(s,m')=m_i(s;(m',\nu'(m'))).
\end{align}
The transversality condition \eqref{q6a} implies that this map is a $C^\infty$ diffeomorphism.  For $m$ near $0$ in $\mathbb{R}^{n+1}$ let $(s,m')=Z_i^{-1}(m)$.   Then \eqref{q6c} gives 
\begin{align*}
\phi_i(m)=g(m'),
\end{align*}
showing that $\phi_i$ is a  $C^\infty$ function of $m$.

\begin{defn}\label{q6d}
1. The curve $C_i(s)$ in $\mathbb{R}^{n+1}$ given by $C_i(s):= m_i(s;(m',\nu'))$ is called the \emph{forward characteristic curve of} $\phi_i$ passing through $m'$ at $s=0$.\footnote{The word ``forward" indicates just that $\alpha$ increases along the curve as $s$ increases.}

\noindent
2. We call $\gamma_i(s;(m',\nu'))$ a \emph{forward null bicharacteristic associated to $\phi_i$}.
\end{defn}

It follows from \eqref{q6aa} and \eqref{q6c} that forward characteristics of $\phi_i$ satisfy the ODE
\begin{align}\label{q6dd}
\begin{gathered}
\dot m_i(s;(m',\nu'))=\pi_* H_p\left(m_i(s;(m',\nu')),d\phi(m_i(s;(m',\nu')))\right),\\ 
m_i(0,(m',\nu'))=m', \ \nu'=\nu'(m),
\end{gathered}
\end{align}
and the choice of $\nu'(m')$ implies that $\alpha(m_i(s;(m',\nu')))$ increases as $s$ increases.   This curve,  of course, is the $\pi$-projection of the  {forward null bicharacteristic} $\gamma_i(s;(m',\nu'))$.   By \eqref{q6a} $\dot m_i$ is nonvanishing for $|s|$ small.

\Remark
The incoming flow map $Z_i$ as in \eqref{q6cc} is a $C^\infty$ diffeomorphism.  Thus, we can regard the $\pi_* H_p(\cdot)$ term in \eqref{q6dd} as defining a $C^\infty$ vector field on $U$,  the \emph{characteristic vector field of $\phi_i$} denoted  by $T_{\phi_i}$. 
The formula for $T_{\phi_i}$ in standard form coordinates is given in \eqref{ezb}.

The eikonal equation \eqref{q5} implies that the graph of $d\phi_0$, 
\begin{align*}
\mathrm{Graph}(d\phi_0):=\{(m,d\phi_0(m)) \ | \  m\in\partial M\cap U\}\subset T^*\partial M,
\end{align*}
satisfies (see \eqref{m11bb})
\begin{align}\label{q8}
\mathrm{Graph}(d\phi_0)\subset H\cup G.
\end{align}
 The next assumption guarantees the existence of a well-defined \emph{illuminable region} of $\partial M$ which is separated from the \emph{shadow region} of $\partial M$ by a smooth  $(n-1)$-dimensional hypersurface $G_{\phi_i}\subset \partial M$.  It also implies the existence of a smooth  $n$-dimensional hypersurface in $\mathring{M}$, the \emph{shadow boundary} $\mathrm{SB}_+$, which separates the illuminable region of $M$ from the shadow region of $M$; see Definition \ref{SB}.
\begin{ass}\label{A2}

For  an open ball  $U$ as in Assumption \ref{A00} taken smaller if necessary and $\usigma$ as in Assumption \ref{A1},   there exists an open set $V\subset T^*\partial M$ containing  $\usigma$ such that $\pi V=\partial M\cap U$ and 
the set $$G_{\phi_i}:=\pi\left(G\cap \mathrm{Graph}(d\phi_0)\cap V\right)$$ is a $C^1$ codimension-two submanifold of   $U$ definable as 
\begin{align}\label{q8a}
G_{\phi_i}=\{m\in U \ | \ \beta(m)=0,  \zeta(m)=0\},
\end{align}
for some $\zeta\in C^1(U)$ such that  $H_p\zeta(\urho)\neq 0$.\footnote{Below we sometimes shrink $U$ without comment.}   Moreover, every point 
$\sigma\in G\cap \mathrm{Graph}(d\phi_0)\cap V$ lies in $\mathcal{G}_d$.
We refer to $G_{\phi_i}\subset \partial M\cap U$ as the \emph{grazing set determined by $\phi_i$.} 
\end{ass}

\Remarks 
1. Since $H_p\zeta(\urho)\neq 0$ we have $(d\beta\wedge d\zeta)(0)\neq 0$ and thus $d\beta\wedge d\zeta\neq 0$ on $U$ after shrinking $U$ if necessary.

\noindent
2. The glancing set $G$ has dimension $2n-1$ and $\mathrm{Graph}(d\phi_0$) has dimension $n$.  By \eqref{q8} the intersection $G\cap \mathrm{Graph}(d\phi_0)$ is not tranversal.    Nevertheless, Assumption \ref{A2} implies that $G\cap \mathrm{Graph}(d\phi_0)\cap V$ is a $(n-1)$-dimensional $C^1$ submanifold of $T^*\partial M$.    An argument of \cite{cheverry1996} shows that 
if $\usigma\in G^2_d\setminus G^3$, then the conditions in Assumption \ref{A2} automatically hold with $\zeta\in C^\infty$ and 
$$G\cap \mathrm{Graph}(d\phi_0)\cap V \subset G^2_d\setminus G^3.$$

\noindent
3. Assumption \ref{A2} generally takes some effort to verify.  In \S \ref{va2} we verify it in a number of examples involving diffractive points of any finite or infinite order.  In some of these examples $\zeta$ is actually $C^\infty$,  but in others it may be no better than $C^1$.

By Assumption \ref{A2} the grazing set $G_{\phi_i}$  is a $C^1$  hypersurface in $\partial M\cap U$.  
A forward characteristic $C_i(s)$ of $\phi_i$ passing through a point of $G_{\phi_i}$ at $s=0$ remains in $M$ for $|s|$ small.  
For $\zeta$ as in  \eqref{q8a} consider the open subregions of $\partial M\cap U$ given by $I_\pm:=\{\pm\zeta>0\}$. 
We show in step \textbf{2} of the  proof of Proposition \ref{m20}  that Assumption \ref{A2} implies that every point $m$ in one of these subregions, say $I_-$,   has the property that if a forward characteristic $C_i(s)$  satisfies $C_i(0)\in I_-$, then $C_i(s)$ \emph{leaves} $M$ as $s$ increases.  In that case every point in $I_+$ has the opposite property:   if a forward characteristic $C_i(s)$  satisfies $C_i(0)\in I_+$, then $C_i(s)$ \emph{enters} $M$ as $s$ increases.   Replacing $\zeta$ by $-\zeta$ if necessary, we can always suppose $I_-$ is the set where forward characteristics leave $M$.

With this preparation we can state:
\begin{defn}[Illuminable and shadow  regions of $\partial M\cap U$]\label{q10zz}
The illuminable region of $\partial M\cap U$ is $I_-\cup G_{\phi_i}$, where $I_-$ is the set where forward characteristics of $\phi_i$ leave $M$ as $s$ increases.   The shadow region of $\partial M\cap U$ is $I_+$, the set where nongrazing forward characteristics of $\phi_i$ enter $M$ as $s$ increases.
\end{defn}
Observe that the definition of the these regions depends on both the choice of $\phi_i$ and the choice of time function $\alpha$. 
Whether or not a part of the illuminable region is actually illuminated in a given problem  \eqref{q4} depends on the size and position of the $m$-support of $U_1$.

By Assumption \ref{A2} the characteristics of $\phi_i$, that is, integral curves of the vector field $T_{\phi_i}$ as in Remark after Definition \ref{q6d}, 
are transverse to the surface $\zeta=0$.  Thus, since the grazing set $G_{\phi_i}$ is a $(n-1)$-dimensional $C^1$ hypersurface in $\zeta=0$,  the flowout of $G_{\phi_i}$ by the characteristics of $\phi_i$ is a $n$-dimensional $C^1$ submanifold of  $\mathbb{R}^{n+1}.$

\begin{defn} \label{SB} 
1. Denote the flowout of $G_{\phi_i}$ using characteristics of $\phi_i$ by $\mathrm{SB}$.  We have 
\begin{align*}
\begin{split}
&\mathrm{SB}=\mathrm{SB}_+\cup \mathrm{SB}_-, \text{ where }\mathrm{SB}_\pm:=\{\mathrm{exp}(sT_{\phi_i})(m)\in U \ | \ m\in G_{\phi_i}, \;\pm s\geq 0\}.
\end{split}
\end{align*}
2. The $n$-dimensional $C^1$ surface $\mathrm{SB}_+$ is called the \emph{shadow boundary}.  
\end{defn}

\subsection{The reflected phase \texorpdfstring{$\phi_r$}{TEXT}.} \label{reflectedphase}

The reflected phase is also constructed by the method of characteristics, this time with initial data on $I_-\cup G_{\phi_i}\subset \partial M\cap U$.  For any $m_0\in I_-\cup G_{\phi_i}$ there is a forward null bicharacteristic associated to $\phi_i$ that either exits or grazes $\partial T^*M$ at some point $(m_0,\nu_i(m_0))$.  For $m_0\in I_-$ let $(m_0,\nu_r(m_0))$ denote the other point in $(i^*)^{-1}\left(i^*(m_0,\nu_i(m_0))\right)\cap p^{-1}(0)$.  For $m_0\in G_{\phi_i}$ set $(m_0,\nu_r(m_0))=(m_0,\nu_i(m_0))$.

With $\nu_r=\nu_r(m_0)$ denote by
 $\gamma_r(s;(m_0,\nu_r))$ the null bicharacteristic of $p$ 
such that 
\begin{align}\label{q11}
\dot\gamma_r(s;(m_0,\nu_r))=H_p\left(\gamma_r(s;(m_0,\nu_r))\right),\;\;     \gamma_r(0;(m_0,\nu_r))=(m_0,\nu_r).
\end{align}
Writing $\gamma_r(s;(m_0,\nu_r))=(m_r(s;(m_0,\nu_r)),\nu_r(s;(m_0,\nu_r)))$, we can now define the reflected flow map.

\begin{defn}\label{q12}
For some $s_0>0$  the \emph{reflected flow map} is the map
\begin{align}\label{q12z}
Z_r:[0,s_0)\times (I_-\cup G_{\phi_i})\to M, \text{ where } Z_r(s,m_0)=m_r(s;(m_0,\nu_r)).
\end{align}
\end{defn}
\noindent The bicharacteristic equations \eqref{q11}  have a solution that is $C^\infty$ in $(s,m_0)$, so the map $Z_r$ is $C^\infty$.

To construct the reflected phase by the method of characteristics we need to invert the map $Z_r$ in \eqref{q12z} on its range, but it is not clear that an inverse exists.  Indeed, when $m_0\in G_{\phi_i}$, the vector field $H_p$ is not transverse to $\partial T^*M$ at $(m_0,\nu_r(m_0))$, and this is manifested in the fact that as $s\to 0$ and $m_0\to G_{\phi_i}$, the Jacobian determinant of 
$Z_r$  approaches $0$.    In \cite{cheverry1996} this determinant was shown to vanish to first order, see \eqref{jac},  in the case $\usigma\in G^2_d\setminus G^3$, and one observes higher order vanishing when $\usigma$ is of higher order diffractive type; see \S\S \ref{2d}--\ref{nd}.
Because of this vanishing, it is not clear in general that the map $Z_r$ in \eqref{q12} is injective even on small domains of the form $[0,s_0)\times (I_-\cup G_{\phi_i})$.   This leads to the next assumption.

\begin{ass}\label{A3}
The reflected flow map $Z_r:[0,s_0)\times (I_-\cup G_{\phi_i})\to M$ is an injective map onto its range, which we denote by $\cJ_r$.   Moreover, the restriction 
$Z_r:[0,s_0)\times I_-\to M$ is a local $C^\infty$ diffeomorphism onto its range, which we denote by $\mathring{\cJ_r}$.\footnote{Note that $\cJ_r$ is not the same as the set $J_r$ defined in the Introduction, which depends on $\cU\supset \mathrm{supp}_{x,y,t}W_1$.}
\end{ass}

\Remarks
1. Assumption \ref{A3} implies that $Z_r:[0,s_0)\times I_-\to M$ is a $C^\infty$ diffeomorphism onto $\mathring{\cJ_r}$, and that 
 $Z_r:[0,s_0)\times (I_-\cup G_{\phi_i})$ is a homeomorphism onto $\cJ_r$.\footnote{For the simple argument showing this,  see step \textbf{5} in the proof of Proposition \ref{i3}.}

\noindent
2. The vector field $H_p$ is transverse to $\partial T^*M$ at points $(m_0,\nu_r(m_0))$ when $m_0\in I_-$, but this implies only that $Z_r$ is a local diffeomorphism on some neighborhood of $(0,m_0)$ whose size may shrink as $m_0\to G_{\phi_i}$.  

\noindent
3. The shadow boundary $\mathrm{SB}_+$ (Definition \ref{SB}) can also be characteristized as the flowout under $Z_r$ of the grazing set 
$G_{\phi_i}$.  This is because $(m_0,\nu_r(m_0))=(m_0,\nu_i(m_0))$ in \eqref{q11} when $m_0\in G_{\phi_i}$.

\noindent
4. Like Assumption \ref{A2}, Assumption \ref{A3} usually takes some effort to verify.  In \S\S \ref{2d}--\ref{nd} we verify it in a number of examples involving points of higher order diffractive type.   In Proposition \ref{i3} we prove that Assumption \ref{A3} always holds when $\usigma\in G^2_d\setminus G^3$
and $\phi_i$ is \emph{any} characteristic phase, possibly nonlinear, such that $\usigma=(0,d\phi_i(0))$.\footnote{In \cite[Lemma 2]{cheverry1996} a partial proof of Proposition \ref{i3} was given.   The Lemma proved injectivity of the map obtained by truncating the Taylor expansion of $Z_r$ at order two.}

The method of characteristics yields a solution of the eikonal equation, the reflected phase $\phi_r$,  such that 
\begin{subequations}\label{q13}
    \begin{align}
        \phi_r(m_r(s;(m_0,\nu_r))) & =\phi_i(m_0),   \ \nu_r=\nu_r(m_0),\\
        d\phi_r(m_r(s;(m_0,\nu_r))) & =\nu_r(s;(m_0,\nu_r)).
    \end{align}
\end{subequations}
As in the construction of $\phi_i$, the construction of $\phi_r$ requires us to invert the associated flow map.   
For $m\in \cJ_r$  Assumption \ref{A3} gives us  
$(s,m_0)=Z_r^{-1}(m)$.   Writing
\begin{align*}
\tilde{\nu}_r(s,m_0):=\nu_r(s;(m_0,\nu_r)),
\end{align*}
by \eqref{q13} we thus obtain
\begin{align}\label{q14}
\phi_r(m)=\phi_i(m_0) \text{ and }d\phi_r(m)=\tilde{\nu}_r\circ Z_r^{-1}(m).
\end{align}
This shows that 
\begin{align}\label{q15}
\phi_r\in C^\infty(\mathring{\cJ_r}), \text{ but we just have }\phi_r\in C^1(\cJ_r). 
\end{align}
A computation given in \cite{cheverry1996} shows that $\phi_r$ generally fails to be in
$C^2(\cJ_r)$ even when $\usigma\in G^2_d\setminus G^3$.   By \eqref{q14} the singularity in $\phi_r$ is due to the singularity of $Z_r^{-1}$ on the set $Z_r(\{0\}_s\times G_{\phi_i})$.

By Remark 1 after Assumption \ref{A3} 
and with $\gamma_r$ as in \eqref{q11}, we can regard \\
$\pi_*H_p\left(\gamma_r(s;(m_0,\nu_r))\right)$ as defining a $C^\infty$ vector field on $\mathring{\cJ_r}$,  denoted $T_{\phi_r}$, which extends to a continuous vector field on $\cJ_r$.

\begin{defn}\label{q16}
1. We call the curve $s\to Z_r(s,m_0)$ a \emph{characteristic of $\phi_r$} and the curve $s\to \gamma_r(s;(m_0,\nu_r))$ 
a \emph{null bicharacteristic associated to $\phi_r$}.   

\noindent
2.  We call $T_{\phi_r}$, which is defined on $\cJ_r$,  the \emph{characteristic vector field of $\phi_r$}.   
\end{defn}

\subsection{Main theorem}

We proceed to state our main result for the continuation problem 
\begin{subnumcases}{\label{q4z}}
    P(m,\partial_m) u^\eps=f(m,u^\eps,\partial_m u^\eps) & near $m=0$ in $M$, \label{q4za}\\
    u^\eps=0 & on $\partial M$, \label{q4zb}\\
    u^\eps=v^\eps\sim_{H^1} u^1(m)+U_1(m,\phi_i(m)/\eps) & in $\{m\in M\ | -T\leq \alpha(m) \leq -T+\delta\}$ \nonumber\\
    \ & for some $T>0$. \label{q4zc}
\end{subnumcases}
Suppose $U_{\det}\subset M\cap U$ with $0\in  U_{\det}$ is a domain of determinacy for the continuation problem in $M$ determined by $P(m,\partial_m)$ and the Dirichlet boundary condition \eqref{q4zb}.   We  set\footnote{Definition \ref{meaning} gives the meaning of $\sim_{H^1}$ in \eqref{q4z} (resp. \eqref{mtz}), with the obvious change that $\Omega_T$ should now be replaced by $U_{{\det},[-T,-T+\delta]}$ (resp. $U_{{\det},[-T,-T]}$). } 
\begin{align*}
\begin{split}
U_{\det,[T_1,T_2]}=U_{\det}\cap \{m \ | \ T_1\leq \alpha(m)\leq T_2\}, \
U_{\det,T_3}=U_{\det}\cap \{m \ | \ \alpha(m)=T_3\}.
\end{split}
\end{align*}

\begin{theo}\label{mt2}
Consider the problem \eqref{q4z} under the structural Assumptions \ref{A0} on $P(m,\partial_m)$ and \ref{A0z} on $f(m,p,q)$, 
Assumption \ref{A00} on the incoming phase $\phi_i$, Assumption \ref{A1} on $\usigma\in \cG_d$, Assumption \ref{A2} on the grazing set $G_{\phi_i}$, and Assumption \ref{A3} on the reflected flow map $Z_r$.   Suppose that both $u^1$ and $U_1$ have $m$-support strictly away from $\partial M$. 

Let $U_{\det}\subset M\cap U$ with $0\in  U_{\det}$ be a domain of determinacy for the continuation problem in $M$ determined by $P(m,\partial_m)$ and the Dirichlet boundary condition \eqref{q4zb}.   Then for some small enough $T>0$ the following statements hold.  
If $U_1(m,\theta)|_{\{m | \alpha=-T\}}$ has small $m$-support near $\mathrm{SB}_-$ such that 
\begin{align*}
 \mathrm{supp}_m\;U_1(m,\theta)|_{\{m | \alpha=-T\}}\subset \mathring{U}_{\det,-T},
\end{align*}
then\footnote{As noted in the Introduction this assumption on the $m$-support is no real restriction, since our purpose is to focus on what happens near the particular grazing point $0\in \partial M$.} 
\begin{align}\label{mtz}
u^\eps(m)|_{U_{\det,[-T,T]}}\sim_{H^1} u(m)+\eps U_r(m,\phi_r/\eps)+\eps U_i(m,\phi_i/\eps).
\end{align}
Here $U_k(m,\theta_k)$ for $k=r,i$  is the unique mean zero periodic primitive in $\theta_k$ of $W_k(m,\theta_k)$,
and the functions 
$$u\in H^1(U_{\det,[-T,T]}),\; W_r\in L^2(U_{\det,[-T,T]}\times \mathbb{T}),\;  W_i\in  L^2(U_{\det,[-T,T]}\times \mathbb{T})$$
are constructed to satisfy the profile equations \eqref{e6}--\eqref{e8}.  In particular, $W_i$ has $m$-support in  the set $K_i$ which is the forward flowout in $U_{\det,[-T,T]}$ of $\mathrm{supp}_m U_1\cap \{\alpha=-T\}$ under $T_{\phi_i}$, and $W_r$ has $m$-support in the forward flowout in $U_{\det,[-T,T]}$ of $K_i\cap\partial M$ under $T_{\phi_r}$.
\end{theo}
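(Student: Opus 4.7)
The plan is to follow the geometric optics blueprint: build a WKB-type approximate solution from solutions of the eikonal and transport equations, and then close an $H^1$ error estimate against the exact $u^\eps$ by a semilinear Picard argument powered by the Kreiss estimate \eqref{kreiss}.

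\emph{Reduction and construction of the reflected phase.} First I would reduce to the local standard form of Theorem \ref{mt} via the change of coordinates of Section \ref{reduce}, in which $\beta=x$, $\alpha=t$, and $P$ has principal symbol $\lambda^2+q(x,y,t,\eta,\tau)$. In these coordinates Assumptions \ref{A2}--\ref{A3} translate into Assumptions \ref{A02}--\ref{A03}, so it suffices to prove Theorem \ref{mt}. Using Assumption \ref{A3} I would invert the reflected flow map $Z_r$ on $\cJ_r$ and define $\phi_r$ via \eqref{q14}, recording that $\phi_r\in C^1(\cJ_r)\cap C^\infty(\mathring{\cJ_r})$ and that the coefficient $P_1\phi_r$ appearing in the $W_r$-transport equation is unbounded along $\mathrm{SB}_+=Z_r(\{0\}\times G_{\phi_i})$.

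\emph{Solving the profile equations.} The system \eqref{e6}--\eqref{e8} couples a transport equation for the background $u$ with transport equations for $W_i$ and $W_r$ joined by a reflection condition on $\partial M$. The incoming profile $W_i$ is transported by $T_{\phi_i}$ from the Cauchy data $W_1$, so $\mathrm{supp}\,W_i\subset K_i$; its trace on $\partial M\cap K_i$ supplies the boundary data for $W_r$, which is then transported by $T_{\phi_r}$ into $\cJ_r$. The obstacle is to carry out this transport in $L^2$ despite the singularity of $P_1\phi_r$ on $\mathrm{SB}_+$. I would proceed by approximation: solve the system first for smooth approximants $(W^k_r,W^k_i)$ whose supports are cut off away from $G_{\phi_i}$, where all coefficients are smooth; then derive an $L^2$ energy identity for the coupled pair in which the seemingly dangerous quadratic form $((p(m,\partial)\phi_r)W_r,W_r)_{L^2}$ coming from $P_1\phi_r$ \emph{cancels} against a conjugate boundary/integration-by-parts term, yielding a bound uniform in $k$; finally pass to the limit to obtain $(W_r,W_i)\in L^2(U_{\det,[-T,T]}\times\mathbb{T})$ and, separately, $u\in H^1(U_{\det,[-T,T]})$.

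\emph{Error analysis and closure.} Given $(u,W_i,W_r)$, form a truncated and regularised ansatz
\begin{align*}
m^l_{\mu,\rho,M,\eps}=u+\eps U^l_r(m,\phi_r/\eps)+\eps U^l_i(m,\phi_i/\eps)+\eps^2 U^M_{\mathrm{nc}}(m,\phi_r/\eps,\phi_i/\eps),
\end{align*}
where $U^l_k$ is a mean-zero trigonometric polynomial in $\theta_k$ whose coefficients are supported away from $G_{\phi_i}$ and approximate $U_k$ (Definition \ref{meaning}), and the noncharacteristic corrector $U^M_{\mathrm{nc}}$ is chosen to kill the $O(1)$ contribution $f^*_{\mathrm{nc}}$ carrying the combined phase $(\phi_r+\phi_i)/\eps$ in the Taylor expansion of $f(m,m^l_{\mu,\rho,M,\eps},\nabla m^l_{\mu,\rho,M,\eps})$. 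Substituting $m^l_{\mu,\rho,M,\eps}$ into $P$ and into $f$, the eikonal and transport equations annihilate the principal and subprincipal oscillatory terms, leaving a residual that is $H^1$-small after choosing first $l$ large (so $\delta_l$ small), then $\mu,\rho,M$ large depending on $l$, and finally $\eps\in(0,\eps_l]$ sufficiently small. The difference $u^\eps-m^l_{\mu,\rho,M,\eps}$ then satisfies a linear hyperbolic equation with Lipschitz nonlinear perturbation, small source, small boundary trace, and small data on $\{\alpha=-T\}$; applying \eqref{kreiss} and closing by Picard iteration (thanks to Assumption \ref{A0z}) yields \eqref{mtz}.

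\emph{Main obstacle.} The decisive difficulty is the middle step: solving the coupled transport system in $L^2$ across arbitrarily high orders of grazing, because the singularity of $\phi_r$ on $\mathrm{SB}_+$ becomes structurally more complicated at higher orders of tangency and no closed-form description of $P_1\phi_r$ seems available beyond first order. The key point I would try to exploit is that the worst term in the $L^2$ energy identity is effectively a total divergence (up to controllable lower-order pieces), so no pointwise bound on $P_1\phi_r$ is ever needed; once this cancellation is in hand, the truncation/regularisation/error analysis is a delicate but essentially standard ordering of limits.
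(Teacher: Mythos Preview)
Your proposal is correct and follows essentially the same architecture as the paper: reduction to standard form, construction of $\phi_r$ via $Z_r^{-1}$, solution of the coupled profile equations by approximation with smooth data supported away from the grazing set together with the crucial cancellation of $((p(m,\partial)\phi_r)W_r,W_r)_{L^2}$ in the energy identity, and then the truncated/regularized ansatz with the $\eps^2$ noncharacteristic corrector followed by the Kreiss estimate on the error. Two minor corrections: the final error estimate is closed not by Picard iteration but by a single application of \eqref{kreiss} with the Lipschitz term $f(u^\eps)-f(m^l_{\mu,\rho,M,\eps})$ absorbed for $T$ small, and the corrector $U^M_{\mathrm{nc}}$ handles all noncharacteristic combinations $\alpha_r\phi_r+\alpha_i\phi_i$ with $\alpha_r\alpha_i\neq 0$, not only the single sum $\phi_r+\phi_i$.
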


The sets $K_k$, $k=r,i$ may be quite irregular, but they are contained  in sets $J_k$, $k=r,i$ respectively, with piecewise $C^1$ boundaries,  which are as described in the Introduction.

\Remark 
An immediate consequence of Theorem \ref{mt2} and Definition \ref{meaning} is that the shadow region adjacent to $\mathrm{SB}_+$ contains no high frequency oscillations detectable in the $H^1$ norm; recall Corollary \ref{inshadow}.

\section{Standard-form coordinates}\label{sfchoice}

In this section we choose spacetime coordinates that put the principal symbol of $P$ in a form  that will facilitate later computations.

Let $(x,y,t)(z)$ be any  $C^\infty$ coordinates near $z=0\in \mathbb{R}^{n+1}$  for which $(x,y,t)(0)=(0,0,0)$ and such that $x=\beta$ and $t=\alpha$ for $\alpha$, $\beta$ as in Assumption \ref{A0}.    Write $(\lambda,\eta,\tau)$ for the dual coordinates.   Then $p$ takes the form
\begin{align}\label{m3}
p(x,y,t,\lambda,\eta,\tau)=\chi(x,y,t)\left[\lambda^2+b(x,y,t,\eta,\tau)\lambda+c(x,y,t,\eta,\tau)\right],
\end{align}
where 
\begin{align*}
\begin{split}
\chi(0,0,0)>0, \
c(0,0,0,0,\pm 1)<0,
\end{split}
\end{align*}
and $b$, $c$ are real homogeneous polynomials of degrees respectively one and two in $(\eta,\tau)$.   Next we change variables to $(x',y',t')=\psi_1(x,y,t)$ to remove the ``mixed" $b\lambda$ term in \eqref{m3}.   For this one can choose $\psi_1$ so that $\psi_1(0,y,t)=(0,y,t)$ and $x'=x$.  If we write
\begin{align*}
b(x,y,t,\eta,\tau)\lambda=\sum_{j=1}^{n-1}b_j(x,y,t)\eta_j\lambda +b_n(x,y,t)\tau \lambda,
\end{align*}
direct computation shows that we may take $\psi_1$ to be given by 
\begin{align}\label{m5b}
x'=x;\ y_k'=y_k+e_k(x,y,t),\ 1\leq k\leq n-1; \ t'=t+e_n(x,y,t),
\end{align}
where the $C^\infty$ functions $e_k$, $1\leq k\leq n$,  are chosen to satisfy the decoupled transport equations
\begin{align*}
\begin{split}
2\partial_x e_k+\sum_{j=1, j\neq k}^{n-1}b_j(\partial_{y_j}e_k)+b_k(1+\partial_{y_k}e_k)+b_n\partial_te_k  & =0, \ 1\leq k\leq n-1,\\
2\partial_x e_n+\sum_{j=1}^{n-1}b_j\partial_{y_j}e_n+b_n(1+\partial_t e_n) & =0,\\
e_k|_{x=0} & =0, \ 1\leq k\leq n.
\end{split}
\end{align*}

For  a new positive function $\chi$ the principal symbol $p$ now takes the form
\begin{align*}
p(x',y',t',\lambda',\eta',\tau')=\chi(x',y',t')\left[\lambda'^2+q(x',y',t',\eta',\tau')\right] \text{ near }(0,0,0).
\end{align*}
It is not clear that the surfaces  $t'=0$ are spacelike for $P$, so we make another change of variables $(x'',y'',t'')=\psi_2(x',y',t')$ to  insure that one of our coordinates is a time variable.     Let 
\begin{align*}
\psi_2(x',y',t'):=\begin{pmatrix}1&0\\0&\mathcal{A}\end{pmatrix}\begin{pmatrix}x'\\y'\\t'\end{pmatrix},
\end{align*}
where  $\mathcal{A}$ is an orthogonal $n\times n$ matrix  chosen to diagonalize the quadratic form 
\begin{align}\label{m6}
q(0,0,0,\eta,\tau)=\begin{pmatrix}\eta&\tau\end{pmatrix}Q\begin{pmatrix}\eta\\\tau\end{pmatrix}; \text{ that is }\mathcal{A}Q\mathcal{A}^t=\diag(q_1,q_2,\dots,q_n).  
\end{align}
The strict hyperbolicity of $p$ and the fact that $x'=0$ is timelike imply that the symmetric matrix $Q$ has signature $(n-1,1)$. We can choose $\mathcal{A}$ so that $q_n$ is the single negative eigenvalue of $Q$.   In the $(x'',y'',t'',\lambda'',\eta'',\tau'')$ coordinates we therefore have 
\begin{align}\label{m7}
q(0,0,0,\eta'',\tau'')=\sum^{n-1}_{k=1}q_k\eta_k''^2+q_n\tau''^2, 
\end{align}
so the surface $t''=0$ is spacelike for $P$ at $(0,0,0)$.    For new functions $\chi$, $q$ the principal symbol of $P$ now takes the form
\begin{align}\label{m8}
p(x'',y'',t'',\lambda'',\eta'',\tau'')=\chi(x'',y'',t'')\left[\lambda''^2+q(x'',y'',t'',\eta'',\tau'')\right], \quad \chi>0,
\end{align}
and $P$ is strictly hyperbolic with respect to $t''$ on a neighborhood of $(0,0,0)$.  In these coordinates the basepoint 
$\usigma$ in Assumption \ref{A1} has the form $(0,0,\ueta,\utau)$, and $\urho$ as in \eqref{q6a} has the form $(0,0,0,0,\ueta,\utau)$.   Replacing $\mathcal{A}$ by $-\mathcal{A}$ if necessary in \eqref{m6}, we can arrange so that 
\begin{align*}
\utau<0\text{ and thus by \eqref{m7} }H_p t''(\urho)>0. 
\end{align*} 
This establishes \eqref{q6a} and the nondegeneracy of $\usigma\in \mathcal{G}_d$; the coordinate $t''$ is the ``modified $\alpha$" that appears in \eqref{q6a}.

\Remark 
This argument shows that the nondegeneracy of $\usigma\in \mathcal{G}_d$ is an automatic consequence of strict hyperbolicity and the fact that the boundary is timelike.

Henceforth, we drop the double primes in \eqref{m8}.  We are free to replace $f$ by $\chi^{-1}f$ in \eqref{q4}, so we take $\chi=1$ from now on.     This gives the following form of the principal symbol of $P$:
\begin{align}\label{m10a}
p(x,y,t,\lambda,\eta,\tau)=\lambda^2+q(x,y,t,\eta,\tau).
\end{align}

\begin{defn}[Standard form of $p$]\label{sform}
We refer to $p$ as in \eqref{m10a},  where $t$ is a global time coordinate and $q(x,y,t,\cdot,\cdot)$ has signature $(n-1,1)$, as a \emph{standard form of $p$}. 
\end{defn}

Sometimes we also need to work with  systems of coordinates $(x,z,\lambda,\eta)$  with $z$ and  $\eta$  in  $\mathbb{R}^n$  in which $p$  takes the form
\begin{align}\label{m10b}
p(x,z,\lambda,\eta)=\lambda^2+q(x,z,\eta),
\end{align}
where $x=0$ defines $\partial M$ but possibly none of the $z_i$ is a suitable time coordinate.\footnote{In \eqref{m10a} $\eta\in\mathbb{R}^{n-1}$.}  In that case we call \eqref{m10a} an \emph{almost standard form of $p$}.

\subsection{Reduction to a problem on a large domain of determinacy \texorpdfstring{$\Omega_T$}{TEXT}.}\label{reduce}

We can modify the coefficients of $P$ outside the neighborhood  $U\ni(0,0,0)$ as in Assumption \ref{A00} on which $\phi_i$ is defined to obtain an operator $P$ with $C^\infty$ coefficients constant outside a compact set that is strictly hyperbolic with respect to $t$ on $\mathbb{R}^{n+1}$,  with $\chi>0$ on $\mathbb{R}^{n+1}$ and with $x=0$ everywhere timelike for $P$.  Similarly, we can modify $f(x,y,t,p,q)$ for $(x,y,t)$ outside $U$ to obtain a smooth function that is uniformly Lipschitzean in $(p,q)$ for $(x,y,t)\in \mathbb{R}^{n+1}$.
Our analysis will be local near $(0,0,0)$, but this extension of $P$  allows us to work on a domain of the form 
\begin{align*}
\Omega_T:=\{(x,y,t)\in\mathbb{R}^{n+1} \ | \ x\geq 0, -T\leq t\leq T\},   \text{ for some }T>0.
\end{align*}

To choose $T$ we first fix an $\mathbb{R}^{n+1}$-open set  $U'\subset U$ such that $U_{\det}:=U'\cap M$ is a domain of determinacy  for the boundary problem \eqref{q4}.   
We then choose $T>0$ small enough so that all forward broken characteristics starting at points  $m\in \{t=-T\}\cap U_{\det}$ reach 
$\{t=T\}$ before leaving $U_{\det}$. Here a forward broken characteristic is either just a forward characteristic of $\phi_i$ that does not leave $M$, or consists of a forward characteristic of $\phi_i$ up to the point of exiting $M$ together with the associated reflected characteristic of $\phi_r$.   
With such a choice of $T$ the set $\Omega_T$ is not only a domain of determinacy for the extended problem    corresponding to \eqref{q4}:
\begin{align*}
    \begin{cases}
        Pu^\eps=f(x,y,t,u^\eps,\nabla_{x,y,t} u^\eps) & \text{in } \Omega_T,  \\
    u^\eps(0,y,t)=0 & \text{on } \Omega_T\cap\{x=0\}, \\
    u^\eps\sim_{H^1} u^1(x,y,t)+\eps U_1(x,y,t,\phi_i/\eps) & \text{on } \Omega_{[-T,-T+\delta]},
    \end{cases}
\end{align*}
where $\delta>0$  is small;
$\Omega_T$ also has the property that $u^\eps|_{U_{\det}\cap \Omega_T}$ is completely determined by the restriction of $f$, $u^1$, and $U_1$ to $U_{\det}\cap \Omega_T$.    Moreover, the sets $J_i$ and $J_r$ defined in the Introduction satisfy
\begin{align*}
J_i\cup J_r\subset U_{\det}\cap \Omega_T.
\end{align*}
This reduction allows us to use the extended problem to study the original problem of Theorem \ref{mt2} on a neighborhood of $0\in M$.

\subsection{Some properties of \texorpdfstring{$q$}{TEXT} and \texorpdfstring{$\phi_i$}{TEXT} in these coordinates. }\label{coords}

In this section we use coordinates to establish some of the claims made in \S \ref{DandA}. 

In coordinates $(x,y,t,\lambda,\eta,\tau)$ that put $p$ in standard form \eqref{m10a} the map 
$i^*:\partial T^*M\to T^*\partial M$ is 
\begin{align*}
i^*(x,y,t,\lambda,\eta,\tau)=(y,t,\eta,\tau),
\end{align*}
 and 
the elliptic, hyperbolic, and glancing regions of 
$T^*\partial M$ are\footnote{We write points in $\partial M$ sometimes as $(0,y,t)$, sometimes as $(y,t)$.} 
\begin{align*}
\begin{split}
&E=\{(y,t,\eta,\tau) \ | \ q(0,y,t,\eta,\tau)>0\},\\
&H=\{(y,t,\eta,\tau) \ | \ q(0,y,t,\eta,\tau)<0\},\\
&G=\{(y,t,\eta,\tau) \ | \ q(0,y,t,\eta,\tau)=0\text{ and }(\eta,\tau)\neq (0,0)\}.
\end{split}
\end{align*}
The eikonal equation takes the form
\begin{align}\label{m11a}
(\partial_x\phi_i)^2+q(x,y,t,\partial_y\phi_i,\partial_t\phi_i)=0.
\end{align}
Evaluating \eqref{m11a} at $x=0$ we obtain
\begin{align*}
q(0,y,t,\partial_{y,t}\phi_i(0,y,t))=-\partial_x\phi_i(0,y,t)^2\leq 0,
\end{align*}
which implies \eqref{q8}:
\begin{align}\label{m11bb}
\mathrm{Graph}(d\phi_0)=\{(y,t,\partial_{y,t}\phi_i(0,y,t)) \ | \ (0,y,t)\in U\}\subset H\cup G.
\end{align}
for $U$ as in Assumption \ref{A2}.  The grazing set determined by $\phi_i$ is thus the set
\begin{align*}
G_{\phi_i}=\{(0,y,t)\in U \ | \ \partial_x\phi_i(0,y,t)=0\}.
\end{align*}
In particular, $\pi\usigma=(0,0,0)\in G_{\phi_i}.$

When $\usigma\in G^2_d\setminus G^3$, it was shown in \cite{cheverry1996} that one can always take the function $\partial_x\phi_i(0,y,t)$ as a 
coordinate function.   To see this note first that since 
\begin{align}\label{hp}
H_p=p_\lambda\partial_x+p_\eta\partial_y+p_\tau\partial_t-p_x\partial_\lambda-p_y\partial_\eta-p_t\partial_\tau,
\end{align}
the conditions defining $G^{2k}_d\setminus G^{2k+1}$ when $k=1$,
\begin{align*}
p(\urho)=0, H_px(\urho)=0, H_p^2x(\urho)>0,
\end{align*}
imply
\begin{subequations}\label{m11e}
    \begin{align}
        q(0,0,0,\ueta,\utau) & =0, \label{m11ea}\\
        q_x(0,0,0,\ueta,\utau) & <0. \label{m11eb}
    \end{align}
\end{subequations}   
Differentiating the eikonal equation  \eqref{m11a} with respect to $x$ yields
\begin{align}\label{m15}
\begin{split}
&2\partial_x\phi_i\partial_{xx}\phi_i+\partial_x q(x,y,t,\partial_{y,t}\phi_i(x,y,t))+\partial_{\eta,\tau}q\cdot \partial_{y,t}\partial_x\phi_i=0.
\end{split}
\end{align}
Evaluating \eqref{m15}  at $(0,0,0)$ we obtain
\begin{align}\label{m16}
q_x(0,0,0,\ueta,\utau)+q_{\eta,\tau}(0,0,0,\ueta,\utau)\cdot \partial_{y,t}\partial_x\phi_i(0,0,0)=0.
\end{align}
With \eqref{m11eb} equation \eqref{m16} implies both
\begin{subequations}\label{m17}
    \begin{align}
        q_{\eta,\tau}(0,0,0,\ueta,\utau) & \neq 0,\text{ and } \label{m17a}\\
        \partial_{y,t}\partial_x\phi_i(0,0,0) & \neq 0. \label{m17b}
\end{align}
\end{subequations}
The property \eqref{m17a} shows again that $\usigma$ is {nondegenerate}, while 
\eqref{m17b} allows us to choose a new system of coordinates $(x,z,\lambda,\eta)$, $z=(z_1,\dots,z_n)$,  such that 
\begin{align}\label{m18}
\partial_x\phi_i(0,z)=z_1.
\end{align}
In these coordinates $p$ has almost standard form \eqref{m10b},  $\urho=(0,0,0,\ueta)$ for some $\ueta\in \mathbb{R}^n$, and  \eqref{m16} takes the form
\begin{align}\label{m19}
q_x(0,0,0,\ueta)+q_{\eta_1}(0,0,0,\ueta)=0.
\end{align}
This argument shows that if $\usigma\in G^2_d\setminus G^3$, then the conditions of Assumption \ref{A2} always hold with 
$\zeta=\partial_x\phi_i(0,z)$; recall Remark 2 after Assumption \ref{A2}.

In the case $\usigma\in G^{2k}_d\setminus G^{2k+1}$ when $k>1$ we have $q_x(0,0,0,\ueta,\utau)=0$, so the above argument does not apply.   When $k>1$ it turns out  that $\partial_x\phi_i(0,y,t)$ can no longer be taken as a coordinate function; see the Remark after Proposition \ref{ra11} and \eqref{ra14z} in particular.  However, we show in Proposition \ref{m20} that Assumption \ref{A2} implies that the zero set of this function, namely $G_{\phi_i}$,  can  be defined by $z_1=0$ in a $C^1$ system of coordinates $(x,z)$.

\begin{prop}\label{m20}
Let $G_{\phi_i}$ be the grazing set defined in Assumption \ref{A2} and let $I_\pm$ be as in Definition \ref{q10zz}.  
Assumption \ref{A2} implies that one can find $C^1$ coordinates $(x,z)$ in $M\cap U$ such that 
\begin{subequations}\label{m21}
    \begin{align}
        & G_{\phi_i}= \{(0,z)\in \partial M\cap U \ | \ \partial_x\phi_i(0,z)=0\} = \{(0,z)\in \partial M\cap U \ | \ z_1=0\}, \label{m21a}\\
        & I_{\pm}= \{(0,z)\in \partial M\cap U\ | \ \pm z_1>0\}, \label{m21b}\\
        & H_pz_1(\urho)\neq  0. \label{m21c}
    \end{align}
\end{subequations}
\end{prop}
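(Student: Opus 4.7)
The plan is to take $z_1:=\zeta$, where $\zeta$ is the $C^1$ function provided by Assumption \ref{A2}, and to complete it to a $C^1$ coordinate system on $M\cap U$ near $0$. Constructing the chart is essentially a $C^1$ implicit function theorem step; the substantive geometric content is the sign identification $(m21b)$, relating the sign of $\zeta$ to the direction in which forward characteristics cross $\partial M$.

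\textbf{Construction of the chart.} By Remark 1 after Assumption \ref{A2}, $d\beta\wedge d\zeta(0)\neq 0$. Since $\beta=x$ in our coordinates, $d\zeta(0)$ is not a scalar multiple of $dx(0)$, so the restriction $d(\zeta|_{\{x=0\}})(0)$ is a nonzero covector on $\partial M$ at $0$. Hence I can choose $C^\infty$ functions $w_2(y,t),\ldots,w_n(y,t)$ so that $(\zeta|_{\{x=0\}},w_2,\ldots,w_n)$ is a $C^1$ chart on $\partial M\cap U$ near $0$. Set $z_1:=\zeta$ on $U$ and $z_j(x,y,t):=w_j(y,t)$ for $j\geq 2$. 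The Jacobian of $(x,z_1,\ldots,z_n)$ with respect to $(x,y,t)$ at $0$ is block triangular with nonsingular diagonal blocks, so $(x,z_1,\ldots,z_n)$ gives a $C^1$ chart on $M\cap U$ near $0$.

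\textbf{The easy properties $(m21a)$ and $(m21c)$.} The identity $G_{\phi_i}=\{(0,y,t)\in U\mid \partial_x\phi_i(0,y,t)=0\}$ was already recorded just after \eqref{m11bb}. By Assumption \ref{A2} this set equals $\{(0,y,t)\in U\mid \zeta(0,y,t)=0\}$, and by construction this is $\{z_1=0\}\cap\partial M$, proving $(m21a)$. For $(m21c)$, $H_p z_1(\urho)=H_p\zeta(\urho)\neq 0$ is exactly the condition included in Assumption \ref{A2}.

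\textbf{Main obstacle: the sign identification $(m21b)$.} The $x$-component of $T_{\phi_i}$ at $(0,y,t)\in\partial M$ is $2\partial_x\phi_i(0,y,t)$, so the sign of $\partial_x\phi_i(0,\cdot)$ determines whether a forward characteristic enters or leaves $M$. Restricting the eikonal equation to $\partial M$ gives $(\partial_x\phi_i(0,y,t))^2=-q(0,y,t,\partial_y\phi_0,\partial_t\phi_0)$, and this is a smooth nonnegative function of $(y,t)$ whose zero set is $G_{\phi_i}$. The crux of step \textbf{2} is to show that $\partial_x\phi_i(0,\cdot)$ takes opposite signs on the two components of $\partial M\cap U\setminus G_{\phi_i}$. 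In the first-order case this is the content of the remark after Assumption \ref{A2} (where $\partial_x\phi_i(0,\cdot)$ equals $z_1$ up to a nonvanishing smooth factor by \eqref{m18}). In the higher-order case, $\partial_x\phi_i(0,\cdot)$ is a smooth function whose square equals a smooth nonnegative function vanishing on the $C^1$ hypersurface $\{\zeta|_{\{x=0\}}=0\}$; if $\partial_x\phi_i(0,\cdot)$ did not change sign it would have to equal $\pm|h|$ for some smooth function $h$ vanishing on that hypersurface, contradicting its smoothness. Having established the sign change, the convention fixed before Definition \ref{q10zz}---replacing $\zeta$ by $-\zeta$ if necessary---guarantees that $I_-=\{\zeta<0\}=\{z_1<0\}$ coincides with the set where $\partial_x\phi_i(0,\cdot)<0$, i.e.\ where forward characteristics leave $M$, while $I_+=\{z_1>0\}$ is where they enter, yielding $(m21b)$.
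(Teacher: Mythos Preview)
Your chart construction and the verification of \eqref{m21a}, \eqref{m21c} are fine and essentially match the paper's step~\textbf{1} (the paper takes $z_1=\zeta(0,y,t)$ rather than $z_1=\zeta$, but this is immaterial).

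The genuine gap is in your argument for the sign change in step~\textbf{2}. The sentence ``if $\partial_x\phi_i(0,\cdot)$ did not change sign it would have to equal $\pm|h|$ for some smooth function $h$ vanishing on that hypersurface, contradicting its smoothness'' is not valid. A smooth function can perfectly well be nonnegative and vanish exactly on a hypersurface with no contradiction: in smooth coordinates $(y,t)$, the function $y_1^2$ is such an example. Knowing that $(\partial_x\phi_i)^2=-q(0,\cdot,d\phi_0)$ is smooth and nonnegative, and that $\partial_x\phi_i$ itself is smooth, gives you no information about whether the smooth square root changes sign. Your argument nowhere uses that $\usigma$ is \emph{diffractive} rather than gliding, nor the clause in Assumption~\ref{A2} that every nearby point of $G\cap\mathrm{Graph}(d\phi_0)$ lies in $\mathcal G_d$; but these are exactly what force the sign change.

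The paper's step~\textbf{2} is a dynamical argument that does use this structure. One takes a null bicharacteristic $\gamma_i$ associated to $\phi_i$ starting at a nearby hyperbolic boundary point $(0,z')$ with, say, $\partial_x\phi_i(0,z')<0$, so $\gamma_i$ exits $M$. The grazing bicharacteristic $\gamma$ through $\urho$ stays in $T^*\mathring M$ for $s\neq 0$ (diffractive hypothesis), so by continuous dependence $\gamma_i$ must re-cross $x=0$ at some nearby point $(0,z'')$. Because $\gamma_i$ is associated to $\phi_i$, its $\lambda$-component there equals $\partial_x\phi_i(0,z'')$. Tangential re-entry ($\partial_x\phi_i(0,z'')=0$) is ruled out: Assumption~\ref{A2} forces $(z'',d\phi_0(z''))\in\mathcal G_d$, which would make $\gamma_i$ bend into $\mathring M$ on both sides of $s''$, contradicting that it was just in $x<0$. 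Hence $\partial_x\phi_i(0,z'')>0$, and the sign flips across $G_{\phi_i}$. You should replace your algebraic claim with this argument.
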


\begin{proof}
\textbf{1. }Let $(x,y,t)$ be the standard form coordinates chosen in \S \ref{sfchoice}.    Then \eqref{q8a} implies
\begin{align*}
G_{\phi_i}=\{(0,y,t)\in \partial M\cap U \ | \ \zeta(0,y,t)=0\}.
\end{align*}
Set $\zeta_0(y,t)=\zeta(0,y,t)$.   By Remark 1 after Assumption \ref{A2} 
we have $dx\wedge d\zeta\neq 0$ on $U$,  and this  implies\footnote{Here we regard $\zeta_0$ as a function on all of $U$.}
\begin{align*}
dx\wedge d\zeta_0\neq 0 \text{ on }U.
\end{align*}
Thus, with $x$ as before we may choose $(x,z)$ coordinates on $U$ where $z_1=\zeta_0(y,t)$.   These coordinates are $C^1$ and and $H_p\zeta(\urho)\neq 0\Rightarrow H_pz_1(\urho)\neq 0$.  We now have \eqref{m21a},\eqref{m21c}.

\textbf{2. }The function $\partial_x\phi_i(0,z)$ has a fixed sign in each of the subregions of $\partial M\cap U$ given by $\{(0,z)\in  \partial M\cap U \ | \ \pm z_1>0\}$. To prove \eqref{m21b} we must show that $\partial_x\phi_i(0,z)$ changes sign from one subregion to the other.  

Choose a point $\sigma'=(z',\partial_z\phi_i(0,z'))\in H$ close to $\usigma$, and let $\gamma_i(s)$ be the null bicharacteristic of $p$ such that $\gamma_i(0)=(0,z',\partial_x\phi_i(0,z'),\partial_z\phi_i(0,z'))$.    Since $\usigma\in \mathcal{G}_d$ the null bicharacteristic of $p$ through $\urho$, call it $\gamma(s)$,  is tangent to $\partial T^*M$ at $\gamma(0)=\urho$, but bends and remains in $T^*\mathring{M}$ for $|s|\neq 0$ small.  
We can suppose that $\gamma_i(s)$ leaves $T^*M$ as $s$ increases, that is, $\partial_x\phi_i(0,z')<0$.  By smooth dependence of solutions of ODEs on initial conditions, $\gamma_i(s)$ remains close to $\gamma(s)$ and so reenters $T^*\mathring{M}$.   The curve $\gamma_i(s)$ cannot reenter $T^*M$ at a point $\gamma_i(s'')=(0,z'',\partial_x\phi_i(0,z''),\partial_z\phi_i(0,z''))$ where $\partial_{x}\phi(0,z'')=0$, for in that case Assumption \ref{A2} implies $(z'', \partial_z\phi_i(0,z''))\in \mathcal{G}_d$, so $\gamma_i(s)$ would lie in $T^*\mathring{M}$ for $|s-s''|\neq 0 $ small.  Thus, we must have 
$\partial_{x}\phi(0,z'')>0$, which shows that $\partial_{x}\phi(0,z'')$ changes sign when $z_1$ changes sign.   Replacing $z_1$ by $-z_1$ if necessary, we arrange \eqref{m21b}.
\end{proof}

\section{Eikonal and profile equations}

In this section we formulate and then solve the profile equations for $(u,W_r,W_i)$.   Eventually, we seek
\begin{align*}
u\in H^1(\Omega_T),  W_r\in L^2(\Omega_T\times \mathbb{T}),  W_i\in L^2(\Omega_T\times \mathbb{T})
\end{align*}
for some small enough $T>0$, 
where $W_r$, $W_i$ have $(x,y,t)$-support in the sets $J_r$, $J_i$, respectively, defined in the Introduction.

\subsection{Formal computation of \texorpdfstring{$P(x,y,t,\partial)u^\eps_a$}{TEXT} and \texorpdfstring{$f(x,y,t,u^\eps_a,\nabla u^\eps_a)$}{TEXT}}\label{formalb}
To motivate the eikonal equations for $(\phi_r,\phi_i)$ and the profile equations for $(u,W_r,W_i)$, we 
first do a \emph{formal} computation of $P(x,y,t,\partial_{x,y,z})u^\eps_a$, where $u^\eps_a$ is an approximate solution of the form 
\begin{align*}
u^\eps_a(x,y,t):=u(x,y,t)+\eps U_r\left(x,y,t,\frac{\phi_r(x,y,t)}{\eps}\right)+\eps U_i\left(x,y,t,\frac{\phi_i(x,y,t)}{\eps}\right).
\end{align*}
Here ``formal" means that we pretend all computations involved make sense on  $\Omega_{T}$, and we leave unspecified the norms in which error terms are small.\footnote{To make sense of all these computations we need to work with truncated and regularized profiles.  Second derivatives of the phase $\phi_r$  blow up near the grazing set $G_{\phi_i}$.   The phases are not defined on all of $\Omega_T$.   The profile  $W_r(x,y,t,\theta_r)$ is only in $L^2$, so evaluation at $\theta_r=\phi_r/\eps$ is not well-defined.}    Rigorous computations similar to these will be shown later to hold for truncated and regularized profiles.  

We use standard form coordinates $(x,y,t)$  in which the second-order operator $P$ has the form
\begin{align*}
    P(x,y,t,\partial)
    =p(x,y,t,\partial)+B_1(x,y,t,\partial)+B_0(x,y,t)
\end{align*}
where $B_j$ is of order $j$, and we set
\begin{align}\label{b6zz}
    P_1(x,y,t,\partial) =p(x,y,t,\partial)+B_1(x,y,t,\partial).
\end{align}

We obtain
\begin{equation}\label{b6b}
    \begin{split}
        & P(x,y,t,\partial)u_a^{\epsilon}(x,y,t) \\
        & =  \epsilon^{-1} \sum_{k=i,r} p(x,y,t,\nabla \phi_k(x,y,t))\partial_{\theta}^2 U_k \left(e,y,t,\frac{\phi_k(x,y,t)}{\epsilon}\right) \\
        & \quad + \epsilon^0 \left[ P(x,y,t,\partial)u + \sum_{k=i,r} (T_{\phi_k}(x,y,t,\partial) W_k(x,y,t,\theta_k) )|_{\theta_k=\frac{\phi_k}{\epsilon}} \right. \\
        & \quad + \left. \sum_{k=i,r} (P_1(x,y,t,\partial)\phi_k) W_k\left( x,y,t,\frac{\phi_k}{\epsilon} \right) \right] + O(\epsilon).
    \end{split}
\end{equation}

Expanding $f(x,y,t,u^\eps_a,\nabla u^\eps_a)$ we obtain:
\begin{align*}
\begin{split}
&f\left(x,y,t,u+\eps U_r+\eps U_i, \nabla\left(u(x,y,t)+\eps U_r\left(x,y,t,\frac{\phi_r}{\eps}\right)+\eps U_i\left(x,y,t,\frac{\phi_i}{\eps}\right)\right)\right)\\
&= f\left(x,y,t,u,\nabla u+ W_r(x,y,t,\theta_r)\nabla \phi_r+W_i(x,y,t,\theta_i)\nabla \phi_i\right)|_{\theta_r=\frac{\phi_r}{\eps},\theta_i=\frac{\phi_i}{\eps}}+O(\eps).
\end{split}
\end{align*}

The goal is to make $Pu^\eps_a-f(x,y,t,u^\eps_a,\nabla u^\eps_a)$ small.  
Clearly, the eikonal equations satisfied by $\phi_i$ and $\phi_r$ make the term of order $\eps^{-1}$ vanish.
The profile equations discussed in the next section are designed to make small the term of order $\eps^0$. 

\subsection{Profile equations}\label{pe}
To write the profile equations we first decompose the nonlinear term\footnote{Here we suppress the dependence of $\underline{f}$, $f^*_r$,  $f^*_i$, and $f^*_{\mathrm{nc}}$ on $(u,W_r,W_i)$ in the notation.}
\begin{align}\label{e5}
\begin{split}
&f(x,y,t,u,\nabla u+W_r\nabla\phi_r+ W_i\nabla\phi_i)\\
&=\underline{f}(x,y,t)+f^*_r(x,y,t,\theta_r)+f^*_i(x,y,t,\theta_i)+f^*_{\mathrm{nc}}(x,y,t,\theta_r,\theta_i),
\end{split}
\end{align}
where $\underline{f}$, $f^*_r$, $f^*_i$ denote respectively the mean of $f(x,y,t,u,\nabla u+W_r\nabla\phi_r+ W_i\nabla\phi_i)$ with respect to $(\theta_r,\theta_i)$, the mean with respect to $\theta_i$ minus $\underline{f}$, and the mean with respect to $\theta_r$ minus $\underline{f}$.  
The term $f^*_{\mathrm{nc}}$ carries the noncharacteristic oscillations.  The coupled profile equations for $u, W_r, W_i$  are:\footnote{Here  we write $\underline{f}=\underline{f}(u,W_r,W_i)$ and do similarly for $f^*_r$, $f^*_i$.}

\begin{align}
    & \begin{cases}\label{e6}
        Pu=\underline{f}(u,W_r,W_i) &\text{in }\Omega_T,\\
        u(0,y,t)=0 & \text{on } \Omega_T\cap\{x=0\}, \\
        u=u^1(x,y,t) & \text{on } \Omega_{[-T,-T+\delta]};
    \end{cases}\\
    & \begin{cases}\label{e7}
        T_{\phi_r}W_r+(P_1\phi_r)W_r=f^*_r(u,W_r,W_i) & \text{in }\mathring{J_r}\times \mathbb{T},\\
        W_r(0,y,t,\theta)=-W_i(0,y,t,\theta) & \text{on } (J_r\cap\{x=0\})\times \mathbb T, \\
        W_r=0 & \text{on }(\Omega_T\setminus J_r)\times \mathbb{T};
    \end{cases}\\
    & \begin{cases}\label{e8}
        T_{\phi_i}W_i+(P_1\phi_i)W_i=f^*_i(u,W_r,W_i) & \text{in }\mathring{J_i}\times \mathbb{T},\\
        W_i|_{t=-T}=W_1(x,y,-T,\theta):=g(x,y,\theta) & \text{on } (J_i\cap\{x=0\})\times \mathbb T,\\
        W_i=0 & \text{on }(\Omega_T\setminus J_i)\times \mathbb{T}.
    \end{cases}
\end{align}
The estimates of \S \ref{ee} and Picard iteration can be used to  construct profiles $u(x,y,t)\in H^1(\Omega_T)$,  and $W_r(x,y,t,\theta_r), W_i(x,y,t,\theta_i)\in L^2(\Omega_T\times\mathbb{T})$ satisfying \eqref{e6}--\eqref{e8}.    The iteration scheme is 
\begin{align}
    & \begin{cases}\label{e6z}
        Pu^{n+1}=\underline{f}(u^n,W^n_r,W^n_i) & \text{in }\Omega_T,\\
        u^{n+1}(0,y,t)=0 & \text{on } \Omega_T\cap\{x=0\},\\
        u^{n+1}=u^1(x,y,t) & \text{on } \Omega_{[-T,-T+\delta]};
    \end{cases} \\
    & \begin{cases}\label{e7z}
        T_{\phi_r}W^{n+1}_r+(P_1\phi_r)W^{n+1}_r=f^*_r(u^n,W^n_r,W^n_i) & \text{in }\mathring{J_r}\times \mathbb{T},\\
        W^{n+1}_r(0,y,t,\theta)=-W^{n+1}_i(0,y,t,\theta) & \text{on } (J_r\cap\{x=0\})\times \mathbb T,\\
        W^{n+1}_r=0 & \text{on }(\Omega_T\setminus J_r)\times \mathbb{T};
    \end{cases} \\
    & \begin{cases}\label{e8z}
        T_{\phi_i}W^{n+1}_i+(P_1\phi_i)W^{n+1}_i=f^*_i(u^n,W^n_r,W^n_i) & \text{in }\mathring{J_i}\times \mathbb{T},\\
        W^{n+1}_i|_{t=-T}=g(x,y,\theta) & \text{on } (J_i\cap\{x=0\})\times \mathbb T,\\
        W^{n+1}_i=0 & \text{on }(\Omega_T\setminus J_i)\times \mathbb{T}.
    \end{cases}
\end{align}
We initiate the iteration by taking $u^0$ and $W^0_r$ equal to zero on $\Omega_T$ and by taking $W^0_i\in L^2(\Omega_T\times \mathbb{T})$ equal to a function supported in $J_i$ that is an extension of $W_1$.    We then construct 
iterates in the order: $u^1,W^1_i,W^1_r,u^2,W^2_i,W^2_r,\dots$, taking care not to confuse the first iterate with the initial datum $u^1$ in \eqref{e6z}.  For each $n$ the functions $W_r^n$, $f^*_r(u^n,W^n_r,W^n_i)$ are supported in $J_r$, while the functions 
$W_i^n$, $f^*_i(u^n,W^n_r,W^n_i)$ are supported in $J_i$.

\Remarks 
1. The equation $T_{\phi_r}W_r+(P_1\phi_r)W_r=f^*_r$, for example, holds in the sense of distributions on $\mathring{J_r}$.
The individual terms on the left side of this equation are not expected to lie in $L^2(\Omega_T\times \mathbb{T})$.   We do \emph{not} claim that this equation holds on $\Omega_T$, even though $W_r$ is defined on $\Omega_T$.  Observe that $T_{\phi_r}$ and $P_1\phi_r$ are  only defined where $\phi_r$ is defined, namely on $J_r$.    In the error analysis we will see that a truncated and regularized version of $W_r$ does satisfy a nearby problem on all of $\Omega_T$. 

\noindent
2. The initial condition for $W_i$ taken at $t=-T$ in \eqref{e8} is consistent with the initial condition taken on $\Omega_{[-T,-T+\delta]}$ in the problem \eqref{e0a}.  That is, the function $U_i$ on $\Omega_{[-T,-T+\delta]}$ obtained from $W_i$ by solving \eqref{e6}--\eqref{e8} and then restricting $W_i$ to $\Omega_{[-T,-T+\delta]}$ can be taken as $U_1$ in \eqref{e0a}.

\noindent
3. In this problem waves associated to incoming and reflected phases $\phi_i$, $\phi_r$ interact in the region $J_r\cap J_i$.   We show  that away from $\mathrm{SB}_+$ the gradients $\nabla\phi_r$ and  $\nabla\phi_i$ are linearly independent at each $(x,y,t)$ and that these phases are \emph{nonresonant}:  for $(x,y,t)\in (J_r\cap J_i)\setminus \mathrm{SB}_+$, we have
\begin{align*}
p(x,y,t,\nabla(k_r\phi_r+k_i\phi_i)(x,y,t))\neq 0 \text{ for }(k_r,k_i)\in \mathbb{Z}^2\text{ such that }k_r\neq 0, k_i\neq 0.
\end{align*}
Thus, no new characteristic phases are produced by nonlinear interactions; see Proposition \ref{noresonance}.  The profile equations reflect this fact.

\section{Solution of the profile equations}\label{ee}

In this section we solve the profile equations in two steps.  First we prove energy estimates for the linear problem that must be solved to construct the $n$th iterate of the scheme \eqref{e6z}--\eqref{e8z}.  Having constructed the iterates, we then use the same energy estimates to show that the iterates converge to a solution of \eqref{e6}--\eqref{e8}.   
  
The linear problem that must be solved to construct the $n$-th iterate $(u^ n,W^n_r,W^n_i)$ consists of the three coupled subproblems\footnote{Really only \eqref{e7y} and \eqref{e8y} are coupled.}

\begin{align}
    & \begin{cases}\label{e6y}
        Pu=\underline{f} & \text{in }\Omega_T,\\
        u(0,y,t)=0 & \text{on } \Omega_T\cap\{x=0\},\\
        u =u^1(x,y,t) & \text{on } \Omega_{[-T,-T+\delta]};
    \end{cases} \\
    & \begin{cases}\label{e7y}
        T_{\phi_r}W_r+(P_1\phi_r)W_r=F_r & \text{in }\mathring{J_r}\times \mathbb{T},\\
        W_r(0,y,t,\theta)=-W_i(0,y,t,\theta) & \text{on } (J_r\cap\{x=0\})\times \mathbb T,\\
        W_r=0 & \text{on }(\Omega_T\setminus J_r)\times \mathbb{T};
    \end{cases} \\
    & \begin{cases}\label{e8y}
        T_{\phi_i}W_i+(P_1\phi_i)W_i=F_i & \text{in }\mathring{J_i}\times \mathbb{T},\\
        W_i|_{t=-T}=g(x,y,\theta_i) & \text{on } (J_i\cap\{x=0\})\times \mathbb T, \\
        W_i=0 & \text{on }(\Omega_T\setminus J_i)\times \mathbb{T}.
    \end{cases}
\end{align}
Here we suppose that 
\begin{align*}
\begin{gathered}
\underline{f}\in L^2(\Omega_T), \ u^1\in H^1(\Omega_{[-T,-T+\delta]}),\
F_r, F_i \in L^2(\Omega_T\times \mathbb{T}),\ g\in L^2(\{t=-T\}),\\
F_r \text{ has support in }J_r; \  F_i, g \text{ have support in }J_i,\text{ resp. }J_i\cap \{t=-T\}.
\end{gathered}
\end{align*}

\subsection{Linear energy estimates: formal arguments}\label{formal}

For $t_0\in [-T,T]$ we expect $W_r$ on $J_r\cap\{t=t_0\}$ to be determined by the data $F_r$ and $W_i(0,y,t)$ of problem \eqref{e7y} in $J_{r,t_0}:= J_r\cap \{t\leq t_0\}$.\footnote{The arguments below will make it clear that the trace on $t=t_0$ as well as traces on $x=0$ make sense.}   The boundary of $J_{r,t_0}$ consists of two flat pieces, one in $\{t=t_0\}$ and one in $\{x=0\}$, and a curved piece foliated by integal curves of $T_{\phi_i}$.

We will do an energy estimate for $W_r$ on $J_{r,t_0}$ starting from the transport equation:
\begin{align*}
     \begin{cases}
         T_{\phi_r} W_r+(P_1\phi_r)W_r=F_r &\text{on }J_{r,t_0}\times \mathbb{T},\\
        W_r=-W_i & \text{on }x=0,
     \end{cases}
\end{align*}
which at least formally implies
\begin{align}\label{d3a}
(T_{\phi_r} W_r,W_r)+((P_1\phi_r)W_r,W_r)=(F_r,W_r).
\end{align}
Here $(\cdot,\cdot)$ is the real $L^2$ pairing on $J_{r,t_0}\times\mathbb{T}$, and below we let $\langle\cdot,\cdot\rangle_{t_0}$ be the $L^2$ pairing on $t=t_0$ and let $(\cdot,\cdot)_0$ be the $L^2$ pairing on $x=0$. 

\Remark 
If $W_r\in L^2(\Omega_T\times \mathbb{T})$ neither term on the left of \eqref{d3a} may have a well-defined finite value. Our plan is first to carry out the energy estimates \emph{formally}.   We then explain how to use the estimates rigorously to obtain solutions to \eqref{e6y}--\eqref{e8y} via an approximation argument;  the estimates will clearly apply to the smooth functions that appear in that argument.  Finally, we will use the estimates again to show that the Picard iterates converge to a solution of \eqref{e6}--\eqref{e8}.

It will be convenient in this section to rewrite $(x,y,t,\lambda,\eta,\tau)$, where $y=(y_1,\dots,y_{n-1})$ and $\eta=(\eta_1,\dots,\eta_{n-1})$ as $(x,y,\lambda,\eta)$, where now $y$ and $\eta$ have $n$ components with $y_n=t$, $\eta_n=\tau$.  The principal symbol $p$ and the operator $T_{\phi_r}$ (recall \eqref{ezb}) may now be written
\begin{align*}
\begin{gathered}
p(x,y,\lambda,\eta) =\lambda^2+q(x,y,\eta)=\lambda^2+\sum_{j,k=1}^n q^{jk}(x,y)\eta_j\eta_k,  \text{ where }q^{jk}=q^{kj},\\
T_{\phi_r} =2\phi_{r,x}\partial_x+2\sum_{j,k=1}^nq^{jk}\phi_{r,y_k}\partial_{y_j}.
\end{gathered}
\end{align*}

First we compute {$(T_{\phi_r}W_r,W_r)$}. 
We have by the Gauss--Green theorem\footnote{Here we use Gauss--Green in the form: $\int_D u_{x_i}v dx=-\int_D uv_{x_i}dx+\int_{\partial D}uv\nu_i dS$, where $\nu$ is the outward unit normal to $\partial D$.}
\begin{align}\label{d4}
\begin{split}
{\tfrac12(T_{\phi_r}W_r,W_r)}
= & 
{-\tfrac12(W_r, T_{\phi_r} W_r)}
-((p(x,y,\partial)\phi_r)W_r,W_r)+(O(1)W_r,W_r)\\
&+\left\langle\left(\sum_{k=1}^nq^{nk}\phi_{r,y_k}\right)W_r,W_r\right\rangle_{t_0}-(\phi_{r,x} W_r,W_r)_0,\\
\end{split}
\end{align}
where $O(1)$ is the bounded function $-\sum_{j,k=1}^{n}\phi_{r,y_k}\partial_{y_j}q^{jk}$.  The boundary integral on the curved part of $J_{r,t_0}$ vanishes since {$T_{\phi_r}$} is tangent to the boundary on that part.
Hence
\begin{align*}
\begin{split}
{(T_{\phi_r} W_r,W_r)}
= &-((p(x,y,\partial)\phi_r)W_r,W_r)+(O(1)W_r,W_r) \\
&+\left\langle\left(\sum_{k=1}^nq^{nk}\phi_{r,y_k}\right)W_r,W_r\right\rangle_{t_0}-(\phi_{r,x} W_r,W_r)_0.
\end{split}
\end{align*}
Observing \emph{cancellation} of the $((p(x,y,\partial)\phi_r)W_r,W_r)$ term in \eqref{d3a}, we see that  \eqref{d3a} becomes
\begin{align}\label{d5}
\begin{split}
(F_r,W_r) = &(T_{\phi_r} W_r,W_r)+((P_1\phi_r)W_r,W_r) \\
= & ((B_1\phi_r)W_r,W_r)+(O(1)W_r,W_r)\\
& + \left\langle\left(\sum_{k=1}^nq^{nk}\phi_{r,y_k}\right)W_r,W_r\right\rangle_{t_0}-(\phi_{r,x} W_r,W_r)_0\\
= & (O(1)W_r,W_r)+\left\langle\left(\sum_{k=1}^nq^{nk}\phi_{r,y_k}\right)W_r,W_r\right\rangle_{t_0}-(\phi_{r,x} W_r,W_r)_0.
\end{split}
\end{align}
Using $W_i=-W_r$ and $\partial_x\phi_i=-\partial_x\phi_r$ on $x=0$ we obtain from this the energy estimate
\begin{align*}
\begin{split}
&\left|\left\langle\left(\sum_{k=1}^nq^{nk}\phi_{r,y_k}\right)W_r,W_r\right\rangle_{t_0}\right|\leq |(F_r,W_r)|+C(W_r,W_r)+|((\partial_x\phi_i)W_i,W_i)_0|.
\end{split}
\end{align*}
Since $J_r$ is contained in a small neighborhood of $0$, it follows from \eqref{m7} that \\$\sum_{k=1}^nq^{nk}\phi_{r,y_k}\neq 0$, so 
\begin{align}\label{d6a}
\langle W_r,W_r\rangle_{t_0}\lesssim |(F_r,W_r)|+(W_r,W_r)+|((\partial_x\phi_i)W_i,W_i)_0|.
\end{align}
Gronwall's inequality then implies\footnote{If $y$ and  $\phi$ are nonnegative and continuous and satisfy 
$y(t)\leq C[\alpha+\int^t_{-T}(y(s)+\phi(s))ds]$ for some $C,\alpha>0$, then $y(t)\leq C[\alpha e^{Ct}+\int^t_{-T}e^{C(t-s)}\phi(s)ds]$; see \cite{chazarain1982}.}
\begin{align}\label{d6}
\langle W_r,W_r\rangle_{t_0}\lesssim (F_r,F_r)+|((\partial_x\phi_i)W_i,W_i)_0|.
\end{align}

Next consider $W_i$ in \eqref{e8y}.  For any $t_0\in[-T,T]$ we expect $W_i$ on $J_i\cap\{t=t_0\}$ to be determined by the data $F_i$ and $g$ of problem \eqref{e8y} in the set $J_{i,t_0}\subset J_i$, which we define as the backward flowout under $T_{\phi_i}$ in $\Omega_T$ of $J_i\cap\{t=t_0\}$.  The boundary of $J_{r,t_0}$ consists of two flat pieces, one in $\{t=t_0\}$ and one in $\{t=-T\}$, and a curved piece foliated by integal curves of $T_{\phi_i}$. 
Starting  from the transport equation
\begin{align*}
    \begin{cases}
        T_{\phi_i} W_i+(P_1\phi_i)W_i=F_i & \text{on }J_{i,t_0}\times \mathbb{T},\\
        W_i=g & \text{on }t=-T,
    \end{cases}
\end{align*}
and using similar notation for inner products, we apply essentially the same argument as above to obtain in place of \eqref{d6a}:
\begin{align}\label{d8}
\langle W_i,W_i\rangle_{t_0}\lesssim |(F_i,W_i)|+(W_i,W_i)+\langle g,g\rangle_{-T},
\end{align}
so Gronwall gives
\begin{align}\label{d9}
\langle W_i,W_i\rangle_{t_0}\lesssim (F_i,F_i)+\langle g,g\rangle_{-T}.
\end{align}

To control the trace term on the right in \eqref{d6} we first define $V=J_i\cap \{x=0\}$ as in \S \ref{intro}, and then define 
$J_{i,V}\subset J_i$ to be the backward flowout under $T_{\phi_i}$ in $\Omega_T$ of $V$.  The boundary of $J_{i,V}$ consists of two flat pieces, one in $\{x=0\}$ and one in $\{t=-T\}$, and a curved piece foliated by integal curves of $T_{\phi_i}$.  Starting  from the transport equation
\begin{align*}
    \begin{cases}
        T_{\phi_i} W_i+(P_1\phi_i)W_i=F_i & \text{on }J_{i,V}\times \mathbb{T},\\
        W_i=g & \text{on }t=-T,
    \end{cases}
\end{align*}
we estimate $W^i$ on $J_{i,V}$ by an argument parallel to the one that gave \eqref{d5}.    
In place of \eqref{d8} we obtain
\begin{align*}
|((\partial_x\phi_i)W_i,W_i)_0|\lesssim |(F_i,W_i)|+(W_i,W_i)+\langle g,g\rangle_{-T}.
\end{align*}
With \eqref{d9} this gives
\begin{align*}
|((\partial_x\phi_i)W_i,W_i)_0|\lesssim (F_i,F_i)+\langle g,g\rangle_{-T}.
\end{align*}

Summarizing, we have the following three estimates for any $t_0\in [-T,T]$:
\begin{align}\label{d12}
&\langle W_r,W_r\rangle_{t_0}\lesssim (F_r,F_r)+|((\partial_x\phi_i)W_i,W_i)_0| && \text{ on }J_{r,t_0}\times \mathbb{T},\nonumber\\
&\langle W_i,W_i\rangle_{t_0}\lesssim (F_i,F_i)+\langle g,g\rangle_{-T}&&\text{ on }J_{i,t_0}\times \mathbb{T},\\
&|((\partial_x\phi_i)W_i,W_i)_0|\lesssim (F_i,F_i)+\langle g,g\rangle_{-T}&&\text{ on }J_{i,V}\times \mathbb{T}. \nonumber
\end{align}
Since $W_r$ and $W_i$ are zero outside $J_r\times \mathbb{T}$ and $J_i\times \mathbb{T}$ respectively, we can combine these estimates to obtain for $t_0\in [-T,T]$:
\begin{align}\label{d13}
\begin{split}
& \langle W_r,W_r\rangle_t+\langle W_i,W_i\rangle_t+|((\partial_x\phi_i)W_i,W_i)_0| \\
& \lesssim (F_r,F_r)+(F_i,F_i)+\langle g,g\rangle_{-T}\text{ on }\Omega_T\times \mathbb{T}.
\end{split}
\end{align}
This estimate easily implies
\begin{align}\label{d17}
\|(W_r,W_i)\|_{L^2(\Omega_T\times \mathbb{T})}\leq C(T)\left(|(F_r,F_i)|+\langle g,g\rangle_{-T}\right)\text{ on }\Omega_T\times \mathbb{T},
\end{align}
where $C(T)\to0$ as $T\to 0$.

We also have the following classical Kreiss estimate for the problem \eqref{e6y}:\footnote{See Kreiss \cite{kreiss1970cpam} or Chazarain-Piriou \cite[Chapter 7]{chazarain1982}.}
\begin{align}\label{kreiss2}
\|u\|_{H^1(\Omega_T)}\leq C(T)\|\underline{f}\|_{L^2(\Omega_T)}+C\|u^1\|_{H^1(\Omega_{[-T,-T+\delta]})},
\end{align}
where $C(T)\to 0$ as $T\to 0$.    In the next section we use these estimates to rigorously solve the coupled linear problems \eqref{e6y}--\eqref{e8y}.

\subsection{Linear energy estimates: rigorous arguments}\label{rigorous}

Consider again the coupled linear problems \eqref{e6y}--\eqref{e8y}.
For $k\in\mathbb{N}$ choose a sequence $F^k_r\in C^\infty_c(\mathring{J}_r\times \mathbb{T})$, supported strictly away from the shadow boundary $\mathrm{SB}_+$, such that $F^k_r\to F_r$ in $L^2(\Omega_T\times \mathbb{T})$ as $k\to \infty$.  Similarly, choose 
 a sequence $F^k_i\in C^\infty_c(\mathring{J}_i\times \mathbb{T})$, supported strictly away from $\mathrm{SB}=\mathrm{SB}_+\cup \mathrm{SB}_-$, such that $F^k_i\to F_i$ in $L^2(\Omega_T\times \mathbb{T})$ as $k\to \infty$.    Finally, choose a sequence  $g^k\in C^\infty_c\left((\mathring{J}_i\cap\{t=-T\})\times \mathbb{T}\right)$ supported strictly away from $\mathrm{SB}_-\cap \{t=-T\}$, such that $g^k\to g$ in $L^2(\{t=-T\})$ as $k\to \infty$.   Next for each $k$ construct a $C^\infty$ solution $(W^k_r,W^k_i)$ to the coupled problems 
\begin{align*}
    & \begin{cases}
        T_{\phi_r}W^k_r+(P_1\phi_r)W^k_r=F^k_r & \text{in }\mathring{J_r}\times \mathbb{T},\\
        W^k_r(0,y,t,\theta)=-W^k_i(0,y,t,\theta) & \text{on } (J_r\cap\{ x=0 \})\times \mathbb T,\\
        W^k_r=0 & \text{on }(\Omega_T\setminus J_r)\times \mathbb{T};
    \end{cases} \\
    & \begin{cases}
        T_{\phi_i}W^k_i+(P_1\phi_i)W_i^k=F^k_i & \text{in }\mathring{J_i}\times \mathbb{T},\\
        W^k_i|_{t=-T}=g^k(x,y,\theta_i) & \text{on } (J_i\cap\{x=0\})\times \mathbb T,\\
        W^k_i=0 & \text{on }(\Omega_T\setminus J_i)\times \mathbb{T}.
    \end{cases}
\end{align*}
Both $W^k_i$, which is constructed first, and $W^k_r$ are easily constructed by integration along characteristics.   Since both are smooth and supported away from $\mathrm{SB}$, all the steps in the formal derivation of  the estimate \eqref{d13} apply rigorously to $W^k_i$ and $W^k_r$,  and we obtain
\begin{align}\label{d13z}
\begin{split}
& \langle W^k_r,W^k_r\rangle_t+\langle W^k_i,W^k_i\rangle_t+|((\partial_x\phi_i)W^k_i,W^k_i)_0| \\
& \lesssim (F^k_r,F^k_r)+(F^k_i,F^k_i)+\langle g^k,g^k\rangle_{-T}\text{ on }\Omega_T\times \mathbb{T}.
\end{split}
\end{align}

Passing to the limit as $k\to \infty$, we obtain a (unique) solution
$$(W_r, W_i)\in C\left([-T,T];L^2(\mathbb{R}^n_+\times \mathbb{T})\right)\times C\left([-T,T];L^2(\mathbb{R}^n_+\times \mathbb{T})\right)$$ to \eqref{e7y}--\eqref{e8y} that satisfies the estimate \eqref{d13}. 
The existence and continuity with respect to $x_0$ small of
\begin{align}\label{d18a}
((\partial_x\phi_i)W_i,W_i)_{x_0}\text{ and }((\partial_x\phi_r)W_r,W_r)_{x_0},
\end{align}
where the pairing is now taken in $L^2(y,t,\theta)$ for $x=x_0$ fixed, follows similarly.\footnote{Recall \eqref{d5}, which treats the case $x_0=0$.}

\Remark 
Here, of course, we have used the fact that the \emph{cancellation} of the bad term 
\begin{align}\label{d16}
((p(x,y,t,\partial)\phi_r)W^k_r,W^k_r)
\end{align}
in \eqref{d5} allows us to obtain an estimate \eqref{d13z} where the constant (implicit in $\lesssim$) is independent of $k$.   The term \eqref{d16} generally blows up as $k\to \infty$ because of the singularity in $\phi_r$.

A unique solution $u\in L^2(\Omega_T)$ to the problem \eqref{e6y} satisfying the estimate \eqref{kreiss2} is provided by \cite{kreiss1970cpam}.   This proves
\begin{prop}\label{d13y}
The coupled linear problems \eqref{e6y}--\eqref{e8y} have a  solution $(u,W_r,W_i)$
in $H^1(\Omega_T)\times L^2(\Omega_T\times \mathbb{T})\times L^2(\Omega_T\times \mathbb{T})$ which satisfies the estimates \eqref{d13}--\eqref{kreiss2}.
The functions $W_r$ and $W_i$ are supported in $J_r$ and $J_i$ respectively.  Both $W_r$ and $W_i$ lie in $C\left([-T,T];L^2(\mathbb{R}^n_+\times \mathbb{T})\right).$  Moreover, the inner products \eqref{d18a} are continuous in $x_0$ for $x_0$ small.
\end{prop}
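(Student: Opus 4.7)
The plan is to construct $u$ and $(W_r, W_i)$ separately, since \eqref{e6y} is decoupled from \eqref{e7y}--\eqref{e8y}. For $u$, the problem is a linear strictly hyperbolic mixed initial-boundary problem with homogeneous Dirichlet data on $\{x=0\}$; the classical result of Kreiss \cite{kreiss1970cpam} supplies, for $T>0$ small, a unique $u \in H^1(\Omega_T)$ satisfying \eqref{kreiss2}, and no further work is required here. For $(W_r, W_i)$ I would carry out the regularization scheme sketched in \S \ref{rigorous}: choose sequences $F^k_r \in C^\infty_c(\mathring{J_r} \times \mathbb{T})$, $F^k_i \in C^\infty_c(\mathring{J_i} \times \mathbb{T})$, and $g^k \in C^\infty_c((\mathring{J_i} \cap \{t=-T\}) \times \mathbb{T})$, all supported strictly away from $\mathrm{SB}$, converging in $L^2$ to $F_r, F_i, g$. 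For each $k$, first construct $W^k_i$ by integration along the characteristics of $T_{\phi_i}$ (using that $Z_i$ is a smooth diffeomorphism, with $(P_1 \phi_i) W^k_i$ absorbed into an integrating factor), and then $W^k_r$ by integration along $T_{\phi_r}$ with boundary datum $-W^k_i|_{x=0}$ — legitimate on $\mathring{J_r}$ by Assumption \ref{A3}. Because the data are supported away from $\mathrm{SB}$, the approximants never sample the grazing singularity and are $C^\infty$ with compact $(x,y,t)$-support.

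Having smooth approximants $(W^k_r, W^k_i)$, every manipulation in the formal derivation of \S \ref{formal} becomes rigorous, yielding the estimate \eqref{d13z} with constants \emph{independent of $k$}. The decisive point — emphasized in the Remark following \eqref{d13z} — is the cancellation in \eqref{d4}--\eqref{d5} of the term $((p(x,y,t,\partial)\phi_r) W^k_r, W^k_r)$, whose coefficient involves the singular second derivatives of $\phi_r$ and would otherwise blow up with $k$. After that cancellation only $\nabla \phi_r$ and bounded terms remain, and Gronwall closes the estimate uniformly. Applying the same estimate to $(W^k_r - W^{k'}_r, W^k_i - W^{k'}_i)$ with forcing $(F^k_r - F^{k'}_r, F^k_i - F^{k'}_i, g^k - g^{k'})$ shows the sequence is Cauchy in $C([-T,T]; L^2(\mathbb{R}^n_+ \times \mathbb{T}))$ in both components, so its limit $(W_r, W_i)$ inherits \eqref{d13}. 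Distributional satisfaction of \eqref{e7y}--\eqref{e8y} on $\mathring{J_r} \times \mathbb{T}$ and $\mathring{J_i} \times \mathbb{T}$ follows by testing against $C^\infty_c$ functions and passing to the limit; the coupling $W_r(0,\cdot) = -W_i(0,\cdot)$ persists by $L^2$-continuity of traces on $\{x=0\}$; and the support statements follow from those of the approximants together with the flow structure of $T_{\phi_r}, T_{\phi_i}$. The continuity of the inner products \eqref{d18a} in $x_0$ near $0$ is obtained by applying the same energy identity on $\{0 \leq x \leq x_0\} \cap J_r$ and $\{0 \leq x \leq x_0\} \cap J_{i,V}$: uniform bounds in $x_0$ together with the Cauchy-in-$k$ argument yield continuity down to $x_0 = 0$. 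Uniqueness of $(W_r, W_i)$ is automatic from \eqref{d17} applied to a difference of two solutions.

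The main obstacle — and the only place where the order of grazing genuinely enters — is controlling $((P_1 \phi_r) W^k_r, W^k_r)$. For $\usigma \in G^{2k}_d \setminus G^{2k+1}$ with large $k$, the second derivatives of $\phi_r$ diverge arbitrarily fast near $G_{\phi_i}$, and the cancellation of \eqref{d5} is the \emph{single} mechanism making the approximants $L^2$-bounded uniformly in $k$. Once that cancellation has been verified algebraically — essentially that the top-order (in $\partial^2 \phi_r$) contribution from integration by parts in $\tfrac12(T_{\phi_r} W_r, W_r)$ exactly offsets $((p\phi_r) W_r, W_r)$ — everything else is routine: Gauss--Green, Gronwall, $L^2$-approximation, and the standard Kreiss theory for $u$.
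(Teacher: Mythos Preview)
Your proposal is correct and follows essentially the same route as the paper: approximate $(F_r,F_i,g)$ by smooth data supported away from $\mathrm{SB}$, build $(W^k_i,W^k_r)$ by integration along characteristics, apply the formal energy argument of \S\ref{formal} rigorously using the cancellation in \eqref{d5} to get \eqref{d13z} uniformly in $k$, and pass to the limit; the Kreiss estimate handles $u$ independently. The paper's own argument is slightly terser about the Cauchy-in-$k$ step and the trace continuity in $x_0$, but your elaboration of those points is accurate and adds nothing foreign to the method.
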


\subsection{Convergence of the Picard iterates.}
 
 Now we apply Proposition \ref{d13y} to the problems \eqref{e6z}--\eqref{e8z} for the $(n+1)$-st iterate $(u^{n+1},W^{n+1}_r,W^{n+1}_i).$ 
  Assumption \ref{A0z} on the nonlinear function $f(x,y,t,\cdot,\cdot)$ implies 
\begin{align*}
\|\underline{f}(u^n,W^n_r,W^n_i)\|_{L^2(\Omega_T)}\lesssim \|u^n\|_{L^2(\Omega_T)}+\|(W^n_r,W^n_i)\|_{{L^2(\Omega_T\times \mathbb{T}) \times L^2(\Omega_T\times \mathbb T)}},
\end{align*}
with  similar estimates for $f^*_r(u^n,W^n_r,W^n_i)$ and $f^*_i(u^n,W^n_r,W^n_i)$.    A standard argument using the estimates 
\eqref{d17} and  \eqref{kreiss2}  shows that for some $T>0$ the iterates $(u^{n+1},W^{n+1}_r,W^{n+1}_i)$ converge to a limit
 $(u,W_r,W_i)\in H^1(\Omega_T)\times L^2(\Omega_T\times \mathbb{T})\times L^2(\Omega_T\times \mathbb{T})$.   Having fixed $T$ small enough, another application of estimate \eqref{d13} yields 
 $$(W_r,W_i)\in C\left([-T,T];L^2(\mathbb{R}^n_+\times \mathbb{T})\times L^2(\mathbb{R}^n_+\times \mathbb{T})\right).$$
 \color{red}
 \color{black}
The existence and continuity with respect to $x_0$ small of
\begin{align}\label{d18}
((\partial_x\phi_i)W_i,W_i)_{x_0}\text{ and }((\partial_x\phi_r)W_r,W_r)_{x_0},
\end{align}
where the pairing is now taken in $L^2(y,t,\theta)$ for $x=x_0$ fixed, follows similarly.  Thus, we may conclude that the limit of the iterates satisfies \eqref{e6}--\eqref{e8}.   This proves
\begin{prop}\label{d13w}
There exists a $T>0$ such that the  nonlinear profile equations \eqref{e6}--\eqref{e8} have a  solution $(u,W_r,W_i)$
in $H^1(\Omega_T)\times L^2(\Omega_T\times \mathbb{T})\times L^2(\Omega_T\times \mathbb{T})$.
The functions $W_r$ and $W_i$ are supported in $J_r$ and $J_i$ respectively.  Both $W_r$ and $W_i$ lie in $C\left([-T,T];L^2(\mathbb{R}^n_+\times \mathbb{T})\right)$ and the inner products  \eqref{d18} 
are continuous in $x_0$ for $x_0$ small.
\end{prop}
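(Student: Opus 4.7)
The plan is to apply the Picard iteration scheme \eqref{e6z}--\eqref{e8z} and show it is a contraction on the ball of radius $R$ in
\[
\mathcal{X}_T:=H^1(\Omega_T)\times L^2(\Omega_T\times\mathbb{T})\times L^2(\Omega_T\times\mathbb{T}),
\]
for $T>0$ sufficiently small. We initialize with $u^0=0$, $W^0_r=0$, and $W^0_i$ any fixed extension to $L^2(\Omega_T\times\mathbb{T})$ of the initial datum $g$ that is supported in $J_i$. At each stage, Proposition \ref{d13y} applied to \eqref{e6z}--\eqref{e8z} produces $(u^{n+1},W^{n+1}_r,W^{n+1}_i)\in\mathcal{X}_T$ with $W^{n+1}_r$ supported in $J_r$ and $W^{n+1}_i$ supported in $J_i$.

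For convergence, first I would establish the uniform-in-$n$ bound. Assumption \ref{A0z} (with $f(m,0,0)=0$) yields
\[
\|\underline{f}(u^n,W^n_r,W^n_i)\|_{L^2(\Omega_T)}+\|f^*_r\|_{L^2(\Omega_T\times\mathbb T)}+\|f^*_i\|_{L^2(\Omega_T\times\mathbb T)}\lesssim \|(u^n,W^n_r,W^n_i)\|_{\mathcal{X}_T},
\]
so combining \eqref{d17} and \eqref{kreiss2} gives
\[
\|(u^{n+1},W^{n+1}_r,W^{n+1}_i)\|_{\mathcal{X}_T}\leq C(T)\|(u^n,W^n_r,W^n_i)\|_{\mathcal{X}_T}+C\bigl(\|u^1\|_{H^1}+\|g\|_{L^2}\bigr),
\]
with $C(T)\to0$ as $T\to 0$. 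Choosing $T$ so small that $C(T)\leq 1/2$ gives a uniform bound on the iterates. Next, applying exactly the same linear estimates to the \emph{difference} of two consecutive iterates---which solves the same linear problems but with zero boundary/initial data and right-hand sides involving $\underline{f}(u^n,W^n_r,W^n_i)-\underline{f}(u^{n-1},W^{n-1}_r,W^{n-1}_i)$ (and similarly for $f^*_r,f^*_i$)---yields
\[
\|(u^{n+1}-u^n,W^{n+1}_r-W^n_r,W^{n+1}_i-W^n_i)\|_{\mathcal{X}_T}\leq C(T)\|(u^n-u^{n-1},W^n_r-W^{n-1}_r,W^n_i-W^{n-1}_i)\|_{\mathcal{X}_T}.
\]
Shrinking $T$ further so $C(T)<1$, the iterates form a Cauchy sequence in $\mathcal{X}_T$ and converge to a limit $(u,W_r,W_i)\in\mathcal{X}_T$. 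The supports are preserved under the $L^2$ limit, so $W_r$ and $W_i$ are supported in $J_r$ and $J_i$.

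To verify the limit satisfies \eqref{e6}--\eqref{e8}, I would pass to the limit in the distributional form of each equation: the linear operators $P$, $T_{\phi_r}+P_1\phi_r$, and $T_{\phi_i}+P_1\phi_i$ are continuous from their domains into distributions, and the Lipschitz assumption ensures the nonlinear right-hand sides converge in $L^2$. Boundary and initial traces are preserved because, once \eqref{d13} is applied to the difference of two iterates, we get uniform control in $C([-T,T];L^2(\mathbb R^n_+\times\mathbb T))$, giving strong convergence of traces on $\{t=t_0\}$ and on $\{x=x_0\}$ for small $x_0$; the continuity in $x_0$ of the pairings \eqref{d18} is inherited from the corresponding statement in Proposition \ref{d13y} by the uniform convergence of iterates in this finer topology.

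The main obstacle I anticipate is that the energy estimate \eqref{d13}, used at each iteration, was only proven formally in \S\ref{formal} and then rigorously via the approximation of \S\ref{rigorous}; to invoke it for the Picard differences I need to know the estimate is genuinely linear and applies to the \emph{differences}, which lie in $L^2$ with $L^2$ source data but are not a priori smooth. Fortunately, the difference $(u^{n+1}-u^n,W^{n+1}_r-W^n_r,W^{n+1}_i-W^n_i)$ solves a linear problem of exactly the same form as \eqref{e6y}--\eqref{e8y}, so Proposition \ref{d13y} applies verbatim and delivers the needed estimate together with the continuous-in-$t$ regularity and the continuity in $x_0$ of the boundary pairings. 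Once this is in hand, the contraction argument and passage to the limit are routine.
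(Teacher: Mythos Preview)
Your proposal is correct and follows essentially the same approach as the paper: set up the Picard iteration \eqref{e6z}--\eqref{e8z} with the same initialization, invoke Proposition~\ref{d13y} at each step together with the Lipschitz bound from Assumption~\ref{A0z}, and use the smallness of $C(T)$ in \eqref{d17} and \eqref{kreiss2} to close the contraction; the finer $C([-T,T];L^2)$ regularity and continuity of the pairings \eqref{d18} come from a further application of \eqref{d13} to the iterates, exactly as you outline. The paper's argument is terser but structurally identical.
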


\section{Truncation and regularization }\label{TR}

This section is largely inspired by ideas from \cite{cheverry1996} and \cite{dumas2002}.  For the error analysis we need to employ a more careful truncation and regularization process than the one used in \S \ref{rigorous}.   In particular, we want the truncator to have the commutation property \eqref{g1}, so we should ``truncate along the flow".

We first truncate $W_r$, $W_i$ near $\mathrm{SB}_+$ and $\mathrm{SB}$, respectively, in a way that preserves the boundary condition. Using a clever idea of \cite{dumas2002}, we regularize first in the tangential variables $(y,t,\theta)$,  then use the profile equations to deduce extra regularity in $x$,  and finally regularize in the normal variable $x$ in a way that preserves the boundary condition.   This procedure is more transparent in its effect on traces than the one in \cite{cheverry1996}.  Moreover, it does not depend on an explicit calculation of the singularity of the flow map $Z^r$ at the glancing set, so it applies more readily to problems involving higher order grazing.

\subsection{Truncation}
~

\noindent
{\bf Notations.}
1. As in \eqref{e3} and \eqref{e4} we sometimes write $(x,y,t)={Z_r}(s,y',t')$, where $s$ is a flow parameter and the primes indicate that $(y',t')$ specifies an  \emph{initial point} on $x=0$ for the flow.  The primes are helpful here, but in other contexts we usually drop them.

\noindent
2. Let $(x,y,t)=\Phi(x,z)=(x,\Phi_2(z))$ be the $C^1$ diffeomorphism that relates the  standard form $(x,y,t)$ coordinates and the $(x,z)$ coordinates of Proposition \ref{m20},  in which the grazing set $G_{\phi_i}$ near $0$  is the subset of $x=0$ defined by $z_1=0$.    Denote by $\cD^r_{pre}$ the preimage of $\cD^r$ as in \eqref{e4} under the map $(s,z)\mapsto (s,y,t)=(s,\Phi_2(z))$.   

\noindent
3. Let $\Xi^r:L^2(J_r\times \mathbb{T})\to L^2(\cD^r_{pre}\times \mathbb{T},j(s,z)dsdzd\theta)$ be the pullback map given by\footnote{Here $j(s,z)$ is the $C^1$ Jacobian of the map $(s,z)\mapsto Z_r(s,\Phi_2(z))$.   Assumption \ref{A3} implies that $\Xi^r$ is well-defined.}
\begin{align*}
(\Xi^r f)(s,z,\theta):=f(Z_r(s,\Phi_2(z)),\theta).
\end{align*}

\begin{figure}[t]
    \begin{center}
        \includegraphics[scale=0.8]{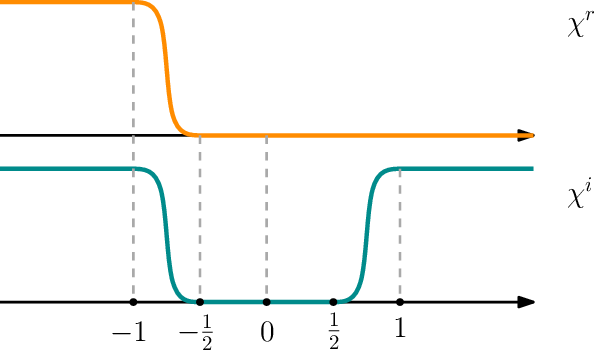}
        \caption{Cutoff functions used in the truncation process.}
        \label{truncation}
    \end{center}
\end{figure}

Suppose that $u\in H^1(\Omega_T)$,  $W_r, W_i \in L^2(\Omega_T)$ is the solution to the profile equations \eqref{e6}--\eqref{e8} provided by Proposition \ref{d13w}.  Let $\chi^r\geq 0$ be a $C^\infty$, decreasing cutoff function such that 
$\chi^r=1$ on $(-\infty,-1]$ and $\chi^r=0$ on $[-1/2,\infty)$.
We truncate $W_r(x,y,t,\theta)$ along $\mathrm{SB}_+$ by defining for $\mu>0$
\begin{align*}
\begin{split}
&W_{r,\mu}(x,y,t,\theta)=\chi^r_{\mu}(x,y,t)W_r(x,y,t,\theta),\text{ where }\chi^r_{\mu}:=(\Xi^r)^{-1}\chi^r(z_1/\mu)\text{ on }J_r.
\end{split}
\end{align*}
We smoothly extend $\chi^r_\mu$ to be zero in the shadow region and to be one on the remaining part of $\Omega_T$.  
Since $j(s,z)$ is $C^1$ even near $s=z_1=0$, we have
\begin{align}\label{e11z}
 \|W_{r,\mu}(x,y,t,\theta)-W_r\|_{L^2(\Omega_T\times \mathbb{T})}=o_\mu(1).
\end{align}

Next we define $W_{i,\mu}$ using the nonsingular flow map ${Z_i}$.   We let $J_{i,e} \supset J_i$ be the extension of $J_i$ defined by 
\begin{align}\label{e12}
J_{i,e}=\{{Z_i}(s,x,y) \ | \ 0\leq s\leq s_e(x,y), (x,y,-T)\in U\}:={Z_i}(\cD^i_e),
\end{align}
where $s_e(x,y)$  is the value of $s$ for which the $t$-component of ${Z_i}(s,x,y)$ is $T$.\footnote{Unlike the range of  ${Z_r}$, the range of ${Z_i}$ can be taken to be a full neighborhood of $0$ in $\mathbb{R}^{n+1},$ and we do that now.   Working with $s_e(x',y')$ and $J_{i,e}$ allows us to avoid difficulties arising from the case by case definition of $s(x',y')$ in \eqref{e2a}. }  
Denote by $\cD^i_{pre}$ the preimage of $\cD^i_e$ as in \eqref{e12} under the map $(s,x,z)\mapsto (s,x,y)=(s,\Phi_d(x,z))$, where $\Phi_d$ is defined by
\begin{align*}
(x,y)=\Phi_d(x,z)\Leftrightarrow (x,y,-T)=\Phi(x,z).
\end{align*}
Let $\Xi^i:L^2(J_{i,e}\times \mathbb{T})\to L^2(\cD^i_{pre}\times \mathbb{T})$ be the pull-back map given by
\begin{align*}
(\Xi^i f)(s,x,z,\theta):=f({Z_i}(s,\Phi_d(x,z)),\theta).
\end{align*}

Let $\chi^i\geq 0$ be a $C^\infty$ cutoff function such that $\chi^i=1$ on $\{t\leq -1\text{ or }t\geq 1\}$, $\chi^i=0$ on $\{-1/2\leq t\leq 1/2\}$, and $\chi^i=\chi^r$ on $[-1,0]$.    We can then truncate $W_i(x,y,t,\theta)$ along $\mathrm{SB}=\mathrm{SB}_+\cup \mathrm{SB}_-$ by 
\begin{align*}
W_{i,\mu}(x,y,t,\theta)=\chi^i_\mu(x,y,t)W_i(x,y,t,\theta),
\end{align*}
where  we have set $\chi^i_\mu(x,y,t)= (\Xi^i)^{-1}\left(\chi^i\left(\frac{z_1}{\mu}\right)\right)$ on $J_i$, and   we smoothly extend $\chi^i_\mu$ to the rest of $\Omega_T$.

Observe that we have the commutation property 
\begin{align}\label{g1}
 [T_{\phi_r},\chi^r_\mu]=[T_{\phi_i},\chi^i_\mu]=0\text{ on }\Omega_T.
\end{align}

\Remarks 
1. The truncations and extensions defined above imply that \eqref{g1} makes sense on $\Omega_T$, even though $T_{\phi_r}$ and $T_{\phi_i}$ are just defined on $J_r$ and $J_i$ respectively.  In the future we will often omit remarks of this nature.

\noindent
2. Recall that the illuminated region of the boundary in $(x,z)$ coordinates is $z_1\leq 0$, and we chose $\chi^i=\chi^r$ on $[-1,0]$.   Then from the definition of the reflected flow and the fact that 
$\chi^i_\mu$ is constant on integral curves of $T_{\phi_i}$, it follows that 
$\chi^r_\mu=\chi^i_\mu\text{ on }x=0$, so the boundary condition is preserved by truncation:
\begin{align*}
W_{r,\mu}+W_{i,\mu}=0\text{ on }x=0.
\end{align*}

\subsection{Regularization}

For $\rho_1>0$ let $\delta_{\rho_1}(y,t,\theta)$ be a smooth approximate identity supported in $|(y,t,\theta)|\leq \rho_1$.  Define \emph{tangential}  regularizations for $k=r,i$ by\footnote{Tangential regularization preserves the boundary condition.    Here \eqref{g4b} means that for fixed $\mu$, the quantity on the left $\to 0$ as $\rho_1\to 0$.} 
\begin{subequations}\label{g4}
    \begin{align}
    & W_{k,\mu,\rho_1}=R^{\rho_1}W_{k,\mu}:=\delta_{\rho_1}*W_{k,\mu}, \text{ and thus} \label{g4a}\\
    & \|W_{k,\mu,\rho_1}- W_{k,\mu}\|_{L^2(\Omega_T\times \mathbb{T})}=o_{\rho_1}(1). \label{g4b}
    \end{align}
\end{subequations}
Using \eqref{g1}, we compute
\begin{align*}
\begin{split}
T_{\phi_k}W_{k,\mu,\rho_1}
= & T_{\phi_k}R^{\rho_1}W_{k,\mu}=R^{\rho_1}T_{\phi_k}W_{k,\mu}+[T_{\phi_k},R^{\rho_1}]W_{k,\mu}\\
= & (T_{\phi_k}W_k)_{\mu,\rho_1}+[T_{\phi_k},R^{\rho_1}]W_{k,\mu}.
\end{split}
\end{align*}
Using a similar computation of $(P_1\phi_k)W_{k,\mu,\rho_1}$ together with the profile equations \eqref{e7}--\eqref{e8}, we  obtain
\begin{align}\label{g6}
\begin{split}
&T_{\phi_k}W_{k,\mu,\rho_1}+(P_1\phi_k)W_{k,\mu,\rho_1}\\
& =  f^*_k(u,W_r,W_i)_{\mu,\rho_1}+[T_{\phi_k},R^{\rho_1}]W_{k,\mu}+[P_1(\phi_k),R^{\rho_1}]W_{k,\mu}\\
& =  f^*_k(u,W_r,W_i)_{\mu}+o_{\rho_1}(1)\text{ in }L^2(\Omega_T\times \mathbb{T}).
\end{split}
\end{align}
Here we use Friedrich's lemma to treat the first commutator and write
\begin{align*}
[P_1\phi_k,R^{\rho_1}]W_{k,\mu}=(I-R^{\rho_1})(P_1\phi_k)W_{k,\mu}+(P_1\phi_k)(W_{k,\mu,\rho_1}-W_{k,\mu})
\end{align*}
for the second. \footnote{The function $P_1\phi_k$ and the coefficients of $T_{\phi_k}$ are smooth on the support of $W_{k,\mu}$.}

Before regularizing in $x$ we set
\begin{align*}
V_{1,\mu,\rho_1}=W_{i,\mu,\rho_1}-W_{r,\mu,\rho_1},\; V_{2,\mu,\rho_1}=W_{i,\mu,\rho_1}+W_{r,\mu,\rho_1},\; V_{\mu,\rho_1}=\begin{pmatrix}V_{1,\mu,\rho_1}\\V_{2,\mu,\rho_1}\end{pmatrix},
\end{align*}
and define for $x_0>0$ small:
\begin{align*}
\begin{split}
&\Omega_{T,x_0}:=\Omega_T\cap \{0\leq x\leq x_0\}\\
&\Omega^e_{T,x_0}=\{(x,y,t)\in\mathbb{R}^n \ | \ t\in[-T,T], -\infty\leq x\leq x_0\}.
\end{split}
\end{align*}
We can rewrite the equations \eqref{g6} on $\Omega_{T,x_0}$ and the boundary condition as 
\begin{align}\label{g8}
 \begin{split}
 \partial_x V_{\mu,\rho_1} & =A_1\partial_yV_{\mu,\rho_1}+A_2\partial_tV_{\mu,\rho_1}+BV_{\mu,\rho_1}+C\in L^2(\Omega_{T,x_0}\times \mathbb{T})\\
 V_{2,\mu,\rho_1} & =0\text{ on }x=0,
 \end{split}
 \end{align}
where the matrices $A_j$ and $B$ can be taken to be smooth on $\Omega_{T,x_0}\times \mathbb{T}$.   Here we use the fact that for $k=r,i$ the coefficients of $\partial_x$ in $T_{\phi_k}$, namely $\partial_x\phi_k$, are nonvanishing near $x=0$ away from the grazing set,  while $V_{\mu,\rho_1}$ vanishes near the grazing set due to truncation.

 The equations \eqref{g8} imply that 
$V_{\mu,\rho_1}\in H^1(\Omega_{T,x_0}\times \mathbb{T})$ and that the zero extension of $V_{2,\mu,\rho_1}$ lies in 
$H^1(\Omega^e_{T,x_0}\times\mathbb{T})$.  After extending $V_{1,\mu,\rho_1}$ as an element of $H^1(\Omega^e_{T,x_0}\times \mathbb{T})$, for $\rho_2>0$ we define regularizations of these extensions by
\begin{align}\label{g9}
V_{k,\mu,\rho_1,\rho_2}:=\delta_{\rho_2}*V_{k,\mu,\rho_1}, \;k=r,i,
\end{align}
where $\delta_{\rho_2}(x)$ is an approximate identity supported in $ 0\leq x\leq 1$.  Hence the boundary condition $V_{2,\mu,\rho_1,\rho_2}=0$ on $x=0$ is preserved.\footnote{This argument involving the $V_{k,\mu,\rho_1,\rho_2}$ is close to an argument in \cite{dumas2002}.}    

Let $\rho:=(\rho_1,\rho_2)$.  By standard properties of approximate identities we have
\begin{align*}
\|V_{k,\mu,\rho}-V_{k,\mu,\rho_1}\|_{H^1(\Omega_{T,x_0}\times \mathbb{T})}\to 0 \text{ as }\rho_2\to 0.
\end{align*}
Now define $W_{k,\mu,\rho}$ in the obvious way from the $V_{k,\mu,\rho}$.   The above properties imply for $k=r,i$:
\begin{align}\label{g11}
\begin{split}
&W_{k,\mu,\rho}\to W_{k,\mu,\rho_1} \text{ in } H^1(\Omega_{T,x_0}\times \mathbb{T})\text{ as }\rho_2\to 0; \text{ hence}\\
&T_{\phi_k}W_{k,\mu,\rho}+(P_1\phi_k)W_{k,\mu,\rho} \\
& \to T_{\phi_k}W_{k,\mu,\rho_1}+(P_1\phi_k)W_{k,\mu,\rho_1}\text{ in }L^2(\Omega_{T,x_0}\times\mathbb{T})\text{ as }\rho_2\to 0.
\end{split}
\end{align}

Using \eqref{g6} and \eqref{g11}, we obtain
\begin{align}\label{g11a}
\begin{split}
&T_{\phi_k}W_{k,\mu,\rho}+(P_1\phi_k)W_{k,\mu,\rho}\\ 
& = f^*_k(u,W_r,W_i)+o_\mu(1)+o_{\rho_1}(1)+o_{\rho_2}(1) \text{in }L^2(\Omega_{T,x_0}\times \mathbb{T}).
 \end{split}
 \end{align}
We can extend \eqref{g11a} to hold on $L^2(\Omega_{T}\times \mathbb{T})$ by observing that for $x\geq x_0/2$ and $\rho_2$ small the convolution \eqref{g9} evaluated at $x$ depends on $V_{k,\mu,\rho_1}(x')$ only for $|x-x'|\leq \rho_2$; so it is unaffected by the extensions into $x<0$ that were taken.   A repetition of the computation  \eqref{g6} in $x\geq x_0$ with tangential convolution replaced by convolution in all variables  yields the claimed extension of \eqref{g11a}.

Summarizing we have
\begin{subnumcases}{\label{g12}}
    T_{\phi_k}W_{k,\mu,\rho}+(P_1\phi_k)W_{k,\mu,\rho} & \nonumber  \\
    \quad = f^*_k(u,W_r,W_i)+o_\mu(1)+o_{\rho_1}(1)+o_{\rho_2}(1)
    & in $L^2(\Omega_T\times \mathbb{T})$, \label{g12a}\\
    W_{r,\mu,\rho}+W_{i,\mu,\rho}=0 & on $x=0$, \label{g12b}\\
    W_{i,\mu,\rho}|_{[-T,-T+\delta]} \nonumber \\
    \quad =W_1|_{[-T,-T+\delta]}+o_{\mu}(1) +o_{\rho_1}(1)+o_{\rho_2}(1) 
    & in $L^2(\Omega_{[-T,-T+\delta]}\times\mathbb{T})$. \label{g12c}
\end{subnumcases}

\Remark 
Here \eqref{g12a} tells us, for example,  that for fixed $\mu$ and $\rho_1$, the quantity $o_{\rho_2}(1)\to 0$ in $L^2(\Omega_T\times \mathbb{T})$, where
\begin{align*}
o_{\rho_2}(1)=\left(T_{\phi_k}W_{k,\mu,\rho}+(P_1\phi_k)W_{k,\mu,\rho}\right)-\left(T_{\phi_k}W_{k,\mu,\rho_1}+(P_1\phi_k)W_{k,\mu,\rho_1}\right).
\end{align*}
The order of fixing parameters -- $\mu,\rho_1,\rho_2$ --  is important.

\section{Error analysis}\label{ea}

In this section we complete the proof of Theorem \ref{mt2}. We begin by stating a couple of useful and rather well-known lemmas, which sometimes allow us to work with functions of $(x,y,t,\theta)$ rather than $(x,y,t,\eps)$.

\begin{lem}[{\cite[Proposition 3.3]{jmr1996cpam}}]\label{jmr1a}
Let $\omega$ be a relatively compact open subset of $\mathbb{R}^{n+1}_{x,y,t}$, and suppose $\phi\in C^1(\overline{\omega})$ is such that $\nabla_{x,y,t}\phi$ is never $0$ on $\overline{\omega}$.
Then if  $a(x,y,t,\theta)\in L^2(\omega;H^1(\mathbb{T}))$, we have 
\begin{align*}
\varlimsup_{\eps\to 0}\|a(x,y,t,\phi/\eps)\|_{L^2(\omega)}\leq (2\pi)^{-1/2}\|a(x,y,t,\theta)\|_{L^2(\omega\times \mathbb{T})}.
\end{align*}
\end{lem}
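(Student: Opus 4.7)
The plan is a Fourier decomposition in $\theta$ combined with a non-stationary phase estimate for each pair of Fourier modes. Expand
\[
a(x,y,t,\theta) = \sum_{k \in \Z} a_k(x,y,t)\, e^{ik\theta}, \qquad a_k \in L^2(\omega).
\]
The $H^1(\mathbb{T})$ hypothesis gives $\sum_{k\in\Z}(1+k^2)\|a_k\|_{L^2(\omega)}^2 \lesssim \|a\|_{L^2(\omega; H^1(\mathbb{T}))}^2 < \infty$, while Parseval yields
\[
\sum_{k} \|a_k\|_{L^2(\omega)}^2 = (2\pi)^{-1}\, \|a\|_{L^2(\omega\times \mathbb{T})}^2,
\]
which is exactly the square of the desired upper bound.

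For $N \in \N$, split $a = a^N + r^N$ with $a^N := \sum_{|k|\leq N} a_k e^{ik\theta}$. By Cauchy--Schwarz, for a.e.\ $(x,y,t)$ and every $\theta$,
\[
|r^N(x,y,t,\theta)|^2 \;\leq\; \Bigl(\sum_{|k|>N} |a_k|\Bigr)^2 \;\leq\; \Bigl(\sum_{|k|>N} k^{-2}\Bigr)\Bigl(\sum_{|k|>N} k^2 |a_k|^2\Bigr) \;\lesssim\; \frac{1}{N} \sum_{k} k^2 |a_k(x,y,t)|^2,
\]
uniformly in $\theta$. Integrating in $(x,y,t)$ over $\omega$ gives
\[
\|r^N(\cdot,\phi/\eps)\|_{L^2(\omega)} \;\leq\; \frac{C}{\sqrt{N}}\, \|a\|_{L^2(\omega; H^1(\mathbb{T}))},
\]
independently of $\eps$. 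It therefore suffices to show that, for each fixed $N$,
\[
\lim_{\eps \to 0}\, \|a^N(\cdot,\phi/\eps)\|_{L^2(\omega)}^2 \;=\; \sum_{|k|\leq N} \|a_k\|_{L^2(\omega)}^2,
\]
and then send $N \to \infty$ after taking the limsup.

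Expanding the square reduces this to the claim that, for each fixed pair $j \neq k$,
\[
I_{jk}(\eps) := \int_\omega a_j(x,y,t)\, \overline{a_k(x,y,t)}\, e^{i(j-k)\phi(x,y,t)/\eps}\, dx\,dy\,dt \;\longrightarrow\; 0 \quad \text{as } \eps \to 0.
\]
Since $\nabla\phi \neq 0$ on the compact set $\overline{\omega}$ we have $|\nabla\phi| \geq c > 0$ there, and a partition of unity reduces $I_{jk}$ to integrals supported in charts on which the $C^1$ implicit function theorem supplies new coordinates with $\phi$ as one of the coordinate functions. In those coordinates $I_{jk}$ becomes a single Fourier coefficient in the $\phi$-variable of an $L^1$ density (the product $a_j \overline{a_k}$ pulled back and multiplied by the $C^1$ Jacobian is integrable, since $a_j \overline{a_k} \in L^1(\omega)$), so $I_{jk}(\eps) \to 0$ by the Riemann--Lebesgue lemma as $|j-k|/\eps \to \infty$.

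Combining the two estimates,
\[
\limsup_{\eps \to 0}\, \|a(\cdot,\phi/\eps)\|_{L^2(\omega)} \;\leq\; \Bigl(\sum_{|k|\leq N}\|a_k\|_{L^2(\omega)}^2\Bigr)^{1/2} + \frac{C}{\sqrt{N}}\, \|a\|_{L^2(\omega; H^1(\mathbb{T}))};
\]
letting $N \to \infty$ produces the bound $(2\pi)^{-1/2} \|a\|_{L^2(\omega\times \mathbb{T})}$. The main obstacle is the low regularity $\phi \in C^1$, which rules out the usual iterated integration by parts proof of non-stationary phase; the workaround is to reduce to Riemann--Lebesgue via a $C^1$ change of coordinates, which is the minimal regularity under which this procedure is available. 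The role of the $H^1(\mathbb{T})$ hypothesis is precisely to make the tail $r^N$ summable uniformly in $\eps$ through the $k^{-2}$-weighted Cauchy--Schwarz estimate.
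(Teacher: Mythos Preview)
Your proof is correct. The paper does not actually prove this lemma; it is stated with a citation to \cite[Proposition 3.3]{jmr1996cpam} and used as a black box, so there is no proof in the paper to compare against. Your argument---Fourier truncation in $\theta$, a uniform-in-$\eps$ tail bound via the $H^1(\mathbb T)$ weight, and Riemann--Lebesgue for the finitely many cross terms after a $C^1$ straightening of $\phi$---is the standard route and handles the low regularity $\phi\in C^1$ correctly.
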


 We also need the following extension of Lemma \ref{jmr1a}, whose proof is similar.  

\begin{lem}\label{jmr2}
Let $\omega$ be a relatively compact open subset of $\mathbb{R}^{n+1}_{x,y,t}$, and suppose $\phi_i\in C^1(\overline{\omega})$ are such that $\nabla_{x,y,t}\phi_1$ and  $\nabla_{x,y,t}\phi_2$ are linearly independent at each $(x,y,t)\in \overline{\omega}$. 
If  $a(x,y,t,\theta_1,\theta_2)\in L^2(\omega;H^2(\mathbb{T}^2))$, we have 
\begin{align*}
\varlimsup_{\eps\to 0}\|a(x,y,t,\phi_1/\eps,\phi_2/\eps)\|_{L^2(\omega)}\leq (2\pi)^{-1}\|a(x,y,t,\theta_1,\theta_2)\|_{L^2(\omega\times \mathbb{T}^2)}.
\end{align*}
\end{lem}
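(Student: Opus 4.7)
The plan is to mirror the proof of Lemma \ref{jmr1a} as in \cite[Proposition 3.3]{jmr1996cpam}, expanding $a$ into a Fourier series in the torus variables, reducing by density to trigonometric polynomials, and then analyzing the resulting finite oscillatory sum. Writing
\begin{align*}
a(x,y,t,\theta_1,\theta_2)=\sum_{(k,l)\in \ZZ^2} a_{k,l}(x,y,t)\, e^{i(k\theta_1+l\theta_2)}, \qquad a_{k,l}\in L^2(\omega),
\end{align*}
the $H^2(\TT^2)$ hypothesis gives $\sum(1+k^2+l^2)^2\|a_{k,l}\|_{L^2(\omega)}^2\lesssim \|a\|_{L^2(\omega;H^2(\TT^2))}^2$. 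Let $a^N$ denote the partial sum over $|k|,|l|\leq N$. For each fixed $N$ I would first establish the equality version of the claim,
\begin{align*}
\lim_{\eps\to 0}\|a^N(x,y,t,\phi_1/\eps,\phi_2/\eps)\|_{L^2(\omega)}=(2\pi)^{-1}\|a^N\|_{L^2(\omega\times\TT^2)},
\end{align*}
and then recover the $\varlimsup$ bound for general $a$ by passing $N\to\infty$.

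For the trigonometric polynomial step, expand the squared norm and split into diagonal and off-diagonal contributions:
\begin{align*}
\|a^N(\cdot,\phi_1/\eps,\phi_2/\eps)\|_{L^2(\omega)}^2=\sum_{(k,l)=(k',l')}\int_\omega|a_{k,l}|^2\,dx\,dy\,dt+\sum_{(k,l)\neq(k',l')}\int_\omega a_{k,l}\overline{a_{k',l'}}\,e^{i\Psi_{k-k',l-l'}/\eps},
\end{align*}
where $\Psi_{m,n}:=m\phi_1+n\phi_2$. The diagonal piece equals $(2\pi)^{-2}\|a^N\|_{L^2(\omega\times\TT^2)}^2$ by Parseval. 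For the off-diagonal piece the hypothesis that $\nabla\phi_1,\nabla\phi_2$ are linearly independent on $\overline{\omega}$, together with compactness of $\overline{\omega}$, gives a uniform lower bound $|\nabla\Psi_{m,n}|\geq c(|m|+|n|)$ for all $(m,n)\in\ZZ^2\setminus\{0\}$. After approximating each $a_{k,l}$ by a $C^\infty_c(\omega)$ function (with arbitrarily small $L^2$ error), I would integrate by parts twice against the vector field $L=\tfrac{\eps}{i}\,|\nabla\Psi_{m,n}|^{-2}\nabla\Psi_{m,n}\cdot\nabla$ (for which $L e^{i\Psi_{m,n}/\eps}=e^{i\Psi_{m,n}/\eps}$) to bound each off-diagonal term by $C\eps^2/(|k-k'|+|l-l'|)^2$ times a product of smooth-norms of $a_{k,l},a_{k',l'}$. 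For fixed $N$ this sum is $o_\eps(1)$, yielding the desired limit.

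To pass to general $a$ one invokes the Sobolev embedding $H^2(\TT^2)\hookrightarrow L^\infty(\TT^2)$, which gives the uniform-in-$\eps$ bound
\begin{align*}
\|(a-a^N)(\cdot,\phi_1/\eps,\phi_2/\eps)\|_{L^2(\omega)}\leq \|a-a^N\|_{L^2(\omega;L^\infty(\TT^2))}\leq C\|a-a^N\|_{L^2(\omega;H^2(\TT^2))},
\end{align*}
which tends to $0$ as $N\to\infty$ independently of $\eps$. Combining this with $\|a^N\|_{L^2(\omega\times\TT^2)}\to\|a\|_{L^2(\omega\times\TT^2)}$ and the triangle inequality gives the claimed $\varlimsup$ estimate. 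The main obstacle is the need to integrate by parts twice rather than once: the lattice counting shows that $\sum_{(m,n)\neq 0}(|m|+|n|)^{-1}$ diverges, so a single integration by parts is not enough, and this is precisely why $H^2(\TT^2)$ rather than $H^1(\TT^2)$ regularity is assumed in the statement. Beyond this bookkeeping, the only substantive input is the linear-independence hypothesis, which enters solely through the uniform lower bound on $|\nabla\Psi_{m,n}|$.
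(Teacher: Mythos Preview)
Your overall strategy---Fourier expansion in $(\theta_1,\theta_2)$, reduction to trigonometric polynomials, and passage to the limit via the Sobolev embedding $H^2(\TT^2)\hookrightarrow L^\infty(\TT^2)$---is correct and is precisely what the paper has in mind (it only says the proof is ``similar'' to that of Lemma~\ref{jmr1a}).

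There is, however, a genuine gap in your treatment of the off-diagonal terms. You propose to integrate by parts against $L=\tfrac{\eps}{i}|\nabla\Psi_{m,n}|^{-2}\nabla\Psi_{m,n}\cdot\nabla$, but the adjoint $L^*$ applied to the amplitude involves $\mathrm{div}\bigl(|\nabla\Psi_{m,n}|^{-2}\nabla\Psi_{m,n}\bigr)$, i.e.\ second derivatives of the phases, while the hypothesis is only $\phi_1,\phi_2\in C^1(\overline\omega)$. With merely $C^1$ phases even one integration by parts is unjustified, let alone two. The standard fix is to avoid integration by parts altogether: for each fixed $(m,n)\neq(0,0)$ the linear-independence hypothesis gives $\nabla\Psi_{m,n}\neq0$ on the compact set $\overline\omega$, so by a partition of unity and the $C^1$ inverse function theorem one may locally take $\Psi_{m,n}$ as one of the coordinates; the off-diagonal integral then becomes $\int b(u)\,e^{iu_1/\eps}\,du$ with $b\in L^1$, and the classical Riemann--Lebesgue lemma gives $o_\eps(1)$. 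Since for fixed $N$ there are only finitely many off-diagonal terms, this suffices.

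This also shows that your closing remark mislocates the role of the $H^2$ assumption. The summability consideration $\sum_{(m,n)\neq0}(|m|+|n|)^{-2}<\infty$ is irrelevant, because the off-diagonal sum is finite for each $N$ and no rate in $\eps$ is needed. The $H^2(\TT^2)$ regularity enters \emph{only} through the embedding $H^2(\TT^2)\hookrightarrow L^\infty(\TT^2)$ that you (correctly) invoke to bound $\|(a-a^N)(\cdot,\phi_1/\eps,\phi_2/\eps)\|_{L^2(\omega)}$ uniformly in $\eps$; this is the two-variable analogue of the embedding $H^1(\TT)\hookrightarrow L^\infty(\TT)$ underlying Lemma~\ref{jmr1a}.
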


The error estimate in \S \ref{erest} uses a classical estimate for the following linear boundary problem on $\Omega_T$:
\begin{align*}
\begin{cases}
P(x,y,t,\partial) u=f & \text{in }\Omega_T,\\
u(0,y,t)=b(y,t) & \text{on } b\Omega_T,\\
u=u^1(x,y,t) & \text{on } \Omega_{[-T,-T+\delta]}.
\end{cases}
\end{align*}
We have\footnote{See Kreiss \cite{kreiss1970cpam} or Chazarain-Piriou \cite[Chapter 7]{chazarain1982}.}
\begin{align}\label{kreiss}
\|u\|_{H^1(\Omega_T)}\leq C(T)\left(\|f\|_{L^2(\Omega_T)}+\langle b\rangle_{H^1(b\Omega_T)}\right)+C\|u^1\|_{H^1(\Omega_{[-T,-T+\delta]})},
\end{align}
where $C(T)\to 0$ as $T\to 0$.   Here $b\Omega_T:=\{(y,t)\ | \ (0,y,t)\in \Omega_T\}$ and $\langle\cdot\rangle$ indicates a norm on $b\Omega_T$.

\begin{prop}\label{noresonance}
    The incoming phases and the reflected phases are nonresonant, in the following sense:
    \begin{enumerate}[label={\arabic*.}]
        \item For any $(x,y,t)\in (J_i\cap J_r)\setminus \mathrm{SB}_+$, the two vectors $\nabla \phi_i(x,y,t)$, $\nabla \phi_r(x,y,t)$ are linearly independent;

        \item For any $k_i, k_r\in \RR$, $k_ik_r\neq 0$, the function $\phi:=k_i\phi_i+k_r\phi_r$ is nowhere characteristic on  $(J_i\cap J_r)\setminus \mathrm{SB}_+$, meaning that
        \[ p(x,y,t, k_i\nabla \phi_i(x,y,t)+k_r \nabla \phi_r(x,y,t))\neq 0\;\;\forall (x,y,t)\in (J_i\cap J_r)\setminus \mathrm{SB}_+. \]
    \end{enumerate}
\end{prop}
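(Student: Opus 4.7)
The plan is to reduce Part 2 to Part 1 via a one-line bilinear computation, and to prove Part 1 by comparing the projected characteristic curves of $\phi_i$ and $\phi_r$ at a putative point of collinearity.

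For Part 1, I would argue by contradiction: suppose $\nabla\phi_i(m) = c\,\nabla\phi_r(m)$ at some $m \in (J_i \cap J_r) \setminus \mathrm{SB}_+$. The constant $c$ is nonzero by Assumption \ref{A00}. Because $p$ is homogeneous of degree two in $\nu$, the identity $\pi_* H_p(m, c\nu) = c\,\pi_* H_p(m,\nu)$ gives $T_{\phi_i}(m) = c\,T_{\phi_r}(m)$, while the analogous identity $H_p\alpha(m, c\nu) = c\,H_p\alpha(m, \nu)$ combined with the orientation convention $H_p\alpha > 0$ on both bicharacteristic families (see \eqref{q6a}) forces $c > 0$. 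Since the characteristic of $\phi_k$ through $m$ is the $\pi$-projection of the unique bicharacteristic of $p$ through $(m, \nabla\phi_k(m))$, and since the phase-space starting points agree up to the orientation-preserving scaling by $c$, the projected characteristics of $\phi_i$ and $\phi_r$ through $m$ coincide as point sets.

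Now trace this common curve backward in $\alpha$ from $m$. As an integral curve of $T_{\phi_i}$ inside $J_i$, it reaches $\mathcal{U} \subset \{\alpha = -T\}$; moreover, since $m \notin \mathrm{SB}_+$ and $\mathrm{SB}_+$ is the forward flowout of $G_{\phi_i}$, the backward trace cannot touch $\{x = 0\}$ at a grazing point (else $m$ would lie in $\mathrm{SB}_+$), nor can it cross $\{x = 0\}$ transversally (else the definition of $s(x',y')$ in \eqref{e2a} would stop the flow there and $m$ would not be reached from $\mathcal{U}$). Hence if $x(m) > 0$, the backward trace stays strictly in $\{x > 0\}$ all the way to $\{\alpha = -T\}$. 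On the other hand, as an integral curve of $T_{\phi_r}$ inside $J_r$, the same curve traced backward must terminate at a point $m_0 \in \{x = 0\}$ at strictly positive distance from $m$, by the construction of $Z_r$. This is a contradiction when $x(m) > 0$. The only remaining case is $x(m) = 0$: then $m \in J_i \cap \{x = 0\} \setminus G_{\phi_i} \subset I_-$, so $\partial_x \phi_i(m) \neq 0$; the construction of $\phi_r$ at illuminated boundary points yields $\partial_{y,t}\phi_r(m) = \partial_{y,t}\phi_i(m)$ and $\partial_x\phi_r(m) = -\partial_x\phi_i(m)$, so $\nabla\phi_i(m) = c\,\nabla\phi_r(m)$ with $c > 0$ forces $c = 1$ and then $\partial_x\phi_i(m) = 0$, a contradiction.

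With Part 1 in hand, Part 2 is a short algebraic consequence. Since $p(m,\cdot)$ is quadratic, it has an associated symmetric bilinear form $p_{\mathrm{bil}}(m;\cdot,\cdot)$; using $p(m,\nabla\phi_i) = p(m,\nabla\phi_r) = 0$, polarization gives
\begin{align*}
p(m, k_i \nabla\phi_i + k_r \nabla\phi_r) = 2\,k_i k_r\, p_{\mathrm{bil}}(m; \nabla\phi_i, \nabla\phi_r).
\end{align*}
The form $p(m,\cdot) = \lambda^2 + q(m,\cdot)$ has signature $(n,1)$, so its maximal totally isotropic subspaces are one-dimensional. Consequently, any two linearly independent null vectors $v, w$ must satisfy $p_{\mathrm{bil}}(v,w) \neq 0$, since otherwise $\mathrm{span}(v,w)$ would be a two-dimensional totally isotropic subspace. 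Applied to $v = \nabla\phi_i(m)$ and $w = \nabla\phi_r(m)$, which are linearly independent by Part 1, this yields $p(m, k_i\nabla\phi_i + k_r\nabla\phi_r) \neq 0$. The main obstacle is Part 1, and within it the boundary case $x(m) = 0$, which requires the explicit reflection relation between $\nabla\phi_i$ and $\nabla\phi_r$; the interior case is a clean comparison of where the incoming and reflected backward flows terminate, exploiting that $G_{\phi_i} \subset \mathrm{SB}_+$ so non-grazing is automatic once $m \notin \mathrm{SB}_+$.
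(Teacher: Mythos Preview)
Your proof is correct and rests on the same two pillars as the paper's: degree-two homogeneity of $p$ for Part~1 and the Lorentzian signature for Part~2. For Part~1, both arguments first observe that the projected bicharacteristics through $(m,\nabla\phi_i(m))$ and $(m,\nabla\phi_r(m))$ coincide as point sets (the paper writes this as $m_r(s)=m_i(as)$). The paper then traces this common curve along the reflected flow to a non-grazing boundary point $m_0$ and reads off $\nu_r(m_0)=a\,\nu_i(m_0)$, which immediately contradicts the choice of $\nu_r(m_0)$ as the \emph{other} preimage in $(i^*)^{-1}(\sigma)\cap p^{-1}(0)$; this single move handles interior and boundary points at once and avoids your case split. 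Your interior argument---comparing the backward $\phi_i$-trace (stays in $\{x>0\}$) with the backward $\phi_r$-trace (hits $\{x=0\}$)---is valid but leans on the specific definition of $J_i$; note also that your ``cannot cross transversally'' clause is phrased for the exit side $I_-$, whereas the \emph{first} backward touch would occur at a point of $I_+$ (the conclusion survives, since the forward curve would then have been in $x<0$ just before and hence already stopped by $s(x',y')$). For Part~2, your maximal-isotropic-subspace argument and the paper's Cauchy--Schwarz computation are equivalent expressions of the fact that a signature-$(n,1)$ form admits no two-dimensional totally null plane; your formulation is arguably the cleaner of the two.
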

The proof presented here is modified from \cite[Lemma 1.2]{dumas2002}.
\begin{proof}[Proof of Proposition \ref{noresonance}]
    1. Suppose the contrary, then there exists $(x,y,t)\in ( J_i\cap J_r)\setminus \mathrm{SB}_+$, and $k_i,k_r\in \RR$, $k_ik_r\neq 0$, such that $k_i\nabla \phi_i(x,y,t)+k_r\nabla \phi_r(x,y,t)=0$. Then $\nabla \phi_r(x,y,t) = a\nabla \phi_i(x,y,t)$ with $a:=-\frac{k_i}{k_r}$. Let $\gamma_i(s):=(m_i(s), \nu_i(s))$ be the null bicharacteristic of $p$ satisfying $(m_i(0), \nu_i(0))=(x,y,t;\nabla \phi_i(x,y,t))$. Let $\widetilde{\gamma}(s):=(m_i(as), a\nu_i(as))$. Then, since $p$ is homogeneous of order $2$ in $\nu$, on can check that $\widetilde{\gamma}$ satisfies 
    \[ \dot{\widetilde{\gamma}}(s) = H_p(\widetilde{\gamma}(s)), \ \widetilde{\gamma}(0) = (m_i(0),a\nu_i(0)) = (x,y,t;\nabla \phi_r(x,y,t)). \]
    This implies that $\widetilde{\gamma}=\gamma_r:=(m_r,\nu_r)$, where $\gamma_r$ is the null bicharacteristic passing through $(x,y,t;\nabla \phi_r(x,y,t))$ at $s=0$. Therefore
    \[ m_r(s)=m_i(as), \ \nu_r(s)=a\nu_i(as). \]
    In particular, there exists $s_0\in \RR$ such that $m_r(s_0)=m_i(as_0)=:m_0\in \{x=0\}\setminus G_{\phi_i}$ and $\nu_r(s_0)=a\nu_i(as_0)$. This is impossible by the choice of $(m_0,\nu_r(m_0))$ in \S \ref{reflectedphase}.

    2. Suppose the contrary.  Relabeling $\phi$, $\phi_i$, $\phi_r$ as $\phi_{\ell}$, $\ell=1,2,3$, and after replacing $\phi_{\ell}$ with $-\phi_{\ell}$ if necessary, we can assume that  there exist $k_{\ell}>0$ such that for some $(x,y,t)\in (J_i\cap J_r)\setminus \mathrm{SB}_+$:
    \[ k_1\nabla\phi_1(x,y,t)+k_2\nabla\phi_2(x,y,t)+k_3\nabla\phi_3(x,y,t)=0. \]
    We denote $X_{\ell}:=\nabla \phi_{\ell}(x,y,t)\in \RR^{n+1}\setminus \{0\}$, and let $\mathcal P$ be the quadratic form $p(x,y,t,\cdot,\cdot)$ on $\RR^{n+1}$. Then 
    \[ \sum_{1\leq \ell\leq 3}k_{\ell}X_{\ell}=0, \ \mathcal P(X_{\ell},X_{\ell})=0. \]
    Since $\mathcal P$ has signature $(n,1)$, after changing of coordinates by a linear transformation, we can assume $\mathcal P$ takes the form 
    \[ \mathcal P(X,X)=\sum_{1\leq j\leq n}c_j (X^j)^2-c_{n+1}(X^{n+1})^2, \ X=(X^1,\cdots, X^{n+1}) \]
    with $c_j>0$, $1\leq j\leq n+1$. Since all $k_{\ell}$ are positive, without loss of generality we can assume $X_1^{n+1}, X_2^{n+1}>0$. Then 
    \[ \mathcal P(X_3,X_3)=0 \ \Rightarrow \ \mathcal P(k_1X_1+k_2X_2, k_1X_1+k_2X_2)=0 \ \Rightarrow \ \mathcal P(X_1, X_2)=0. \]
    On the other hand,
    \[\begin{split} 
    \mathcal P(X_1, X_2) 
    = & \sum_{1\leq j\leq n}c_j X_1^j X_2^j - c_{n+1}X_1^{n+1}X_2^{n+1} \\
    = & \sum_{1\leq j\leq n}c_j X_1^j X_2^j - \sqrt{\sum_{1\leq j\leq n} c_j(X_1^j)^2 } \sqrt{ \sum_{1\leq j\leq n} c_j (X_2^j)^2 }\leq 0
    \end{split}\]
    by the Cauchy-Schwarz inequality, with equality holding if and only $X_1$, $X_2$ are colinear. Since $k_3>0$, this implies $X_{\ell}$, $\ell=1,2,3$ are colinear. But this contradicts part 1 of the proposition.
\end{proof}

\subsection{The TR approximate solution \texorpdfstring{$m^l_{\mu,\rho,M,\eps}$}{TEXT}.}\label{approx}

We now define the  truncated and regularized (TR) approximate solution
\begin{align}\label{f1}
\begin{split}
m^l_{\mu,\rho,M,\eps}(x,y,t):=u^l_{\rho}(x,y,t) & +\eps U^l_{r,\mu,\rho}\left(x,y,t,\frac{\phi_r}{\eps}\right)+\eps U^l_{i,\mu,\rho}\left(x,y,t,\frac{\phi_i}{\eps}\right) \\
& + \eps^2 U^{M}_{\mathrm{nc}}\left(x,y,t,\frac{\phi_r}{\eps},\frac{\phi_i}{\eps}\right).
\end{split}
\end{align}
Here the superscript $l$ indicates that $(u^l,W_r^l,W^l_i)$ is the solution to the same profile equations \eqref{e6}--\eqref{e8} as $(u,W_r,W_l)$, except that the initial  data $W_1(x,y,t,\theta_i)$ in \eqref{e8} is replaced by a trigonometric polynomial  $W^l_1$ as in Definition \ref{meaning}.\footnote{Because the problem is nonlinear, note that  $W_r^l$ and $W^l_r$ are not necessarily  trigonometric polynomials.}

\Remark 
The sublinearity of $f(x,y,t,\cdot,\cdot)$ in its last two arguments along with the Kreiss estimate \eqref{kreiss} and the estimates of \S \ref{ee} imply that 
\begin{align}\label{f1y}
\|u-u^l\|_{H^1(\Omega_T)}+\|W_r-W^l_r\|_{L^2(\Omega_T\times \mathbb{T})}+\|W_i-W^l_i\|_{L^2(\Omega_T\times \mathbb{T})}\lesssim \delta_l.  
\end{align}

In \eqref{f1} we have set  $\rho=(\rho_0,\rho_1,\rho_2)$, where $\rho_i$, $i=1,2$ are as before, and $\rho_0>0$ is a regularization parameter for $u^l$.  The TR objects $W^l_{k,\mu,\rho}$ are defined as in \S \ref{TR}, and $U^l_{k,\mu,\rho}$ is the unique periodic $\theta_k$-primitive with mean zero of   $W^l_{k,\mu,\rho}$, $k=r,i$.   The term $\eps^2 U^{M}_{\mathrm{nc}}$ is a corrector designed to solve away \emph{most of} a term similar to  $f^*_{\mathrm{nc}}$ as in \eqref{e5}.      
We will describe $u^l_\rho$ and $U^{M}_{\mathrm{nc}}$ after introducing some notation.

\noindent
{\bf Notations.}
Here are some abuses of notation that we often commit below.
\begin{align*}
\begin{split}
Pu & =P(x,y,t,\partial)u,\\
f(m^l_{\mu,\rho,M,\eps})&:=f(x,y,t,m^l_{\mu,\rho,M,\eps}, \nabla m^l_{\mu,\rho,M,\eps}),\\
\underline{f}(u^l,W^l_r,W^l_i)&:=\underline{f}(x,y,t,u^l,\nabla u^l+W^l_{r}\nabla\phi_r+W^l_{i}\nabla\phi_i),\\
\underline{f}(u^l_\rho,W^l_{r,\mu,\rho},W^l_{i,\mu,\rho})&:=\underline{f}(x,y,t,u^l_\rho,\nabla u^l_\rho+W^l_{r,\mu,\rho}\nabla\phi_r+W^l_{i,\mu,\rho}\nabla\phi_i),\\
f^*_r(u^l,W^l_r,W^l_i)&:=f^*_r(x,y,t,u^l,\nabla u^l+W^l_{r}\nabla\phi_r+W^l_{i}\nabla\phi_i),\\
f^*_i(u^l_\rho,W^l_{r,\mu,\rho},W^l_{i,\mu,\rho})&:=f^*_i(x,y,t,u^l_\rho,\nabla u^l_\rho+W^l_{r,\mu,\rho}\nabla\phi_r+W^l_{i,\mu,\rho}\nabla\phi_i),\\
& \text{ etc...}
\end{split}
\end{align*}
We also recall that we use $\underline{f}$, $f^*_r$, $f^*_i$ denote respectively the mean of $f(\cdot)$ with respect to $(\theta_r,\theta_i)$, the mean with respect to $\theta_i$ minus $\underline{f}$, and the mean with respect to $\theta_r$ minus $\underline{f}$.    Finally,  
\begin{align*}
f^*_{\mathrm{nc}}:=f(\cdot)-(\underline{f}+f^*_r+f^*_i).
\end{align*}
We often rely on the context to make it clear whether $\theta_r$, $\theta_i$ are evaluated at $\phi_r/\eps$, $\phi_i/\eps$ or not.

To define $u^l_\rho$ recall that $u^l$ satisfies
\begin{align*}
\begin{cases}
Pu^l=\underline{f}(u^l,W^l_r,W^l_i):=\underline{F} & \text{in }\Omega_T,\\
u^l(0,y,t)=0 & \text{on } \Omega_T\cap\{x=0\},\\
u^l=u^1 & \text{on }\Omega_{[-T,-T+\delta]}.
\end{cases}
\end{align*}
Choose $C^\infty$ functions $\underline{F}_{\rho_0}\to \underline{F}$ in $L^2(\Omega_T)$ and $u^1_{\rho_0}\to u^1$ in $H^1(\Omega_{[-T,-T+\delta]})$ as $\rho_0\to 0$.\footnote{These functions are easily chosen to satisfy compatibility conditions to infinite order at the corner.} Define $u^l_\rho$ as the $C^\infty$ solution of
\begin{align}\label{h2}
\begin{cases}
Pu^l_\rho=\underline{F}_{\rho_0} & \text{in } \Omega_T,\\
u^l_\rho(0,y,t)=0 & \text{on } \Omega_T\cap\{x=0\},\\
u^l_{\rho}=u^1_{\rho_0} &\text{on }\Omega_{[-T,-T+\delta]}.
\end{cases}
\end{align}
The estimate \eqref{kreiss} implies\footnote{We need this regularization of $u^l$ later to make sense of the trace of $U^{M}_{\mathrm{nc}}$ on $x=0$.}
\begin{align}\label{h3}
u^l_\rho\to u^l\text{ in }H^1(\Omega_T)\text{ as }\rho_0\to 0.
\end{align}
Moreover, the definition of $u^l_\rho$ implies
\begin{align*}
Pu^l_\rho=\underline{f}(u^l,W^l_r,W^l_i)+o_{\rho_0}(1)\text{ in }L^2(\Omega_T).
\end{align*}

Next we define the corrector $U^M_{\mathrm{nc}}$.  
Using Lemma \ref{jmr2}, we may write
\begin{align}\label{f1a}
\begin{split}
& f(x,y,t,m^l_{\mu,\rho,M,\eps}, \nabla m^l_{\mu,\rho,M,\eps}) \\
& =f(x,y,t,u^l_\rho,\nabla u^l_\rho+W^l_{r,\mu,\rho}\nabla\phi_r+W^l_{i,\mu,\rho}\nabla\phi_i)+o_\eps(1) \text{ in }L^2(\Omega_T),
\end{split}
\end{align}
where, similar to \eqref{e5},\footnote{In both \eqref{f1a} and \eqref{f1aa} we set $\theta_r=\phi_r/\eps$, $\theta_i=\phi_i/\eps$.}
\begin{align}\label{f1aa}
\begin{split}
&f(x,y,t,u^l_\rho,\nabla u^l_\rho+W^l_{r,\mu,\rho}\nabla\phi_r+W^l_{i,\mu,\rho}\nabla\phi_i)\\
& =  \underline{f}(u^l_\rho,W^l_{r,\mu,\rho},W^l_{i,\mu,\rho})+f^*_r(u^l_\rho,W^l_{r,\mu,\rho},W^l_{i,\mu,\rho})+f^*_i(u^l_\rho,W^l_{r,\mu,\rho},W^l_{i,\mu,\rho}) \\
& \quad + f^*_{\mathrm{nc}}(u^l_\rho,W^l_{r,\mu,\rho},W^l_{i,\mu,\rho}).
\end{split}
\end{align}
The absence of resonances (Proposition \ref{noresonance}) implies that the term $f^*_{\mathrm{nc}}$ has only noncharacteristic oscillations.  Thus, it has a (real) Fourier series of the form
\begin{align}\label{f2}
f^*_{\mathrm{nc}}(u^l_\rho,W^l_{r,\mu,\rho},W^l_{i,\mu,\rho})(x,y,t)=\sum_{\alpha\in \mathbb{Z}^{2,*}}f_\alpha(x,y,t)e^{i\alpha \phi/\eps},
\end{align}
where $\alpha\phi:=\alpha_r\phi_r+\alpha_i\phi_i$ and 
\begin{align*}
\mathbb{Z}^{2,*}:=\{\alpha=(\alpha_r,\alpha_i)\in\mathbb{Z}^2 \ | \ \alpha_r\neq 0, \alpha_i\neq 0\}.
\end{align*}

Given $\mu>0$ and $\rho=(\rho_0,\rho_1,\rho_2)$,  we can truncate the series \eqref{f2}, preserving its reality,  and set
\begin{align}\label{f3}
f^{*,M}_{\mathrm{nc}}(u^l_\rho,W^l_{r,\mu,\rho},W^l_{i,\mu,\rho}):=\sum_{\alpha\in \mathbb{Z}^{2,*}, |\alpha|\leq M}f_{\alpha}(x,y,t)e^{i\alpha \phi/\eps},
\end{align}
where we choose $M=M(\mu,\rho)$ large enough so that\footnote{The functions in \eqref{f4} are evaluated at $(x,y,t,\theta_r,\theta_i)$, while the one in \eqref{f3} is evaluated at $(x,y,t)$.  }
\begin{align}\label{f4}
\|f^*_{\mathrm{nc}}(u^l_\rho,W^l_{r,\mu,\rho},W^l_{i,\mu,\rho})-f^{*,M}_{\mathrm{nc}}(u^l_\rho,W^l_{r,\mu,\rho},W^l_{i,\mu,\rho})\|_{L^2(\Omega_T\times \mathbb{T})} < \rho_1.
\end{align}
We construct $U^{M}_{\mathrm{nc}}$ in \eqref{f1} to have the form
\begin{align}\label{f4a}
U^{M}_{\mathrm{nc}}=\sum_{\alpha\in \mathbb{Z}^{2,*}, |\alpha|\leq M}U_{\alpha}(x,y,t)e^{i\alpha \phi/\eps},
\end{align}
where the coefficients $U_{\alpha}$ are chosen as follows.   Observe that 
\begin{align*}
P(x,y,t,\partial)(\eps^2U^{M}_{\mathrm{nc}})=\sum_{\alpha\in \mathbb{Z}^{2,*}, |\alpha|\leq M} \left(-p(x,y,t,d(\alpha\phi))U_{\alpha} \right)+O(\eps)\text{ in }L^2(\Omega_T).
\end{align*}
Thus, we can use $U^{M}_{\mathrm{nc}}$ to solve away $f^{*,M}_{\mathrm{nc}}$ if we set 
\begin{align}\label{f6}
U_{\alpha}:=-p^{-1}(x,y,t,d(\alpha\phi))f_{\alpha}\text{ for }\alpha\in \mathbb{Z}^{2,*}, |\alpha|\leq M.
\end{align}
To see that $U_\alpha$ is well-defined on $\Omega_T$, we use the fact that $f^*_{\mathrm{nc}}(u^l_\rho,W^l_{r,\mu,\rho},W^l_{i,\mu,\rho})$ has  $(x,y,t)$-support in a compact set $K\subset J_r\cap J_i$ strictly away from $\mathrm{SB}$; so  $p(x,y,t,d(\alpha\phi))$ is smooth and nonzero for all $(x,y,t)\in K$ and all $\alpha\in \mathbb{Z}^{2,*}$.
This completes the definition of $m^l_{\mu,\rho,M,\eps}$ in \eqref{f1}.\footnote{Observe that the series \eqref{f4a} is real since the series \eqref{f3} is real.}  

 With this choice of $U_\alpha$ we have
\begin{align}\label{f7a}
\begin{gathered}
P(x,y,t,\partial)(\eps^2U^{M}_{\mathrm{nc}})(x,y,t)=f^{*,M}_{\mathrm{nc}}(u^l_\rho,W^l_{r,\mu,\rho},W^l_{i,\mu,\rho})+O(\eps), \\
\text{where } \|f^*_{\mathrm{nc}}(u^l_\rho,W^l_{r,\mu,\rho},W^l_{i,\mu,\rho})-f^{*,M}_{\mathrm{nc}}(u^l_\rho,W^l_{r,\mu,\rho},W^l_{i,\mu,\rho})\|_{L^2(\Omega_T\times \mathbb{T})} < \rho_1.
\end{gathered}
\end{align}

The main step in the error analysis is the proof of the following lemma.

\begin{lem}\label{mainlem}
Let $u,W_r,W_i$ be the functions constructed in Proposition \ref{d13w}.   There exists $T>0$ such that the following statements hold.    For any sequence of positive numbers $\delta_l\to 0$ there exist   sequences of positive numbers $\mu_l$, $\rho_{0,l}$, $\rho_{1,l}$, $\rho_{2,l}$, and $\eps_l$ such that the exact solution $u^\eps$ of \eqref{e0a} satisfies\footnote{Here $\rho_l:=(\rho_{0,l}, \rho_{1,l},\rho_{2,l}).$}
\begin{subequations}\label{f1z}
    \begin{align}
        \|W_k-W^l_{k,\mu_l,\rho_l}\|_{L^2(\Omega_T\times\mathbb{T})}\leq \delta_l\text{ for }k=r,i; \label{f1za}
    \end{align}
and for all  $\eps\in (0,\eps_l]$, 
    \begin{align}
        \left\|u^\eps-\left(u(x,y,t)+\eps U^l_{r,\mu_l,\rho_l}\left(x,y,t,\phi_r/\eps\right)+\eps U^l_{i,\mu_l,\rho_l}\left(x,y,t,\phi_i/\eps\right)\right)\right\|_{H^1(\Omega_T)}\lesssim \delta_l. \label{f1zb}
    \end{align}
\end{subequations}
\end{lem}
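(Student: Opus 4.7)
My plan is to use the TR ansatz $m^l_{\mu,\rho,M,\eps}$ from \eqref{f1} as an approximate solution: I will show that $\|Pm^l - f(m^l)\|_{L^2(\Omega_T)}$, the trace of $m^l$ on $\{x=0\}$, and the error of $u^\eps - m^l$ on the initial slab $\Omega_{[-T,-T+\delta]}$ are all small in the appropriate norms, then invoke the Kreiss estimate \eqref{kreiss} together with an absorption argument (valid because $T$ is small) to control $u^\eps - m^l$ in $H^1(\Omega_T)$, and finally compare $m^l$ with the target expression in \eqref{f1zb}. The first conclusion \eqref{f1za} will follow immediately by combining \eqref{f1y} for $\|W_k - W^l_k\|_{L^2}$ with \eqref{e11z}, \eqref{g4b}, and \eqref{g11} for $\|W^l_k - W^l_{k,\mu,\rho}\|_{L^2}$.

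For the approximate-PDE step I will substitute $m^l$ into $P$ along the lines of the formal calculation \eqref{b6b}: the $\eps^{-1}$ term vanishes by the eikonal equation; \eqref{h2} gives $Pu^l_\rho = \underline{f}(u^l,W^l_r,W^l_i) + o_{\rho_0}(1)$ in $L^2$; the transport sum from \eqref{g11a} contributes $(f^*_r + f^*_i)(u^l,W^l_r,W^l_i) + o_\mu(1) + o_\rho(1)$ after using Lemma \ref{jmr1a} to push the $\theta_k$-dependence down to $\theta_k = \phi_k/\eps$; and \eqref{f7a} turns $P(\eps^2 U^M_{\mathrm{nc}})$ into $f^{*,M}_{\mathrm{nc}} + O(\eps)$. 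On the other side, \eqref{f1a}, \eqref{f1aa}, \eqref{f4}, and the Lipschitz hypothesis of Assumption \ref{A0z}, combined with the bounds on $\|u - u^l_\rho\|_{H^1}$ and $\|W_k - W^l_{k,\mu,\rho}\|_{L^2}$ already recalled, give the expansion $f(m^l) = \underline{f} + f^*_r + f^*_i + f^{*,M}_{\mathrm{nc}} + o_\mu + o_\rho + o_\eps(1)$ in $L^2$, each piece evaluated at $(u^l,W^l_r,W^l_i)$. Subtracting yields $\|Pm^l - f(m^l)\|_{L^2(\Omega_T)} = o_\mu(1) + o_\rho(1) + O(\eps)$.

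For the traces, on $\{x=0\}$ I will use $\phi_r = \phi_i$ together with the matching condition \eqref{g12b} and uniqueness of mean-zero periodic primitives in the common phase to conclude that $U^l_{r,\mu,\rho}(\cdot,\phi_r/\eps) + U^l_{i,\mu,\rho}(\cdot,\phi_i/\eps) = 0$ there, so $m^l|_{x=0} = \eps^2 U^M_{\mathrm{nc}}|_{x=0}$, of size $O(\eps)$ in $H^1(b\Omega_T)$ by direct differentiation of \eqref{f4a}. On $\Omega_{[-T,-T+\delta]}$, shrinking $T$ ensures that $J_r$ and the $(x,y,t)$-support of $f^{*,M}_{\mathrm{nc}}$ both miss this slab, so \eqref{g12c} together with \eqref{q4zc} and Definition \ref{meaning} give $\|u^\eps - m^l\|_{H^1(\Omega_{[-T,-T+\delta]})} = o_\mu + o_\rho + o_\eps(1)$. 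Writing $w^\eps := u^\eps - m^l$, I have
\[
Pw^\eps = \bigl[f(u^\eps) - f(m^l)\bigr] - \bigl[Pm^l - f(m^l)\bigr],
\]
and the Lipschitz bound $|f(u^\eps) - f(m^l)| \leq K(|w^\eps| + |\nabla w^\eps|)$ combined with \eqref{kreiss} and absorption of the term $C(T)K\|w^\eps\|_{H^1}$ to the left (valid for $T$ small enough that $C(T)K < 1/2$) yields $\|w^\eps\|_{H^1(\Omega_T)} = o_\mu(1) + o_\rho(1) + O(\eps)$.

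Finally I will fix parameters in the layered order dictated by the $o$-notation: given $\delta_l$, first choose $\mu_l$, then $\rho_{0,l}, \rho_{1,l}, \rho_{2,l}$, then $M_l = M(\mu_l,\rho_l)$ via \eqref{f4}, then $\eps_l$, each small enough that the corresponding error terms are $\leq \delta_l$. The second conclusion \eqref{f1zb} follows by writing
\[
u^\eps - \bigl(u + \eps U^l_{r,\mu_l,\rho_l}(\phi_r/\eps) + \eps U^l_{i,\mu_l,\rho_l}(\phi_i/\eps)\bigr) = w^\eps - \eps^2 U^M_{\mathrm{nc}} + (u^l_{\rho_l} - u),
\]
and noting that $\|\eps^2 U^M_{\mathrm{nc}}\|_{H^1(\Omega_T)} = O(\eps)$ (standard WKB scaling applied to the finite Fourier sum \eqref{f4a}) and $\|u^l_{\rho_l} - u\|_{H^1} \lesssim \delta_l$ by \eqref{h3} and \eqref{f1y}. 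The main obstacle I anticipate is the careful bookkeeping of nested $o_\mu, o_\rho, o_\eps$ errors across the identities for $Pm^l$ and $f(m^l)$---in particular, verifying that the transport equations for the truncated-regularized profiles $W^l_{k,\mu,\rho}$ hold as global $L^2$ identities on all of $\Omega_T$ (rather than only on $\mathring{J}_k$) with remainders uniform in $\mu$. This is precisely what the commutation property \eqref{g1} and the Friedrichs-lemma argument in \eqref{g6} are designed to achieve, ultimately resting on the cancellation of the singular term \eqref{d16} isolated in the Remark of \S \ref{rigorous}.
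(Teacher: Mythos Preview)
Your approach is correct and follows the same skeleton as the paper: form the error $d^l_{\mu,\rho,M,\eps}=u^\eps-m^l_{\mu,\rho,M,\eps}$, write $Pd^l=[f(u^\eps)-f(m^l)]-[Pm^l-f(m^l)]$, absorb the Lipschitz piece via small $T$ in the Kreiss estimate \eqref{kreiss}, control boundary and initial traces, and finally unwind to \eqref{f1zb} after stripping off $\eps^2U^M_{\mathrm{nc}}$ and $(u^l_\rho-u)$.

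The one structural difference is that the paper inserts a cutoff $\chi^r_{c(\mu)}$ supported away from $\mathrm{SB}_+$ and splits $Pm^l-f(m^l)=B_1+B_2$ (see \eqref{f8}), handling the near-$\mathrm{SB}_+$ piece $B_1$ by a separate small-measure argument (Lemmas \ref{f9}--\ref{f13}) and the far piece $B_2$ by the six-term decomposition \eqref{f17}. You bypass this split by estimating $Pm^l-f(m^l)$ directly; the small-measure content of Lemma \ref{f13} is then hidden inside your appeal to $\|W^l_k-W^l_{k,\mu,\rho}\|_{L^2}=o_\mu(1)$ when you pass from $\underline{f}(u^l_\rho,W^l_{r,\mu,\rho},W^l_{i,\mu,\rho})$ to $\underline{f}(u^l,W^l_r,W^l_i)$ via Lipschitz. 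Both routes work; yours is a little more streamlined, while the paper's makes the two regimes (profile-equation region vs.\ shadow-boundary strip) explicit.

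One small omission: in the initial-slab estimate you write $\|u^\eps-m^l\|_{H^1(\Omega_{[-T,-T+\delta]})}=o_\mu+o_\rho+o_\eps(1)$, but the hypothesis \eqref{e0ac} only gives $\|u^\eps-(u^1+\eps U^l_1)\|_{H^1}\lesssim\delta_l$ for $\eps\le\eps_l$, so a $\delta_l$ term survives there (compare Proposition \ref{f21}\eqref{f22b}). This is harmless for the conclusion \eqref{f1zb}, whose target bound is already $\lesssim\delta_l$.
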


The first result \eqref{f1za}  is immediate from the estimate \eqref{f1y} and the TR estimates \eqref{e11z}, \eqref{g4}, \eqref{g11}. 
The second result \eqref{f1zb} is proved in \S\S \ref{erest}--\ref{conclusion}.

\subsection{Estimate of the error term \texorpdfstring{$d^l_{\mu,\rho,M,\eps}=u^\eps-m^l_{\mu,\rho,M,\eps}$}{TEXT}}\label{erest}

The problem satisfied by 
$$d^l_{\mu,\rho,M,\eps}(x,y,t):=u^\eps(x,y,t)-m^l_{\mu,\rho,M,\eps}(x,y,t)$$ 
is\footnote{Here use the fact that $U^l_{r,\mu,\rho}$ and $U^M_{\mathrm{nc}}$ vanish outside $J_r$ and hence in  $\Omega_{[-T,-T+\delta]}$; also $u^l=u^1$ on that set.}
\begin{align}\label{fa7}
    \begin{cases}
        Pd^l_{\mu,\rho,M,\eps}=f(u^\eps)-Pm^l_{\mu,\rho,M,\eps} & \text{in }\Omega_T, \\
        \begin{aligned}
            d^l_{\mu,\rho,M,\eps}(0,y,t)
            = & -m^l_{\mu,\rho,M,\eps}(0,y,t) \\
            = & -\left[\eps U^l_{r,\mu,\rho}+\eps U^l_{i,\mu,\rho}+\eps^2 U^M_{\mathrm{nc}}\right]|_{x=0},
        \end{aligned} & \text{on } \Omega_T\cap\{x=0\},\\
        \begin{aligned}
            d^l_{\mu,\rho,M,\eps}
            = & u^\eps-\left(u^1_\rho+\eps U^l_{r,\mu,\rho}+\eps U^l_{i,\mu,\rho}+\eps^2 U^M_{\mathrm{nc}}\right) \\
            = &\left(u^\eps-(u^1+\eps U^l_1)\right)+\left[(u^1+\eps U^l_1)-(u^1_\rho+\eps U^l_{i,\mu,\rho})\right]
        \end{aligned} & \text{on } \Omega_{[-T,-T+\delta]}.
    \end{cases}
\end{align}

Next write 
\begin{align}\label{f7}
Pd^l_{\mu,\rho,M,\eps}=[f(u^\eps)-f(m^l_{\mu,\rho,M,\eps})]-[Pm^l_{\mu,\rho,M,\eps}-f(m^l_{\mu,\rho,M,\eps})]:=A+B.
\end{align}
When estimating  $d^l_{\mu,\rho,M,\eps}$ using \eqref{kreiss}, the term $A$ can be absorbed into the left side by taking $T$ small enough.  
We decompose $B$ as follows.

First choose $c(\mu)>0$ small enough so that the support of $1- \chi^r_{c(\mu)}$ is disjoint from the union of the supports of $\chi^r_\mu$ and $\chi^i_\mu$, and so that $\lim_{\mu\to 0}c(\mu)=0$.  Then write
\begin{align}\label{f8}
\begin{split}
&Pm^l_{\mu,\rho,M,\eps}-f(m^l_{\mu,\rho,M,\eps})\\
& = (1- \chi^r_{c(\mu)})[Pm^l_{\mu,\rho,M,\eps}-f(m^l_{\mu,\rho,M,\eps})]+ \chi^r_{c(\mu)}[Pm^l_{\mu,\rho,M,\eps}-f(m^l_{\mu,\rho,M,\eps})]\\
&:= B_1(l,\mu,\rho,M,\eps)+B_2(l,\mu,\rho,M,\eps).
\end{split}
\end{align}
Here $B_2$ is supported away from $\mathrm{SB}_+$.  The functions $W^l_{r,\mu,\rho}(x,y,t,\frac{\phi_r}{\eps})$, $W^l_{i,\mu,\rho}(x,y,t,\frac{\phi_i}{\eps})$     and  $(P_1\phi_r)W^l_{r,\mu,\rho}$,  $(P_1\phi_i)W^l_{i,\mu,\rho}$ are all $C^\infty$ on $\Omega_{T}$.  
To make $B_2$ small, we will use the profile equations.   To make $B_1$ small,  we use the profile equations to show it supported in a small neighborhood of $\mathrm{SB}_+$,  call it $\cJ_\mu$, whose measure satisfies $|\cJ_\mu|=o_\mu(1)$.

The next two lemmas treat 
$B_1$.

\begin{lem}\label{f9}
For $l$, $\rho$, $\mu$ fixed we have
\begin{align}\label{f10}
\begin{split}
&\limsup_{\eps\to 0}\;\left \|(1- \chi^r_{c(\mu)})[Pm^l_{\mu,\rho,M,\eps}-f(m^l_{\mu,\rho,M,\eps})]\right\|_{L^2(\Omega_{T})} \\
&\leq  \left\|(1-\chi^r_{c(\mu)})[Pu^l(x,y,t)-f(x,y,t,u^l,\nabla u^l)]\right\|_{L^2(\Omega_{T})}.
\end{split}
\end{align}
\end{lem}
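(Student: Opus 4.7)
The plan is to exploit the fact that all oscillatory pieces of $m^l_{\mu,\rho,M,\eps}$ vanish identically on an open neighborhood of $\mathrm{supp}(1-\chi^r_{c(\mu)})$, reducing the left-hand side to a fixed, $\eps$-independent quantity which is then related to the right-hand side via the approximation $u^l_\rho\to u^l$.

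First, because $c(\mu)$ was chosen so that $\mathrm{supp}(1-\chi^r_{c(\mu)})$ is disjoint from $\mathrm{supp}(\chi^r_\mu)\cup\mathrm{supp}(\chi^i_\mu)$, I would fix $\rho_1$, $\rho_2$ small (depending on $\mu$) so that the tangential and normal convolutions in \eqref{g4a} and \eqref{g9} do not carry the supports of $\chi^k_\mu W^l_k$ into any point of $\mathrm{supp}(1-\chi^r_{c(\mu)})$. Then $W^l_{r,\mu,\rho}(x,y,t,\theta)=W^l_{i,\mu,\rho}(x,y,t,\theta)=0$ for every $\theta$ on an open neighborhood $\mathcal{N}$ of $\mathrm{supp}(1-\chi^r_{c(\mu)})$, and the same is true of their mean-zero $\theta$-primitives $U^l_{r,\mu,\rho}$ and $U^l_{i,\mu,\rho}$. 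From its construction \eqref{f2}--\eqref{f6}, the corrector $\eps^2 U^M_{\mathrm{nc}}$ has coefficients $f_\alpha$ arising from the pure cross-oscillation remainder $f^*_{\mathrm{nc}}(u^l_\rho,W^l_{r,\mu,\rho},W^l_{i,\mu,\rho})$; wherever either $W^l_{k,\mu,\rho}$ is identically zero in $\theta$, the nonlinearity $f(x,y,t,u^l_\rho,\nabla u^l_\rho+W^l_{r,\mu,\rho}\nabla\phi_r+W^l_{i,\mu,\rho}\nabla\phi_i)$ is independent of $\theta_k$, so $f^*_{\mathrm{nc}}=0$ there. Hence $U^M_{\mathrm{nc}}$, along with all of its $(x,y,t)$-derivatives, vanishes on $\mathcal{N}$ as well.

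Since these vanishings occur on an open neighborhood, all $(x,y,t)$-derivatives of the oscillatory and corrector terms in \eqref{f1} vanish pointwise on $\mathrm{supp}(1-\chi^r_{c(\mu)})$ for every $\eps>0$. It follows that $m^l_{\mu,\rho,M,\eps}=u^l_\rho$ and $\nabla m^l_{\mu,\rho,M,\eps}=\nabla u^l_\rho$ there, so
\[ (1-\chi^r_{c(\mu)})[Pm^l_{\mu,\rho,M,\eps}-f(m^l_{\mu,\rho,M,\eps})]=(1-\chi^r_{c(\mu)})[Pu^l_\rho-f(x,y,t,u^l_\rho,\nabla u^l_\rho)] \]
in $L^2(\Omega_T)$. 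The right side is $\eps$-independent, so $\limsup_{\eps\to 0}$ of the $L^2$-norm on the left equals its $L^2$-norm. To finish, I would let $\rho_0\to 0$: by construction $Pu^l_\rho=\underline{F}_{\rho_0}\to\underline{F}=Pu^l$ in $L^2(\Omega_T)$, while $u^l_\rho\to u^l$ in $H^1(\Omega_T)$ (recall \eqref{h3}), and Assumption \ref{A0z} then gives $f(x,y,t,u^l_\rho,\nabla u^l_\rho)\to f(x,y,t,u^l,\nabla u^l)$ in $L^2(\Omega_T)$. Hence the $L^2$-norm converges to the right-hand side of the lemma, yielding the inequality (with equality in the limit $\rho_0\to 0$; the $o_{\rho_0}(1)$ discrepancy for fixed $\rho$ is absorbed later in the error analysis).

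The main obstacle is the first step: carefully choosing $\rho_1$, $\rho_2$ depending on $\mu$ so that the open-neighborhood vanishing survives the convolutions. In particular, since the $x$-regularization of $V_{1,\mu,\rho_1}$ is performed after extending it across $x<0$, one must verify that for $x\ge 0$ on $\mathrm{supp}(1-\chi^r_{c(\mu)})$ the $\delta_{\rho_2}$-convolution integrates only against values where the relevant cutoffs still vanish, which is ensured by the strict disjointness of supports together with the positive separation built into the choice of $c(\mu)$.
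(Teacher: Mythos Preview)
Your approach is essentially the same as the paper's: both use the disjointness of $\mathrm{supp}(1-\chi^r_{c(\mu)})$ from $\mathrm{supp}(\chi^r_\mu)\cup\mathrm{supp}(\chi^i_\mu)$ to kill all oscillatory and corrector pieces of $m^l_{\mu,\rho,M,\eps}$ on that set, reducing to the $\eps$-independent quantity involving $u^l_\rho$. The paper's proof is more terse --- it simply invokes \eqref{f1a}, the disjointness, and the computation \eqref{b6b} --- whereas you are more explicit on two points the paper leaves implicit: (i) that $\rho_1,\rho_2$ must be taken small relative to the separation so the convolutions do not enlarge supports past the gap (this is consistent with the paper's parameter ordering $\mu,\rho_0,\rho_1,\rho_2,M,\eps$); and (ii) that one actually lands on $u^l_\rho$ rather than $u^l$, so that the stated inequality with $u^l$ on the right carries an $o_{\rho_0}(1)$ discrepancy. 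The paper writes $u^l$ directly; as you correctly note, this discrepancy is harmless and is absorbed in the iterated $\varlimsup$ of \eqref{f17a}--\eqref{f25}.
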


\begin{proof}
Using \eqref{f1a} and the disjointness of supports described above, 
we have 
\begin{align*}
\begin{split}
&(1-\chi^r_{c(\mu)})f(m^l_{\mu,\rho,M,\eps})=(1- \chi^r_{c(\mu)})f(x,y,t,u^l,\nabla u^l)+o_\eps(1)\text{ in }L^2(\Omega_{T}).
\end{split}
\end{align*}
Along with a similar analysis of  $(1-\chi^r_{c(\mu)})Pm^l_{\mu,\rho,M,\eps}$ using the computation \eqref{b6b}, this  gives \eqref{f10}.
\end{proof}

\begin{lem}\label{f13}
We have 
$$\left\|(1-\chi^r_{c(\mu)})[Pu^l(x,y,t)-f(x,y,t,u^l,\nabla u^l)]\right\|_{L^2(\Omega_{T})}=o_\mu(1).$$
\end{lem}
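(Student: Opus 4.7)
The plan is to use the profile equation for $u^l$ to rewrite $Pu^l - f(x,y,t,u^l,\nabla u^l)$ as a Lipschitz-bounded difference involving $W^l_r$ and $W^l_i$, and then exploit the fact that $1-\chi^r_{c(\mu)}$ localizes to a neighborhood of the shadow boundary $\mathrm{SB}_+$ that shrinks to a hypersurface of measure zero. Concretely, since $(u^l,W^l_r,W^l_i)$ solves \eqref{e6}--\eqref{e8} with $W_1$ replaced by the trigonometric polynomial $W^l_1$, the definition \eqref{e5} of $\underline f$ yields the identity
\begin{align*}
Pu^l - f(x,y,t,u^l,\nabla u^l) = \int_{\mathbb{T}^2}\bigl[f\bigl(x,y,t,u^l,\nabla u^l + W^l_r\nabla\phi_r + W^l_i\nabla\phi_i\bigr) - f(x,y,t,u^l,\nabla u^l)\bigr]\frac{d\theta_r\,d\theta_i}{(2\pi)^2}.
\end{align*}
Applying the Lipschitz bound of Assumption \ref{A0z}, using boundedness of $\nabla\phi_r$ on $\cJ_r$ (which holds since $\phi_r\in C^1(\cJ_r)$ by \eqref{q15}) and of $\nabla\phi_i$ on $U$, and invoking Cauchy--Schwarz in $\theta$ will produce the pointwise bound $|Pu^l - f(x,y,t,u^l,\nabla u^l)|^2 \lesssim \|W^l_r(x,y,t,\cdot)\|^2_{L^2(\mathbb{T})} + \|W^l_i(x,y,t,\cdot)\|^2_{L^2(\mathbb{T})}$. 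Integrating against $(1-\chi^r_{c(\mu)})^2$ then reduces the claim to establishing $\|(1-\chi^r_{c(\mu)})W^l_k\|_{L^2(\Omega_T\times\mathbb{T})}=o_\mu(1)$ for $k=r,i$.

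The next step is to identify the support of $(1-\chi^r_{c(\mu)})W^l_k$ inside $\Omega_T\times\mathbb{T}$. By the construction of $\chi^r_{c(\mu)}$ via pullback through the reflected flow map and by its extension to $\Omega_T$ in \S \ref{TR}, the factor $1-\chi^r_{c(\mu)}$ vanishes on $\Omega_T\setminus(J_r\cup\{\text{shadow region}\})$, equals $1$ on the shadow region, and on $J_r$ is supported in a one-sided neighborhood of $\mathrm{SB}_+$ of thickness $O(c(\mu))$. Since $W^l_r$ is supported in $J_r$, $W^l_i$ in $J_i$, and neither profile reaches the shadow region, the full integrand is in fact supported in this shrinking one-sided neighborhood of $\mathrm{SB}_+$ inside $J_r$.

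Finally, because $\mathrm{SB}_+$ is an $n$-dimensional $C^1$ hypersurface in $\mathbb{R}^{n+1}$, the Lebesgue measure of its $c(\mu)$-neighborhood is $O(c(\mu))=o_\mu(1)$. Since $W^l_r, W^l_i\in L^2(\Omega_T\times\mathbb{T})$, absolute continuity of the Lebesgue integral---equivalently, dominated convergence with pointwise majorant $|W^l_k|^2$ and pointwise vanishing of the cutoff off $\mathrm{SB}_+$---delivers the required $o_\mu(1)$ bound. The main subtlety is the bookkeeping of the extension of $\chi^r_{c(\mu)}$ relative to the supports of $W^l_r$ and $W^l_i$; in particular, the seemingly problematic contribution from the shadow region vanishes because the profiles themselves vanish there. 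Once that is recorded, the argument is entirely routine and does not require any information about the singularity of $\phi_r$ near $\mathrm{SB}_+$ beyond the $L^2$ bounds already in Proposition \ref{d13w}.
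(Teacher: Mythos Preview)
Your argument is correct and follows the same overall strategy as the paper: both use the profile equation $Pu^l=\underline f(u^l,W^l_r,W^l_i)$ together with the support properties of $W^l_r,W^l_i$ to localize $(1-\chi^r_{c(\mu)})[Pu^l-f(x,y,t,u^l,\nabla u^l)]$ to a neighborhood $\cJ_\mu$ of $\mathrm{SB}_+$ with $|\cJ_\mu|=o_\mu(1)$, and finish by absolute continuity of the $L^2$ integral.

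The only difference is in how the $L^2$ control on the shrinking set is obtained. You pass through the Lipschitz estimate to get a pointwise majorant $\|W^l_r(x,y,t,\cdot)\|_{L^2(\mathbb T)}+\|W^l_i(x,y,t,\cdot)\|_{L^2(\mathbb T)}$ and then invoke $W^l_k\in L^2(\Omega_T\times\mathbb T)$. The paper skips this step: it simply notes that $Pu^l-f(x,y,t,u^l,\nabla u^l)$ vanishes identically on $\Omega_T\setminus(J_r\cup J_i)$ (since $\underline f$ collapses to $f$ when both profiles are zero), and then uses that $Pu^l=\underline f(u^l,W^l_r,W^l_i)\in L^2(\Omega_T)$ and $f(x,y,t,u^l,\nabla u^l)\in L^2(\Omega_T)$ separately. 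Your route is slightly longer but yields a more explicit majorant; the paper's is a line shorter and avoids invoking the boundedness of $\nabla\phi_r$ on $\cJ_r$.
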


An argument similar to the following proof occurs in \cite[\S 9]{cheverry1996}.
\begin{proof}
Let $\cJ:=J_r\cup J_i$.   Since both $W^l_r$ and $W^l_i$ are zero on $\Omega_T\setminus \cJ$, we have 
\begin{align*}
\underline{f}(u^l,W^l_r,W^l_i)=f(x,y,t,u^l,\nabla u^l)\text{ on }\Omega_T\setminus \cJ.
\end{align*}
Thus, the profile equations satisfied by $(u^l,W^l_r,W^l_i)$ imply
\begin{align*}
0=Pu^l-\underline{f}(u^l,W^l_r,W^l_i)=Pu^l-f(x,y,t,u^l,\nabla u^l)\text{ on } \Omega_T\setminus \cJ.
\end{align*}
Hence $(1-\chi^r_{c(\mu)})[Pu^l-f(x,y,t,u^l,\nabla u^l)]$ is supported in a small neighborhood of $\mathrm{SB}_+$,  call it $\cJ_\mu$, whose measure satisfies $|\cJ_\mu|=o_\mu(1)$.  
This implies the lemma since both $Pu^l$ and $f(x,y,t,u^l,\nabla u^l)$ are in  $L^2(\Omega_{T})$.\footnote{Use the profile equations to see that $Pu^l\in L^2(\Omega_{T})$.}
\end{proof}

Next we estimate $B_2$ in \eqref{f8}.
Using the fact that  formal computations like those in  \S \ref{formalb} are valid when $u^\eps_a$ is replaced by $m^l_{\mu,\rho,M,\eps}$, with  \eqref{h2} and \eqref{f7a}   we compute
\begin{align}\label{f16}
\begin{split}
Pm^l_{\mu,\rho,M,\eps}= & P(u^l_\rho+\eps U^l_{r,\mu,\rho}+\eps U^l_{i,\mu,\rho}+\eps^2 U^{M}_{\mathrm{nc}})\\
= & \underline{f}(u^l,W^l_r,W^l_i)_{\rho_0}+\left[T_{\phi_r} W^l_{r,\mu,\rho}+(P_1\phi_r)W^l_{r,\mu,\rho}\right] \\
& +\left[T_{\phi_i} W^l_{i,\mu,\rho}+(P_1\phi_i)W^l_{i,\mu,\rho}\right]+ f^{*,M}_{\mathrm{nc}}(u^l_\rho,W^l_{r,\mu,\rho},W^l_{i,\mu,\rho})+ o_\eps(1).
\end{split}
\end{align}
Recall from \eqref{f1a} and \eqref{f1aa} that 
\begin{align*}
\begin{split}
f(m^l_{\mu,\rho,M,\eps})
= & \underline{f}(u^l_\rho,W^l_{r,\mu,\rho},W^l_{i,\mu,\rho})+f^*_r(u^l_\rho,W^l_{r,\mu,\rho},W^l_{i,\mu,\rho}) \\
& +f^*_i(u^l_\rho,W^l_{r,\mu,\rho},W^l_{i,\mu,\rho})+ f^*_{\mathrm{nc}}(u^l,W^l_{r,\mu,\rho},W^l_{i,\mu,\rho})+o_\eps(1).  
\end{split}
\end{align*}
Thus, with \eqref{f16} we obtain\footnote{In \eqref{f17} ${f}^*_k(u^l,W^l_r,W^l_i)_\mu:=\chi^k_\mu {f}^*_k(u^l,W^l_r,W^l_i) $, $k=r,i$.}
\begin{align}\label{f17}
    \begin{split}
        & \chi^r_{c(\mu)}\left[Pm^l_{\mu,\rho,M,\eps}-f(m^l_{\mu,\rho,M,\eps})\right](x,y,t) \\
        & = \chi^r_{c(\mu)}\left[\underline{f}(u^l,W^l_r,W^l_i)_{\rho_0}-\underline{f}(u^l_\rho,W^l_{r,\mu,\rho},W^l_{i,\mu,\rho})\right]\\
        & \quad + \chi^r_{c(\mu)}\left[\left(T_{\phi_r} W^l_{r,\mu,\rho}(P_1\phi_r)W^l_{r,\mu,\rho}\right)-{f}^*_r(u^l,W^l_r,W^l_i)_\mu\right] \\
        & \quad + \chi^r_{c(\mu)}\left[{f}^*_r(u^l,W^l_r,W^l_i)_\mu-{f}^*_r(u^l_\rho,W^l_{r,\mu,\rho},W^l_{i,\mu,\rho})\right] \\
        & \quad + \chi^r_{c(\mu)}\left[\left(T_{\phi_i} W^l_{i,\mu,\rho}+(P_1\phi_i)W^l_{i,\mu,\rho}\right)-{f}^*_i(u^l,W^l_r,W^l_i)_\mu\right]  \\
        & \quad + \chi^r_{c(\mu)}\left[{f}^*_i(u^l,W^l_r,W^l_i)_\mu-{f}^*_i(u^l_\rho,W^l_{r,\mu,\rho},W^l_{i,\mu,\rho})\right] \\
        & \quad + \chi^r_{c(\mu)}\left[f^{*,M}_{\mathrm{nc}}(u^l_\rho,W^l_{r,\mu,\rho},W^l_{i,\mu,\rho})-f^*_{\mathrm{nc}}(u^l_\rho,W^l_{r,\mu,\rho},W^l_{i,\mu,\rho})\right]+o_\eps(1).
    \end{split}
\end{align}
We expect each of the  differences appearing in \eqref{f17} to be ``small" in $L^2(\Omega_{T})$.

\Remark 
More precisely, given $\delta>0$, we expect that if $\mu$ is first fixed small enough, then $\rho_0=\rho_0(\mu)$ can be fixed small enough, then $\rho_1=\rho_1(\mu,\rho_0)$ can be fixed small enough, then $\rho_2=\rho_2(\mu,\rho_0,\rho_1)$ can be fixed small enough, then $M=M(\mu,\rho)$ can be fixed large enough, and finally $\epsilon_0=\eps_0(\mu,\rho,M)$ can be fixed small enough, so that for $0<\eps<\epsilon_0$,  each of the differences in \eqref{f17} is less than $\delta$ in $L^2(\Omega_{T})$.   If $h$ denotes any one of those differences, this can be expressed more briefly by\footnote{In fact, $\rho_0$ does not really depend on $\mu$.}
\begin{align}\label{f17a}
\varlimsup_{\mu\to 0}\left(\varlimsup_{\rho_0\to 0}\left(\varlimsup_{\rho_1\to 0}\left(\varlimsup_{\rho_2\to 0}\left(\varlimsup_{M\to \infty}\left(\varlimsup_{\eps\to 0}\|h(\mu,\rho,M,\eps)(x,y,t)\|_{L^2(\Omega_{T})}\right)\right)\right)\right)\right)=0.
\end{align}
This \emph{order} of fixing $\mu,\rho_0,\rho_1,\rho_2,M,\eps$ is implicit in the notation $o_\eps(1)$ used, for example, in \eqref{f16}.  There $o_\eps(1)$ denotes a function $r(\mu,\rho,M,\eps)$ such that for $\mu, \rho,M$ fixed we have $$\lim_{\eps\to 0}\|r(\mu,\rho,M,\eps)\|_{L^2(\Omega_{T})}=0.$$

\begin{prop}\label{f19}
The function $h$ given by $\chi^r_{c(\mu)}\left[Pm^l_{\mu,\rho,M,\eps}-f(m^l_{\mu,\rho,M,\eps})\right](x,y,t)$ satisfies \eqref{f17a}.
\end{prop}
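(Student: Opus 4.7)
The plan is to estimate each of the six bracketed differences in the decomposition \eqref{f17} (together with the $o_\eps(1)$ remainder) in $L^2(\Omega_T)$ and verify that each becomes negligible in the iterated-limit sense of \eqref{f17a}. Since $\|\chi^r_{c(\mu)}\|_{L^\infty}\leq 1$, that cutoff can be dropped for the purposes of norm bounds, so I will simply estimate each bracketed expression.

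I would begin with the profile-residual terms (the second and fourth brackets), which I expect to be the heart of the argument. Using the commutation property \eqref{g1} together with the profile equations \eqref{e7}--\eqref{e8} gives $T_{\phi_k}W^l_{k,\mu}+(P_1\phi_k)W^l_{k,\mu}=f^*_k(u^l,W^l_r,W^l_i)_\mu$, and the tangential and normal regularization steps of \S\ref{TR} (exactly as in \eqref{g6}--\eqref{g11a}) then yield
\[
T_{\phi_k}W^l_{k,\mu,\rho}+(P_1\phi_k)W^l_{k,\mu,\rho}=f^*_k(u^l,W^l_r,W^l_i)_\mu+o_{\rho_1}(1)+o_{\rho_2}(1)
\]
in $L^2(\Omega_T\times\mathbb{T})$. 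Evaluating at $\theta_k=\phi_k/\eps$ and invoking Lemma \ref{jmr1a}, this becomes an $L^2(\Omega_T)$ bound of the same order after $\eps\to 0$. The crucial input here, as in \S\ref{rigorous}, is the cancellation of the $((p(\cdot,\partial)\phi_r)W_r,W_r)$ term from \eqref{d5}, which is what keeps the implicit constants uniform and so guarantees that the $o_{\rho_j}(1)$ symbols have their intended meaning.

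The first, third, and fifth brackets are Lipschitz differences. Since the means $\underline f,f^*_r,f^*_i$ inherit the Lipschitz estimate from Assumption \ref{A0z}, each such bracket is dominated in $L^2(\Omega_T)$ by a constant times
\[
\|u^l_\rho-u^l\|_{H^1(\Omega_T)}+\sum_{k=r,i}\|W^l_{k,\mu,\rho}-W^l_k\|_{L^2(\Omega_T\times\mathbb{T})},
\]
together with $\|\underline F_{\rho_0}-\underline f(u^l,W^l_r,W^l_i)\|_{L^2(\Omega_T)}$ in the case of bracket $1$. Each piece vanishes under the appropriate limit by \eqref{h3} and the TR convergences \eqref{e11z}, \eqref{g4b}, \eqref{g11}. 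For the sixth bracket, \eqref{f4} combined with Lemma \ref{jmr2}---which applies because $\nabla\phi_r,\nabla\phi_i$ are linearly independent and nonresonant off $\mathrm{SB}_+$ by Proposition \ref{noresonance}, while the relevant supports avoid $\mathrm{SB}_+$---gives an $L^2(\Omega_T)$ bound of order $\rho_1$ after $\eps\to 0$, which then vanishes as $\rho_1\to 0$. The $o_\eps(1)$ remainder, coming from Lemma \ref{jmr2} in \eqref{f1a} and from \eqref{f7a}, disappears as $\eps\to 0$ with $(\mu,\rho,M)$ held fixed. Assembling these bounds and taking the iterated limits in the order $\eps\to 0$, $M\to\infty$, $\rho_2\to 0$, $\rho_1\to 0$, $\rho_0\to 0$, $\mu\to 0$ prescribed by \eqref{f17a} then proves the proposition; the main point of care is bookkeeping, namely verifying that the dependencies $M=M(\mu,\rho)$ and $c=c(\mu)$ arrange every $o$-symbol to factor through the intended parameter so that no subsequent limit reactivates a term already killed.
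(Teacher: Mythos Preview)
Your proposal is correct and follows essentially the same route as the paper: handle the profile residuals via \eqref{g12} (equivalently \eqref{g6}--\eqref{g11a}) plus Lemma \ref{jmr1a}, handle the $\underline f$ and $f^*_k$ differences by the Lipschitz hypothesis together with the TR convergences \eqref{e11z}, \eqref{g4b}, \eqref{g11}, \eqref{h3}, and handle the noncharacteristic remainder via \eqref{f4} and Lemma \ref{jmr2}.

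One correction to an aside: the cancellation of $((p(\cdot,\partial)\phi_r)W_r,W_r)$ from \eqref{d5} plays \emph{no} role here. That cancellation is what allows the energy estimates of \S\ref{rigorous} to be uniform in the approximation index and hence to solve the profile equations. In the present argument the uniformity of the $o_{\rho_j}(1)$ symbols comes from a different mechanism: the truncation $\chi^k_\mu$ forces $W^l_{k,\mu}$ to be supported strictly away from $\mathrm{SB}$, and on that support $P_1\phi_r$ and the coefficients of $T_{\phi_r}$ are smooth and bounded (see the footnote after \eqref{g6}), so Friedrichs' lemma and the commutator manipulations go through with constants depending only on $\mu$. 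Removing that sentence, your argument is the paper's.
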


\begin{proof}
\textbf{1. } We show that each of the  six differences appearing in \eqref{f17} satisfies \eqref{f17a}.   By \eqref{f7a} and Lemma \ref{jmr2} we have immediately
\begin{align*}
\begin{split}
&\varlimsup_{\eps\to 0}\left\| \chi^r_{c(\mu)}\left[f^{*,M}_{\mathrm{nc}}(u^l_\rho,W^l_{r,\mu,\rho},W^l_{i,\mu,\rho})-f^*_{\mathrm{nc}}(u^l_\rho,W^l_{r,\mu,\rho},W^l_{i,\mu,\rho})\right]\right\|_{L^2(\Omega_T)}  \\
&\lesssim \left\|\chi^r_{c(\mu)}\left[f^{*,M}_{\mathrm{nc}}(u^l_\rho,W^l_{r,\mu,\rho},W^l_{i,\mu,\rho})-f^*_{\mathrm{nc}}(u^l_\rho,W^l_{r,\mu,\rho},W^l_{i,\mu,\rho})\right]\right\|_{L^2(\Omega_T\times \mathbb{T})}=o_{\rho_1}(1).
\end{split}
\end{align*}

\textbf{2. }We have 
\begin{align*}
\begin{split}
&\underline{f}(u^l,W^l_r,W^l_i)_{\rho_0}-\underline{f}(u^l_\rho,W^l_{r,\mu,\rho},W^l_{i,\mu,\rho})\\
&=\left[\underline{f}(u^l,W^l_r,W^l_i)_{\rho_0}-\underline{f}(u^l,W^l_r,W^l_i)\right]+\left[\underline{f}(u^l,W^l_r,W^l_i)-\underline{f}(u^l_\rho,W^l_{r,\mu,\rho},W^l_{i,\mu,\rho})\right].
\end{split}
\end{align*}
The first term on the right is $o_{\rho_0}(1)$, and the sublinearity assumption on $f$ implies
\begin{align*}
\begin{split}
&\varlimsup_{\eps\to 0}\left\|\chi^r_{c(\mu)}\left[\underline{f}(u^l,W^l_r,W^l_i)-\underline{f}(u^l_\rho,W^l_{r,\mu,\rho},W^l_{i,\mu,\rho})\right]\right\|_{L^2(\Omega_T)} \\
&\lesssim \|u^l-u^l_\rho\|_{H^1(\Omega_T)}+\|(W^l_r-W^l_{r,\mu,\rho},W^l_i-W^l_{i,\mu,\rho})\|_{ L^2(\Omega_T)\times L^2(\Omega_T)} \\
& =o_{\rho_0}(1)+o_{\rho_2}(1)+
o_{\rho_1}(1)+o_{\mu}(1).
\end{split}
\end{align*}
Here we use \eqref{h3} to get the  $o_{\rho_0}(1)$ term.  For the remaining terms we used Lemma \ref{jmr1a} followed by \eqref{g11}, \eqref{g4}, and \eqref{e11z}. 

\textbf{3. }Recall from \eqref{g12} that for $k=r,i$:
\begin{align*}
\begin{split}
\left\|\left(T_{\phi_k}W_{k,\mu,\rho}+(P_1\phi_k)W_{k,\mu,\rho}\right)-f^*_k(u,W_r,W_i)\right\|_{L^2(\Omega_T\times \mathbb{T})}
=o_\mu(1)+o_{\rho_1}(1)+o_{\rho_2}(1).
\end{split}
\end{align*}
Thus, Lemma \ref{jmr1a} implies
\begin{align*}
\begin{split}
\varlimsup_{\eps\to 0}\left\| \chi^r_{c(\mu)}\left[\left(T_{\phi_k} W^l_{k,\mu,\rho}+(P_1\phi_k)W^l_{k,\mu,\rho}\right)-{f}^*_k(u^l,W^l_r,W^l_i)_\mu\right]\right\|_{L^2(\Omega_T)} =o_\mu(1)+o_{\rho_1}(1)+o_{\rho_2}(1)
\end{split}
\end{align*}

\textbf{4. }Similarly, applying Lemma \ref{jmr1a} and using the sublinearity of $f$ as in step \textbf{2}  yields for $k=r,i$:
\begin{align*}
\begin{split}
\varlimsup_{\eps\to 0}\left\|\chi^r_{c(\mu)}\left[{f}^*_k(u^l,W^l_r,W^l_i)_\mu-{f}^*_k(u^l_\rho,W^l_{r,\mu,\rho},W^l_{i,\mu,\rho}\right]\right\|_{L^2(\Omega_T)}
=o_\mu(1)+o_{\rho_1}(1)+o_{\rho_2}(1).
\end{split}
\end{align*}
This completes the proof.
\end{proof}

Next we consider the boundary term and the initial data term in the  application of the Kreiss estimate \eqref{kreiss} to the problem \eqref{fa7} satisfied by $d^l_{\mu,\rho,M,\eps}(x,y,t)$.

\begin{prop}\label{f21}
Let  $\delta_l\to 0$ be as in Definition \ref{meaning} {as applied to} the symbol $\sim_{H^1}$ in \eqref{e0ac}.
We have
\begin{subequations}\label{f22}
    \begin{gather}
        \left\langle d^l_{\mu,\rho,M,\eps}\right\rangle_{H^1(b\Omega_T)}=o_\eps(1); \text{ and } \label{f22a}\\
        \varlimsup_{\eps\to 0}\|d^l_{\mu,\rho,M,\eps}\|_{H^1(\Omega_{[-T,-T+\delta]})}\lesssim \delta_l+ o_\mu(1)+o_{\rho_0}(1)+o_{\rho_1}(1)+o_{\rho_2}(1). \label{f22b}
    \end{gather}
\end{subequations}
\end{prop}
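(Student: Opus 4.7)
The plan for \eqref{f22a} is to exploit an exact cancellation of the characteristic terms at the boundary, reducing $d^l_{\mu,\rho,M,\eps}|_{x=0}$ to the corrector alone. The construction of $\phi_r$ by \eqref{q13} gives $\phi_r=\phi_i$ on $\{x=0\}$, and the boundary condition \eqref{g12b} reads $W^l_{r,\mu,\rho}+W^l_{i,\mu,\rho}=0$ there; since $U^l_{r,\mu,\rho}$ and $U^l_{i,\mu,\rho}$ are the unique mean-zero periodic $\theta$-primitives of the corresponding $W$'s, integrating yields $U^l_{r,\mu,\rho}+U^l_{i,\mu,\rho}=0$ on $\{x=0\}$ as well. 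Consequently
\[
 d^l_{\mu,\rho,M,\eps}(0,y,t) \;=\; -\eps^2 U^M_{\mathrm{nc}}\!\left(0,y,t,\tfrac{\phi_r}{\eps},\tfrac{\phi_i}{\eps}\right).
\]
Because $U^M_{\mathrm{nc}}$ is the finite trigonometric polynomial \eqref{f4a} with smooth compactly supported coefficients $U_\alpha$, the $L^2$ part of the trace norm is $O(\eps^2)$, while each tangential derivative pulls down an extra $O(1/\eps)$ from the $\phi/\eps$ arguments, giving $O(\eps)$ in $H^1$. This establishes \eqref{f22a}.

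For \eqref{f22b}, since $U^l_{r,\mu,\rho}$ and $U^M_{\mathrm{nc}}$ have $(x,y,t)$-support contained in $J_r$ and hence vanish on $\Omega_{[-T,-T+\delta]}$ (using the support assumption on $W_1$ together with the smallness of $T$, as in the footnote to \eqref{fa7}), we decompose
\[
 d^l_{\mu,\rho,M,\eps} \;=\; \underbrace{[u^\eps - (u^1 + \eps U^l_1(\cdot,\tfrac{\phi_i}{\eps}))]}_{(I)} \;+\; \underbrace{(u^1-u^1_\rho)}_{(II)} \;+\; \underbrace{\eps[U^l_1 - U^l_{i,\mu,\rho}](\cdot,\tfrac{\phi_i}{\eps})}_{(III)}.
\]
Term $(I)$ has $H^1$ norm $\lesssim \delta_l$ for $\eps\in(0,\eps_l]$ directly from Definition \ref{meaning} applied to the initial condition; term $(II)$ is $o_{\rho_0}(1)$ by \eqref{h3}. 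For $(III)$, the chain rule gives $\nabla[\eps V(\cdot,\phi_i/\eps)] = \eps\,\nabla V + (\partial_\theta V)\,\nabla\phi_i$ with $V=U^l_1-U^l_{i,\mu,\rho}$ and $\partial_\theta V=W^l_1-W^l_{i,\mu,\rho}$; the $\eps\,\nabla V$ contribution is $O(\eps)$, while Lemma \ref{jmr1a} reduces the oscillatory piece to controlling $\|W^l_1-W^l_{i,\mu,\rho}\|_{L^2(\Omega_{[-T,-T+\delta]}\times\mathbb{T})}$. Inserting the intermediate quantities $W_1$ and $W_{i,\mu,\rho}$ and applying the triangle inequality, this norm is bounded by
$\|W^l_1-W_1\|_{L^2} + \|W_1-W_{i,\mu,\rho}\|_{L^2} + \|W_{i,\mu,\rho}-W^l_{i,\mu,\rho}\|_{L^2}$,
which is $\lesssim \delta_l + (o_\mu+o_{\rho_1}+o_{\rho_2}) + \delta_l$ by, respectively, Definition \ref{meaning}, the TR identity \eqref{g12c}, and \eqref{f1y} combined with the uniform $L^2$-boundedness of truncation and mollification. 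Assembling the three contributions yields \eqref{f22b}.

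The only nontrivial bookkeeping is the last chain of inequalities for term $(III)$: one must route the comparison of the trigonometric polynomial $W^l_1$ (from Definition \ref{meaning}) with the TR profile $W^l_{i,\mu,\rho}$ through $W_1$ and $W_{i,\mu,\rho}$, absorbing the difference between the formal initial datum $W_1$ and the profile $W_i$ on the initial slab into the TR error budget via \eqref{g12c}. This is precisely why \eqref{g12c} was built into the regularization scheme of \S \ref{TR}; every other step is a routine application of Lemma \ref{jmr1a}, the Leibniz rule, and the sublinearity of the nonlinearity.
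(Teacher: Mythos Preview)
Your proof is correct and follows essentially the same approach as the paper's. The only minor difference is in handling term $(III)$ of \eqref{f22b}: you route the comparison $W^l_1\to W^l_{i,\mu,\rho}$ through the unlabeled intermediaries $W_1$ and $W_{i,\mu,\rho}$ via \eqref{g12c} and \eqref{f1y} (picking up harmless extra $\delta_l$ terms), whereas the paper invokes \eqref{e11z}, \eqref{g4}, \eqref{g11} directly on the $l$-superscripted objects.
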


\begin{proof}
\textbf{For \eqref{f22a}: }by \eqref{fa7} and \eqref{g12} we have $d^l_{\mu,\rho,M,\eps}(0,y,t)=-\eps^2 U^M_{\mathrm{nc}}|_{x=0}=o_\eps(1) \text{ in }H^1(b\Omega_T)$.  Indeed, \eqref{f3} and \eqref{f6} imply that each term is smooth in the finite sum  \eqref{f4a} that gives $U^M_{\mathrm{nc}}$.\footnote{Here we use the fact that $u^l_\rho$ and  $W^l_{k,\mu,\rho}$, $k=r,i$, are smooth. }

\textbf{For \eqref{f22b}: }by \eqref{fa7} we have
\begin{align*}
d^l_{\mu,\rho,M,\eps}|_{\Omega_{[-T,-T+\delta]}}=\left(u^\eps-(u^1+\eps U^l_1)\right)+\left[(u^1+\eps U^l_1)-(u^1_\rho+\eps U^l_{i,\mu,\rho})\right], 
\end{align*}
hence
\begin{align*}
\begin{split}
\|d^l_{\mu,\rho,M,\eps}\|_{H^1(\Omega_{[-T,-T+\delta]})}\lesssim  \delta_l+\|u^1-u^1_\rho\|_{H^1(\Omega_{[-T,-T+\delta]})} 
+\|\eps U^l_1-\eps U^l_{i,\mu,\rho}\|_{H^1(\Omega_{[-T,-T+\delta]})}
\end{split}
\end{align*}
The conclusion then follows by the choice of $u^1_{\rho_0}$ in \eqref{h2} and, after applying Lemma \ref{jmr1a},  from \eqref{e11z}, \eqref{g4}, \eqref{g11}.
\end{proof}

\subsection{Conclusion of the proof of Theorem \ref{mt2}}\label{conclusion}
Application of the Kreiss estimate \eqref{kreiss} to the error problem \eqref{fa7} yields, after absorption of the term involving $A$ in \eqref{f7}, the estimate
\begin{align*}
\begin{split}
\|d^l_{\mu,\rho,M,\eps}\|_{H^1(\Omega_{T})}
\lesssim & \sum^2_{k=1}\|B_k(l,\mu,\rho,M,\eps)\|_{L^2(\Omega_{T})} 
 +\langle d^l_{\mu,\rho,M,\eps} \rangle_{H^1(b\Omega_T)}+\|d^l_{\mu,\rho,M,\eps}\|_{H^1(\Omega_{[-T,-T+\delta]})},
\end{split}
\end{align*}
where the $B_k$ are defined in \eqref{f8}.    The term $B_1$ is estimated in Lemmas \ref{f9} and \ref{f13}, the term $B_2$ is estimated in Proposition \ref{f19}, and the remaining terms are  estimated in Proposition \ref{f21}. 
Together these estimates show that for the  sequence of numbers $\delta_l\to 0$ in Proposition  \ref{f21}, we have 
\begin{align}\label{f24}
\|d^l_{\mu,\rho,M,\eps}\|_{H^1(\Omega_{T,X})}\lesssim \delta_l+R(l,\mu,\rho,M,\eps),
\end{align}
where for each $l\in\mathbb{N}$ 
\begin{align}\label{f25}
\varlimsup_{\mu\to 0}\left(\varlimsup_{\rho_0\to 0}\left(\varlimsup_{\rho_1\to 0}\left(\varlimsup_{\rho_2\to 0}\left(\varlimsup_{M\to \infty}\left(\varlimsup_{\eps\to 0}\|R(l,\mu,\rho,M,\eps)\|_{L^2(\Omega_{T})}\right)\right)\right)\right)\right)=0.
\end{align}

\begin{proof}[Proof of Lemma \ref{mainlem}]
We proved \eqref{f1za} at the end of \S \ref{approx}.   To prove \eqref{f1zb}, 
for each $l$ we  use \eqref{f24} and \eqref{f25} to choose (or modify) consecutively 
$\mu_l$, $\rho_{0,l}$, $\rho_{1,l}$, $\rho_{2,l}$, $M_l$, and $\eps_l$ such that 
\begin{align*}
\text{for all } \eps\in (0,\eps_l], \ \|d^l_{\mu_l,\rho_l,M_l,\eps}\|_{H^1(\Omega_{T})}\lesssim \delta_l.
\end{align*}
Recalling the definition of $d^l_{\mu,\rho,M,\eps}$ and using 
\begin{align*}
\|u-u^l\|_{H^1(\Omega_{T})}\lesssim \delta_l,\ 
\|u^l-u^l_\rho\|_{H^1(\Omega_T)}=o_{\rho_0}(1), \text{ and }\left\|\eps^2 U^M_{\mathrm{nc}}\right\|_{H^1(\Omega_{T})}=o_\eps(1),
\end{align*}
we obtain \eqref{f1zb} after possibly another modification of $\rho_{0,l}$ and $\eps_l$.  
\end{proof}

To complete the proof of Theorem \ref{mt2}, one then just needs to replace the smooth functions $W^l_{k,\mu_l,\rho_l}$ in \eqref{f1z} by trigonometric polynomial approximations $W^l_{k,\mu_l,\rho_l,N_l}$ such that\footnote{This entails another application of Lemma \ref{jmr1a} and another possible reduction of $\eps_l$. } 
$$\left\|W^l_{k,\mu_l,\rho_l}-W^l_{k,\mu_l,\rho_lN_l}\right\|_{L^2(\Omega_T\times \mathbb{T})}\leq \delta_l. $$

\Remark
Since the profiles $W_r$, $W_i$ have support in $J_r\cup J_i$, Theorem \ref{mt2} implies
\begin{align*}
\|u^\eps - u\|_{H^1(\Omega_{T}\setminus (J_r\cup J_i))}=o_\eps(1).
\end{align*}
In particular,  there are no high frequency oscillations in the shadow that are  detectable in the $H^1$ norm.

\section{Diffraction of plane waves by a convex obstacle} \label{convexobstacle}

In this section we let $P(m,\partial_m)$ be the wave operator on $\mathbb{R}^{n+1}$, 
\begin{align}\label{r1}
\Box=\partial_{x_1}^2+\dots+\partial_{x_n}^2-\partial_t^2,
\end{align}
and  show that Theorem \ref{mt2} applies to describe the diffraction of oscillatory plane waves by a large class of convex obstacles  $\mathcal{O}\subset \mathbb{R}^{n}$ with $C^\infty$ boundary.   We take the spacetime domain to be $M=(\mathbb{R}^n\setminus \cO)\times \mathbb{R}_t$ and use coordinates $(x_1,\ox,t,\xi_1,\oxi,\tau)$ on $T^*M$.
Grazing rays of any finite or infinite order are allowed.   We must show that Assumptions \ref{A2} and \ref{A3} hold for these problems.

Denote points in $\mathbb{R}^n$ by $x=(x_1,\ox)$.
Our analysis is local near a given boundary point, so we make the following definition.

\begin{defi}\label{r2}
Let $\mathcal{O}\subset \mathbb{R}^n$ be an open convex set with $C^\infty$ boundary and suppose $\mathrm P_0\in \partial \cO$.
After rotation and translation of $\cO$ we can suppose $\mathrm P_0=(1,0)$, that the tangent plane to $\partial \cO$ at $\mathrm P_0$ is 
$x_1=1$, and that $\cO$ lies to the \emph{left} of $\mathrm P_0$ near $\mathrm P_0$.    We say that \emph{$\cO$ is strictly convex near $\mathrm P_0$} provided there exists an $\mathbb{R}^n$-open set $\Omega\ni \mathrm P_0$ such that $\partial \cO\cap \Omega$ is the graph $x_1=F(\ox)$ of a function $F(\ox)$ with the following properties. There exists an $\mathbb{R}^{n-1}$-open ball $B(0,r)$ of radius $r>0$ such that $F: B(0,r)\to \mathbb{R}$ and 
\begin{enumerate}[label={{ \arabic*.}}]
    \item $F \in C^\infty(B(0,r))$ and $F(0)=1$;
    \item For all $\ox$, $\ox^*\in B(0,r)$, we have $F(\ox^*)-F(\ox)\leq \langle\nabla F(\ox),\ox^*-\ox\rangle$ with equality holding if and only if $\ox=\ox^*$.
\end{enumerate}
Thus, we have
\begin{align*}
\partial \cO\cap \Omega=\{(F(\ox),\ox) \ | \ \ox\in B(0,r)\}.
\end{align*}
\end{defi}
The second condition in Definition \ref{r2} means that $F$ is strictly concave on $B(0,r)$.  
The conditions 1, 2 in Definition \ref{r2}  imply that  the Hessian of $F$ is negative semi-definite,  that is,  $\nabla^2 F\leq 0$ on $B(0,r)$.\footnote{ In fact, the conditions 1, 2 in Definition \ref{r2} imply $\nabla^2 F<0$ on $B(0,r)$, except possibly on a nowhere dense subset.  See \cite{robertsvarberg1973} for properties of  convex functions.}  Note also that $\nabla F(0)=0$.

\Remark 
If condition 1 in Definition \ref{r2}  holds along with $\nabla^2 F<0$ on $B(0,r)\setminus \{0\}$, then $\cO$ is strictly convex near $\mathrm P_0=(1,0)$.

\noindent 
{\bf Examples. }
For the following functions $F_j:\mathbb{R}^{n-1}\to \mathbb{R}$  the sets $\{(x_1,\ox) \ | \ x_1< F(\ox)\}$ are strictly convex near $(1,0)$:
\begin{subequations}\label{r4}
    \begin{align}
        & F_0(\ox) = 1-|\ox|^{2k} \text{ where } k\in \mathbb N;\\
        & F_1(\ox)=1-(x_2^{2k}+\dots+x_n^{2k}) \text{ where } k\in\mathbb{N};  \label{r4a}\\
        & F_2(\ox)=1-\begin{cases} e^{-|\ox|^{-2}}, \ & \ox\neq 0, \\ 0, \ & \ox=0. \end{cases}   \label{r4b}
    \end{align}
\end{subequations}
Here $F_2$, which vanishes to infinite order at $\ox=0$,   and $F_0$ satisfy $\nabla^2 F<0$ for $\ox\neq 0$ small.  The function $F_1$ does not.

\begin{figure}[t] 
   \centering
   \includegraphics[width=0.49\textwidth]{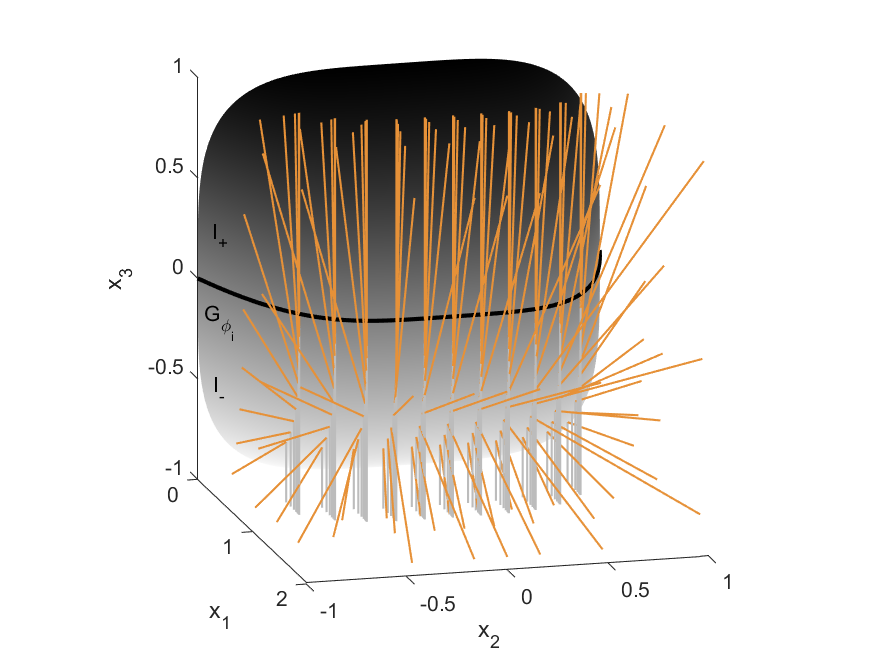} 
   \includegraphics[width=0.49\textwidth]{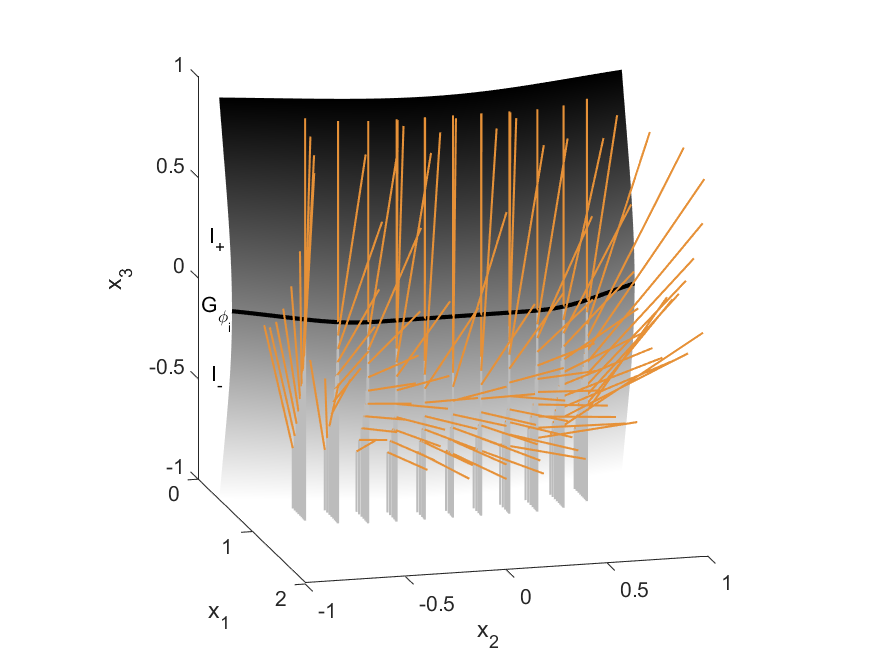} 
   \caption{{\bf Left:} Convex obstacle $\mathcal O_1$ with $F_1$ in \eqref{r4a} and $n=3$, $k=2$. {\bf Right:} Convex obstacle $\mathcal O_2$ with $F_2$ in \eqref{r4b} and $n=3$. In both figures, $I_+$, $G_{\phi_i}$, and $I_-\cup G_{\phi_i}$ are the $x$-projections of the shadow regions, the grazing sets and the illuminable regions respectively. The gray lines are the incoming rays and the yellow lines are the reflected rays.}
   \label{fig:rays}
\end{figure}

Suppose now that $\cO$ is strictly convex near $\mathrm P_0=(1,0)$.   Incoming plane waves correspond to linear incoming phases.  
A linear phase having a forward characteristic that grazes $\partial M$ at $(\mathrm P_0,t_0)=(1,0,t_0)$  must be some positive multiple of\footnote{The point $(1,0,t_0)$ is now playing the role of the distinguished basepoint $``0"\in \partial M$ of \S \ref{DandA}.}  
\begin{align}\label{r5}
\phi_i(x_1,\ox,t)=-t+\langle\otheta, \ox\rangle,  \text{ where }\otheta=(\theta_2,\dots,\theta_n)\in \mathbb S^{n-2}.
\end{align}

In \S \ref{va2} we verify Assumption \ref{A2} for oscillatory incoming plane waves for the following kinds of obstacles:
\begin{enumerate}[label={\arabic*.}]
    \item \emph{any} two-dimensional obstacle that is strictly convex near $\mathrm P_0=(1,0)$; see Proposition \ref{ra11}.

    \item any three dimensional obstacle that is strictly convex near $\mathrm P_0=(1,0)$, provided $F$ as in Definition \ref{r2} also satisfies Assumption \ref{A4}; see Proposition \ref{r11}.

    \item $n$ dimensional obstacles that are strictly convex near $\mathrm P_0=(1,0)$ and have an additional symmetry property -- Assumption \ref{ndA}; see Proposition \ref{rnd}.
\end{enumerate}

In \S\S \ref{2d}--\ref{nd} we show that for strictly convex obstacles, the reflected flow map $Z_r$ resulting from an incoming phase $\phi_i$ in \eqref{r5} satisfies Assumption \ref{A3}.

\subsection{Assumption \ref{A2}}\label{va2}
For an obstacle $\cO$ defined by a function $F$ as in Definition \ref{r2} and incoming phase $\phi_i=-t+\langle \otheta,\ox\rangle$ as in \eqref{r5} the grazing set determined by $\phi_i$, defined in Assumption \ref{A2}, is\footnote{Using the parametrization of $\partial M$ given by $(\ox,t)\mapsto (F(\ox),\ox,t)$, we can write $\phi_0=-t+\langle \otheta,\ox\rangle.$   Thus, $\usigma=(0,t_0,d\phi_0(0,t_0))=(0,t_0,\otheta,-1)=i^*\urho$,  where $\urho=(1,0,t_0,0,\otheta,-1)$.}
\begin{align}\label{r10}
G_{\phi_i}=\{(F(\ox),\ox,t) \ | \ \langle\nabla F(\ox),\otheta\rangle=0, \ \ox\in B(0,r), \ t\in \mathbb{R}\}.
\end{align}
Indeed, the normal vector to $\partial M$ at $(F(\ox),\ox,t)$ is $(1,-\nabla F(\ox),0)$ and the direction of a forward characteristic of $\phi_i$ at $(F(\ox),\ox,t)$ is $(0,\otheta,1)$.   Similarly, the illuminated region (Definition \ref{q10zz}) is $I_-\cup G_{\phi_i}$, where 
\begin{align*}
I_- = \{(F(\ox),\ox,t)\ | \ \langle\nabla F(\ox),\otheta\rangle>0, \ \ox\in B(0,r), \ t\in \mathbb{R}\}.
\end{align*}

\subsubsection{2D obstacles} \label{2do}   

We show now that  Assumption \ref{A2} holds for incoming plane waves when $\cO$ is 
\emph{any} two-dimensional obstacle that is strictly convex near $\mathrm P_0=(1,0)$.

\begin{prop}\label{ra11}
Suppose $\cO\subset \mathbb{R}^2$ is defined by a function $F$ as in Definition \ref{r2}; that is, assume only that $\cO$ is strictly convex near $\mathrm P_0=(1,0)$. 
Let $P=\Box$ be the wave operator \eqref{r1} on $M=(\mathbb{R}^2\setminus \cO)\times \mathbb{R}_t$ and let $\phi_i=-t+\langle\otheta,\ox\rangle$ where $\otheta=\pm 1$.   Assume 
$$\usigma=i^*\urho\in \cG_d:=\cup_{k=1}^\infty \left(G^{2k}_d\setminus G^{2k+1}\right)\cup G^\infty_d,$$ where $\urho=(1,0,t_0,0,\otheta,-1)$.    Then the conditions of Assumption \ref{A2} are satisfied if one takes $\zeta(\ox)=\ox=x_2$.   That is, we have
\begin{align}\label{ra12}
\begin{split}
&G_{\phi_i}=\{(F(x_2),x_2,t) \ | \ x_2=0,\ x_2\in B(0,r), t\in \mathbb{R}\}. 
\end{split}
\end{align}
Moreover, $H_p\zeta(\urho)\neq 0$   and points in $\left(G\cap \mathrm{Graph}(d\phi_0)\right)\setminus\{\usigma\}$ near $\usigma$ belong to $\cG_d$ and have the same order as $\usigma$.    
\end{prop}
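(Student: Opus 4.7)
The plan is to exploit the one-dimensional nature of the boundary normal variable $\ox = x_2$ together with strict concavity of $F$. Starting from \eqref{r10} and using $\otheta = \pm 1 \neq 0$, the grazing condition $\langle \nabla F(x_2), \otheta\rangle = 0$ reduces immediately to $F'(x_2) = 0$, so the main geometric task is to identify the critical set of $F$ in $B(0,r)$.

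First I would deduce from strict concavity that $F'$ vanishes only at $x_2 = 0$. Setting $\ox = 0$ in condition (2) of Definition \ref{r2} and using $\nabla F(0) = 0$ gives $F(\ox^*) < F(0) = 1$ for every $\ox^* \neq 0$ in $B(0,r)$. If $F'(\ox_*) = 0$ held for some $\ox_* \neq 0$, the same inequality applied at $\ox = \ox_*$ would force $F(\ox^*) < F(\ox_*)$ for every $\ox^* \neq \ox_*$, yielding the contradiction $F(0) < F(\ox_*) < F(0)$. Hence $G_{\phi_i} = \{(1, 0, t) : t \in \mathbb R\}$, which is cut out in $\partial M \cap U$ by $\zeta := x_2 = 0$, establishing \eqref{ra12}. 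For the transversality statement I would compute directly with $p = \xi_1^2 + \xi_2^2 - \tau^2$, $H_p = 2\xi_1\partial_{x_1} + 2\xi_2\partial_{x_2} - 2\tau\partial_t$, and $\urho = (1, 0, t_0, 0, \otheta, -1)$, obtaining $H_p \zeta(\urho) = 2\otheta = \pm 2 \neq 0$.

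For the final claim about nearby points of $G \cap \mathrm{Graph}(d\phi_0)$, I would use the fact that both the obstacle and the phase are invariant under translation in $t$. Since $p$ and $\beta = x_1 - F(x_2)$ do not depend on $t$ and the cotangent variables $(\xi_1, \xi_2, \tau) = (0, \otheta, -1)$ are constant along $G_{\phi_i}$, an easy induction on $j$ shows that $H_p^j \beta$ evaluated at any $\rho' = (1, 0, t, 0, \otheta, -1)$ is a polynomial in $\otheta$ and derivatives of $F$ at $0$, and hence is independent of $t$; the same conclusion in the $G^\infty_d$ case follows by applying a $t$-shift to carry the bicharacteristic through $\urho$ to the one through $\rho'$. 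Since the points of $(G \cap \mathrm{Graph}(d\phi_0)) \setminus \{\usigma\}$ near $\usigma$ are exactly the $(0, t, \otheta, -1)$ for $t$ near $t_0$, they carry the same order of tangency as $\usigma$ and therefore lie in $\cG_d$. The only nontrivial step is the strict concavity argument; the remaining items are routine computations.
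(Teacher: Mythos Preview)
Your proof is correct and reaches the same conclusions, but by a somewhat different route for the first step. The paper argues via Taylor expansion: writing $F(x_2)=1-\sum_j\beta_{2j}x_2^{2j}+r(x_2)$, it identifies the order of $\usigma$ with the first nonvanishing $\beta_{2j}$ and then shows $F'(x_2)=0\Leftrightarrow x_2=0$ either from the factorization $F'(x_2)=x_2^{2k-1}G(x_2)$ with $G(0)\neq 0$ (finite order) or from strict monotonicity of $r'$ (infinite order). You bypass this case split entirely by invoking Definition~\ref{r2}(2) directly: any critical point of a strictly concave function is its unique strict maximum, so there can be only one. This is cleaner for the proposition as stated. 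For the nearby-points claim both arguments are essentially the same: once one knows $F'(x_2)\neq 0$ for $x_2\neq 0$, the only points of $G\cap\mathrm{Graph}(d\phi_0)$ near $\usigma$ have $x_2=0$, and $t$-translation invariance of $\beta$ and $H_p$ gives the same order---you make the $t$-invariance explicit, while the paper just records the conclusion. One thing the paper's Taylor-expansion approach buys is the explicit formula \eqref{raa13} relating the order of $\usigma$ to the coefficients of $F$, which is used immediately afterward in the Remark to compute $\partial_{x'}\phi_i(0,z')$ in standard-form coordinates; your argument does not yield that byproduct, but it is not needed for the proposition itself.
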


\begin{proof}
\textbf{1. }The strict convexity assumption implies that the Taylor expansion of $F$ at $0$ must have the form
\begin{align*}
F(x_2)=1-(\beta_2 x_2^2+\beta_4x_2^4+\dots+\beta_{2k}x_2^{2k})+r(x_2), \text{ where }r(x_2)=O(|x_2|^{2k+1}),
\end{align*}
where the first nonzero coefficient $\beta_{2j}$, if there is one, must be positive.  A computation similar to  \eqref{r6a} shows that 
\begin{subequations}\label{raa13}
    \begin{align}
        \usigma\in G^{2k}_d\setminus G^{2k+1} & \Leftrightarrow \beta_{2j}=0 \text{ for } j=1,\dots,k-1 \text{ and } \beta_{2k}>0;    \label{raa13a}\\
        \usigma\in G^{\infty}_d & \Leftrightarrow \beta_{2j}=0=0 \text{ for all } j.     \label{raa13b}
    \end{align}
\end{subequations}
In case \eqref{raa13b}, $r(x_2)=O(|x_2|^\infty)$ and the condition (b) in Definition \ref{r2} implies $r'(x_2)$ is strictly increasing for $x_2\in B(0,r)$.
\footnote{See \cite[\S 11]{robertsvarberg1973}.}  
Both cases in \eqref{raa13} give $\usigma\in \mathcal G_d$.

From  \eqref{r10} we have
\begin{align}\label{ra14}
G_{\phi_i}=\{(F(x_2),x_2,t) \ | \ F'(x_2)=0, \ \ox\in B(0,r), \ t\in \mathbb{R}\}.
\end{align}
If \eqref{raa13a} holds, then $F'(x_2)=x_2^{2k-1}G(x_2)$ for some $C^\infty$ function $G$ such that $G(0)\neq 0$. If \eqref{raa13b} holds, then again $F'(x_2)=r'(x_2)=0 \Leftrightarrow x_2=0$.  With \eqref{ra14} this gives \eqref{ra12}.

\textbf{2. }We have $H_p=2\xi_1\partial_{x_1}+2\xi_2\partial_{x_2}-2\tau\partial_t$, so
$H_p x_2(\urho)=2\otheta\neq 0$.    Moreover, if $$\sigma\in\left(G\cap \mathrm{Graph}(d\phi_0)\right)\setminus\{\usigma\}$$ lies near 
$\usigma$, we must have $\sigma=i^*\rho$, where $\rho=(F(x_2),x_2,t_1,0,\otheta,-1)$ with $t_1$ near $t_0$ and $x_2$ near $0$.  If $x_2\neq 0$, then with $\beta=x_1-F(x_2)$ we have 
\begin{align}
H_p\beta(\rho)=-2\otheta F'(x_2)\neq 0,
\end{align}
so  $\sigma\notin G$.  If $x_2=0$, then $\sigma\in \cG_d$ has the same order as $\usigma$.
\end{proof}

\Remark
Let $P$ and $\phi_i$ be as in Proposition \ref{ra11} and consider $F(x_2)$ in the case where \eqref{raa13a} holds.
If we first change variables to flatten the boundary by defining 
\begin{align}
(x,z_1,z_2):=(x_1+\beta_{2k}x_2^{2k}-r(x_2)-1,x_2,t),
\end{align}
and then put $p$ into standard form via the second change of variables
\begin{align}
(x'_1,z_1',z_2')=(x,z_1+e_1(x,z_1),z_2),
\end{align}
where $e_1$ is chosen to remove the  ``mixed term"  in $p$ as in \eqref{m5b},
then  direct computation shows 
\begin{align}\label{ra14z}
\partial_{x'}\phi_i(0,z_1',z_2')=z_1'^{2k-1}v(z_1').
\end{align}
Here $v$ is $C^\infty$ and $v(0)\neq 0$.   Thus, we can't expect to use $\partial_{x'}\phi_i(0,z')$ as a smooth coordinate function 
when $k>1$.

\subsubsection{3D obstacles}

In this section, we show that Assumption \ref{A2} is satisfied for incoming plane waves by any three-dimensional obstacle that is strictly convex near $\mathrm P_0=(1,0)$, provided $F$ as in Definition \ref{r2} also satisfies the next assumption.   

\begin{ass}\label{A4}
Let  $\mathcal{O}\subset \mathbb{R}^3$ be an obstacle that is strictly convex near $\mathrm P_0=(1,0)$,  and which is defined by a function  $F$ as in Definition \ref{r2} that satisfies the following additional condition for some $k\in\mathbb{N}:$\footnote{Condition \eqref{r6bb} itself implies that $\cO$ is strictly convex near $P$.} 
\begin{subequations}\label{r6}
    \begin{align}
        F(\ox)= & 1+\sum_{|\alpha|=2k}\frac{\partial^\alpha F(0)}{\alpha !}\ox^\alpha +O(|\ox|^{2k+1}), \nonumber \\ 
        & \text{where }\sum_{|\alpha|=2k}\frac{\partial^\alpha F(0)}{\alpha !}\ox^\alpha<0 \text{ for }\ox\neq 0; \text{ and } \label{r6aa}\\
        \nabla^2 F_{2k}< & 0 \text{ for }\ox\neq 0, \text{ where }F_{2k}:=1+\sum_{|\alpha|=2k}\frac{\partial^\alpha F(0)}{\alpha !}\ox^\alpha.  \label{r6bb}
    \end{align}
\end{subequations}
\end{ass}

In the proof of Proposition \ref{ra11} we saw that the analogue of Assumption \ref{A4} for $\cO\subset \mathbb{R}^2$ holds automatically when $\cO$ is strictly convex near $\mathrm P_0$ and $\usigma\in G^{2k}_d\setminus G^{2k+1}$.   This is no longer true for obstacles $\cO\subset \mathbb{R}^n$ for $n>2$.   A $C^\infty$ function of the form
\begin{align*}
\begin{split}
F(\ox)=1+h_2(\ox)+h_4(\ox)+\dots +h_{2k-2}(\ox)+ h_{2k}(\ox)+O(|\ox|^{2k+1}), 
\end{split}
\end{align*}
where each function $h_{2j}$ is a homogeneous polynomial in $\ox$ of degree $2j$ and 
\begin{subequations}\label{r6y}
    \begin{align}
        & h_{2j}\leq 0,\; \nabla^2 h_{2j}\leq 0, \;h_{2j}(\otheta)=0\; \text{ for }j=1,\dots,k-1, \text{ but } \label{r6ya}\\
        & h_{2k}<0\text{ and }\nabla^2 h_{2k}<0\text{ for }\ox\neq 0, \label{r6yb}
    \end{align}
\end{subequations}
defines an obstacle $\cO$ that is strictly convex near $\mathrm P_0$ and for which $\usigma\in G^{2k}_d\setminus G^{2k+1}$; see the computation \eqref{r6a}.   Below the proof of Proposition \ref{r11}, we remark an extension of Proposition \ref{r11} to certain functions of this type.

The condition \eqref{r6} implies that for \emph{every} $\otheta\in \mathbb S^2$, the point 
$\usigma=i^*\urho$, where $\urho=(1,0,t_0,0,\otheta,-1)$,  lies in $G^{2k}_d\setminus G^{2k+1}$.
To see this we check that the conditions \eqref{q3}  hold with $\beta(y,t):=x_1-F(\ox)$.   
The forward null bicharacteristic associated to $\phi_i$ such that $\gamma(0)=\urho=(1,0,t_0,0,\otheta,-1)$ is
\begin{align*}
\gamma(s)=(1,2s\otheta,t_0+2s,0,\otheta,-1),
\end{align*}
We have
\begin{align}\label{r6a}
\begin{gathered}
\beta(\gamma(s))=1-F(2s\otheta)=(2s)^{2k}\left(-\sum_{|\alpha|=2k}\frac{\partial^\alpha F(0)}{\alpha !}\otheta^\alpha\right)+O(s^{2k+1}), \\
H_p^j\beta(\urho)=\left. \left(\frac{d}{ds}\right)^j \right|_{s=0}\;\beta(\gamma(s))\text{ for all }j,
\end{gathered}
\end{align}
which implies that the conditions \eqref{q3} hold.

\Remarks 
1.  A  computation like \eqref{r6a} shows that for  $F$ as in Example \eqref{r4a} we have $\usigma\in G^{2k}_d\setminus G^{2k+1}$, while for $F$ as in Example \eqref{r4b} we have $\usigma\in G^\infty_d$.

\noindent
2. The following  $C^\infty$ functions $F_j:\mathbb{R}^2\to \mathbb{R}$ satisfy Assumption \ref{A4}:
\begin{align}\label{r8}
\begin{split}
&F_3(\ox)=1-(x_2^{4}+x_2^2x_3^2+x_3^{4})+r(\ox), \text{ where }r(\ox)=O(|\ox|^{5});\\
&F_4(\ox)=1-(x_2^{4}+x_2^2x_3^2+x_3^{4}-x_2x_3^3)+r(\ox), \text{ where }r(\ox)=O(|\ox|^{5});\\
&F_5(\ox)=1-(x_2^{6}+x_2^2x_3^4+x_2^4x_3^2+x_3^{6})+r(\ox), \text{ where }r(\ox)=O(|\ox|^{7}).
\end{split}
\end{align}

\noindent
3. The function $F(\ox)=1-(x_2^{6}+x_2^3x_3^3+x_3^{6})$ satisfies \eqref{r6aa} but fails to satisfy even $\nabla^2 F\leq 0$.

\begin{prop}\label{r11}
Let $\mathcal{O}\subset \mathbb{R}^3$ be an obstacle defined by $F$ as in Assumption \ref{A4}.  
Let $P=\Box$ be the wave operator \eqref{r1} on $M=(\mathbb{R}^3\setminus \cO)\times \mathbb{R}_t$ and let $\phi_i=-t+\langle\otheta,\ox\rangle$ where $\otheta=(\theta_2,\theta_3)\in \mathbb S^{1}$.   Assume $\usigma=i^*\urho\in G^{2k}_d\setminus G^{2k+1}$, $k\in \mathbb{N}$, where $\urho=(1,0,t_0,0,\otheta,-1)$.    Then the conditions of Assumption \ref{A2} are satisfied: there is a function $\zeta$ such that 
\begin{align*}
\begin{gathered}
\zeta\in C^1(B(0,r)),\ \zeta\in C^\infty(B(0,r)\setminus 0), \\
G_{\phi_i}=\{(F(\ox),\ox,t) \ | \ \zeta(\ox)=0,\ \ox\in B(0,r), \ t\in \mathbb{R}\}.
\end{gathered}
\end{align*}
Moreover, $H_p\zeta(\urho)\neq 0$   and every point in $\left(G\cap \mathrm{Graph}(d\phi_0)\right)\setminus\{\usigma\}$ near $\usigma$ lies in $\cG_d$.    When $k=1$, $\zeta$ can be found $C^\infty(B(0,r))$.

\end{prop}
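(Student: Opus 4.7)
The strategy is to reduce Assumption \ref{A2} to a plane-curve problem in the variable $\ox$. By \eqref{r10} and the chart $(\ox,t)\mapsto(F(\ox),\ox,t)$ on $\partial M$,
\[
G_{\phi_i} = \{(F(\ox),\ox,t) : \ox\in \Gamma,\; t\in\mathbb{R}\}, \qquad \Gamma := \{\ox\in B(0,r) : \Phi(\ox)=0\},
\]
with $\Phi(\ox):=\langle \nabla F(\ox),\otheta\rangle$, so it suffices to construct $\zeta(\ox)\in C^1(B(0,r))\cap C^\infty(B(0,r)\setminus 0)$ cutting out $\Gamma$ with $\nabla\zeta(0)\neq 0$. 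By Assumption \ref{A4}, $F = 1 + P_{2k} + R$ with $P_{2k}$ the degree-$2k$ homogeneous part and $R = O(|\ox|^{2k+1})$, so $\Phi = H_{2k-1} + r_\Phi$ with $H_{2k-1}(\ox) := \langle\nabla P_{2k},\otheta\rangle$ homogeneous of degree $2k-1$ and $r_\Phi = O(|\ox|^{2k})$.

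The key step is the factorization $H_{2k-1}(\ox) = \ell(\ox)\,q(\ox)$, with $\ell$ a nonzero linear form and $q$ a homogeneous polynomial of degree $2k-2$ satisfying $q > 0$ on $\ox\neq 0$. In polar coordinates $\ox = \rho(\cos\phi,\sin\phi)$, $H_{2k-1} = \rho^{2k-1} g(\phi)$, where $g$ is a nowhere-vanishing smooth multiple of $f'$, with $f(\phi) := r(\phi)\sin(\phi-\alpha)$ the $\otheta^\perp$-coordinate along the level curve $\{P_{2k}=-c\}$ (radial function $r(\phi)$, $\otheta=(\cos\alpha,\sin\alpha)$). The hypothesis $\nabla^2 P_{2k} < 0$ on $\ox\neq 0$ makes this level curve strictly convex, so $f$ has exactly two nondegenerate extrema on $S^1$, at antipodal angles $\phi_1,\phi_1+\pi$ (antipodal because $P_{2k}$ is even). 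Hence $g$ has only these two simple zeros, and $\ell(\ox) := \rho\sin(\phi-\phi_1)$ divides $H_{2k-1}$ as polynomials, with quotient $q$ of constant sign away from $0$. Writing $\Phi = \rho^{2k-1}\Psi(\rho,\phi)$ with $\Psi(\rho,\phi) = g(\phi) + \rho\,s(\rho,\phi)$ smooth for $\rho\geq 0$, the condition $g'(\phi_1)\neq 0$ lets the implicit function theorem produce smooth branches $\phi_*(\rho),\phi_{**}(\rho)$ on $[0,\rho_0)$ with $\phi_*(0)=\phi_1$, $\phi_{**}(0)=\phi_1+\pi$ solving $\Psi=0$. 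Joining these branches, $\Gamma$ is a single curve through $0$, $C^\infty$ off $0$ and $C^1$ at $0$ with unit tangent $(\cos\phi_1,\sin\phi_1)$; one then obtains $\zeta$ by patching local $C^\infty$ defining functions for $\Gamma$ on $B(0,r)\setminus 0$ with a $C^1$ defining function near $0$ (e.g., the signed distance) via a smooth partition of unity with consistent orientation.

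The remaining claims follow directly. Since $H_p = 2\xi_1\partial_{x_1}+2\oxi\cdot\partial_{\ox}-2\tau\partial_t$ and $\urho=(1,0,t_0,0,\otheta,-1)$, one has $H_p\zeta(\urho) = 2\langle\otheta,\nabla\zeta(0)\rangle$, nonzero iff $\otheta$ is not parallel to the tangent of $\Gamma$ at $0$; this follows from $f'(\alpha) = r(\alpha) > 0$, which forces $\phi_1\not\equiv\alpha\pmod\pi$. For a nearby $\sigma'=(\ox',t',\otheta,-1)\in G\cap\mathrm{Graph}(d\phi_0)$ with $\ox'\in\Gamma$, the bicharacteristic from $\rho'=(F(\ox'),\ox',t',0,\otheta,-1)$ is $(F(\ox'),\ox'+2s\otheta,t'+2s,0,\otheta,-1)$, so $\beta = x_1 - F(\ox)$ restricted to it equals $F(\ox')-F(\ox'+2s\otheta)$, with vanishing first derivative at $s=0$ and second derivative $-4\,\otheta^T\nabla^2 F(\ox')\otheta > 0$ for $\ox'\neq 0$ small, since $\nabla^2 F(\ox') = \nabla^2 P_{2k}(\ox') + O(|\ox'|^{2k-1})$ with leading part negative definite of size $|\ox'|^{2k-2}$. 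Hence $\sigma' \in G^2_d\setminus G^3 \subset \cG_d$ for $\ox'\neq 0$, and $\sigma=\usigma\in G^{2k}_d\setminus G^{2k+1}$ by hypothesis. The case $k=1$ is immediate: $q$ reduces to a positive constant, $\Phi$ has nonvanishing gradient at $0$, and standard Cartesian IFT directly yields a $C^\infty$ defining function $\zeta$, proving the final assertion. The principal technical obstacle for $k>1$ is that $\Phi$ vanishes to order $2k-1$ at $0$, so Cartesian IFT fails there; the polar-coordinate factorization $\Phi = \rho^{2k-1}\Psi$ together with the strict-convexity-driven factorization $H_{2k-1} = \ell q$ with $q>0$ is what converts the degenerate Cartesian equation into a smooth, nondegenerate angular one.
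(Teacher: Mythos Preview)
Your argument is essentially correct and follows the same architecture as the paper's proof: both reduce to analyzing the homogeneous leading part $H_{2k-1}=\langle\nabla P_{2k},\otheta\rangle$, both use strict convexity of the level curves $\{P_{2k}=-c\}$ to show the zero set of $H_{2k-1}$ is a single line through the origin, and both verify $H_p\zeta(\urho)\neq 0$ and the $\cG_d$ claim by the same direct computations.

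There are two places where the paper's route is more efficient. First, for the simplicity of the zeros of your angular function $g$ (equivalently, the nonvanishing of $q$ on $\ox\neq 0$), you assert that $g$ is a nowhere-vanishing multiple of $f'$; this is true, but requires a computation you did not supply. The paper instead differentiates once: $\langle\nabla H_{2k-1}(\ox),\otheta\rangle=\langle\nabla^2 P_{2k}(\ox)\otheta,\otheta\rangle<0$ for $\ox\neq 0$, and evaluating the product-rule expansion of $\nabla(\ell q)\cdot\otheta$ along the line $\ell=0$ forces both $q\neq 0$ there and $\langle\nabla\ell,\otheta\rangle\neq 0$ in one stroke. Second, and more substantially, the paper avoids your patching construction entirely by writing down the explicit global defining function
\[
\zeta(\ox)=\ell(\ox)+\frac{\langle\nabla R(\ox),\otheta\rangle}{q(\ox)},\qquad \zeta(0)=0,
\]
which is manifestly $C^\infty$ off $0$, and $C^1$ at $0$ because the quotient is $O(|\ox|^{2k})/|\ox|^{2k-2}=O(|\ox|^2)$; its gradient at $0$ is exactly $\nabla\ell$. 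Your signed-distance-plus-partition-of-unity construction can be made to work, but as written it is vague about why the patched function is $C^\infty$ on all of $B(0,r)\setminus 0$ (the signed distance is only defined on a tubular neighborhood of $\Gamma$, and one must check the smoothness on the overlaps carefully). The explicit quotient formula sidesteps this entirely and also makes $\nabla\zeta(0)=\nabla\ell$ immediate, so the $H_p\zeta(\urho)\neq 0$ check becomes a one-line consequence of the differentiation identity above.
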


\begin{proof}
\textbf{1. }Write $F=F_{2k}+r$, where 
\begin{subequations}\label{r13a}
    \begin{align}
        & F_{2k}(\ox) =1+\sum_{|\alpha|=2k}\frac{\partial^\alpha F(0)}{\alpha !}\ox^\alpha<0 \text{ for }\ox\neq 0,\  r(\ox)=O(|\ox|^{2k+1}), \label{r13aa}\\
        & \nabla^2 F_{2k}< 0 \text{ for }\ox\neq 0. \label{r13ab}
    \end{align}
\end{subequations}
 With \eqref{r10} in mind, we define grazing functions
\begin{align*}
g_{\otheta}(\ox):=\langle \nabla F(\ox),\otheta\rangle\text{ and }g_{2k,\otheta}(\ox):=\langle \nabla F_{2k}(\ox),\otheta\rangle
\end{align*}
and observe that 
\begin{subequations}
    \begin{align*}
        \nabla g_{\otheta}(0)=0, \nabla g_{2k,\otheta}(0)=0, \
        \nabla g_{2k,\otheta}(\ox)=\nabla^2 f_{2k}(\ox)\otheta\neq 0 \text{ for }\ox\neq 0.
    \end{align*}
\end{subequations}

\textbf{2. }The function $g_{2k,\otheta}$ is a homogeneous polynomial in $\ox=(x_2,x_3)$ of degree $2k-1$.  The homogeneity implies that the real zero set of $g_{2k,\otheta}$ is a union of at most $2k-1$ lines through the origin.     We claim  that \eqref{r13a}  implies there is only one line.   To see this fix $\eps>0$ small and define the level curve
\begin{align*}
C_\eps:=\{\ox \ | \ 1-F_{2k}(\ox)=\eps\}.
\end{align*}
This is a compact strictly convex $C^\infty$ curve enclosing $0$ with positive curvature at all points.\footnote{Compactness follows from 
$1-F_{2k}(\ox)\geq C|\ox|^{2k}$, and the other properties follow from $\nabla^2(1-F_{2k})>0$.}     Now $g_{2k,\otheta}(\ox)=0\Leftrightarrow \nabla F_{2k}(\ox)= a\otheta^\perp$ for some $a\neq 0$,  and the positive curvature of $C_\eps$ implies this can happen only at two points of $C_\eps$. Thus, the zero set of $g_{2k,\otheta}$ must consist of just one line,
whose equation we can write as\footnote{For $F_3$ in \eqref{r8} and $\otheta=(\frac{1}{\sqrt{2}},\frac{1}{\sqrt{2}})$, that line is $x_2+x_3=0.$  For $F_4$ in \eqref{r8} and $\otheta=(1,0)$, the line is $x_3-cx_2=0$, for some $c\in (\frac{5}{2},3)$. }
\begin{align*}
x_3=0, \text{ or }x_2-cx_3=0 \text{ for some }c\in\mathbb{R}. 
\end{align*}
Below we consider the second case; the first is treated similarly.

\textbf{3. } We have
\begin{align}\label{r14a}
g_{\otheta}(\ox)=g_{2k,\otheta}(\ox)+\langle \nabla r(\ox),\otheta \rangle
\end{align}
as well as the factorization 
\begin{align}\label{r14b}
g_{2k,\otheta}(\ox)=(x_2-cx_3)G(\ox),
\end{align}
where $G$ is a real homogeneous polynomial of degree $2k-2$ that is nonvanishing \emph{off} the line $x_2-cx_3=0$.   
Next we show that $G$ is nonvanishing on that line as well, except at $\ox=0$.

\textbf{4. } For any $\ox$ we compute
\begin{align}\label{r14c}
 \langle \nabla g_{2k,\otheta}(\ox),\otheta\rangle=\langle (1,-c),\otheta\rangle G(\ox)+(x_2-cx_3)\langle \nabla G(\ox),\otheta\rangle.
\end{align}
The left side of \eqref{r14c} is $\langle \nabla^2 F_{2k}(\ox)\otheta,\otheta\rangle <0$ for $\ox\neq 0$, so after evaluating \eqref{r14c} at $x_2=cx_3$, we conclude both 
\begin{align}\label{r15}
\langle(1,-c),\otheta\rangle \neq 0 \text{ and }G(\ox)\neq 0 \text{ for }x_2=cx_3\neq 0. 
\end{align}
Thus, $G$ has a fixed sign for $\ox\neq 0$, which we may take as positive.   This implies
\begin{align}\label{r16}
\text{there exists } C>0\text{ such that }G(x)\geq C|\ox|^{2k-2}.
\end{align}

\textbf{5. }Recalling \eqref{r14a} and \eqref{r14b}, we see that 
\begin{align}\label{r17}
g_{\otheta}(\ox)=0\Leftrightarrow \zeta(\ox)=0, \text{ where }\zeta(\ox)=\begin{cases}x_2-cx_3+\frac{\langle \nabla r(\ox),\otheta \rangle}{G(\ox)}, \ & \ox\neq 0, \\ 0,\ & \ox=0,\end{cases}
\end{align}
that is, $\zeta=0$ defines the grazing set $G_{\phi_i}$.
It follows from \eqref{r16} and $\langle \nabla r(\ox),\otheta \rangle=O(|\ox|^{2k})$ that $\zeta$ is $C^1$ but possibly not $C^2$ when $k>1$.   If $k=1$, then $G$ is a positive constant and the function $\zeta$ in \eqref{r17} is $C^\infty$. 

\textbf{6. }We have $H_p=2\xi_1\partial_{x_1}+2\overline\xi\partial_{\ox}-2\tau\partial_t$, so with $\urho=(1,0,t_0,0,\otheta,-1)$ we have
\begin{align*}
H_p\zeta (\urho)=2\langle\otheta, \partial_{\ox}\zeta (0)\rangle=2\langle\otheta,(1,-c)\rangle \neq 0
\end{align*}
by \eqref{r15}.

\textbf{7. }Finally we show that every point $\sigma\in\left(G\cap \mathrm{Graph}(d\phi_0)\right)\setminus\{\usigma\}$ near $\usigma$ satisfies 
$$\sigma\in (G^2_d\setminus G^3)\cup (G^{2k}_d\setminus G^{2k+1})\subset \cG_d.$$   Using the parametrization of $\partial M$ given by $(\ox,t)\mapsto (F(\ox),\ox,t)$, we can write $\phi_0=-t+\langle \otheta,\ox\rangle.$   Thus, such a $\sigma$ has the form 
$$\sigma=(\ox,t,\otheta,-1)=i^*\rho, \text{ where }\rho=(F(\ox),\ox,t_1,0,\otheta,-1)$$ for some $t_1$ near $t_0$ and $\ox$ near $0$ satisfying $g_{\otheta}(\ox)=0$.    
With $\beta(x_1,\ox)=x_1-F(\ox)$, if $\ox\neq 0$  we compute
\begin{align}\label{r19}
H_p\beta (\rho)=-2\langle\nabla F(\ox),\otheta\rangle=0, \ H_p^2\beta(\rho)=-4\langle\nabla^2 F(\ox)\otheta,\otheta\rangle>0.
\end{align}
Thus, $\sigma\in G^2_d\setminus G^3$.   If $\ox=0$, then $\sigma\in \cG_d$ has the same order as $\usigma$.
\end{proof}

\noindent
{\bf Remark} (Extension of Proposition \ref{r11}){\bf.} 
If one takes a more general function $F$ of the form 
\begin{align}\label{improve}
F(\ox)=1+h_2(\ox)+h_4(\ox)+ h_{2k}(\ox)+O(|\ox|^{2k+1}), \;\;\text{ for }k\geq 3
\end{align}
where the conditions \eqref{r6y} hold, we have checked that the conclusions of Proposition \ref{r11} still hold.
Indeed, one can show that the conditions \eqref{r6ya} {imply}
\begin{align*}
\langle \nabla h_2(x),\otheta\rangle =\langle \nabla h_4(x),\otheta\rangle=0\text{ for all }\ox,
\end{align*}
so \eqref{r14a} in step \textbf{3} of the above proof remains true.   The rest of the proof follows as before.

\subsubsection{Obstacles in $\mathbb{R}^n$}\label{ond}
Here we present examples involving obstacles $\cO\subset \mathbb{R}^n$ for any $n$  that satisfy all the assumptions of Theorem \ref{mt}.

\begin{ass}\label{ndA}
Let  $\mathcal{O}\subset \mathbb{R}^n$ be an obstacle that is strictly convex near $\mathrm P_0=(1,0)$,  and which is defined by a function  $F$ as in Definition \ref{r2} that satisfies the following additional condition
\begin{equation}\begin{gathered}\label{r20}
    F(\ox) = 1-h(|\Lambda \ox|^2), \
    h\in C^{\infty}( [0,R);[0,\infty) ), \\
    h(0)=0, \ h^{\prime}|_{(0,R)}>0, \ h^{\prime\prime}|_{[0,R)}\geq 0, \\
    \Lambda \text{ is a positive definite constant matrix. }
\end{gathered}\end{equation}
\end{ass}

\begin{prop}\label{rnd}
Suppose $\cO\subset \mathbb{R}^n$ is defined by a function $F$ as in Assumption \ref{ndA}.  
Let $P=\Box$ be the wave operator \eqref{r1} on $M=(\mathbb{R}^2\setminus \cO)\times \mathbb{R}_t$ and let $\phi_i=-t+\langle\otheta,\ox\rangle$ where $\otheta\in \mathbb S^{n-2}$.   
Then $\usigma:=i^*\urho \in \mathcal G_d$, where $\urho = (1,0,t_0, 0, \otheta, -1)$. 
The conditions of Assumption \ref{A2} are satisfied if one takes $\zeta(\ox)=\langle \otheta, \Lambda \ox \rangle$.   That is, we have
\begin{align*}
\begin{split}
G_{\phi_i}=\{(F(x_2),x_2,t) \ | \ \langle \otheta, \Lambda \ox \rangle=0, \ \ox \in B(0,r), \  t\in \mathbb{R}\}.
\end{split}
\end{align*}
Moreover, $H_p\zeta(\urho)\neq 0$   and every point in $\left(G\cap \mathrm{Graph}(d\phi_0)\right)\setminus\{\usigma\}$ near $\usigma$ lies in $\cG_d$.
\end{prop}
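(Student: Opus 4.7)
My plan is to mimic the strategy of Propositions \ref{ra11} and \ref{r11}, but to exploit the explicit form $F(\ox) = 1-h(|\Lambda\ox|^2)$ in order to bypass Taylor-series arguments entirely. Since $\Box$ has constant coefficients, the null bicharacteristic through $\urho = (1,0,t_0,0,\otheta,-1)$ is $\gamma(s) = (1,2s\otheta,t_0+2s,0,\otheta,-1)$, along which
\begin{equation*}
\beta(\gamma(s)) = 1 - F(2s\otheta) = h(4s^2 Q(\otheta)), \qquad Q(\otheta):=|\Lambda\otheta|^2 > 0
\end{equation*}
by positive definiteness and $\otheta\in\mathbb{S}^{n-2}$. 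If $h$ vanishes at $0$ to exact finite order $k\geq 1$, then $h(cs^2) = a_k c^k s^{2k} + O(s^{2k+1})$ with $a_k>0$, which gives $H_p^j\beta(\urho)=0$ for $j<2k$ and $H_p^{2k}\beta(\urho)>0$, placing $\usigma\in G^{2k}_d\setminus G^{2k+1}$. If $h$ vanishes to infinite order at $0$, then all $H_p^j\beta(\urho)=0$; the hypothesis $h|_{(0,R)}>0$ forces $\beta(\gamma(s))>0$ for $s\neq 0$ small, so $\gamma(s)\in T^*\mathring M$ and $\usigma\in G^\infty_d$. Either way, $\usigma\in\cG_d$.

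To identify the grazing set I would compute $\nabla F(\ox) = -h'(|\Lambda\ox|^2)\nabla_\ox(|\Lambda\ox|^2)$; reading $|\Lambda\cdot|^2$ as the positive definite quadratic form whose gradient is $2\Lambda\ox$ (the paper's normalization), the equation $\langle \nabla F(\ox),\otheta\rangle=0$ is equivalent, for $\ox\neq 0$, to $\langle \Lambda\ox,\otheta\rangle=0$, and both sides vanish at $\ox=0$. Thus $G_{\phi_i}=\{\zeta=0\}$ on $\partial M\cap U$, with $\zeta(\ox):=\langle\otheta,\Lambda\ox\rangle$ smooth and having nonvanishing gradient $\Lambda\otheta$. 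The transversality condition at $\urho$ follows directly from $H_p = 2\xi_1\partial_{x_1}+2\overline\xi\cdot\partial_\ox-2\tau\partial_t$:
\begin{equation*}
H_p\zeta(\urho) = 2\langle\otheta,\Lambda\otheta\rangle > 0.
\end{equation*}

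For the last statement, a nearby point $\sigma = i^*\rho\in (G\cap\mathrm{Graph}(d\phi_0))\setminus\{\usigma\}$ corresponds to $\rho=(F(\ox),\ox,t_1,0,\otheta,-1)$ with $\ox$ small and $\zeta(\ox)=0$. Since the momenta are constant along bicharacteristics of $\Box$, $\gamma_\rho(s)=(F(\ox),\ox+2s\otheta,t_1+2s,0,\otheta,-1)$, and Taylor expansion gives
\begin{equation*}
\beta(\gamma_\rho(s)) = -2s\langle\nabla F(\ox),\otheta\rangle - 2s^2\langle\nabla^2 F(\ox)\otheta,\otheta\rangle + O(s^3).
\end{equation*}
The linear term vanishes by the grazing condition, and an explicit computation of $\nabla^2 F(\ox)$ using $\langle \Lambda\ox,\otheta\rangle = 0$ kills the $h''$-cross-term and collapses the Hessian term to $\langle\nabla^2 F(\ox)\otheta,\otheta\rangle = -2h'(|\Lambda\ox|^2)\langle\Lambda\otheta,\otheta\rangle$, which is strictly negative for $\ox\neq 0$ by Assumption \ref{ndA} and positive definiteness. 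Thus $H_p^2\beta(\rho)>0$ and $\sigma\in G^2_d\setminus G^3$; when $\ox=0$ (but $t_1\neq t_0$), the first paragraph applies verbatim. The only genuine obstacle is this last step: one must rule out higher-order tangency at \emph{nearby} grazing rays, and the hypothesis $h'|_{(0,R)}>0$ is precisely what forces the order back down to $2$ the instant one steps off the central grazing ray.
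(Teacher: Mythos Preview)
Your proof is correct and follows essentially the same route as the paper's: compute $\beta(\gamma(s))=h(4s^2|\Lambda\otheta|^2)$ along the bicharacteristic through $\urho$ and read off the order of tangency from the Taylor expansion of $h$; identify $G_{\phi_i}$ from $\langle\nabla F(\ox),\otheta\rangle=-2h'(|\Lambda\ox|^2)\langle\Lambda\ox,\otheta\rangle$; compute $H_p\zeta(\urho)=2\langle\Lambda\otheta,\otheta\rangle>0$; and at nearby grazing points verify $H_p^2\beta(\rho)=-4\langle\nabla^2 F(\ox)\otheta,\otheta\rangle>0$. The only cosmetic difference is in this last step: the paper simply appeals to $\nabla^2 F(\ox)<0$ for $\ox\neq 0$ (which uses both $h'>0$ and $h''\geq 0$), whereas you use the grazing condition $\langle\Lambda\ox,\otheta\rangle=0$ to kill the rank-one $h''$ contribution and reduce to $-2h'(|\Lambda\ox|^2)\langle\Lambda\otheta,\otheta\rangle<0$, needing only $h'>0$. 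Both arguments are valid and lead to the same conclusion.
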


\begin{proof}
    We compute
\begin{subequations}\label{r21}
    \begin{align}
        \nabla F(\ox) & =-2h'(|\Lambda\ox|^2)\Lambda \ox,\ \langle\nabla F(\ox),\otheta\rangle=-2h'(|\ox|^2)\langle \otheta, \Lambda \ox \rangle, \label{r21a}\\
        \nabla^2 F(\ox) & =-2 h'(|\ox|^2)\Lambda - 4 h''(|\Lambda \ox|^2) \ (\Lambda \ox)\otimes (\Lambda \ox). \label{r21b}
    \end{align}
\end{subequations}
From \eqref{r21b} we see that $\nabla^2 F(\ox)<0$ for $\ox\neq 0$.   Thus, $\cO$ is strictly convex near $P_0=(1,0)$, so the results of \S \ref{nd} imply that  Assumption 
\ref{A3} on the forward flow map $Z^r$ holds.

For any $\otheta\in \mathbb S^{n-2}$, let $\usigma=i^*\urho$, where $\urho=(1,0,t_0,0,\otheta,-1)$.    Write the Taylor expansion of $h$ at $s=0$ as
\begin{align*}
h(s)=\sum^k_{j=1}\frac{h^{(j)}(0)}{j!}s^j+O(s^{k+1}),
\end{align*}
and observe that the first nonzero coefficient (if there is one) must be \emph{positive}, since $h''(s)\geq 0$ on $[0,R)$.
A computation similar to  \eqref{r6a} shows that 
\begin{align}\label{r23}
\begin{split}
\usigma\in G^{2k}_d\setminus G^{2k+1} & \Leftrightarrow h^{(j)}(0)=0 \text{ for }j=1,\dots,k-1\text{ and }h^{(k)}(0)>0;\\
\usigma\in G^{\infty}_d & \Leftrightarrow h^{(j)}(0)=0 \text{ for all }j.
\end{split}
\end{align}
Both cases give $\usigma\in \mathcal G_d$.

To verify Assumption \ref{A2} we recall that the grazing set $G_{\phi_i}$ is determined by $\langle\nabla F(\ox),\otheta\rangle=0$, and 
from \eqref{r20} and \eqref{r21a} we see that 
\begin{align*}
\langle\nabla F(\ox),\otheta\rangle=0\Leftrightarrow \zeta(\ox)=0, \text{ where }\zeta(\ox):=\langle \Lambda\ox,\otheta\rangle.
\end{align*}
We have $\zeta\in C^{\infty}$ and 
\begin{align*}
H_p\zeta(\urho)=2\langle \Lambda \otheta,\otheta\rangle>0
\end{align*}
since $\Lambda$ is positive definite.

Finally, a repetition of the computation in step \textbf{7} of the proof of Proposition \ref{r11} shows that points  $\sigma\in G\setminus \{\usigma\}$ must lie in $\cG_d$.    If the $\ox$ coordinate of $\sigma$ is zero, then $\sigma$ has  the same order as $\usigma$; otherwise,  $\sigma\in G^2_d\setminus G^3$.    Thus, Assumption \ref{A2} holds.
\end{proof}

\Remark 
Consider the function $F_1(\ox)=1-(x_2^{2k}+\dots+x_n^{2k})$ of Example \eqref{r4a}.   Now the condition $\nabla^2 F_1<0$ fails, but the obstacle $\cO$ defined by $F_1$ is strictly convex near $\mathrm P_0=(1,0)$.   If we take $\phi_i=-t+\langle\otheta,\ox\rangle$ where $\otheta=(1,0,\dots,0)\in \mathbb S^{n-2}$, then Assumption \ref{A2} is easily seen to hold with $\zeta(\ox)=x_2$.

\subsection{Assumption \ref{A3}:  two-dimensional convex obstacles.} \label{2d}

In this section, we show that Assumption \ref{A3} is satisfied by plane waves when $\mathcal O$ is any two-dimensional obstacle that is strictly convex near $\mathrm P_0=(1,0)$.

We introduce the notation
\begin{equation}
\label{omega}
    \omega:=\{ (s,x_2,t^{\prime}) \ | \ 0\leq s<s_0, \ |x_2|<r, \ F^{\prime}(x_2)\geq 0, \ t^{\prime}\in \RR \} \simeq [0,s_0)\times (I_-\sqcup G_{\phi_i})
\end{equation}
and the ``interior'' of the domain
\begin{equation}
\label{omegao}
    \mathring\omega:=\{ (s,x_2,t^{\prime}) \ | \ s\geq 0, \ |x_2|<r, \ F^{\prime}(x_2)> 0, \ t^{\prime}\in \RR \} \simeq [0,s_0)\times I_-.
\end{equation}

\begin{lemm}
\label{lem: 2d-ref-flow}
Let $\mathcal O$ and $F$ be as in Definition \ref{r2} with $n=2$, $M=(\RR^2\setminus \mathcal O)\times \RR$, and $\phi_i=-t+\langle \otheta, \ox \rangle$ with $\otheta=\pm 1$ be the incoming phase for the wave operator $\Box$. Then through the parametrization \eqref{omega}, the reflected flow map $Z_r$ in Definition \ref{q12}, is given by 
\begin{equation}\begin{gathered}
\label{eq: 2d-ref-flow}
    Z_r: [0,s_0)\times (I_-\cup G_{\phi_i}) \to M, \\ Z_r(s,x_2,t^{\prime}) = \left( F(x_2)+\frac{4 \otheta F^{\prime}(x_2)}{1+F^{\prime}(x_2)^2}s, \ x_2+\frac{2 \otheta (1-F^{\prime}(x_2)^2)}{1+F^{\prime}(x_2)^2}s, \ t^{\prime}+2s \right).
\end{gathered}\end{equation}
\end{lemm}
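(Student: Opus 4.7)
The plan is to verify the formula \eqref{eq: 2d-ref-flow} by a direct computation using the definition of $Z_r$ in \S \ref{reflectedphase}, specialized to the wave operator in two spatial dimensions. The key simplification is that the principal symbol $p(x_1,x_2,t,\xi_1,\xi_2,\tau) = \xi_1^2 + \xi_2^2 - \tau^2$ has constant coefficients, so Hamilton's equations for $p$ are trivially integrable: along any bicharacteristic the momentum $\nu = (\xi_1,\xi_2,\tau)$ is constant, and the base projection is the affine line $m(s) = m_0 + s(2\xi_1, 2\xi_2, -2\tau)$.

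First I would compute the incoming covector at a boundary point $m_0 = (F(x_2), x_2, t')$. Since $\phi_i = -t + \theta x_2$, we have $\nu_i(m_0) = \nabla\phi_i(m_0) = (0, \theta, -1)$, and one checks $p(m_0,\nu_i) = \theta^2 - 1 = 0$. To find $\nu_r(m_0)$ I would use the defining property that $\nu_r$ is the \emph{other} element of $(i^*)^{-1}\bigl(i^*(m_0,\nu_i)\bigr) \cap p^{-1}(0)$. Parametrizing $\partial M$ by $(x_2,t)\mapsto (F(x_2),x_2,t)$, the pullback map reads $i^*(F(x_2), x_2, t, \xi_1, \xi_2, \tau) = (x_2, t, F'(x_2)\xi_1 + \xi_2, \tau)$, so writing $\nu_r = (\xi_1,\xi_2,\tau)$ the two conditions become
\begin{equation*}
F'(x_2)\xi_1 + \xi_2 = \theta, \quad \tau = -1, \quad \xi_1^2 + \xi_2^2 = 1.
\end{equation*}
Substituting $\xi_2 = \theta - F'(x_2)\xi_1$ into the null relation yields the factored equation $\xi_1\bigl[(1+F'(x_2)^2)\xi_1 - 2\theta F'(x_2)\bigr] = 0$, whose roots are $\xi_1 = 0$ (recovering $\nu_i$) and
\begin{equation*}
\xi_1 = \frac{2\theta F'(x_2)}{1+F'(x_2)^2}, \quad \xi_2 = \frac{\theta\bigl(1-F'(x_2)^2\bigr)}{1+F'(x_2)^2}.
\end{equation*}
This second solution is $\nu_r(m_0)$; note it collapses to $\nu_i$ precisely when $F'(x_2) = 0$, i.e.\ when $m_0 \in G_{\phi_i}$, matching the convention at the start of \S \ref{reflectedphase}.

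Finally, since $H_p = 2\xi_1\partial_{x_1} + 2\xi_2\partial_{x_2} - 2\tau\partial_t$ with constant coefficients along the flow, the integration of \eqref{q11} gives $m_r(s;(m_0,\nu_r)) = m_0 + s(2\xi_1, 2\xi_2, -2\tau)$. Plugging in the components of $\nu_r$ computed above and the initial point $m_0 = (F(x_2), x_2, t')$ produces exactly the three components in \eqref{eq: 2d-ref-flow}. There is no real obstacle in the argument, as the two-dimensional geometry trivializes both the reflection law and the bicharacteristic ODE; the expression \eqref{eq: 2d-ref-flow} is thus a closed-form computation that will be used in the next subsection to analyze injectivity and the Jacobian degeneracy of $Z_r$ at the grazing set $\{F'(x_2) = 0\}$.
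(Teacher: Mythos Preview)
Your proposal is correct and follows essentially the same approach as the paper: compute the incoming covector, solve for the reflected covector via the constraint $i^*\nu_r = i^*\nu_i$ on $p^{-1}(0)$, and integrate the constant-coefficient Hamilton flow. The only cosmetic difference is that the paper expresses the condition $i^*\nu_i = i^*\nu_r$ as $\nu_i - \nu_r \in N^*(\partial M) = \mathbb{R}\,(1,-F'(x_2),0)$, whereas you compute $i^*$ directly via the boundary parametrization; both yield the identical algebraic system and formulas.
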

\begin{proof}
    The wave operator $\Box$ has symbol $p(x,t,\xi,\tau):=|\xi|^2-\tau^2$. The Hamiltonian vector field of $p$ is $H_p=2\xi_1\partial_{x_1}+2\xi_2\partial_{x_2}-2\tau\partial_t$. The incoming bicharacteristics passing $(x_1^0, x_2^0,t^0, d\phi_i(x_1^0, x_2^0,t^0))$ where $\otheta x_2^0<0$, $t^0<0$ are then 
    \begin{equation*}
        \gamma_i(s) := (x_1, x_2, t, \xi_1,\xi_2,\tau)(s) = (x_1^0, x_2^0+2\otheta s, t^0+2s, 0,\otheta,-1), \ s\geq 0.
    \end{equation*}
    Notice that when $x_1^0=1$, $\gamma_i$ hits $\partial T^*M$ tangentially; when $x_1^0<1$, $\gamma_i$ hits $\partial T^*M$ transversally; when $x_1^0>1$, $\gamma_i$ does not hit $\partial T^*M$ near $(1,0)$.

    Suppose $\gamma_i$ hits $\partial T^*M$ at the point $(F(x_2), x_2, t^{\prime}, 0,\otheta,-1)$, that is, 
    $$(x_1(s), x_2(s), t(s)) =(F(x_2),x_2,t^{\prime})\in \partial M$$
    for some $s\geq 0$. Then the initial point of the reflected bicharacteristic is the unique point $(F(x_2),x_2,t^{\prime}, \xi_1^r, \xi_2^r, \tau^r)\in p^{-1}(0)\cap \partial T^*M$ such that 
    \begin{equation*}
        i^*(F(x_2),x_2,t^{\prime},0,\otheta,-1)=i^*(F(x_2),x_2,\tau^{\prime},\xi_1^r, \xi_2^r,\tau^r). 
    \end{equation*}
    Notice that $\mathrm{Ker}(i^*)=N^*(\partial M)$, which is the conormal bundle on $\partial M$. Near $\mathrm P_0=(1,0)$, $\partial M$ is given by $x_1-F(x_2)=0$, hence the normal vectors of $\partial M$ at $(F(x_2),x_2,t^{\prime})$ are parallel to $(1,-F^{\prime}(x_2),0)$. Thus there exists $c\in \RR$ such that 
    \[ (0,\otheta,-1)-(\xi_1^r,\xi_2^r,\tau^r) = c(1,-F^{\prime}(x_2),0), \ |(\xi_1^r, \xi_2^r)|=|\tau^r|. \]
    From here we solve 
    \[ \xi_1^r=\frac{2 \otheta F^{\prime}(x_2)}{1+(F^{\prime}(x_2))^2}, \ \xi_2^r = \otheta \frac{1-F^{\prime}(x_2)^2}{1+F^{\prime}(x_2)^2}, \ \tau^{r}=-1. \]
    The reflected bicharacteristic satisfies 
    \begin{equation}\begin{cases}\label{eq: bichar}
        & \dot{x}_1=2\xi_1, \ \dot{x}_2 = 2\xi_2, \ \dot{t}=-2\tau, \ \dot{\xi}_1=\dot{\xi}_2=\dot{\tau}=0, \\
        & x_1(0)=F(x_2), \ x_2(0)=x_2, \ t(0)=t^{\prime}, \ \xi_1(0)=\xi_1^r, \ \xi_2(0)=\xi_2^r, \ \tau(0)=\tau^r.
    \end{cases}\end{equation}
    Hence we obtain the reflected bicharacteristic passing $(x_1,x_2,t^{\prime}, \xi_1^r, \xi_2^r,\tau^r)$:
    \begin{equation*}
        \gamma_r(s) = (x_1(s),x_2(s), \tau(s), \xi_1(s), \xi_2(s), \tau(s)  )
    \end{equation*}
    where
    \[\begin{gathered} 
    x_1(s) = F(x_2)+\frac{4 \otheta F^{\prime}(x_2)}{1+F^{\prime}(x_2)^2}s, \ x_2(s) = x_2+\frac{2 \otheta (1-F^{\prime}(x_2)^2)}{1+F^{\prime}(x_2)^2}s, \ t(s)=t^{\prime}+2s, \\
    \xi_1^r(s) = \frac{2 \otheta F^{\prime}(x_2)}{1+F^{\prime}(x_2)^2}, \ \xi_2^r(s) = \frac{2 \otheta (1-F^{\prime}(x_2)^2)}{1+F^{\prime}(x_2)^2}, \ \tau^r(s) =-1.
\end{gathered}\]
It remains to project $\gamma_r$ onto the base manifold $M$ to conclude the formula \eqref{eq: 2d-ref-flow}.
\end{proof}

\noindent
{\bf Remark} (Equal angle reflection){\bf .}
    The projections onto the $(x_1,x_2)$-plane of the incoming and reflected bicharacteristic exhibit ``equal angle reflection''. That is 
    \begin{equation}
    \label{ear}
    (0,-\otheta)\cdot n(x_2) = (\xi_1^r,\xi_2^r)\cdot n(x_2) 
    \end{equation}
    where $n(x_2)=(1, -F^{\prime}(x_2))$ is a normal vector to the obstacle $\mathcal O$ at $(F(x_2),x_2)$. 
    Indeed, 
    \[\begin{split}
        \eqref{ear}\Leftrightarrow [(\xi_1^r, \xi_2^r)+(0,\otheta)]\cdot n(x_2)=0 \Leftrightarrow [(\xi_1^r, \xi_2^r)+(0,1)]\cdot [(\xi_1^r,\xi_2^r)-(0,\otheta)]=0.
    \end{split}\]
    The last equality holds as $\otheta=\pm 1$ and $|(\xi_1^r,\xi_2^r)|=1$.

The next proposition justifies Assumption \ref{A3} for strictly convex obstacles in 2D.

\begin{prop}\label{prop: 2d-ref}
    Let $\mathcal O$, $F$, $Z_r$ be as in Lemma \ref{lem: 2d-ref-flow} with $n=2$, and $\omega$, $\mathring\omega$ be as in \eqref{omega}, \eqref{omegao}. Then the map $Z_r: \mathring\omega \to Z_r(\mathring\omega)$ is a $C^{\infty}$ diffeomorphism, which extends to a homeomorphism $Z_r: \omega\to Z_r(\omega)$.
\end{prop}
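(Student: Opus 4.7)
My approach reduces the proposition to a Jacobian computation plus a global-injectivity argument, using the explicit formula \eqref{eq: 2d-ref-flow}. Since $F\in C^\infty(B(0,r))$ and $1+F'(x_2)^2 > 0$, the map $Z_r$ is $C^\infty$ on all of $\omega$. The $t$-component of $Z_r$ is $t'+2s$, so its $3\times 3$ Jacobian determinant collapses (via the $t'$-column) to the $2\times 2$ Jacobian of the planar map $\Phi(s,x_2):=(\Phi_1,\Phi_2)$ consisting of the first two components of $Z_r$. A direct computation, together with the identity $4(F')^2+(1-(F')^2)^2=(1+(F')^2)^2$, gives
\[
\det DZ_r(s,x_2,t') \;=\; 2\otheta F'(x_2) \;-\; \frac{8F''(x_2)}{1+F'(x_2)^2}\,s.
\]

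On $\mathring\omega$ one has $\otheta F'(x_2) > 0$, and strict concavity of $F$ forces $F''(x_2)\leq 0$, so $\det DZ_r > 0$ there. Hence $Z_r$ is a local $C^\infty$ diffeomorphism on $\mathring\omega$ by the inverse function theorem. The same formula reveals that $\det DZ_r=0$ only at $s=\otheta F'(x_2)(1+(F')^2)/(4F''(x_2))\leq 0$; in other words, the caustic of the reflected ray family sits in the unphysical half-line $s<0$.

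For global injectivity on $\mathring\omega$, the equation $t'_1+2s_1=t'_2+2s_2$ reduces the problem to injectivity of $\Phi$. Each curve $s\mapsto \Phi(s,x_2)$ is a straight line of (equal-angle-reflection) speed $2$, so the case $x_2^{(1)}=x_2^{(2)}$ immediately forces $s_1=s_2$. For $x_2^{(1)}\neq x_2^{(2)}$ a standard compactness argument---using that $\Phi(0,\cdot)=(F(x_2),x_2)$ is an embedding, that $\det D\Phi>0$ on $\mathring\omega$, and that the caustic lies at $s<0$---shows that for $s_0, r$ chosen sufficiently small no two distinct reflected rays can intersect inside $\mathring\omega$. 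This settles the first part. For the extension to $\omega$, the formula \eqref{eq: 2d-ref-flow} is $C^\infty$ across the grazing locus $\{F'(x_2)=0\}$, so $Z_r$ extends continuously (in fact smoothly) to $\omega$. Injectivity across the grazing boundary uses that grazing rays stay in the tangent hyperplane $\{x_1=1\}$ and move into the shadow direction $\otheta x_2\geq 0$, whereas a reflected ray from $\mathring\omega$ has $\partial_s\Phi_1>0$ and, by a short computation using strict concavity of $F$ at $0$, crosses $\{x_1=1\}$ at a point with $\otheta x_2<0$; the two images are therefore disjoint. Continuity of $Z_r^{-1}$ follows from properness: since the $t$-component is $t'+2s$ and $(s,x_2)$ is already confined to a bounded set, the preimage of any compact set is compact in $\omega$, so $Z_r$ is a closed map and hence a homeomorphism onto its image.

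The main obstacle is the global injectivity of $\Phi$ on $\mathring\omega$. Local diffeomorphism and properness are immediate from the Jacobian formula, but ruling out self-intersections of the entire reflected ray family needs either the caustic bound together with a smallness choice of $(s_0,r)$, or equivalently a monotone-angle argument using $\alpha(x_2):=2\arctan F'(x_2)$, which decreases strictly in $x_2$ by strict concavity. The higher-dimensional and higher-order analogues of this step, carried out in \S\ref{nd}, are substantially more delicate, which is why the present subsection is devoted to the planar case.
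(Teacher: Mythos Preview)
Your Jacobian computation and the local-diffeomorphism conclusion are correct and match the paper exactly. The gap is in the global injectivity of $\Phi$ on $\mathring\omega$. The ``standard compactness argument'' you invoke does not go through here: since $\det D\Phi(s,x_2)=2\otheta F'(x_2)-\tfrac{8F''(x_2)}{1+F'(x_2)^2}\,s\to 0$ as $x_2\to 0$, there is no uniform positive lower bound on the Jacobian over $\mathring\omega$, no matter how small $s_0$ and $r$ are chosen. Perturbation from the embedded initial curve $\Phi(0,\cdot)$ therefore gives no uniform injectivity neighborhood. Likewise, the observation that the caustic sits at $s<0$ controls only where \emph{infinitesimally} nearby rays focus; it does not by itself rule out intersections of rays emanating from well-separated boundary points at some $s_1,s_2\in[0,s_0)$. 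You mention the monotone-angle alternative, which is the right idea, but you do not carry it out.

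The paper executes precisely that alternative. Writing $\alpha(x_2)$ for the angle between $(0,\otheta)$ and $(\xi_1^r,\xi_2^r)$, one computes $\alpha'(x_2)=\tfrac{2\otheta F''(x_2)}{1+F'(x_2)^2}$, so strict concavity gives strict monotonicity of $\alpha$ (the rays are \emph{defocusing}). Assuming $z(s,x_2)=z(s^*,x_2^*)$ with $\otheta(x_2^*-x_2)>0$, one eliminates $(z_1,z_2)$ to obtain
\[
F(x_2^*)-F(x_2)\ \geq\ (x_2^*-x_2)\,\otheta\tan\alpha(x_2),
\]
using $\alpha(x_2^*)<\alpha(x_2)$ and $s^*\geq 0$. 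On the other hand, $\tan\alpha(x_2)=\tfrac{2\otheta F'(x_2)}{1-F'(x_2)^2}>\otheta F'(x_2)$ together with strict concavity of $F$ gives the opposite \emph{strict} inequality, a contradiction. This argument works uniformly on all of $\omega$, including the grazing boundary $x_2=0$, so no separate boundary case is required; your treatment of grazing versus non-grazing rays becomes unnecessary once injectivity is proved this way. The homeomorphism statement then follows, as you note, from properness (compactness of $\omega$).
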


\begin{proof}
    We first remark that by Proposition \ref{ra11}, the domains $\omega$, $\mathring\omega$ takes the form 
    \[ \omega=\{ (s,x_2,t^{\prime}) \ | \ s\geq 0,\ \otheta x_2\leq 0, \ t^{\prime}\in \RR \}, \ \mathring\omega=\{ (s,x_2,t^{\prime}) \ | \ s\geq 0, \ \otheta x_2< 0, \ t^{\prime}\in \RR \}. \]

    \textbf{1. Injectivity.} To show that $Z_r: \omega\to Z_r(\omega)$ is injective, it suffices to show the injectivity of  
    \begin{equation*}
        z(s,x_2):= \left( F(x_2)+\frac{4 \otheta F^{\prime}(x_2)}{1+F^{\prime}(x_2)^2}s, \ x_2+\frac{2 \otheta (1-F^{\prime}(x_2)^2)}{1+F^{\prime}(x_2)^2}s\right)
    \end{equation*}
    on the $(s,x_2)$-projection of $\omega$.

    Suppose the contrary, then there exist $(s,x_2)$, $(s^*, x_2^*)$ in the $(s,x_2)$-projection of $\omega$ such that 
    \begin{equation}\label{z1z2}
        (s,x_2)\neq (s^*,x_2^*), \ z(s,x_2)=z(s^*,x_2^*)=:(z_1,z_2).
    \end{equation}
    Without loss of generality, we assume $\otheta (x_2^*-x_2)>0$.

    \begin{figure}[t]
        \begin{center}
        \includegraphics[scale=0.5]{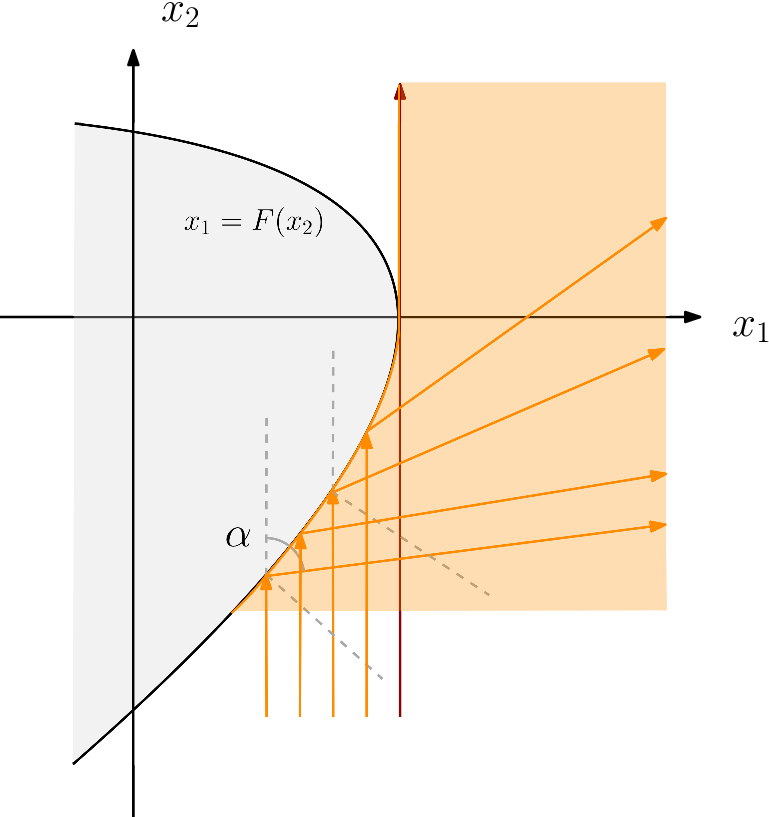}
        \caption{Reflected rays in the proof of Proposition \ref{prop: 2d-ref} when $\otheta=1$. }
        \label{2dreflect}
        \end{center}
    \end{figure}

    Let $\alpha(x_2)$ be the angle between the vectors $(0,\otheta)$ and $(\xi_1^r(x_2),\xi_2^r(x_2))$. Shrink the $x_2$ component of $\omega$ if needed, we can assume that $0\leq \alpha(x_2)<\frac{\pi}{2}$. Then we have 
    \begin{equation}\label{alpha} 
    \sin{\alpha(x_2)} = \frac{ 2 \otheta F^{\prime}(x_2) }{ 1+F^{\prime}(x_2)^2 }, \ \cos{\alpha(x_2)} = \frac{ 1-F^{\prime}(x_2)^2 }{1+F^{\prime}(x_2)^2}. 
    \end{equation}
    We first claim that in $\omega$, the reflected bicharacteristics are {\em defocusing}, that is, $\alpha(x_2^*)<\alpha(x_2)$. Indeed, differentiate the first identity in \eqref{alpha} with respect to $x_2$ and we obtain 
    \[ \alpha^{\prime}(x_2)\cos(\alpha(x_2)) = \frac{2 \otheta F^{\prime\prime}(x_2) (1-F^{\prime}(x_2)^2)}{(1+F^{\prime}(x_2)^2)^2}. \]
    Use the second identity in \eqref{alpha} and we find 
    \[ \alpha^{\prime}(x_2)=\frac{2 \otheta F^{\prime\prime}(x_2)}{1+F^{\prime}(x_2)^2} \  \Rightarrow \ \otheta \alpha^{\prime}(x_2)\leq 0 \text{ in } \omega \]
    which implies that $\alpha(x_2^*)\leq \alpha(x_2)$. Moreover, if $\alpha(x_2^*)=\alpha(x_2)$, then $F^{\prime\prime}=0$ on $[x_2,x_2^*]$ when $\otheta=1$, or on $[x_2^*, x_2]$ when $\otheta=-1$; but neither of the cases is possible since $F$ is strictly concave.

    Now by the second identity in \eqref{z1z2}, we know $(z_1,z_2)$ satisfies
    \[\begin{gathered}
        (z_2-x_2)\tan{\alpha(x_2)} = \otheta ( z_1-F(x_2)), \ (z_2-x_2^*)\tan{\alpha(x_2^*)} = \otheta (z_1-F(x_2^*)).
    \end{gathered}\]
    From this we find 
    \begin{equation}\begin{split}\label{f1f2} 
    F(x_2^*)-F(x_2)
    = & (z_2-x_2) \otheta \tan\alpha(x_2)-(z_2-x_2^*)\otheta \tan\alpha(x_2^*) \\
    = & (\tan\alpha(x_2)-\tan\alpha(x_2^*))\otheta z_2+ \otheta (x_2^*\tan\alpha(x_2^*)-x_2\tan\alpha(x_2)). 
    \end{split}\end{equation}
    We showed $0\leq \alpha(x_2^*)<\alpha(x_2)<\frac{\pi}{2}$, hence $\tan\alpha(x_2)-\tan\alpha(x_2^*)>0$.
    Since $s^*\geq 0$, $\cos\alpha(x_2^*)\geq 0$, we know $\otheta z_2 = \otheta x_2^*+2s^*\cos{\alpha(x_2^*)}\geq \otheta x_2^*$. 
    Using the monotonicity of the right hand side of \eqref{f1f2} in $z_2$, we conclude that
    \begin{equation}\label{tan1}
    F(x_2^*)-F(x_2)\geq (x_2^*-x_2) \otheta \tan\alpha(x_2). 
    \end{equation}
    On the other hand, by \eqref{alpha} we have 
    \[ \tan\alpha(x_2) = \frac{ 2\otheta F^{\prime}(x_2) }{1-F^{\prime}(x_2) } > \otheta F^{\prime}(x_2). \]
    Combining this with the assumption $\otheta (x_2^*-x_2)>0$ and the strict concavity of $F$, we obtain
    \begin{equation}\label{tan2}
    F(x_2^*)-F(x_2)< F^{\prime}(x_2)(x_2^*-x_2) = \otheta F^{\prime}(x_2) \cdot \otheta (x_2^*-x_2) < \tan{\alpha(x_2)} \cdot \otheta (x_2^*-x_2). 
    \end{equation}
    This contradicts \eqref{tan1}. We have now proved the injectivity of $Z_r: \omega\to Z_r(\omega)$.

    \textbf{2. Local diffeomorphism.} To prove $Z_r$ is a local diffeomorphism from $\mathring\omega \to Z_r(\mathring\omega)$, it suffices to show its Jacobian $j$ is nonzero in $\mathring\omega$. A direct computation gives that 
    \begin{equation}\label{tan2a}\begin{split} 
        j(s,x_2,t^{\prime}) 
        = & \begin{vmatrix}
            2\sin\alpha & F^{\prime}+2s\alpha^{\prime}\cos{\alpha} & 0 \\
            2\otheta \cos{\alpha} & 1-2s\otheta \alpha^{\prime} \sin{\alpha} & 0 \\
            2 & 0 & 1
        \end{vmatrix} \\
        = & 2\left( \sin\alpha -\otheta F^{\prime} \cos{\alpha} -2s \otheta \alpha^{\prime} \right) \\
        = & 2 \otheta F^{\prime}(x_2)-\frac{ 8 s F^{\prime\prime}(x_2)  }{ 1+F^{\prime}(x_2)^2 }.
    \end{split}\end{equation}
    By the definition of $\omega$, we have $\otheta F^{\prime}(x_2)>0$. By the concavity of $F$, we have $F^{\prime\prime}\leq 0$.
    Hence when $s\geq 0$, we have 
    \begin{equation*}
        j(s,x_2,t^{\prime})\geq 2\otheta F^{\prime}(x_2)>0.
    \end{equation*}
    This completes the proof.
\end{proof}

\Remark\label{tan2b}
For the functions $F_0$ and $F_1$ in Examples \eqref{r4} with $n=2$ we obtain from \eqref{tan2a} that 
\begin{align}
j(s,x_2,t')\sim |x_2|^{2k-1}+s|x_2|^{2k-2}.
\end{align}
This reduces to the formula of \cite{cheverry1996} when $k=1$. For the function $F_2$ in Examples \eqref{r4} with $n=2$ we obtain
\begin{align}
j(s,x_2,t')\sim e^{-\frac{1}{x^2}}\left(|x_2|^{-3}+s|x_2|^{-6}\right).
\end{align}
Here we have taken $\otheta=1$ and the grazing set is $\{x_2=0\}$.

\subsection{Assumption \ref{A3}: \texorpdfstring{$n$}{TEXT}-dimensional convex obstacles.}\label{nd}

We generalize the results in the previous section to $n$-dimensional convex obstacles. 

We first introduce the parametrizations of $[0,s_0)\times (I_-\sqcup G_{\phi_i})$ and $[0,s_0)\times I_-$:
\begin{equation}\begin{gathered}\label{nomega}
    \omega:=[0,s_0)\times \{ (\ox, t^{\prime}) \ | \ \langle \otheta, \nabla F(\ox) \rangle \geq 0, \ |\ox|<r, \ t^{\prime}\in \RR \}\simeq [0,s_0)\times (I_-\sqcup G_{\phi_i}), \\
    \mathring\omega:=\{ (s,\ox, t^{\prime}) \ | \ 0\leq s<s_0, \ \langle \otheta, \nabla F (\ox) \rangle > 0, \ |\ox|<r, \ t^{\prime}\in \RR \}\simeq [0,s_0)\times I_-.
\end{gathered}\end{equation}

\begin{lemm}
\label{lem: nd-ref-flow}
Let $\mathcal O$ and $F$ be as in Definition \ref{r2}, $M:=(\RR^n\setminus \mathcal O)\times \RR$, and $\phi_i=-t+\langle \otheta,\ox \rangle$ be the incoming phase for the wave operator $P=\Box$. Then through the identification \eqref{nomega}, the reflected flow map $Z_r$ in Definition \ref{q12}, is given by 
\begin{equation}\begin{gathered}
\label{eq: nd-ref-flow}
    Z_r: [0,s_0)\times (I_-\sqcup G_{\phi_i}) \to M, \\
    Z_r(s,\ox, t^{\prime}):=(F(\ox)+2s\xi_1^r(\ox), \ox+2s\oxi^r(\ox), t^{\prime}+2s)
\end{gathered}\end{equation}
with 
\begin{equation}\label{xir}
    \xi_1^r(\ox) := \frac{ 2\langle \otheta, \nabla F(\ox) \rangle }{ 1+|\nabla F(\ox)|^2 }, \ \oxi^r(\ox) := \otheta - 2\frac{ \langle \otheta, \nabla F(\ox) \rangle}{1+|\nabla F(\ox)|^2} \nabla F(\ox).
\end{equation}
\end{lemm}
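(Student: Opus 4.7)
The proof will follow the same template as Lemma \ref{lem: 2d-ref-flow}, since the structural ingredients, namely the Hamiltonian vector field of $p(x,t,\xi,\tau) = |\xi|^2 - \tau^2$ and the equal-angle reflection law, are dimension-independent. The plan is to identify the reflected initial momentum at a boundary point by using $i^*$ together with the null condition, and then to integrate the bicharacteristic ODE, which is trivial since the coefficients $\xi$ and $\tau$ are constants of motion.

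First I would write down the incoming bicharacteristics with $H_p = 2\xi_1\partial_{x_1} + 2\overline\xi\cdot\partial_{\ox} - 2\tau\partial_t$ and initial covector $(0,\otheta,-1)$; these are straight lines with velocity $(0,2\otheta,2)$. Suppose such a ray meets $\partial T^*M$ over the boundary point $(F(\ox),\ox,t')$. Then the reflected covector $(\xi_1^r,\oxi^r,\tau^r)$ is characterized by the two conditions
\begin{equation*}
  i^*(F(\ox),\ox,t',0,\otheta,-1) = i^*(F(\ox),\ox,t',\xi_1^r,\oxi^r,\tau^r),\qquad p(F(\ox),\ox,t',\xi_1^r,\oxi^r,\tau^r)=0.
\end{equation*}
Since $\mathrm{Ker}(i^*) = N^*(\partial M)$ and $\partial M$ is locally $\{x_1=F(\ox)\}$, the conormal at $(F(\ox),\ox,t')$ is spanned by $(1,-\nabla F(\ox),0)$. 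Hence there exists $c\in\RR$ with
\begin{equation*}
  (0,\otheta,-1)-(\xi_1^r,\oxi^r,\tau^r) = c(1,-\nabla F(\ox),0),
\end{equation*}
which immediately gives $\tau^r = -1$, $\xi_1^r = -c$, and $\oxi^r = \otheta + c\nabla F(\ox)$. The null condition $(\xi_1^r)^2 + |\oxi^r|^2 = 1$ expands to
\begin{equation*}
  c\bigl(c(1+|\nabla F(\ox)|^2) + 2\langle\otheta,\nabla F(\ox)\rangle\bigr)=0.
\end{equation*}
The root $c=0$ reproduces the incoming covector; the other root $c = -\frac{2\langle\otheta,\nabla F(\ox)\rangle}{1+|\nabla F(\ox)|^2}$ yields exactly the expressions \eqref{xir} for $\xi_1^r(\ox)$ and $\oxi^r(\ox)$.

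Finally, the bicharacteristic equations of $p$ are
\begin{equation*}
  \dot x_1 = 2\xi_1,\quad \dot{\ox} = 2\overline\xi,\quad \dot t = -2\tau,\quad \dot\xi_1=0,\quad \dot{\overline\xi}=0,\quad \dot\tau=0,
\end{equation*}
so $\xi$ and $\tau$ are constants along the reflected ray and the position coordinates are linear in $s$. Starting from $(F(\ox),\ox,t')$ with momentum $(\xi_1^r(\ox),\oxi^r(\ox),-1)$, integrating yields
\begin{equation*}
  x_1(s) = F(\ox) + 2s\,\xi_1^r(\ox),\quad \ox(s) = \ox + 2s\,\oxi^r(\ox),\quad t(s) = t' + 2s,
\end{equation*}
and projecting onto $M$ gives formula \eqref{eq: nd-ref-flow}. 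There is no essential obstacle here, as the argument is algebraic once the $2$D case is understood; the only thing to check carefully is that the normal direction to $\partial M$ is indeed $(1,-\nabla F(\ox),0)$ and that the two roots of the null equation correctly pick out the incoming and reflected rays, both of which are immediate from the computation above.
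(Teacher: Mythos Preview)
Your proof is correct and follows essentially the same approach as the paper: both identify the reflected covector by writing $(0,\otheta,-1)-(\xi_1^r,\oxi^r,\tau^r)=c(1,-\nabla F(\ox),0)$, solve for $c$ via the null condition, and then integrate the (trivial) bicharacteristic ODE. The only cosmetic difference is that you spell out the quadratic in $c$ and explicitly identify the two roots, whereas the paper simply states the solution.
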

\begin{proof}
    The proof is similar to the proof of Lemma \ref{lem: 2d-ref-flow}. The wave operator $\Box$ has symbol $p=|\xi|^2-\tau^2$, whose Hamiltonian vector field is $H_p=2\xi\cdot \nabla_x -2\tau\partial_t$. Thus for the incoming phase $\phi_i=-t+\langle \otheta, \ox \rangle$, the incoming bicharacteristics passing $(x_1^0, \overline x^0, t, 0, \otheta,-1)$ is 
    \[ \gamma_i(s) :=(x_1^0, \overline x, t, \xi_1, \oxi, \tau)(s) = (x_1^0, \overline x^0+2\otheta s, \tau+2s, 0, \otheta, -1). \]
    Suppose $\gamma_i(s)$ hits $\partial T^*M$ at $( F(\ox), \ox, t^{\prime}, 0, \otheta,-1 )$. Then the starting point of the reflected bicharacteristic $(F(\ox),\ox, t^{\prime}, \xi_1^r, \oxi^r, \tau^r)$ must satisfy 
    \begin{equation}\label{nd-ref-eq}
        i^*(0,\otheta,-1)=i^*(\xi_1^r, \oxi^r, \tau^r), \ p(F(\ox),\ox, t^{\prime},\xi_1^r, \oxi^r,\tau^r)=0.
    \end{equation}
    Since $\mathrm{Ker}( i^*)=N^*(\partial M)$, and the normal vectors of $\partial M$ at $(F(\ox),\ox,t^{\prime})$ is parallel to $(1,-\nabla F(\ox),0)$, we can rewrite \eqref{nd-ref-eq} as 
    \begin{equation*}
        (0,\otheta,-1)-(\xi_1^r,\oxi^r, \tau^r)=c(1,-\nabla F(\ox),0), \ |(\xi_1^r, \oxi^r)|=|\tau^r|.
    \end{equation*}
    From this we solve 
    \begin{equation*}
    \xi_1^r = \frac{ 2\langle \otheta, \nabla F(\ox) \rangle }{ 1+|\nabla F(\ox)|^2 }, \ \oxi^r = \otheta - 2\frac{ \langle \otheta, \nabla F(\ox) \rangle}{1+|\nabla F(\ox)|^2} \nabla F(\ox), \ \tau^r=-1. 
    \end{equation*}
    A similar computation as \eqref{eq: bichar} gives the reflected bicharacteristics
    \[ \gamma_r = \gamma_r(s,\ox, t^{\prime}) = (F(\ox)+2s\xi_1^r(\ox), \ox+2s\oxi^r(\ox), t^{\prime}+2s, \xi_1^r(\ox), \oxi^r(\ox), -1 ). \]
    Project the bicharacteristics onto $M$ and we obtain the reflected flow map \eqref{eq: nd-ref-flow}.
\end{proof}

\noindent
{\bf Remark} (Law of reflection){\bf .}
The projection onto the $x$-plane of the incoming and reflected bicharacteristics obeys the following law of reflection: at $(F(\ox),\ox)\in \partial\mathcal O$, the direction of the incoming rays $(0,-\otheta)$, the direction of the reflected rays $(\xi_1^r, \oxi^r)$ and the normal vector $(1,-\nabla F(\ox))$ are coplanar, and the normal vector bisects the angle formed by $(0,-\otheta)$ and $(\xi_1^r, \oxi^r)$. The proof is similar ot the proof of \eqref{ear}.

The remaining part of this section is devoted to justifying that Assumption \ref{A3} holds for strictly convex obstacles in $n$ dimensional and plane wave phases. 

\begin{prop}\label{nd-ref}
    Let $\mathcal O$, $F$, $Z_r$ be as in Lemma \ref{lem: nd-ref-flow}, and $\omega$, $\mathring\omega$ be as in \eqref{nomega}. Then the map $Z_r: \mathring\omega \to Z_r(\mathring\omega)$ is a $C^{\infty}$ diffeomorphism, which extends to a homeomorphism $Z_r: \omega\to Z_r(\omega)$.
\end{prop}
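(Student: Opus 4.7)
The $t$-component of $Z_r$ is $t'+2s$ and depends only on $(s,t')$, so $t'$ is recoverable once the output point and $s$ are known. It suffices to prove that the spatial map
\[ \tilde Z_r(s,\ox) := (F(\ox)+2s\xi_1^r(\ox),\,\ox+2s\oxi^r(\ox)) \]
is injective on the $(s,\ox)$-projection of $\omega$ and a local $C^\infty$ diffeomorphism onto its image on the corresponding interior. Throughout I use the identity $\oxi^r(\ox)=\otheta-\xi_1^r(\ox)\nabla F(\ox)$ from \eqref{xir}, and write $u_r:=(\xi_1^r,\oxi^r)$ and $P(\ox):=(F(\ox),\ox)$, so $\tilde Z_r=P+2su_r$ with $|u_r|=1$.

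\textbf{Jacobian.} Let $J(s,\ox):=\det D_{(s,\ox)}\tilde Z_r$. At $s=0$, the column-operation subtracting $(\oxi^r)_k$ times the $(k{+}1)$st column from the first (the same operation used in the 2D proof) reduces the first column to $(\xi_1^r-\langle\oxi^r,\nabla F\rangle,0,\ldots,0)^T=(\langle\otheta,\nabla F\rangle,0,\ldots,0)^T$, yielding $J(0,\ox)=2\langle\otheta,\nabla F(\ox)\rangle$, which vanishes only on the grazing set and is strictly positive on $\mathring\omega$. For $s>0$, direct computation gives $\nabla_\ox\xi_1^r=\tfrac{2}{1+|\nabla F|^2}(\nabla^2 F)\oxi^r$, and multilinear expansion of the determinant in $s$ produces polynomial-in-$s$ corrections whose coefficients involve $\nabla^2 F$. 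Strict concavity of $F$ ($\nabla^2 F\leq 0$) together with $s\geq 0$ and $\xi_1^r\geq 0$ in $\omega$ forces each correction to carry the same sign as $\langle\otheta,\nabla F\rangle$, so $J(s,\ox)\geq 2\langle\otheta,\nabla F(\ox)\rangle>0$ on $\mathring\omega$, in parallel with \eqref{tan2a}.

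\textbf{Injectivity.} Suppose $\tilde Z_r(s_1,\ox_1)=\tilde Z_r(s_2,\ox_2)$ with $(s_1,\ox_1)\neq(s_2,\ox_2)$, and set $\tilde a_i:=2s_i\xi_1^r(\ox_i)\geq 0$. The defining equations become
\begin{align*}
F(\ox_1)-F(\ox_2) &= \tilde a_2-\tilde a_1,\\
(\ox_1-\ox_2)+2(s_1-s_2)\otheta &= \tilde a_1\nabla F(\ox_1)-\tilde a_2\nabla F(\ox_2).
\end{align*}
If $\ox_1=\ox_2$, the second equation forces $s_1=s_2$ (since $\oxi^r(\ox_1)\neq 0$ near the basepoint, using $|u_r|=1$ and $|\nabla F|$ small), contradiction. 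For $\ox_1\neq\ox_2$, strict concavity of $F$ gives
\[ \langle\nabla F(\ox_1),\ox_1-\ox_2\rangle \;<\; F(\ox_1)-F(\ox_2) \;<\; \langle\nabla F(\ox_2),\ox_1-\ox_2\rangle. \]
Taking the inner product of the vector equation with $\ox_1-\ox_2$ and applying these bounds together with $\tilde a_i\geq 0$ yields the key inequality
\[ |\ox_1-\ox_2|^2+2(s_1-s_2)\langle\otheta,\ox_1-\ox_2\rangle \leq -(\tilde a_1-\tilde a_2)^2 \leq 0, \]
strict whenever not both $\tilde a_i$ vanish. Combining this with the scalar identity $\langle\otheta,\ox_1-\ox_2\rangle+2(s_1-s_2)=\tilde a_1\beta_1-\tilde a_2\beta_2$ (where $\beta_i:=\langle\otheta,\nabla F(\ox_i)\rangle$) obtained by projecting the vector equation onto $\otheta$, together with case analysis on the signs of $\tilde a_i$ and a separate argument in the degenerate subcase $\tilde a_1=\tilde a_2=0$ (which reduces to strict concavity of $F$ along the line through $\ox_2$ in direction $\otheta$), produces the desired contradiction, in parallel with inequalities \eqref{tan1}--\eqref{tan2} of the 2D proof.

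\textbf{Homeomorphism and main obstacle.} With injectivity and the local-diffeomorphism property established, continuity of $Z_r^{-1}$ up to the grazing boundary follows from the explicit formula \eqref{eq: nd-ref-flow}, extending $Z_r$ to the claimed homeomorphism $\omega\to Z_r(\omega)$. The main obstacle is the closing step of the injectivity argument: in 2D, the single-parameter family of tangent directions permits a clean angle-based contradiction, whereas in $n\geq 3$ the vector-valued equations force us to combine the scalar inequality above with the $\otheta$-projection identity and to handle carefully the boundary case where one or both $\tilde a_i$ vanish. These steps, while elementary, are delicate in the highly degenerate configurations $\usigma\in G^{2k}_d\setminus G^{2k+1}$ with $k\gg 1$, where $\beta_i$ and $\nabla F$ vanish to high order and the strict-concavity inequalities become weak.
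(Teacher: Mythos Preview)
Your proposal has the right overall architecture, but both the Jacobian step and the injectivity step contain genuine gaps that the paper fills with specific, nontrivial computations.

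\textbf{Jacobian.} Your claim that ``multilinear expansion of the determinant in $s$ produces polynomial-in-$s$ corrections whose coefficients\ldots carry the same sign as $\langle\otheta,\nabla F\rangle$'' is exactly the hard point, and it is not justified. In $n$ dimensions the higher-order $s$-coefficients are signed combinations of minors of $\nabla^2F$ tangled with $\nabla F$, $\otheta$, and $\oxi^r$; there is no obvious reason each term has a definite sign. The paper does \emph{not} expand: it writes the $(n{-}1)\times(n{-}1)$ block as $A=B-2sC\nabla^2F$ with $B=I-(\oxi^r\otimes\nabla F)/\xi_1^r$ and $C=\xi_1^r I+(\otheta\otimes\oxi^r)/\langle\otheta,\nabla F\rangle$, proves the identity $CB^{T}=\bigl((\xi_1^r)^2I+\oxi^r\otimes\oxi^r\bigr)/\xi_1^r$ (so $CB^{T}>0$), and computes $\det B=(1+|\nabla F|^2)/2$. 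Writing $A=B(I-2sB^{-1}C\nabla^2F)$ with $B^{-1}C=B^{-1}(CB^{T})(B^{-1})^{T}$ positive definite, similarity to $L^{T}(\nabla^2F)L$ forces the eigenvalues of $B^{-1}C\nabla^2F$ to be $\le 0$, hence $\det(I-2sB^{-1}C\nabla^2F)\ge 1$ for $s\ge 0$, and $j=2\xi_1^r\det A\ge 2\langle\otheta,\nabla F\rangle$. Your sign heuristic would have to reproduce this eigenvalue conclusion, which it cannot without the factorization.

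\textbf{Injectivity.} Your ``key inequality'' $|\ox_1-\ox_2|^2+2(s_1-s_2)\langle\otheta,\ox_1-\ox_2\rangle\le -(\tilde a_1-\tilde a_2)^2$ is correct, but it does not by itself yield a contradiction: the left side is not manifestly nonnegative, and your promised ``case analysis'' combining it with the $\otheta$-projection identity is not carried out. One can push further (e.g.\ $|\ox_1-\ox_2|^2\ge\langle\otheta,\ox_1-\ox_2\rangle^2$ gives $\delta(\delta+2\sigma)<0$ with $\delta=\langle\otheta,\ox_1-\ox_2\rangle$, $\sigma=s_1-s_2$), but closing from there still requires real work. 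The paper avoids all of this by pairing differently: it shows directly that
\[
\bigl\langle \xi^r(\ox^*)-\xi^r(\ox),\,(F(\ox^*)-F(\ox),\,\ox^*-\ox)\bigr\rangle
\]
is $\ge 0$ from concavity (using $\oxi^r=\otheta-\xi_1^r\nabla F$ and $\xi_1^r\ge 0$), and simultaneously equals $-2(s+s^*)\bigl(1-\langle\xi^r(\ox),\xi^r(\ox^*)\rangle\bigr)\le 0$ from the collision relation and $|\xi^r|=1$, with strictness forced by strict concavity after ruling out $\nabla F(\ox)=\nabla F(\ox^*)$ and the double-grazing case. This single pairing closes the argument in one stroke, with no case analysis.
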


\begin{proof}
    \textbf{1. Injectivity.} To show the injectivity of $Z_r$, it suffices to show that the map
    \[  z(s,\ox):= (F(\ox)+2s\xi_1^r(\ox), \ox+2s\oxi^r(\ox)) \]
    is injective on the $(s,\ox)$-projection of $\omega$. 

    Suppose the contrary that there exists $(s, \ox)$, $(s^*,\ox^*)$ in the $(s,\ox)$-projection of $\omega$, such that 
    \begin{equation}\label{yys}
        (s,\ox)\neq (s^*, \ox^*), \ z(s,\ox)=z(s^*, \ox^*).
    \end{equation}
    From \eqref{yys} one can see that $s\neq s^*$, $\ox \neq \ox^*$. We record two observations based on \eqref{yys}:
    \begin{itemize}[leftmargin=40pt]
        \item[\textbf{OB1.}] The set of vectors 
        \[ \left\{ \xi^r(\ox), \ \xi^r(\ox^*), \ (F(\ox^*) - F(\ox), \ox^*-\ox) \right\} \]
        is linearly dependent, where $\xi^r:=(\xi_1^r, \bar{\xi}^r)$;

        \item[\textbf{OB2.}] There holds 
        \begin{equation}\label{focus}
            \langle \xi^r(\ox^*)-\xi^r(\ox), (F(\ox^*) - F(\ox), \ox^*-\ox ) \rangle < 0.
        \end{equation}
    \end{itemize}
    \begin{figure}[t]
    \begin{center}
        \includegraphics[scale=0.5]{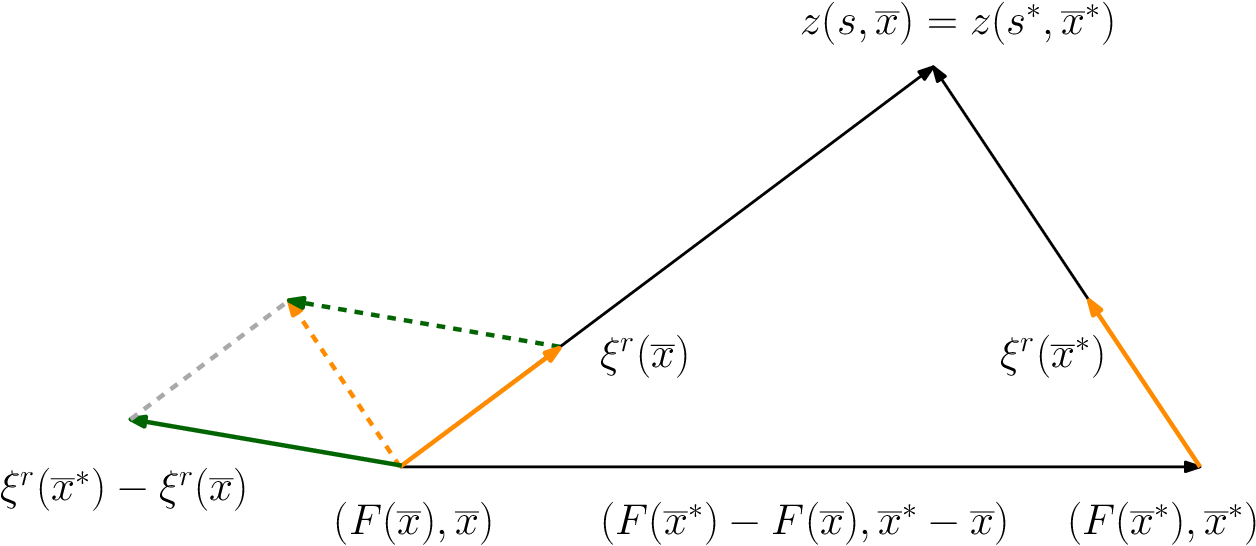}
        \caption{Intersecting reflected rays satisfying \eqref{yys}. }
        \label{intersectrays}
    \end{center}
    \end{figure}
    \noindent
    \begin{proof}[Proof of {\bf OB1}] 
    This is because $z(s,\ox)=z(s^*,\ox^*)$ implies
        \[ (F(\ox),\ox)+2s\xi^r(\ox)=( F(\ox^*), \ox^* )+2s^*\xi^r(\ox^*), \]
        that is,
        \begin{equation}\label{linear} 
        2s\xi^r(\ox) - 2s^*\xi^r(\ox^*) -( F(\ox^*)-F(\ox), \ox^*-\ox )=0.
        \end{equation}
        This justifies {\bf OB1}.
    \end{proof}

    \begin{proof}[Proof of {\bf OB2}] 
    Indeed, using \eqref{linear} and the facts that $|\xi^r(\ox)|=|\xi^r(\ox^*)|=1$, we obtain 
        \[ \langle \xi^r(\ox^*)-\xi^r(\ox), (F(\ox^*) - F(\ox), \ox^*-\ox ) \rangle = -2(s+s^*)\left( 1-\langle \xi^r(\ox), \xi^r(\ox^*) \rangle
        \right)\leq 0. \]
        Moreover, the inner product on the left can be $0$ if and only if $\xi^r(\ox)=\xi^r(\ox^*)$, which is true if and only if $\nabla F(\ox) = \nabla F(\ox^*)$ or $\langle \otheta, \nabla F(\ox)\rangle = \langle \otheta, \nabla F(\ox^*) \rangle=0$. 

        If $\nabla F(\ox)=\nabla F(\ox^*)$ with $\ox\neq \ox^*$, then by the strict concavity of $F$, we have 
        \[ F(\ox)-F(\ox^*)<\langle \nabla F(\ox^*), \ox-\ox^* \rangle = -\langle \nabla F(\ox), \ox^*-\ox \rangle <-( F(\ox^*) - F(\ox) ). \]
        This is impossible. 

        If $\langle \otheta, \nabla F(\ox) \rangle = \langle \otheta, \nabla F(\ox^*) \rangle=0$ with $\ox \neq \ox^*$. Then from \eqref{xir}, we know $\xi^r(\ox)=\xi^r(\ox^*)=(0,\otheta)$. The assumption $z(s,\ox)=z(s^*, \ox^*)$ implies 
        \[ \ox+2s\otheta = \ox^* + 2s^*\otheta \ \Rightarrow \ \ox^*-\ox = 2(s-s^*)\otheta. \]
        Since $\ox\neq \ox^*$, by the strict concavity of $F$, we have  
        \[ F(\ox^*)-F(\ox)<\langle \nabla F(\ox), \ox^*-\ox \rangle = 2(s-s^*)\langle \otheta, \nabla F(\ox) \rangle=0. \]
        Similarly, we have 
        \[ F(\ox) - F(\ox^*)<\langle \nabla F(\ox^*), \ox-\ox^* \rangle = 2(s^*-s)\langle \otheta, \nabla F(\ox^*) \rangle=0. \]
        This is a contradiction. We can now conclude that \eqref{focus} holds.
        \end{proof}

        On the other hand, we claim that for $\ox, \ox^*$ in the $\ox$-projection of $\omega$, there holds 
    \begin{equation}\label{defocus}
        \langle \xi^r(\ox^*)-\xi^r(\ox), (F(\ox^*) - F(\ox), \ox^*-\ox ) \rangle \geq 0.
    \end{equation}
    Indeed, by \eqref{xir} and the concavity of $F$, there holds
    \[\begin{split} 
    & \langle \oxi^r(\ox^*) - \oxi^r(\ox), \ox^*-\ox \rangle \\
    & = \frac{ 2\langle \otheta, \nabla F(\ox) \rangle }{ 1+|\nabla F(\ox)|^2 } \langle \nabla F(\ox), \ox^*-\ox \rangle
    + \frac{ 2\langle \otheta, \nabla F(\ox^*) \rangle }{ 1+|\nabla F(\ox^*)|^2 } \langle \nabla F(\ox^*), \ox-\ox^* \rangle \\
    & \geq  \frac{ 2\langle \otheta, \nabla F(\ox) \rangle }{ 1+|\nabla F(\ox)|^2 } ( F(\ox^*) - F(\ox) )
    + \frac{ 2\langle \otheta, \nabla F(\ox^*) \rangle }{ 1+|\nabla F(\ox^*)|^2 } ( F(\ox) - F(\ox^*) ) \\
    & = \left( \frac{ 2\langle \otheta, \nabla F(\ox) \rangle }{ 1+|\nabla F(\ox)|^2 } 
    - \frac{ 2\langle \otheta, \nabla F(\ox^*) \rangle }{ 1+|\nabla F(\ox^*)|^2 } \right) ( F(\ox^*) - F(\ox) ) \\
    & = - (\xi_1^r(\ox^*) - \xi_1^r(\ox) )( F(\ox^*) - 
    F(\ox )).
    \end{split}\]
    This proves \eqref{defocus}, which contradicts the observation \eqref{focus}.

    {\bf 2. Local diffeomorphism.} We now show that $Z_r: \mathring\omega\to Z_r(\mathring\omega)$ is a local diffeomorphism. For that, we compute the Jacobian $j$ of $Z_r$:
    \begin{equation*}\begin{split}
    j(s,\ox,t^{\prime}) 
    = & \begin{vmatrix}
        2\xi_1^r & \partial_{x_2}F+2s\partial_{x_2}\xi_1^r & \partial_{x_3}F+2s\partial_{x_3}\xi_1^r & \cdots & \partial_{x_n}F+2s\partial_{x_n}\xi_1^r & 0 \\
        2\xi_2^r & 1+2s\partial_{x_2}\xi_2^r & 2s\partial_{x_3}\xi_2^r & \cdots & 2s\partial_{x_n}\xi_2^r & 0 \\
        2\xi_3^r & 2s\partial_{x_2}\xi_3^r & 1+2s\partial_{x_3}\xi_3^r & \cdots & 2s\partial_{x_n}\xi_3^r & 0 \\
        \cdots & \cdots & \cdots & \cdots & \cdots & \cdots \\
        2\xi_n^r & 2s\partial_{x_2}\xi_n^r & 2s\partial_{x_3}\xi_n^r & \cdots & 1+2s\partial_{x_n}\xi_n^r & 0 \\
        2 & 0 & 0 & \cdots & 0 & 1
    \end{vmatrix} \\
    = &  2\begin{vmatrix}
        \xi_1^r & \nabla F + 2s \nabla \xi_1^r \\
        (\oxi^r)^T & I+2s \frac{\partial \oxi^r}{\partial \ox}
    \end{vmatrix}.
\end{split}\end{equation*}
By row reduction, we have 
\begin{equation}\begin{gathered}\label{ja}
    j(s,\ox,t^{\prime}) 
    =  2 \begin{vmatrix}
    \xi_1^r & \nabla F+2s\nabla \xi_1^r \\
    0 & I+2s\frac{ \partial \oxi^r }{\partial \ox}-\frac{1}{\xi_1^r} \oxi^r \otimes (\nabla F+ 2s\nabla \xi_1^r)
    \end{vmatrix} = 2\xi_1^r \det(A), \\
    \text{with } A:= I-\frac{\oxi^r \otimes \nabla F}{\xi_1^r} +2s \left( \frac{\partial\oxi^r}{\partial \ox} -\frac{\oxi^r \otimes \nabla \xi_1^r}{\xi_1^r}  \right).
\end{gathered}\end{equation}
Here for two $n-1$ dimensional row vectors $v_1$, $v_2$, we define their tensor product by $v_1\otimes v_2:=v_1^T v_2$, which is an $(n-1)\times (n-1)$ matrix.

By \eqref{xir}, for $2\leq k, \ell\leq n$, we have 
\[ \xi_k^r = \theta_k-\xi_1^r \partial_{x_k}F \ \Rightarrow \ \partial_{x_{\ell}}\xi_k^r = - \partial_{x_k}F \partial_{x_{\ell}}\xi_1^r - \xi_1^r \partial_{x_k}\partial_{x_{\ell}}F. \]
Therefore, we have
\begin{equation*}\begin{split}
    \frac{\partial\oxi^r}{\partial \ox} 
    = -\nabla F\otimes \nabla \xi_1^r -\xi_1^r \nabla^2 F.
\end{split}\end{equation*}
Hence 
\begin{equation*}\begin{split}
    A= & I-\frac{\oxi^r \otimes \nabla F}{\xi_1^r} -2s \left( \nabla F\otimes \nabla \xi_1^r +\xi_1^r \nabla^2 F + \frac{\oxi^r \otimes \nabla \xi_1^r}{\xi_1^r}  \right) \\
    = & I-\frac{\oxi^r \otimes \nabla F}{\xi_1^r} -2s \left( \xi_1^r \nabla^2 F + \frac{ (\oxi^r+\xi_1^r \nabla F)\otimes \nabla \xi_1^r }{\xi_1^r} \right) \\
    = & I-\frac{\oxi^r \otimes \nabla F}{\xi_1^r} -2s \left( \xi_1^r \nabla^2 F + \frac{\otheta\otimes \nabla \xi_1^r}{\xi_1^r} \right)
\end{split}\end{equation*}
Use the formula for $\xi_1^r$ in \eqref{xir} and we compute for $2\leq \ell\leq n$,
\begin{equation*}
    \partial_{x_{\ell}}(\log \xi_1^r) = \frac{ \sum_{2\leq k\leq n} \theta_k\partial_{x_{\ell}}\partial_{x_k}F }{\langle \otheta,\nabla F  \rangle} -\frac{ 2\sum_{2\leq k\leq n} \partial_{x_k}F\partial_{x_{\ell}}\partial_{x_k}F }{ 1+|\nabla F|^2}.
\end{equation*}
Therefore 
\[ \nabla (\log{\xi_1^r}) = \frac{ \otheta \cdot \nabla^2 F }{ \langle 
\otheta, \nabla F \rangle } - \frac{2\nabla F \cdot \nabla^2 F}{ 1+|\nabla F|^2 } = \frac{1}{\langle \otheta, \nabla F \rangle}\left( 
\otheta - \frac{2\langle \otheta, \nabla F \rangle}{ 1+|\nabla F|^2 } \nabla F \right)\cdot \nabla^2 F = \frac{ \oxi^r \cdot \nabla^2 F }{ \langle \otheta, \nabla F \rangle }. \]
We can now simplify $A$ as
\begin{equation*}
    A=I-\frac{ \oxi^r \otimes \nabla F }{ \xi_1^r } -2s \left( \xi_1^r I + \frac{ \otheta \otimes \oxi^r }{\langle \otheta, \nabla F \rangle}\right)\nabla^2 F.
\end{equation*}
Denote
\begin{equation}\label{bc}
    B := I-\frac{\oxi^r\otimes \nabla F}{\xi_1^r}, \ C:= \xi_1^r I +\frac{ \otheta\otimes \oxi^r }{ \langle \otheta, \nabla F \rangle }.
\end{equation}
Then we can write 
\begin{equation}\label{abc}
    A=B-2sC\nabla^2 F.
\end{equation}

The following lemmata are used to show that $A$ has a positive determinant. 

\begin{lem}\label{magic}
    Let $B$, $C$ be as in \eqref{bc}. Then there holds
    \begin{equation}\label{cbt}
        CB^T=\frac{ (\xi_1^r)^2 I + \oxi^r\otimes \oxi^r }{\xi_1^r}.
    \end{equation}
    In particular, $CB^T$ is positive definite.
\end{lem}
\begin{proof}[Proof of Lemma \ref{magic}]
    We first notice that by the definition of tensors,
    \begin{equation*}
        (\otheta\otimes \oxi^r)( \nabla F \otimes \oxi^r ) = (\otheta^T\oxi^r)((\nabla F)^T \oxi^r) = \otheta^T ( \oxi^r (\nabla F)^T ) \oxi^r = \langle \oxi^r, \nabla F \rangle ( \otheta \otimes \oxi^r ).
    \end{equation*}
    Use \eqref{xir} and the relation $\oxi^r = \otheta - \xi_1^r \nabla F$, and we find 
    \begin{equation}\begin{split}\label{xrnf}
        \langle \oxi^r, \nabla F \rangle 
        & = \langle \otheta - \xi_1^r \nabla F, \nabla F \rangle = \langle \otheta,\nabla F \rangle - \xi_1^r |\nabla F|^2 \\
        & = \frac{1+|\nabla F|^2}{2}\xi_1^r -|\nabla F|^2 \xi_1^r = \frac{ 1-|\nabla F|^2 }{2} \xi_1^r.
    \end{split}\end{equation}
    We now compute the product $CB^T$
    \begin{equation*}\begin{split}
        CB^T = &  \left( \xi_1^r I +\frac{ \otheta\otimes \oxi^r }{\langle \otheta, \nabla F \rangle} \right)\left( I-\frac{ \nabla F \otimes \oxi^r }{\xi_1^r} \right) \\
        = & \xi_1^r I +\frac{ \otheta\otimes \oxi^r }{ \langle \otheta, \nabla F \rangle } - \nabla F \otimes \oxi^r -\frac{ (\otheta\otimes \oxi^r)(\nabla F \otimes \oxi^r) }{\xi_1^r \langle \otheta,\nabla F \rangle } \\
        = & \xi_1^r I +\frac{ \otheta\otimes \oxi^r }{ \langle \otheta, \nabla F \rangle } - \nabla F \otimes \oxi^r -\frac{1-|\nabla F|^2}{2\langle \otheta, \nabla F \rangle}(\otheta \otimes \oxi^r) \\
        = & \xi_1^r I +\frac{ \otheta \otimes \oxi^r }{ \xi_1^r } - \nabla F\otimes \oxi^r \\
        = & \xi_1^r I +\frac{ (\otheta -\xi_1^r \nabla F) \otimes \oxi^r }{ \xi_1^r } \\
        = & \frac{ (\xi_1^r)^2 I + \oxi^r\otimes \oxi^r }{\xi_1^r}.
    \end{split}\end{equation*} 
    One can now see that $CB^T$ is symmetric. Moreover, for any $v\in \RR^{n-1}$, there holds
    \begin{equation*}
        \langle v,CB^T v \rangle = \frac{(\xi_1^r)^2 |v|^2 +|\langle \oxi^r, v \rangle|^2}{\xi_1^r} \geq \xi_1^r |v|^2.
    \end{equation*}
    Since $\xi_1^r>0$ on $\mathring\omega$, we conclude that $CB^T$ is positive definite.
\end{proof}

\begin{lem}\label{detb}
    Let $B$ as in \eqref{bc}. Then there holds
    \begin{equation*}
        \det(B) = \frac{1+|\nabla F|^2}{2}>0.
    \end{equation*}
    In particular, $B$ is invertible.
\end{lem}
\begin{proof}[Proof of Lemma \ref{detb}]
    We prove a slightly more general result. Let $a,b\in \RR^{n-1}$ be two row vectors. Then there holds 
    \begin{equation}\label{atb}
        \det(I+a\otimes b) = 1+\langle a,b \rangle.
    \end{equation}
    We first notice the following identities
    \begin{equation*}
        \begin{pmatrix}
            1 & -b \\ a^T & I
        \end{pmatrix}
        \begin{pmatrix}
            1 & 0 \\ -a^T & I
        \end{pmatrix}
        = \begin{pmatrix}
            1+ba^T & -b \\ 0 & I
        \end{pmatrix}, \ 
        \begin{pmatrix}
            1 & 0 \\ -a^T & I
        \end{pmatrix}
        \begin{pmatrix}
            1 & -b \\ a^T & I
        \end{pmatrix}
        =\begin{pmatrix}
            1 & -b \\ 0 & I+a^T b
        \end{pmatrix}.
    \end{equation*}
    Take determinants in both identities and we obtain 
    \begin{equation*}
        \begin{vmatrix}
            1 & -b \\ a^T & I
        \end{vmatrix}
        = \begin{vmatrix}
            1+ba^T & -b \\ 0 & I
        \end{vmatrix}
        = 1+ba^T, \ 
        \begin{vmatrix}
            1 & -b \\ a^T & I
        \end{vmatrix}
        = \begin{vmatrix}
            1 & -b \\ 0 & I+a^T b
        \end{vmatrix}
        =\det(I+a^T b).
    \end{equation*}
    Combining both identities of the determinants and recalling $ba^T=\langle a,b \rangle$, $a^Tb=a\otimes b$, we conclude that \eqref{atb} holds.

    Now put $a=-\frac{\oxi^r}{\xi_1^r}$, $b=\nabla F$ in \eqref{atb}, and we get 
    \begin{equation*}
        \det(B) = 1-\frac{\langle \oxi^r, \nabla F \rangle}{\xi_1^r} = 1-\frac{1-|\nabla F|^2}{2} = \frac{1+|\nabla F|^2}{2}.
    \end{equation*}
    Here we used \eqref{xrnf}.
\end{proof}

We are now ready to show that $A$ has a positive determinant. Indeed, recalling \eqref{abc}, we have
\begin{equation}\label{adecom}
    A=B\left( I-2sB^{-1}C \nabla^2 F \right) \ \Rightarrow \ \det(A)=\frac{1+|\nabla F|^2}{2}\det\left( I-2s B^{-1}C \nabla^2 F \right).
\end{equation}
Notice that
\[ B^{-1}C = B^{-1} (CB^T) (B^{-1})^T, \]
which implies that $B^{-1}C$ is positive definite since $CB^T$ is positive definite by Lemma \ref{magic}. Hence we can find an invertible matrix $L$ such that $B^{-1}C=LL^T$.
Since $F$ is concave, which implies that $\nabla^2 F$ is negative semi-definite, we know eigenvalues of $\nabla^2 F$ are non-positive. Use the identity
\[ B^{-1}C\nabla^2 F = LL^T(\nabla^2 F) = L \left(L^T(\nabla^2 F) L\right) L^{-1} \]
and we conclude that eigenvalues of $B^{-1}C\nabla^2 F$ are all non-positive. Using \eqref{adecom} and $s\geq 0$, we find that
\begin{equation*}
    \det(A)\geq \frac{1+|\nabla F|^2}{2}.
\end{equation*}
It now remains to recall \eqref{ja} to conclude that 
\begin{equation*}
    j(s,\ox,t^{\prime})=2\xi_1^r \det(A) \geq \xi_1^r (1+|\nabla F|^2) = 2\langle \otheta, \nabla F \rangle >0.
\end{equation*}
This completes the proof.
\end{proof}

\Remarks
1. The proof shows that the statement of Proposition \ref{nd-ref} can be made global, meaning that if $\mathcal O:=\{ (F(\ox), \ox) \ | \ \ox\in \RR^{n-1} \}$ with a strictly concave smooth function $F$ such that $F(0)=1$ and $\ox=0$ is the global maximum of $F$. Then Proposition \ref{nd-ref} holds with the restriction $|\ox|<r$ in \eqref{nomega} removed.

\noindent
2. Formula \eqref{adecom} and the fact that eigenvalues of $B^{-1}C\nabla^2 F$ are nonnegative implies that for fixed $\ox$, $t^{\prime}$, the Jacobian $j(s,\ox, t^{\prime})$ is non-decreasing as $s$ increases.

\subsection{Summary of the examples}

We summarize the examples we discussed in \S\S \ref{va2}--\ref{nd} in the following proposition.

\begin{prop}
    Suppose $\mathcal O\in \RR^n$ is defined by a function $F$ as in Definition \ref{r2}. Let $P=\Box$ be the wave operator \eqref{r1} on $M=(\RR^n\setminus \mathcal O)\times \RR_t$ and let $\phi_i=-t+\langle \otheta, \ox\rangle$ where $\otheta\in \mathbb S^{n-2}$. Set $\usigma=i^*\urho$, where $\urho=(1,0,t_0, 0,\otheta,-1)$ for any $t_0\in \RR$. 
    \begin{enumerate}[label={\arabic*.}]
        \item If $n=2$, then $\usigma\in \mathcal G_d$, in fact, \eqref{raa13} holds, and the conclusions of Theorem \ref{mt2} apply;

        \item If $n=3$ and $F$ satisfies Assumption \ref{A4} for some $k\in \mathbb N$, then $\usigma\in G_d^{2k}\setminus G^{2k+1}$, and the conclusions of Theorem \ref{mt2} apply;

        \item If $n\geq 2$ and $F$ satisfies Assumption \ref{ndA}, then $\usigma\in \mathcal G_d$, in fact, \eqref{r23} holds, and the conclusions of Theorem \ref{mt2} apply;

        \item Additionally, the conclusions of Theorem \ref{mt2} apply also to $3$-dimensional obstacles described by $F$ in \eqref{improve} in the Remark after Proposition \ref{r11}; and $n$-dimensional obstacles described by $F$ in \eqref{r4a} with $\otheta=(1,0)\in \mathbb S^{n-2}$.
    \end{enumerate}
\end{prop}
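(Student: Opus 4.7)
The plan is to observe that this proposition is essentially a bookkeeping statement: each case reduces to checking that the hypotheses of Theorem \ref{mt2} are met, and in every instance the work has already been done in \S\S \ref{va2}--\ref{nd}. I will therefore organize the argument by walking through each assumption of Theorem \ref{mt2} in turn and indicating which earlier result supplies it.

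First, Assumptions \ref{A0} and \ref{A0z} are immediate: $P=\Box$ has principal symbol $p(\xi,\tau)=|\xi|^2-\tau^2$, which is strictly hyperbolic with respect to $t$, and any spacelike hyperplane such as $t=\mathrm{const}$ works as $S$, while $\partial M=\partial\mathcal{O}\times\mathbb{R}_t$ is timelike since $\mathcal{O}$ is a smooth obstacle in space; the hypothesis on $f$ is taken as given. Assumption \ref{A00} holds for $\phi_i=-t+\langle\otheta,\ox\rangle$ with $\otheta\in\mathbb{S}^{n-2}$ because $p(\nabla\phi_i)=|\otheta|^2-1=0$, and $\nabla\phi_i=(0,\otheta,-1)\neq 0$.

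Next I would verify Assumptions \ref{A1} and \ref{A2} case by case. For $n=2$, the computation \eqref{raa13} in the proof of Proposition \ref{ra11} shows $\underline\sigma\in \mathcal{G}_d$, and Proposition \ref{ra11} verifies Assumption \ref{A2} with $\zeta=x_2$. For the 3D case under Assumption \ref{A4}, the computation \eqref{r6a} gives $\underline\sigma\in G^{2k}_d\setminus G^{2k+1}$, and Proposition \ref{r11} supplies the codimension-two $C^1$ hypersurface $\zeta=0$ with $H_p\zeta(\underline\rho)\neq 0$ required by Assumption \ref{A2}. For general $n$ under Assumption \ref{ndA}, the computation leading to \eqref{r23} shows $\underline\sigma\in\mathcal{G}_d$, and Proposition \ref{rnd} provides $\zeta(\ox)=\langle\otheta,\Lambda\ox\rangle$, which is smooth with $H_p\zeta(\underline\rho)=2\langle\Lambda\otheta,\otheta\rangle>0$. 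The additional cases in item 4 are handled by the Remark after Proposition \ref{r11} for \eqref{improve}, and by the Remark at the end of \S \ref{ond} for the $F_1$ example with $\otheta=(1,0)$.

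Then I would verify Assumption \ref{A3} on the reflected flow map. For $n=2$ this is precisely the content of Proposition \ref{prop: 2d-ref}, which gives the homeomorphism onto its range together with the smooth restriction to the illuminated side. For all $n\geq 2$ and any strictly convex obstacle in the sense of Definition \ref{r2}, Proposition \ref{nd-ref} gives the same conclusion; the proof there uses only strict concavity of $F$ and does not depend on Assumptions \ref{A4} or \ref{ndA}, so it simultaneously covers cases 1, 2, 3, and 4. The step I expect to be the main obstacle---verifying $\underline\sigma\in \mathcal{G}_d$ and the $C^1$ structure of the grazing set, together with injectivity and nondegeneracy of $Z_r$ at grazing order $2k$ or $\infty$---has already been done in those propositions, so at this stage the proof is just a matter of quoting them. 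Having checked all the hypotheses of Theorem \ref{mt2} in each of the four cases, the conclusion \eqref{mtz} follows directly.
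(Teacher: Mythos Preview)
Your proposal is correct and matches the paper's treatment: the proposition is stated as a summary with no separate proof, precisely because each item is a direct assembly of the earlier results (Propositions \ref{ra11}, \ref{r11}, \ref{rnd}, \ref{prop: 2d-ref}, \ref{nd-ref} and the associated Remarks) verifying the hypotheses of Theorem \ref{mt2}. Your walkthrough of which earlier result supplies each assumption is exactly the intended reading.
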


\appendix

\section{The forward flow map \texorpdfstring{$Z_r$}{TEXT} in the case \texorpdfstring{$\usigma\in G^2_d\setminus G^3$}{TEXT}.}\label{flowmap}

In this section we show that Assumption \ref{A3} is always satisfied when $\usigma\in G^2_d\setminus G^3$.

We work in $C^\infty$ almost standard form coordinates $(x,z,\lambda,\eta)$ for which $\partial_x\phi_i(0,z)=z_1$; recall \eqref{m10b} and \eqref{m18}.   Let  $$p(x,z,\lambda,\eta)=\lambda^2+q(x,z,\eta)$$ be the principal symbol of the main operator.   The bicharacteristic equations used to construct the reflected flow map $(s,y)\to Z_r(s,y)=(x(s,y),z(s,y))$  are 
\begin{align}\label{i1}
\begin{cases}
x_s=2\lambda,& x(0,y)=0,\\
z_s=\partial_\eta q,& z(0,y)=y,\\
\lambda_s=-\partial_x q,& \lambda(0,y)=-y_1\text{ where }y_1\leq 0, \\
\eta_s=-\partial_z q,& \eta(0,y)=\partial_y\phi_i(0,y)\text{ where }\partial_y\phi_i(0,0)=\ueta.
\end{cases}
\end{align}  
Let $\rho=(0,0,0,\underline{\eta})\in G^2_d\setminus G_3$.   
From \eqref{m11e} and \eqref{m19} we have
\begin{align*}
\alpha:=\partial_{\eta_1}q(0,0,\ueta)=-q_x(0,0,\ueta)>0.
\end{align*}

\begin{prop}\label{i3}
Let $\omega$ be the closure of an open neighborhood of $(0,0)$ in 
$\{(s,y)\ | \ s\geq 0,y_1\leq 0\}$,
and set $\mathring{\omega}:=\omega\cap \{y_1<0\}$.
If $\omega$ is small enough, the map $Z_r:\mathring{\omega}\to Z_r(\mathring{\omega})$ is a $C^\infty$ diffeomorphism, which extends to a homeomorphism $Z_r:{\omega}\to Z_r({\omega})$.

\end{prop}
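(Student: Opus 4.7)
My plan is to verify two things: (1) $Z_r$ is a local $C^\infty$ diffeomorphism on $\mathring\omega$ by a Jacobian computation, and (2) $Z_r$ is injective on all of $\omega$; together with a standard topological argument, these give the desired conclusion.

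\textbf{Step 1 (Taylor expansion).} Using the Hamilton system \eqref{i1} together with the identities $q_x(\urho)=-\alpha<0$ and $\partial_{\eta_1}q(\urho)=\alpha$ (the latter a consequence of the eikonal equation as in \eqref{m19}), I would integrate to obtain
\begin{align*}
x(s,y) &= -2y_1 s + \alpha(y)\,s^2 + O(s^3),\\
z_k(s,y) &= y_k + s\,\partial_{\eta_k}q(0,y,\partial_y\phi_i(0,y)) + O(s^2),
\end{align*}
where $\alpha(y):=-q_x(0,y,\partial_y\phi_i(0,y))$ satisfies $\alpha(0)=\alpha>0$.

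\textbf{Step 2 (Jacobian).} A Schur-complement calculation on the $(n{+}1)\times(n{+}1)$ matrix of partials of $(x,z)$ with respect to $(s,y)$, exploiting that the $z$-by-$y'$ block equals $I+O(s)$, reduces the determinant to
\[
\det dZ_r(s,y) = -2y_1 + 2\alpha(y)\,s + 2s\,\partial_{\eta_1}q(0,y,\partial_y\phi_i(0,y)) + O\big((|y|+s)^2\big).
\]
Evaluating the coefficient of $s$ at $y=0$ gives $2\alpha+2\alpha=4\alpha>0$, so by shrinking $\omega$ we secure the pointwise lower bound $\det dZ_r(s,y)\ge c(-y_1+s)$ for some $c>0$. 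This is strictly positive on $\mathring\omega=\{y_1<0\}$, hence $Z_r|_{\mathring\omega}$ is a local $C^\infty$ diffeomorphism.

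\textbf{Step 3 (Injectivity on $\omega$).} This is the crux. I would first use the implicit function theorem on the transverse tangential variables: the relations $z_k=y_k+sB_k(s,y)$ for $k=2,\dots,n$ let one solve $y'=(y_2,\dots,y_n)$ smoothly as a function of $(s,y_1,z')$ with $z'=(z_2,\dots,z_n)$ on small enough $\omega$. This reduces the question to injectivity of the planar map
\[
(s,y_1)\longmapsto (x,z_1)=(-2y_1 s+\alpha s^2,\; y_1+\alpha s) + \text{higher-order corrections}
\]
with the $z'$-parameters frozen. The quadratic model is explicitly injective on $\{s\ge 0,y_1\le 0\}$: eliminating $y_1=z_1-\alpha s$ yields $3\alpha s^2-2z_1 s-x=0$, whose unique non-negative root is $s=(z_1+\sqrt{z_1^2+3\alpha x})/(3\alpha)$, with associated $y_1=(2z_1-\sqrt{z_1^2+3\alpha x})/3\le 0$ automatic for $x\ge 0$. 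Passing to the full map requires a blow-up / rescaling argument (e.g.\ polar coordinates $y_1=-r\sin\theta$, $s=r\cos\theta$, or equivalently the substitution $w=y_1+\alpha s$, $u=s$) that desingularizes $Z_r$ at the grazing point; in the new variables the higher-order terms become uniformly small perturbations of the quadratic leading part, and injectivity follows by a contraction/IFT argument in the resolved chart.

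\textbf{Step 4 (Homeomorphism, topological wrap-up).} Once injectivity on $\omega$ and $\det dZ_r>0$ on $\mathring\omega$ are established, $Z_r$ is a continuous injection from the (compactly-closed) set $\omega$ into $\mathbb{R}^{n+1}$, hence a homeomorphism onto its image by the standard argument that a continuous injection from a compact Hausdorff space is closed. The nonvanishing Jacobian on $\mathring\omega$ upgrades this to a $C^\infty$ diffeomorphism there.

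The main obstacle is Step 3 in the regime $(s,y_1)\to(0,0)$: the Jacobian vanishes at the grazing point so ordinary IFT does not apply directly, and one must resolve the singularity by a rescaling. This is exactly the issue that forced \cite{cheverry1996} to truncate at order two, and handling the exact (rather than truncated) Taylor series is what the argument above needs to supply.
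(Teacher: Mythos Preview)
Your Steps 1, 2, and 4 match the paper's argument essentially verbatim. The divergence is in Step 3, and there your proof is incomplete: you correctly identify that the quadratic model is injective and that the difficulty is in controlling the higher-order remainder near $(s,y_1)=(0,0)$, but you only \emph{propose} a blow-up/rescaling and explicitly say this is what ``the argument above needs to supply.'' You have not supplied it, and carrying it out is not entirely routine (the substitution $w=y_1+\alpha s$, $u=s$ you suggest still leaves a Jacobian that vanishes at the origin).

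The paper avoids blow-up entirely by a more elementary route. Rather than desingularize, it estimates the \emph{differences} of the error terms directly. Writing $Z_r(s,y)=Z_r(\os,\oy)$ as three groups of equations (one for $x$, one for $z_1$, one for $z_j$ with $j\ge 2$), one first shows from the $z$-equations and a crude bound on $|\eps_2^j(s,y)-\eps_2^j(\os,\oy)|$ that $|y-\oy|\lesssim |s-\os|$. Feeding this back yields the sharper difference estimates
\[
|\eps_3(s,y)-\eps_3(\os,\oy)|\lesssim |s-\os|\,|(s,\os)|\,|(s,\os,y)|,\qquad
|\eps_2^j(s,y)-\eps_2^j(\os,\oy)|\lesssim |s-\os|\,|(s,\os,y)|.
\]
For $\omega$ small these differences are strictly dominated by the leading quadratic terms, so the sign argument you ran for the exact model---one equation forces $\os\le s$, the other forces $s\le\os$---survives unchanged for the full map. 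No reduction to a planar problem via the implicit function theorem on $y_2,\dots,y_n$ is needed; the estimate $|y-\oy|\lesssim|s-\os|$ handles all tangential variables at once. This is both shorter and more robust than a resolution of singularities.
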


\begin{proof}
\textbf{1. } Integrating the equations \eqref{i1} we obtain
\begin{subequations}\label{i4}
    \begin{align}
        & \begin{aligned}
            x(s,y)= & 2\int^s_0\lambda(t,y)dt=-2y_1s-2\int^s_0\int^t_0\partial_x q(x(r,y),z(r,y),\eta(r,y))drdt\\
            = & \alpha s^2-2y_1s+\eps_3(s,y),
        \end{aligned} \label{i4a}\\
        & z_1(s,y)=y_1+\int^s_0\partial_{\eta_1}q(x(t,y),z(t,y),\eta(t,y))dt=y_1+\alpha s+\eps^1_2(s,y), \label{i4b}\\
        & \begin{aligned}  
            z_j(s,y)= & y_j+\int^s_0\partial_{\eta_j}q(x(t,y),z(t,y),\eta(t,y))dt \\
            = & y_j+\partial_{\eta_j}q(0,0,\ueta)s+\eps^j_2(s,y), \ j=2,\dots n,
        \end{aligned} \label{i4c}\\
        & \lambda(s,y)=-y_1-\int^s_0\partial_x q(x(t,y),z(t,y),\eta(t,y))dt, \label{i4d}\\
        & \eta(s,y)=\partial_z\phi_i(0,y)-\int^s_0\partial_z q(x(t,y),z(t,y),\eta(t,y))dt. \label{i4e}
    \end{align}
\end{subequations}

\textbf{2. Estimate of the error terms.} Let
\begin{align*}
\begin{split}
Q(r,y) & :=-2\partial_x q(x(r,y),z(r,y),\eta(r,y))\text{ and } \\
Q_j(t,y) & :=\partial_{\eta_1}q(x(t,y),z(t,y),\eta(t,y)).
\end{split}
\end{align*}
Then we can rewrite 
\begin{align*}
\begin{split}
&\eps_3(s,y)=\int^s_0\int^t_0[Q(r,y)-Q(0,0)]drdt=\int^s_0\int^t_0[Q_1(r,y)r+Q_2(r,y)y]drdt,\\
&\eps^j_2(s,y)=\int^s_0[Q_j(t,y)-Q_j(0,0)]dt=\int^s_0[Q_{j1}(t,y)t+Q_{j2}(t,y)y]dt,
\end{split}
\end{align*}
for some smooth functions $Q_k$, $Q_{jk}$, $k=1,2$.    Obvious estimates of these integrals yield
\begin{align}\label{i5c}
\begin{gathered}
|\eps_3(s,y)|\lesssim s^3+s^2|y|, \; |\partial_s\eps_3|\lesssim s^2+s|y|,\;|\partial_y\eps_3|\lesssim s^2,\\
|\eps^j_2(s,y)|\lesssim s^2+|y|s,\; |\partial_s \eps^j_2|\lesssim s+|y|,\;|\partial_y \eps^j_2|\lesssim s.
\end{gathered}
\end{align}

\textbf{3. }A direct computation using \eqref{i4} and \eqref{i5c}  shows that the Jacobian determinant, $j(s,y)$, of the map 
$(s,y)\mapsto Z_r(s,y)=(x(s,y),z(s,y))$ satisfies
\begin{align}\label{jac}
j(s,y)=4\alpha s-2y_1+\eps_1(s,y)s,\text{ where }|\eps_1(s,y)|\lesssim |(s,y)|,
\end{align}
and thus $j(s,y)>0$ on $\mathring{\omega}$ if $\omega$ is small enough.    Thus, $Z_r$ is a local diffeomorphism on $\mathring{\omega}$.  

\textbf{4. $Z_r$ is injective on ${\omega}$. }Suppose $(s,y)$ and $(\os,\oy)$ lie ${\omega}$ and $Z_r(s,y)=Z_r(\os,\oy)$.    Using \eqref{i4b}--\eqref{i4d} this may be rephrased as:
\begin{subequations}\label{i6}
    \begin{align}
        & (s-\os)[\alpha(s+\os)-(y_1+\oy_1)]+\eps_3(s,y)-\eps_3(\os,\oy)=(s+\os)(y_1-\oy_1), \label{i6a}\\
        & y_1-\oy_1=\alpha(\os-s)+\eps^1_2(\os,\oy)-\eps^1_2(s,y), \label{i6b}\\
        & y_j-\oy_j=\gamma_j(\os-s)+\eps^j_2(\os,\oy)-\eps^j_2(s,y), \text{ where }\gamma_j:=\partial_{\eta_j}q(0,0,\ueta), \ 2\leq j\leq n. \label{i6c}
    \end{align}
\end{subequations}
We are free to switch $y_1$ and $\oy_1$, so from now on we assume 
\begin{align*}
y_1\leq \oy_1\leq 0.
\end{align*}
Observe that if all the error terms in \eqref{i6} are  set equal to zero, then \eqref{i6a} implies $s\leq \os$, while \eqref{i6b} implies $\os\leq s$.  Thus $s=\os$ and \eqref{i6b}, \eqref{i6c} imply $y=\oy$.\footnote{This observation was made in \cite{cheverry1996}, but the argument was incomplete because it did not treat the error terms.}

To treat the error terms we must estimate the error differences in \eqref{i6}.    We have 
\begin{align}\label{i8}
\begin{split}
& \eps_3(s,y)-\eps_3(\os,\oy)\\
& =  [\eps_3(s,y)-\eps_3(\os,y)]+[\eps_3(\os,y)-\eps_3(\os,\oy]\\
& =  \int^s_{\os}\int^t_0[Q(r,y)-Q(0,0)]drdt+\int^{\os}_{0}\int^t_0[Q(r,y)-Q(r,\oy)]drdt\\
& =  \int^s_{\os}\int^t_0[Q_1(r,y)r+Q_2(r,y)y]drdt+\int^{\os}_{0}\int^t_0[Q(r,y)-Q(r,\oy)]drdt.
\end{split}
\end{align}
From \eqref{i8} we can read off the estimate
\begin{align}\label{i9}
\begin{split}
|\eps_3(s,y)-\eps_3(\os,\oy)|\lesssim & |s^3-\os^3|+|y|(s^2-\os^2|+|y-\oy|\os^2 \\
\lesssim & |s-\os||(s,\os)|^2+|s-\os||(s,\os)||y|+\os^2|y-\oy|.
\end{split}
\end{align}
A similar estimate of the other differences yields
\begin{align}\label{i10}
|\eps^j_2(s,y)-\eps^j_2(\os,\oy)|\lesssim |s-\os||(s,\os)|+|s-\os||y|+\os|y-\oy|, \;j=1,\dots,n.
\end{align}
From \eqref{i6b}, \eqref{i6c} and \eqref{i10} we obtain
\begin{align}\label{i11}
|y-\oy|\lesssim |s-\os|+\os|y-\oy|\Rightarrow |y-\oy|\lesssim |s-\os|
\end{align}
if $\omega$ is small enough, after absorbing $\os|y-\oy|$ into the left side.  Using \eqref{i11} we can rewrite the inequalities \eqref{i9},\eqref{i10} as
\begin{subequations}\label{i12}
    \begin{align}
        & |\eps_3(s,y)-\eps_3(\os,\oy)|\lesssim |s-\os|\;\left(|(s,\os)|^2+|(s,\os)||y|\right)\lesssim |s-\os|\;|(s,\os)|\;|(s,\os,y)|, \label{i12a}\\
        & |\eps^j_2(s,y)-\eps^j_2(\os,\oy)|\lesssim |s-\os|\;|(s,\os,y)| \text{ for } 1\leq j\leq n. \label{i12b}
    \end{align}
\end{subequations}
If $\omega$ is small enough, \eqref{i12b} implies that the right side of \eqref{i6b} has the same sign as $\alpha(\os-s)$, so \eqref{i6b} implies $\os\leq s$.   Similarly, \eqref{i12a} implies that the left side of \eqref{i6b} has the same sign as 
$(s-\os)[\alpha(s+\os)-(y_1+\oy_1)]$.  Thus, \eqref{i6a} implies $s\leq \os$.   This implies $s=\os$, which by \eqref{i11} implies
$y=\oy$.

\textbf{5. }The flow map $Z_r:{\omega}\to Z_r({\omega})$  defined by the bicharacteristic equations \eqref{i1} is clearly continuous.  We have shown that $Z_r$ is a bijection onto its image, when ${\omega}$ is small enough.  The inverse is continuous provided $Z_r$ maps closed subsets of ${\omega}$ to closed sets.  That holds since ${\omega}$ is compact.
\end{proof}

\bibliographystyle{alpha}
\bibliography{GeoOptics}

\end{document}